\theoremstyle{plain}
\newtheorem{thm}{Theorem}[section]
\numberwithin{equation}{section} 
\numberwithin{figure}{section} 
\theoremstyle{plain}
\newtheorem*{thm*}{Theorem}
\theoremstyle{plain}
\theoremstyle{plain}
\newtheorem*{cor*}{Corollary}
\theoremstyle{plain}
\theoremstyle{plain}
\theoremstyle{definition}
\theoremstyle{remark}
\theoremstyle{remark}
\theoremstyle{remark}
\theoremstyle{remark}
\theoremstyle{definition}
\theoremstyle{remark}
\newtheorem*{acknowledgement*}{Acknowledgement}
\theoremstyle{plain}
\newtheorem{subthm}{Theorem}[subsection]
\theoremstyle{plain}
\newtheorem{subcor}[subthm]{Corollary} 
\theoremstyle{plain}
\newtheorem{sublem}[subthm]{Lemma} 
\theoremstyle{plain}
\newtheorem{subprop}[subthm]{Proposition} 
\theoremstyle{definition}
\newtheorem{subdefn}[subthm]{Definition}
\theoremstyle{remark}
\newtheorem{subrem}[subthm]{Remark}
\theoremstyle{remark}
\theoremstyle{remark}
\theoremstyle{plain}
\newtheorem{subexer}[subthm]{Exercise}
\newtheorem{subexample}[subthm]{Example}
\newcommand{\R}{{\mathbb R}}
\newcommand{\C}{{\mathbb C}}
\newcommand{\N}{{\mathbb N}}
\newcommand{\Z}{{\mathbb Z}}
\newcommand{\M}{{\mathbb M}}
\newcommand{\K}{{\mathbb K}}
\newcommand{\B}{{\mathbb B}}
\newcommand{\F}{{\mathbb F}}
\newcommand{\G}{\Gamma}
\newcommand{\e}{\varepsilon}
\newcommand{\p}{\varphi}
\newcommand{\hh}{{\mathcal H}}
\newcommand{\hk}{{\mathcal K}}
\newcommand{\id}{\mathrm{id}}
\DeclareMathOperator{\Tr}{Tr}
\newcommand{\mtp}{\mathbin{\otimes}_{\max}}
\newcommand{\vt}{\mathbin{\bar{\otimes}}}
\newcommand{\fin}{{\mathfrak{F}}}
\DeclareMathOperator{\Aut}{Aut}
\DeclareMathOperator{\prob}{Prob}
\newcommand{\grp}{\mathcal{G}}
\def\freeprod{\font\bigsymbolsfont=cmsy10 scaled \magstep3
 \setbox0=\hbox{\bigsymbolsfont\char'003 }\mathord{\lower1pt\box0}}\relax\ignorespaces
\begin{document}

\title[The symbiosis of C$^*$- and W$^*$-algebras]{The symbiosis of C$^*$- and W$^*$-algebras}

\author{Nathanial P. Brown}

\address{Department of Mathematics, Penn State University, State
College, PA 16802}

\email{nbrown@math.psu.edu}

\keywords{$C^*$-algebras, von Neumann algebras, amenability}
\subjclass[2000]{46L05, 46L10, 46L55, 46L06, 46L35}

\thanks{Partially supported by DMS-0554870.}

\begin{abstract}  These days it is common for young operator algebraists to know a lot about C$^*$-algebras, or a lot about von Neumann algebras -- but not both.  Though a natural consequence of the breadth and depth of each subject, this is unfortunate as the interplay between the two theories has deep historical roots and has led to many beautiful results.  We review some of these connections, in the context of amenability, with the hope of convincing (younger) readers that tribalism impedes progress.  
\end{abstract}

\maketitle

\section{Introduction} 

I was raised a hardcore C$^*$-algebraist.  My thesis focused on C$^*$-dynamical systems, and never once required a weak topology.  As a fresh PhD my knowledge of von Neumann algebras was superficial, at best.    I didn't really like von Neumann algebras, didn't understand them, and certainly didn't need them to prove theorems.  Conversations with new W$^*$-PhDs make it clear that this goes both ways; they often know little about C$^*$-algebras, and care less.  

Today I still know relatively little about von Neumann algebras, but I have grown to love them.  And they are an indispensable tool in my C$^*$-research.  Conversely, recent work of Narutaka Ozawa (some in collaboration with Sorin Popa) has shown that C$^*$-techniques can have deep applications to the structure theory of certain von Neumann algebras.  In other words, there are very good reasons for C$^*$-algebraists and von Neumann algebraists to learn something about each other's craft.   In the ``old" days (say 30 or more years ago), the previous sentence would have been silly (indeed, some ``old" timers may still find it silly) as the field of operator algebras was small enough for students to become well acquainted with most of it.  That's no longer the case.  Hence, I hope these notes will help my generation, and those that follow, to see the delightfully intertwined theories of C$^*$- and W$^*$-algebras as an indivisible unit. 

I do not intend to write an encyclopedia of C$^*$- and W$^*$-interactions.  Amenability (for groups, actions and operator algebras) is a perfect context for illustrating some of the most important interactions, so these notes are organized around that theme.  Also, I assume familiarity with basics such as C$^*$- and W$^*$-algebras associated to discrete groups, crossed products, and numerous other things.  Keeping these notes self contained would, I fear, bury the ideas being advertised  under a mountain of details.  Hence, the reader will frequently be referred to \cite{Me-n-Taka} (another advertisement, of the shameless sort) for more details.\footnote{In fact, large portions of these notes are shamelessly cut-n-pasted directly from \cite{Me-n-Taka}! And I gratefully acknowledge the AMS's permission to do so.}    

\vspace{2mm}

{\noindent\textbf{Acknowlegment:}} These notes are based on lectures given at the Summer School on Aspects of Operator Algebras and Applications (Palacio de la Magdalena, Santander) and the workshop Operator Algebras, Dynamics and Classification (Texas A\&M).  The organizers at both places were amazing and I can't thank them enough for their hospitality.

\subsection{C$^*$-algebras vs.\ W$^*$-algebras}   

I distinctly remember my first encounter with a W$^*$-Fundamentalist.  After a day at GPOTS, we were at a bar impressing locals with exotic vocabulary. Out of nowhere came the verbal left hook.  

``Why do you study C$^*$-algebras?" he asked with palpable contempt. ``They're not interesting, you can't do anything with them...they don't even have a Borel functional calculus!"  

I was stunned silent. (No small accomplishment, given my big mouth.) But later that night, lying awake and trembling with rage, I finally had a piercing come back. 

``Anybody can prove a theorem about von Neumann algebras. How hard is that? Look at all the tools you give yourself: Weakly compact unit balls, polar decompositions, you even have a Borel functional calculus!   A child could do something with all those tools. And another thing: You're a jerk!"  

Pleased at having silenced the voice in my head, I fell asleep with a smug smile. ;-) 

\vspace{5mm}

Fundamentalism is stupid.  Trust me, I know, I'm a recovering Fundamentalist.  It took me a long time to see that I was guilty of precisely the sort of tribalism that I now wish to discourage.  (And, yes, I occasionally relapse.) These days when I meet a W$^*$-Fundamentalist, I try to point out what C$^*$-algebras have done for von Neumann algebras.   For example, having complicated representation theory is not a C$^*$-defect -- it's an opportunity.  Just ask Bob Powers.  His celebrated construction of non-isomorphic type III factors (cf.\ \cite{powers}) depends on the non-triviality of C$^*$-representation theory.  In the latter half of these notes we'll see more recent examples; Ozawa's work on solid von Neumann algebras is largely C$^*$-algebraic. 

For my own part, I no longer denigrate the rich structure of von Neumann algebras. I exploit it.  I love the fact that they have projections and (in the finite case) tracial states. I'm green with envy over the compactness of their unit balls.  For example, how awesome would it be if the following result, which is a fairly simple consequence of Alaoglu's Theorem, had a C$^*$-analogue? 

\begin{subthm}
\label{subthm:pointultraweak} Let $X$ be a  Banach space, $M$ be a von Neumann algebra and $T_{\lambda}\colon X \to M$ be a bounded net of linear maps. Then $\{ T_{\lambda}\}_{\lambda \in \Lambda}$ has a cluster point in the point-ultraweak topology.
\end{subthm}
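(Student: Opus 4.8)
The plan is to recognize the set of candidate limit maps as a compact product space and then invoke Tychonoff's theorem, exploiting the one feature $M$ has that a general $C^*$-algebra lacks: it is a dual space. Write $C = \sup_{\lambda}\|T_\lambda\| < \infty$ for the uniform bound on the net, and let $M_*$ denote the predual, so that the ultraweak topology is $\sigma(M, M_*)$. By the Banach--Alaoglu theorem, for each $r > 0$ the closed ball $\Ball_M(r) = \{a \in M : \|a\| \le r\}$ is ultraweakly compact. For every $x \in X$ we have $T_\lambda(x) \in \Ball_M(C\|x\|)$ for all $\lambda$, so the entire orbit of $x$ stays inside a fixed ultraweakly compact set.

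First I would form the product space $P = \prod_{x \in X} \Ball_M(C\|x\|)$, each factor carrying the ultraweak topology, which is compact by Tychonoff. Identifying a map $S \colon X \to M$ with the tuple $(S(x))_{x \in X}$ realizes the point-ultraweak topology on maps as the subspace topology inherited from $P$ (equivalently from $\prod_{x\in X} M$). Each $T_\lambda$ thus becomes a point of $P$, and since $P$ is compact the net $(T_\lambda)$ has a cluster point $T = (T(x))_{x \in X} \in P$; equivalently, there is a subnet $(T_\mu)$ converging to $T$ point-ultraweakly.

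The only point requiring genuine care is to confirm that the cluster point $T$, which a priori is merely an element of the product, is actually a \emph{bounded linear map}, so that it is a legitimate cluster point in the space of such maps. Fix $x, y \in X$ and scalars $a, b$. Along the convergent subnet, $T_\mu(ax+by) \to T(ax+by)$ ultraweakly, while $a T_\mu(x) + b T_\mu(y) \to a T(x) + b T(y)$ ultraweakly, because the ultraweak topology is a Hausdorff vector-space topology in which addition and scalar multiplication are continuous. Since $T_\mu(ax+by) = aT_\mu(x) + bT_\mu(y)$ for every $\mu$, uniqueness of ultraweak limits forces $T(ax+by) = aT(x) + bT(y)$, so $T$ is linear; and $\|T\| \le C$ because $T(x) \in \Ball_M(C\|x\|)$. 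I expect no serious obstacle here: the argument is a clean packaging of Alaoglu and Tychonoff, and the crux is simply that $M = (M_*)^*$ -- precisely the structure absent for a general $C^*$-algebra, which is exactly why one should not expect a $C^*$-analogue.
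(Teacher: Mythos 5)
Your proof is correct and is precisely the argument the paper has in mind: the paper offers no written proof, remarking only that the statement is ``a fairly simple consequence of Alaoglu's Theorem,'' and your Alaoglu-plus-Tychonoff packaging, with the routine verification that the cluster point is a bounded linear map, is the standard way to carry that out. Nothing is missing --- the passage to a convergent subnet, the use of Hausdorffness of $\sigma(M,M_*)$ for uniqueness of limits, and the norm bound $\|T\|\le C$ are all handled properly.
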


Don't see the point?  Well, imagine the theorems you could prove if, for example, every sequence of asymptotically multiplicative u.c.p.\ maps $\p_n \colon A \to B$ had a point-norm convergent subsequence -- i.e., gave rise to a $*$-homomorphism $A \to B$! (For example, you could prove that all C$^*$-algebras are type I, a fantastically false result.) 

The point I'm trying to make is that in a von Neumann algebra you can do things we C$^*$-Fundamentalists only dream of doing. Passing to a weak closure and gettin' crazy can be a lot of fun -- not to mention  fruitful.  Indeed, when trying to prove a C$^*$-theorem my first thought is usually this:  ``How do I translate this problem into a W$^*$-question? And how will I come back?"  Of course, certain C$^*$-questions are more amenable to this approach than others, but in these notes I'll try to explain some of the ways that von Neumann algebras have been used to prove C$^*$-theorems, and vice versa.  The results are impressive, by any standard.  And beautiful!  

I hope you enjoy them as much as I do.

\subsection{Five classical theorems}  

Here are some general tools that facilitate the passage between norm-closed and  weakly-closed algebras.  The first result, one of the oldest in the subject, is still used daily. 

\begin{subthm}[Bicommutant Theorem] Let $A \subset \B(\hh)$ be a C$^*$-algebra acting nondegenerately.  The weak-operator-topology closure of $A$ is equal to the double commutant $A^{\prime\prime}$. 
\end{subthm}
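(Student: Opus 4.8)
The plan is to establish the two inclusions separately. The inclusion $\overline{A}^{\,\mathrm{wot}} \subseteq A''$ is the soft one: since every element of $A$ commutes with every element of $A'$ we have $A \subseteq A''$, and a routine check shows that the commutant of any subset of $\B(\hh)$ is WOT-closed, because for fixed $T$ the map $S \mapsto ST - TS$ is separately WOT-continuous. Hence $A'' \supseteq \overline{A}^{\,\mathrm{wot}}$. The real content is the reverse inclusion $A'' \subseteq \overline{A}^{\,\mathrm{wot}}$, and since $A$ is a convex subset of $\B(\hh)$ its WOT- and strong-operator-topology closures coincide. So it suffices to show that every $T \in A''$ lies in the SOT-closure of $A$; that is, given $\xi_1, \dots, \xi_n \in \hh$ and $\e > 0$, I must produce $a \in A$ with $\|(T-a)\xi_i\| < \e$ for all $i$.

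First I would settle the single-vector case $n=1$. Fix $\xi \in \hh$, let $K = \overline{A\xi}$, and let $p \in \B(\hh)$ be the orthogonal projection onto $K$. Because $A$ is a self-adjoint algebra, $K$ reduces $A$ (invariance of $K$ under $A$ gives $ap = pap$, and applying the same to $a^*$ and taking adjoints gives $pa = pap$, so $ap = pa$); thus $p \in A'$. Consequently $T$, lying in $A'' = (A')'$, commutes with $p$ and therefore leaves $K$ invariant. The one subtle point is that $\xi$ itself belongs to $K$, and this is exactly where nondegeneracy enters: using an approximate unit $(e_\lambda)$ of $A$, nondegeneracy gives $e_\lambda \xi \to \xi$ in norm, and since each $e_\lambda \xi \in A\xi \subseteq K$ we conclude $\xi \in K$. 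Hence $T\xi = Tp\xi = pT\xi \in K = \overline{A\xi}$, so some $a \in A$ satisfies $\|(T-a)\xi\| < \e$.

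To upgrade to finitely many vectors I would use the amplification trick. On $\hh^{(n)} = \hh \oplus \cdots \oplus \hh$ let $A$ act diagonally via $a \mapsto a^{(n)} = \diag(a, \dots, a)$. A matrix computation identifies $(A^{(n)})' = M_n(A')$ and then $(A^{(n)})'' = \{S^{(n)} : S \in A''\}$: commuting with the scalar matrix units $E_{ij} \otimes \id \in M_n(A')$ forces diagonal form, and each diagonal entry must then lie in $A''$. In particular $T^{(n)} \in (A^{(n)})''$, and $A^{(n)}$ still acts nondegenerately on $\hh^{(n)}$. Applying the single-vector case to $T^{(n)}$ and the vector $(\xi_1, \dots, \xi_n)$ produces $a \in A$ with $\sum_i \|(T-a)\xi_i\|^2 < \e^2$, which is what we wanted.

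The main obstacle, and the one genuinely geometric step, is the single-vector case, specifically the complementary roles of self-adjointness and nondegeneracy: self-adjointness makes the invariant subspace $K$ reducing, so that $p \in A'$ and $T$ preserves $K$, while nondegeneracy guarantees $\xi \in K$, so that $T\xi$ is actually approximable by vectors $a\xi$. Everything else — the WOT-closedness of commutants, the convexity argument equating the two closures, and the bookkeeping of the amplification — is formal.
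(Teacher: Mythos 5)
Your proof is correct, and it is the standard textbook argument for the von Neumann bicommutant theorem; the paper itself states this result without proof, as one of five classical background theorems, so there is no in-paper argument to compare against. All the essential points are handled properly: the reducing-subspace argument putting $p \in A'$, the approximate-unit use of nondegeneracy to get $\xi \in \overline{A\xi}$ (the step that genuinely fails for degenerate actions), and the identification $(A^{(n)})'' = \{S^{(n)} : S \in A''\}$ via the matrix units $E_{ij}\otimes \id$, which lie in $M_n(A')$ because $\id \in A'$ regardless of whether $A$ is unital. One small remark: the convexity fact equating WOT- and SOT-closures is not actually needed for your direction of the inclusion, since the strong topology is finer than the weak one and hence SOT-approximation of $T \in A''$ already places $T$ in the WOT-closure; convexity would only be needed to conclude the two closures coincide as sets, which the theorem's statement does not require.
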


\begin{subthm}[Kaplansky's Density Theorem] Let $A \subset \B(\hh)$ be a C$^*$-algebra acting nondegenerately.  Then the unit ball of $A$ is weakly dense in the unit ball of $A^{\prime\prime}$. 
\end{subthm}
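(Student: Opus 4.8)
The plan is to bootstrap the Bicommutant Theorem up to the unit-ball statement in two reductions: first from the whole algebra to its self-adjoint part, and then from self-adjoint elements to arbitrary ones. Since strong-operator convergence implies weak convergence, it suffices to prove the stronger assertion that the unit ball of $A$ is \emph{strongly} dense in the unit ball of $A''$. First I would record the standard fact that for a convex set the weak-operator and strong-operator closures coincide (both topologies have the same continuous functionals); combined with the Bicommutant Theorem this gives that $A$ is strongly dense in $A''$. To handle self-adjoints, given $x \in (A'')_{sa}$ I would choose a net $a_\lambda \in A$ with $a_\lambda \to x$ strongly, hence weakly; since the adjoint is weakly continuous, $\tfrac{1}{2}(a_\lambda + a_\lambda^*) \to x$ weakly, so $A_{sa}$ is weakly --- and therefore (again by convexity) strongly --- dense in $(A'')_{sa}$.

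The heart of the matter is controlling norms, and here I would invoke continuous functional calculus. Fix once and for all a function $f \in C_0(\R)$ with $f(t) = t$ for $|t| \le 1$ and $|f(t)| \le 1$ for all $t$ (for instance $f(t) = t$ on $[-1,1]$ and $f(t) = 1/t$ for $|t| > 1$). Given $x \in (A'')_{sa}$ with $\|x\| \le 1$, pick $a_\lambda \in A_{sa}$ with $a_\lambda \to x$ strongly. Then $f(a_\lambda) \in A$ lies in the unit ball since $\|f\|_\infty \le 1$, and $f(x) = x$ because the spectrum of $x$ sits inside $[-1,1]$, where $f$ is the identity. So everything reduces to the claim that $f(a_\lambda) \to f(x)$ strongly.

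The main obstacle is precisely this strong continuity of the functional calculus, which I expect to be the one genuinely analytic step. I would prove the general lemma that $g \mapsto g(\,\cdot\,)$ is strongly continuous on self-adjoint operators for every $g \in C_0(\R)$. The set $S$ of such $g$ is a norm-closed $*$-subalgebra of $C_0(\R)$: closedness under products and norm limits follows from the uniform bound $\|g(a_\lambda)\| \le \|g\|_\infty$ together with the splitting $g(a_\lambda)h(a_\lambda) - g(x)h(x) = g(a_\lambda)[h(a_\lambda)-h(x)] + [g(a_\lambda)-g(x)]h(x)$. It then remains to exhibit generators: for a resolvent $r_z(t) = (t - z)^{-1}$ with $z \in \C \setminus \R$, the resolvent identity gives $r_z(a_\lambda) - r_z(x) = (a_\lambda - z)^{-1}(x - a_\lambda)(x - z)^{-1}$, and since $\|(a_\lambda - z)^{-1}\| \le |\operatorname{Im} z|^{-1}$ is uniformly bounded while $(x - a_\lambda)(x-z)^{-1}\xi \to 0$ by strong convergence, strong continuity of $r_z$ follows. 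As $\{r_z : z \in \C \setminus \R\}$ is conjugation-closed and separates points, Stone--Weierstrass yields $S = C_0(\R)$, and in particular our chosen $f$ lies in $S$.

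Finally I would remove the self-adjointness hypothesis with the standard $2 \times 2$ trick. Identifying $M_2(A)'' = M_2(A'')$ inside $\B(\hh \oplus \hh)$, for a general $x \in A''$ with $\|x\| \le 1$ I would apply the self-adjoint case to the self-adjoint contraction $\tilde x = \left(\begin{smallmatrix} 0 & x \\ x^* & 0 \end{smallmatrix}\right)$, obtaining self-adjoint contractions in $M_2(A)$ converging strongly to $\tilde x$. Strong convergence of a matrix forces strong convergence of each entry, so the $(1,2)$-corners furnish elements of the unit ball of $A$ converging strongly --- hence weakly --- to $x$, completing the proof.
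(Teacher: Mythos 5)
The paper states Kaplansky's Density Theorem as one of its ``five classical theorems'' and gives no proof at all, so there is no in-paper argument to compare yours against; judged on its own, your proof is correct and complete. It is the standard textbook argument, with every delicate point handled properly: the coincidence of weak and strong closures of convex sets lets you upgrade the Bicommutant Theorem to strong density; the weak (not strong) continuity of the adjoint is what justifies passing to $\tfrac{1}{2}(a_\lambda+a_\lambda^*)$, and you correctly use it only weakly before invoking convexity again; your choice of $f\in C_0(\R)$ with $f(0)=0$ quietly takes care of the nonunital case, since $f(a_\lambda)\in C^*(a_\lambda)\subset A$; and the $2\times 2$ matrix trick, with the identification $\M_2(A)''=\M_2(A'')$ and the fact that corners of contractions are contractions, correctly removes self-adjointness. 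The one place your route differs from the most common presentations is the strong continuity of the functional calculus: the classical Kaplansky argument verifies strong continuity only for the single function $t\mapsto 2t(1+t^2)^{-1}$ via an explicit algebraic identity, whereas you prove the sharper statement that \emph{every} $g\in C_0(\R)$ is strongly continuous on self-adjoints, by showing the set of such $g$ is a closed subalgebra containing the resolvents $r_z$, $z\in\C\setminus\R$ (with the uniform bound $\|(a_\lambda-z)^{-1}\|\le|\operatorname{Im}z|^{-1}$ doing the work), and then applying Stone--Weierstrass; your handling of conjugation-closedness at the level of the generating set $\{r_z\}$, rather than claiming the adjoint is strongly continuous, is exactly right. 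This costs slightly more machinery than the one-function trick but yields a reusable lemma, and both routes are standard; nothing in your argument has a gap.
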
 

\begin{subthm}[Up-Down Theorem]  Let $A \subset \B(\hh)$ be a C$^*$-algebra acting nondegenerately on a separable Hilbert space $\hh$.  For each self-adjoint $x \in A^{\prime\prime}$, there exists a decreasing sequence of self-adjoints $x_n \geq x_{n+1} \geq \cdots$ in $A^{\prime\prime}$ such that
\begin{enumerate} 
\item $x_n \to x$ in the strong operator topology, and  

\item for each $n \in \N$, there exists an increasing sequence of self-adjoint $y^{(n)}_k \leq y^{(n)}_{k+1}$ in $A$, such that $y^{(n)}_k \to x_n$ (as $k \to \infty$) in the strong operator topology. 
\end{enumerate} 
\end{subthm}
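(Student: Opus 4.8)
The plan is to prove the nontrivial direction, that every self-adjoint $x\in A''$ can be reached by going ``up then down''; the reverse remark, that such down-of-up limits lie in $A''$, is immediate since $A''$ is closed in the strong operator topology (SOT). Write $A^{\uparrow}\subseteq A''_{sa}$ for the set of SOT-limits of bounded increasing sequences drawn from $A_{sa}$. Since $\hh$ is separable, on norm-bounded subsets of $\B(\hh)$ the SOT is metrizable; I fix a compatible metric, say $\rho(S,T)=\sum_m 2^{-m}\|(S-T)\xi_m\|$ for a dense sequence $(\xi_m)$ in the unit ball of $\hh$, and after rescaling assume $0\le x\le 1$. Two elementary facts drive everything. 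First, finite maxima of self-adjoints stay inside $A$, because $\max(a,b)=\tfrac12(a+b+|a-b|)$ and $|c|=\sqrt{c^2}\in C^*(c)\subseteq A$; consequently $A^{\uparrow}$ is closed under finite maxima and under sums. Secondly, a bounded \emph{decreasing} sequence converging in SOT to $x$ automatically lies termwise above $x$. Thus it suffices to construct a decreasing sequence $x_1\ge x_2\ge\cdots$ in $A^{\uparrow}$ with $\rho(x_n,x)\to0$.

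I would build the $x_n$ by induction, starting from $x_0=1\in A^{\uparrow}$ (an increasing approximate unit climbs to $1$ strongly by nondegeneracy, and on a separable space one may take a sequence). The inductive engine is the following sandwiching \emph{Claim}, which is the whole content of the theorem: for $x\le x'$ in $A''_{sa}$, any $\e>0$, and any finite $F\subset\hh$, there is $y\in A^{\uparrow}$ with $x\le y\le x'$ and $\|(y-x)\xi\|<\e$ for all $\xi\in F$. Granting the Claim, I feed it $x'=x_n$, $\e=1/(n{+}1)$, and $F=\{\xi_1,\dots,\xi_n\}$ to get $x_{n+1}:=y$; then $x_{n+1}\le x_n$ keeps the sequence decreasing, $x_{n+1}\in A^{\uparrow}$, and closeness on more and more vectors forces $\rho(x_n,x)\to0$. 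This finishes the proof modulo the Claim.

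The Claim is the main obstacle, and it is where separability and Kaplansky's theorem must be combined with care. Kaplansky supplies SOT-approximants $a\in A_{sa}$ to $x$ on $F$, but these are unordered: a single one neither dominates $x$ nor respects the cap $x'$. The temptation to pass to a supremum of many approximants also fails, since the global supremum $\sup_j a_j$ does dominate $x$ but typically overshoots, landing far above $x$ (and above $x'$). The resolution is to interpolate: one must climb up to a limit wedged strictly between $x$ and $x'$ while staying close to $x$ on $F$. The guiding picture is the commutative case $A=C(\Omega)$, $A''=L^\infty$, $x=\chi_E$: here $x'$ is essentially the indicator of an open set $U\supseteq E$, and one takes $y=\chi_V$ for an open $V$ with $E\subseteq V\subseteq U$ and $V\setminus E$ of small measure; $\chi_V$ is lower semicontinuous, hence an increasing limit of continuous functions, i.e.\ lies in $A^{\uparrow}$, and shrinking $V$ toward $E$ controls the overshoot.

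In general I would realize this by capping Kaplansky approximants with the finite-maximum operation inside $A$ and exploiting the directed-supremum structure of $\{a\in A_{sa}:a\le x'\}$, extracting an increasing sequence via metrizability; the delicate point is to arrange the limit both to dominate $x$ and to remain $\e$-close to $x$ on $F$, and I expect this ordered interpolation to be the genuinely hard step, with the rest being bookkeeping. It is worth flagging that $A^{\uparrow}$ is closed under finite \emph{maxima} but not minima (the square root is not SOT-continuous, so minima of increasing sequences need not converge to the minimum of the limits); this asymmetry is precisely why one must go up first and only then down, and why the Claim caps $y$ from above rather than from below.
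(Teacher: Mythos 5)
Note first that the paper does not prove this statement: it is quoted as one of five classical background theorems (it is Pedersen's Up-Down Theorem, \cite[Theorem 2.4.3]{pedersen}), so the only fair comparison is with the standard proof. Measured against that, your proposal is a reduction, not a proof: every bit of the difficulty is packed into your unproven ``Claim,'' and the Claim with $x'=1$ is already the hard half of Pedersen's theorem (produce $y \in A^{\uparrow}$ with $y \geq x$ and $y$ close to $x$ on $F$). The surrounding bookkeeping --- decreasing limits dominate their limit, the induction with $\e = 1/(n+1)$ and growing $F$, metrizability of the SOT on bounded sets --- is all fine, but it was never the obstacle.

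Worse, the two structural facts you propose to prove the Claim with are broken. First, the set $\{a \in A_{sa} : a \leq x'\}$ is \emph{not} upward directed: $m(a,b) = \tfrac12(a+b+|a-b|)$ is an upper bound for $a$ and $b$ (since $|c| \geq \pm c$), but it is not a least upper bound --- $\B(\hh)_{sa}$ is not a lattice (Kadison) --- so there is no reason $m(a,b) \leq x'$, and your ``directed-supremum structure'' does not exist. Second, your claim that $A^{\uparrow}$ is closed under finite maxima is unjustified: if $a_k \uparrow a$ and $b_k \uparrow b$, the sequence $m(a_k,b_k)$ need not be increasing, because $c \mapsto c_+$ (equivalently $m$) is not operator monotone; SOT-continuity of $|\cdot|$ on bounded sets gives convergence of $m(a_k,b_k)$ to $m(a,b)$ but not monotonicity, so you cannot conclude $m(a,b) \in A^{\uparrow}$. (A smaller flag: the normalization $0 \leq x \leq 1$ costs generality when $A$ is nonunital, since translating by $\beta 1$ moves the approximants out of $A$.) The classical argument avoids order interpolation entirely: one takes a Kaplansky sequence $a_n \to x$ with errors made summable on a dense sequence of vectors and builds the $x_n$ by correcting the $a_n$ with sums of positive parts $(a_j - a_{j+1})_+$, using $\|c_+\xi\| \leq \|c\xi\|$ for self-adjoint $c$, together with careful norm-boundedness control; that construction, or something of comparable substance, is what your proposal still owes.
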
 

\begin{subthm}[Lusin's Theorem]   Let $A \subset \B(\hh)$ be a C$^*$-algebra acting nondegenerately. For every
finite set of vectors $\fin \subset \hh$, $\e > 0$, projection $p_0
\in A^{\prime\prime}$ and self-adjoint $y \in A^{\prime\prime}$, there exist a self-adjoint $x \in
A$ and a projection $p \in A^{\prime\prime}$ such that $p \leq p_0$, $\|p(h) -
p_0(h)\| < \e$ for all $h \in \fin$, $\|x\| \leq \min\{2\|yp_0\|,\|y\|\} + \e$ and
$xp = yp$.
\end{subthm}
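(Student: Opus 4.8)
The plan is to prove this operator-algebraic Lusin theorem as the exact analogue of the classical one: $A^{\prime\prime}$ plays the role of measurable functions, $A$ the role of continuous functions, and ``agreement off a set of small measure'' becomes ``agreement after cutting by a projection $p \le p_0$ that moves the test vectors in $\fin$ by less than $\e$.'' To manufacture the \emph{exact} identity $xp = yp$ (rather than a mere approximation) I would build $x$ and $p$ by an infinite telescoping: produce self-adjoint $a_0,a_1,a_2,\dots \in A$ with $\sum_n\|a_n\| \le \min\{2\|yp_0\|,\|y\|\}+\e$, together with a decreasing chain of projections $p_0 \ge p_1 \ge p_2 \ge \cdots$ in $A^{\prime\prime}$, arranged so that the residuals $r_n := y - \sum_{i<n}a_i$ satisfy $\|r_n p_n\| \to 0$. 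Setting $x := \sum_n a_n$ (norm-convergent in $A$, hence self-adjoint) and $p := \text{s-}\lim_n p_n$ (a projection $\le p_0$), and using $p_n p = p$, one gets $\|(y-x)p\| = \lim_n \|r_n p\| \le \lim_n\|r_n p_n\| = 0$, i.e.\ $xp = yp$.

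First I would record a harmless reduction: since $p \le p_0$, for $h \in \fin$ one has $(p_0-p)h = (p_0-p)p_0 h$, so it suffices to control the motion of the finite set $\{p_0 h : h \in \fin\} \subset p_0\hh$, and since $p_0 - p = \sum_n(p_n - p_{n+1})$ strongly it is enough to make $\|(p_n - p_{n+1})v\| < \e/2^{\,n+1}$ for these $v$. The engine is then a \emph{one-step lemma}: given self-adjoint $w \in A^{\prime\prime}$, a projection $q \le p_0$, a finite set $S \subset \hh$, and $\delta,\eta>0$, produce self-adjoint $a \in A$ and a projection $q'\le q$ with $\|(q-q')v\| < \eta$ on $S$, with $\|a\|$ controlled, and with $\|(w-a)q'\| \le \delta$. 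I would get $a$ from \textbf{Kaplansky's Density Theorem}: a self-adjoint $b \in A$, $\|b\|\le\|w\|$, with $\|(w-b)\xi\|$ as small as desired on $S \cup qS$. The decisive move is to upgrade this \emph{pointwise} (strong) smallness to \emph{operator-norm} smallness using the Borel functional calculus in $A^{\prime\prime}$ (legitimate by the \textbf{Bicommutant Theorem}): the spectral projection $e = \chi_{\{|t|\le\delta\}}(w-b) \in A^{\prime\prime}$ satisfies $\|(w-b)e\| \le \delta$ while $\|(1-e)\xi\| \le \|(w-b)\xi\|/\delta$ is tiny on $S$. Truncating $b$ by a continuous function at height $\approx\|wq\|$ keeps $a \in A$ with $\|a\|\le 2\|wq\|$ while barely disturbing agreement on the range of $e$; this crude truncation is what forces the factor $2$ in $2\|yp_0\|$, whereas taking $a=b$ gives the alternative bound $\|y\|$ and explains the minimum. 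Iterating with summable $\delta_n,\eta_n$ and $w = r_n$, $q = p_n$ yields the chain, with $\|a_0\|\lesssim\|yp_0\|$ and $\sum_{n\ge1}\|a_n\| < \e$.

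The hard part is reconciling the two demands on $q'$: it must sit \emph{below} $q$ (so the final $p$ stays under $p_0$) and yet its range must lie essentially \emph{inside} the spectral subspace of $e$ (so that $\|(w-a)q'\|$ is small in the \emph{uncompressed} operator norm, which is what the residual estimate $\|r_n p_n\|\to 0$ genuinely requires). These pull in opposite directions: the range projection of $qe$ is $\le q$ but need not lie under $e$, that of $eq$ lies under $e$ but need not lie under $q$, and one cannot simply intersect, because $q\wedge e$ can be far from $q$ even when $e$ nearly fixes every vector of $S$ — already in $\R^2$ two lines at a small angle both nearly contain a given vector while meeting only at $0$. Cutting inside the corner $qA^{\prime\prime}q$, via $e = \chi_{\{|t|\le\delta\}}(q(w-b)q)$, cleanly delivers $q'\le q$ with the required small drift on $S$, but controls only the compressed quantity $\|q(w-b)q'\|$; the off-diagonal piece $\|(1-q)(w-b)q'\|$ is left uncontrolled. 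Closing that gap — passing from a compression to the full right-cut — is the crux.

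This is a genuinely infinite-dimensional difficulty: in finite dimensions strong smallness at the vectors of $S$ \emph{is} operator-norm smallness on a rank-preserving subprojection, and the argument closes at once. To beat it in general I would replace the bare Kaplansky approximation by the \textbf{monotone} approximations of the \textbf{Up-Down Theorem}, which produce increasing sequences from $A$ with positive, \emph{decreasing} defects tending strongly to $0$; for such defects the spectral ``bad'' projections decrease to $0$ in a controlled way, and positivity/monotonicity are what let one carve out a single $q'\le q$ on which the defect is \emph{uniformly} small while still nearly fixing $S$. Here one must be honest that the Up-Down Theorem as stated needs $\hh$ separable, whereas the present statement does not; the reduction to a separable reducing subspace containing $\fin$ and the relevant approximants is the one further point requiring care. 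Assembling the telescoping with this refined one-step, and tracking the bookkeeping to land the sharp constant $\min\{2\|yp_0\|,\|y\|\}$, completes the proof.
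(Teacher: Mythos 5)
Your global architecture is the right one, and it is essentially the standard proof (Pedersen's book \cite{pedersen}, Theorem 2.7.5 --- the paper itself states this theorem as classical background and gives no proof): self-adjoint $a_n \in A$ with summable norms, a decreasing chain $p_0 \geq p_1 \geq p_2 \geq \cdots$ of projections in $A''$ with $\|(y - \sum_{i \leq n} a_i)p_n\| \to 0$, then $x = \sum_n a_n$ and $p$ the strong limit of the $p_n$, with Kaplansky density as the engine of the one-step lemma. But there is a genuine gap, precisely at the step you yourself flag as the crux: upgrading the strong (pointwise) smallness of $w - b$ supplied by Kaplansky to operator-norm smallness of $(w-b)q'$ for a projection $q' \leq q$ that barely moves $S$. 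Your proposed resolution --- replacing Kaplansky by the monotone approximants of the Up-Down Theorem, with the assertion that ``positivity/monotonicity are what let one carve out a single $q' \leq q$'' --- is not an argument: you never say how the decreasing defects yield the projection, and the very compression-versus-off-diagonal tension you describe reappears verbatim for those defects. The separability discussion is care spent on a detour that should not be taken.

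What you missed is that the difficulty dissolves under squaring: for self-adjoint $c$ and any projection $q'$ one has $\|cq'\|^2 = \|q'c^2q'\|$, so the full right-cut norm \emph{is} a compressed quantity --- of $c^2$, not of $c$. So run your own corner construction on the positive element $q(w-b)^2q$ instead of on $q(w-b)q$: set $q' = \chi_{[0,\delta^2]}\bigl(q(w-b)^2q\bigr)$, computed in the corner $qA''q$, so that $q' \leq q$ automatically. Then $q'(w-b)^2q' \leq \delta^2 q'$ gives $\|(w-b)q'\| \leq \delta$ outright, with no off-diagonal piece left over; and for $v \in S$, since $q - q' = \chi_{(\delta^2,\infty)}(T) \leq \delta^{-2}T$ for $T = q(w-b)^2q$, the drift $\|(q-q')v\|$ is controlled by $\|Tv\| \leq \|w-b\|\,\|(w-b)qv\|$, i.e.\ exactly by the strong Kaplansky estimate (take $b$ strongly close to $w$ on $qS$). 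With this one-step lemma your telescoping closes exactly as you set it up: Kaplansky alone suffices, no Up-Down Theorem and no separability hypothesis is needed, and your accounting of the constant (continuous truncation at height $\approx \|yp_0\|$ forcing the factor $2$, versus $a = b$ giving the bound $\|y\|$, whence the minimum plus $\e$) is then correct as sketched.
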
 

\begin{subthm}[Double Dual Theorem] The (Banach space) double dual $A^{**}$ of a C$^*$-algebra $A$ is a von Neumann algebra. Moreover, the ultraweak topology on $A^{**}$ (coming from its von Neumann algebra structure) agrees with the weak-$*$ topology (coming from $A^*$), and hence restricts to the weak topology on $A$ (coming from $A^*$).
\end{subthm}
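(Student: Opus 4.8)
The plan is to realize $A^{**}$ concretely as the enveloping von Neumann algebra of $A$ via the universal representation, and then to check that the Banach-space structures and the weak topologies match up. First I would form the universal representation $\pi_u = \bigoplus_{\phi \in S(A)} \pi_\phi$ on $\hh_u = \bigoplus_{\phi \in S(A)} \hh_\phi$, where $(\pi_\phi, \hh_\phi, \xi_\phi)$ is the GNS triple of the state $\phi$, and set $M = \pi_u(A)''$, a von Neumann algebra acting nondegenerately on $\hh_u$. Since $\pi_u$ is faithful I identify $A$ with $\pi_u(A) \subset M$. The entire theorem then reduces to producing an isometric isomorphism $A^* \cong M_*$ of Banach spaces that is compatible with the obvious duality pairings; everything else is bookkeeping.

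The heart of the argument is an extension lemma: every $\phi \in A^*$ extends uniquely to a normal (ultraweakly continuous) functional $\tilde\phi \in M_*$, with $\|\tilde\phi\| = \|\phi\|$. For a state $\phi$ this is transparent: in the universal representation the cyclic vector $\xi_\phi$ implements $\phi$, so $\phi(a) = \langle \pi_u(a)\xi_\phi, \xi_\phi\rangle$, and the vector functional $\omega_{\xi_\phi}$ on $M$ restricts to $\phi$; vector functionals are normal, and $\|\tilde\phi\| = \|\phi\|$ since both equal $\|\xi_\phi\|^2$. A general $\phi$ is a linear combination of (at most four) states via the Jordan-type decomposition of $A^*$, so it too extends to a normal functional; the equality $\|\tilde\phi\| = \|\phi\|$ then follows from the matching decompositions $\|\phi\| = \|\phi_+\| + \|\phi_-\|$ on self-adjoint parts together with the fact that restriction never increases norm. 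Uniqueness is immediate from the Bicommutant Theorem: $\pi_u(A)$ is ultraweakly dense in $M$, so a normal functional is determined by its values on $A$. Conversely every $\psi \in M_*$ restricts to $\psi|_A \in A^*$, and uniqueness forces $\widetilde{\psi|_A} = \psi$; hence $\phi \mapsto \tilde\phi$ is a surjective isometry $A^* \to M_*$.

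With $A^* \cong M_*$ in hand I would dualize: taking Banach-space adjoints yields an isometric isomorphism $A^{**} \cong (M_*)^* = M$, which transports the von Neumann algebra structure of $M$ onto $A^{**}$ and proves the first assertion. For the topological statements, the point is that these identifications intertwine the duality pairings: under $A^{**} \cong M$ and $A^* \cong M_*$ the pairing $\langle x, \phi\rangle$ on $A^{**} \times A^*$ becomes $\langle x, \tilde\phi\rangle$ on $M \times M_*$. Consequently the weak-$*$ topology $\sigma(A^{**}, A^*)$ is carried exactly onto the ultraweak topology $\sigma(M, M_*)$, which is the ``moreover'' clause. Finally, the canonical embedding $\kappa \colon A \to A^{**}$ satisfies $\langle \kappa(a), \phi\rangle = \phi(a)$, so it corresponds to $\pi_u \colon A \hookrightarrow M$; the weak topology $\sigma(A, A^*)$ on $A$ is by definition the initial topology induced by $A^*$, and hence is precisely the restriction of $\sigma(A^{**}, A^*)$ along $\kappa$, giving the last clause.

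The step I expect to be the main obstacle is the extension lemma, specifically the norm equality $\|\tilde\phi\| = \|\phi\|$. Extendability and uniqueness are soft (vector states plus ultraweak density), but verifying that the normal extension is genuinely isometric for non-positive functionals requires the Jordan decomposition of hermitian functionals together with some care about approximate units when $A$ is non-unital: a positive $\phi$ satisfies $\|\phi\| = \lim_\lambda \phi(e_\lambda)$, which must be reconciled with $\tilde\phi(1_M)$. Once this isometry is nailed down, the remaining passages---dualizing and transporting the topologies through the pairing---are purely formal.
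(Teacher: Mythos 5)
The paper states this result as background---it is one of the ``five classical theorems''---and supplies no proof, deferring to \cite{Me-n-Taka}; so there is no in-paper argument to compare against, and your proposal must stand on its own. Your route is the standard Sherman--Takeda proof (universal representation $\pi_u$, identify $A^*\cong M_*$ for $M=\pi_u(A)''$, dualize), and most of it is sound. But the step you yourself flag as the crux---the norm equality $\|\tilde\phi\|=\|\phi\|$---is genuinely gapped as you argue it, namely for non-hermitian $\phi$. Jordan decomposition does settle the hermitian case: from $\phi=\phi_+-\phi_-$ with $\|\phi\|=\|\phi_+\|+\|\phi_-\|$ you get $\|\tilde\phi\|\le\|\tilde\phi_+\|+\|\tilde\phi_-\|=\|\phi\|$, and contractivity of restriction gives the reverse. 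For a general $\phi=\phi_1+i\phi_2$, however, the same bookkeeping only yields $\|\tilde\phi\|\le\|\phi_1\|+\|\phi_2\|$, which can strictly exceed $\|\phi\|$: already for $A=\C$ and $\phi=1+i$ one has $\|\phi_1\|+\|\phi_2\|=2>\sqrt{2}=\|\phi\|$. There is no isometric relation between $\|\phi\|$ and the norms of its hermitian parts, so ``matching decompositions plus contractivity of restriction'' does not close the estimate. And this estimate is not optional: the theorem asserts that the Banach double-dual norm on $A^{**}$ coincides with a C$^*$-norm, which is exactly the isometry of $A^*\cong M_*$.

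The standard repair runs the isometry in the other direction, via Kaplansky density---amusingly, another of the paper's five classical theorems. For $\psi\in M_*$ and $x$ in the unit ball of $M$, Kaplansky provides a net $(a_\lambda)$ in the unit ball of $\pi_u(A)$ converging strongly, hence (being bounded) ultraweakly, to $x$; ultraweak continuity of $\psi$ gives $|\psi(x)|=\lim_\lambda|\psi(a_\lambda)|\le\|\psi|_A\|$, so the restriction map $M_*\to A^*$ is isometric, not merely contractive, on all of $M_*$ at once---no case split by hermitian parts. (Alternatively, the polar decomposition $\psi=|\psi|(u\,\cdot)$ of normal functionals reduces the general case to the positive one.) Your extension map is precisely the inverse of this restriction, so with this one insertion the rest of your argument---surjectivity via vector states, uniqueness from ultraweak density of $\pi_u(A)$, dualizing to get $A^{**}\cong M$, and transporting $\sigma(M,M_*)$ to $\sigma(A^{**},A^*)$ along the intertwined pairings---goes through as written, including your handling of the non-unital case, where $\tilde\phi(1_M)=\|\xi_\phi\|^2=\lim_\lambda\phi(e_\lambda)=\|\phi\|$ reconciles the approximate-unit issue exactly as you anticipated.
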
 

From a C$^*$-algebraist's point of view, the double dual theorem is probably the most important as it allows one to come back from the world of von Neumann algebras.  That is, suppose one wants to prove a C$^*$-theorem, exploiting the enormous W$^*$-toolkit. Well, in any representation of the given C$^*$-algebra one could take the weak closure and bang away.  But how to come back to the C$^*$-algebra of interest?   

Answer: The Hahn-Banach Theorem.  

That is, rather than work in any old weak closure, one should work in the double dual von Neumann algebra, where the Hahn-Banach theorem implies that convex sets have the same norm and weak closures.  

That probably makes little sense, so let's look at an illustrative example. Let's show that if $A^{**}$ is semidiscrete, then $A$ is nuclear.

\subsection{Semidiscreteness and Nuclearity}  
Recall that a linear map $\p\colon A \to B$ is \emph{completely positive} (c.p.) if the matrix-level map $\p_n\colon \M_n(A) \to \M_n(B)$, defined by
\[
\p_n([a_{i,j}]) = [\p(a_{i,j})],
\]
is positive (i.e., maps positive matrices to positive matrices) for every $n \in \N$.   Special attention will be paid to the cases where $\p$ is unital (u.c.p.) or, more generally, contractive (c.c.p.). 



\begin{subdefn}  
\label{defn:nuclear}
A map $\theta\colon A \to B$ is \emph{nuclear} if there exist c.c.p.\ maps $\p_n \colon A \to \M_{k(n)} (\C)$ and $\psi_n\colon \M_{k(n)} (\C) \to B$ such that $\psi_n \circ \p_n(a) \to \theta(a)$ in norm, for all $a \in A$.\footnote{One could also use general finite-dimensional algebras in this definition -- see \cite[Exercise 2.1.2]{Me-n-Taka}.}


A C$^*$-algebra $A$ is \emph{nuclear} if the identity map $\mathrm{id}\colon A \to A$ is nuclear, i.e., if there exist c.c.p.\ maps $\p_n \colon A \to \M_{k(n)} (\C)$ and $\psi_n\colon \M_{k(n)} (\C) \to A$ such that $\psi_n \circ \p_n(a) \to a$ in norm, for all $a \in A$. 

A W$^*$-algebra $M$ is \emph{semidiscrete} if there exist c.c.p.\ maps $\p_n \colon M \to \M_{k(n)} (\C)$ and $\psi_n\colon \M_{k(n)} (\C) \to M$ such that $\psi_n \circ \p_n(x) \to x$ ultraweakly, for all $x \in M$. 
\end{subdefn} 

To show ``$A^{**}$ semidiscrete $\Longrightarrow A$ nuclear," we need an important matrix-fact. 

\begin{subprop}\label{prop:cpmapsfrommatrix}
Let $A$ be a $\mathrm{C}^*$-algebra and $\{e_{i,j}\}$ be 
matrix units of $\M_n(\C)$. A map $\p\colon \M_n(\C)\to A$ is
c.p.\ if and only if $[\p(e_{i,j})]_{i,j}$ is positive in $\M_n(A)$. 
\end{subprop}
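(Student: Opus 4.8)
The plan is to prove both implications, treating the ``only if'' direction as a quick application of the definition and reserving the real work for the converse. For the forward direction, suppose $\p$ is c.p., so that the amplification $\p_n\colon \M_n(\M_n(\C)) \to \M_n(A)$ is positive. The trick is to observe that the matrix $[e_{i,j}]_{i,j}$, regarded as an element of $\M_n(\M_n(\C)) \cong \B(\C^n \otimes \C^n)$, is positive: identifying the $(i,j)$ block $e_{i,j}$ with $|i\rangle\langle j|$, one checks that $[e_{i,j}]_{i,j} = |\xi\rangle\langle\xi|$ for the vector $\xi = \sum_k e_k \otimes e_k$, hence is (a scalar multiple of) a rank-one projection. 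Applying $\p_n$ to this positive element gives $\p_n([e_{i,j}]) = [\p(e_{i,j})] \ge 0$ in $\M_n(A)$, as desired.

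For the converse, assume $b := [\p(e_{i,j})]_{i,j} \ge 0$ and represent $A \subseteq \B(\hh)$ faithfully, so that $\M_m(A) \subseteq \B(\C^m \otimes \hh)$ for every $m$. The key structural input is to factor $b = c^*c$ with $c = b^{1/2} = [c_{k,l}] \in \M_n(A)$, so that the entries satisfy $\p(e_{i,j}) = \sum_k c_{k,i}^* c_{k,j}$. Since $\p$ is linear and $x = \sum_{i,j} x_{i,j} e_{i,j}$ for $x = [x_{i,j}] \in \M_n(\C)$, this completely determines $\p$ from the $c_{k,l}$, and the goal becomes to exhibit $\p$ as a compression of a $*$-homomorphism. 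Concretely, I would build an operator $W\colon \hh \to \C^n \otimes \C^n \otimes \hh$ out of the $c_{k,l}$ and let $\pi(x) = 1_n \otimes x \otimes 1_\hh$ be the (amplified) representation of $\M_n(\C)$; a direct index computation should then verify the identity $\p(x) = W^*\pi(x)W$.

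Once this Stinespring-type formula is in hand, complete positivity is automatic: $\pi$ is a $*$-homomorphism and hence c.p., and compression $T \mapsto W^* T W$ preserves complete positivity, since for a positive $[x_{p,q}] \in \M_m(\M_n(\C))$ one has $[\p(x_{p,q})] = \diag(W,\dots,W)^* \, \pi_m([x_{p,q}]) \, \diag(W,\dots,W) \ge 0$. Alternatively, one can avoid naming $\pi$ and argue directly: decompose an arbitrary positive $X \in \M_m(\M_n(\C))$ as a sum of vector states $|\xi\rangle\langle\xi|$ and check, via the scalar $m \times n$ coefficient matrix $T$ of $\xi$, that $[\p(X_{p,q})]_{p,q} = (T \otimes 1_\hh)\, b\, (T \otimes 1_\hh)^* \ge 0$.

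The main obstacle is twofold. First, one must recognize the conceptual point that positivity of the single $n \times n$ matrix $b$ already controls \emph{all} the amplifications $\p_m$; this is what makes the statement a genuine simplification, since only finitely much data, at the single level $n$, needs checking. Second, the technical heart is the bookkeeping behind the identity $\p(x) = W^* \pi(x) W$ (or its direct-computation variant), which rests on the factorization $b = c^*c$ and on carefully tracking the matrix-unit indices; everything else — linearity, and the fact that positivity relative to $\B(\C^m \otimes \hh)$ coincides with positivity in the C$^*$-subalgebra $\M_m(A)$ — is routine.
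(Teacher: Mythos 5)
Your proposal is correct, but note that the paper itself states this proposition without proof (it is Choi's classical characterization, imported from \cite{Me-n-Taka}), so there is no in-text argument to compare against; what you have written is the standard Choi-style proof, essentially the one in the cited reference. The forward direction is exactly right: $[e_{i,j}]_{i,j}=\xi\xi^*$ in $\M_n(\M_n(\C))$ for $\xi=\sum_k e_k\otimes e_k$, so complete positivity applied at level $n$ gives $[\p(e_{i,j})]\geq 0$. In the converse, both of your variants check out. The Stinespring-type one works with $Wh=\sum_{k,l}e_k\otimes e_l\otimes c_{k,l}h$, where $c=b^{1/2}=[c_{k,l}]\in\M_n(A)$, and the index computation indeed yields $\p(x)=W^*(1_n\otimes x\otimes 1_{\hh})W$, from which complete positivity follows by compression. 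Your second, fully explicit variant is complete as stated and is arguably cleaner: decomposing a positive $X\in\M_m(\M_n(\C))$ as a finite sum of rank-one positives $\xi\xi^*$ with scalar coefficient matrix $T\in\M_{m\times n}(\C)$ gives $[\p(X_{p,q})]_{p,q}=(T\otimes 1)\,b\,(T\otimes 1)^*$, which is positive because $z b z^*=(b^{1/2}z^*)^*(b^{1/2}z^*)\geq 0$; this version does not even require choosing a faithful representation of $A$, since everything happens inside the C$^*$-algebras $\M_m(A)$ (or matrices over the unitization). The only cosmetic sloppiness is the phrase ``should then verify'' for the $W$-identity, but since your alternative argument is carried out in full, there is no gap.
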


\begin{subprop} 
\label{prop:seminuc}
If $A^{**}$ is semidiscrete, then $A$ is nuclear. 
\end{subprop}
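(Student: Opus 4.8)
The plan is to start from the semidiscrete approximation of $A^{**}$ and manufacture from it a nuclear approximation of $A$, the whole difficulty being how to ``come back'' from $A^{**}$ to $A$. By hypothesis there are c.c.p.\ maps $\p_n\colon A^{**}\to\M_{k(n)}(\C)$ and $\psi_n\colon\M_{k(n)}(\C)\to A^{**}$ with $\psi_n\circ\p_n(x)\to x$ ultraweakly for every $x$. Restricting $\p_n$ to $A$ (via $A\hookrightarrow A^{**}$) already gives honest c.c.p.\ maps $A\to\M_{k(n)}(\C)$, so the only defect is that $\psi_n$ lands in $A^{**}$ rather than $A$, and that the convergence is merely ultraweak. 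By the Double Dual Theorem the ultraweak topology on $A^{**}$ is the weak-$*$ topology $\sigma(A^{**},A^*)$, which restricts to the weak topology on $A$; I will exploit this repeatedly.

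First I would replace each $\psi_n$ by c.c.p.\ maps into $A$. Here is where Proposition \ref{prop:cpmapsfrommatrix} enters: $\psi_n$ is encoded by the single positive matrix $X_n=[\psi_n(e_{i,j})]\in\M_{k(n)}(A^{**})_+$. Identifying $\M_{k(n)}(A)^{**}=\M_{k(n)}(A^{**})$ (finite matrices commute with the bidual, and the identification is a weak-$*$ homeomorphism), I would apply Kaplansky's Density Theorem in its positive form to produce a net of positive matrices $Y_n^{(\lambda)}=[y_{i,j}^{(\lambda)}]\in\M_{k(n)}(A)_+$, of bounded norm, with $Y_n^{(\lambda)}\to X_n$ weak-$*$. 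By Proposition \ref{prop:cpmapsfrommatrix} each $Y_n^{(\lambda)}$ defines a c.p.\ map $\psi_n^{(\lambda)}\colon\M_{k(n)}(\C)\to A$, $\psi_n^{(\lambda)}(e_{i,j})=y_{i,j}^{(\lambda)}$, and a harmless rescaling makes these c.c.p. Since $\psi_n^{(\lambda)}(T)=\sum_{i,j}T_{i,j}\,y_{i,j}^{(\lambda)}$ depends weak-$*$ continuously on the entries, $\psi_n^{(\lambda)}(T)\to\psi_n(T)$ weakly for every $T$; hence the genuine maps $\psi_n^{(\lambda)}\circ\p_n\colon A\to A$, which factor through $\M_{k(n)}(\C)$, converge weakly to $\psi_n\circ\p_n(a)$ for each $a$. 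Combined with the semidiscreteness hypothesis, I obtain c.c.p.\ maps $A\to A$ factoring through matrices whose values at any prescribed finite $\fin\subset A$ are weakly close to the given elements.

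The last step turns weak approximation into the required norm approximation, and this is precisely the point advertised after the Double Dual Theorem. Let $\sg$ denote the set of all c.c.p.\ maps $A\to A$ that factor through a finite-dimensional matrix algebra. This set is convex: a convex combination $t\,\beta_1\alpha_1+(1-t)\,\beta_2\alpha_2$ factors through $\M_{k_1}(\C)\oplus\M_{k_2}(\C)\subset\M_{k_1+k_2}(\C)$ via $\alpha_1\oplus\alpha_2$ followed by the evident c.c.p.\ map. Fixing $\fin=\{a_1,\dots,a_m\}$, I consider the evaluation $E\colon\sg\to A^{m}$, $\Theta\mapsto(\Theta(a_1),\dots,\Theta(a_m))$, whose image is a convex subset of $A^m$. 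The previous step shows $(a_1,\dots,a_m)$ lies in the weak closure of $E(\sg)$; since $E(\sg)$ is convex, Hahn--Banach gives that its weak and norm closures coincide, so there is $\Theta=\beta\circ\alpha\in\sg$ with $\|\Theta(a_i)-a_i\|<\e$ for all $i$. As $\fin$ and $\e$ were arbitrary, $\id_A$ is nuclear, i.e.\ $A$ is nuclear.

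The main obstacle is the passage from $A^{**}$ back to $A$ carried out in the second paragraph; once the factorable c.c.p.\ maps live on $A$ and converge weakly, the convexity/Hahn--Banach argument of the third paragraph is the clean ``return ticket.'' The two technical points I would check carefully are the identification $\M_{k(n)}(A)^{**}\cong\M_{k(n)}(A^{**})$ together with the matching of the weak-$*$ topologies, and the bookkeeping that keeps the perturbed maps $\psi_n^{(\lambda)}$ contractive (not merely bounded) --- both routine but essential.
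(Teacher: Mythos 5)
Your proposal is correct and follows essentially the same route as the paper's own (sketched) proof: perturb the $\psi_n$ into $A$ via the matrix-positivity encoding of Proposition \ref{prop:cpmapsfrommatrix} together with the weak-$*$ density of $\M_{k(n)}(A)$ in $\M_{k(n)}(A^{**})$, then use the Double Dual Theorem plus Hahn--Banach to turn weak approximation on finite sets into norm approximation. Your tuple formulation in $A^m$ simply merges the paper's ``standard direct-sum trick'' with its Hahn--Banach step, and the contractivity bookkeeping you rightly flag is precisely among the ``nontrivial details'' the paper defers to \cite[Proposition 2.3.8]{Me-n-Taka}.
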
 

\begin{proof} I'll sketch the argument, highlighting the use of the double dual theorem and neglecting some nontrivial details (see \cite[Proposition 2.3.8]{Me-n-Taka}).  

Let $\p_n \colon A^{**} \to \M_{k(n)} (\C)$, $\psi_n\colon \M_{k(n)} (\C) \to A^{**}$ be such that $\psi_n \circ \p_n(x) \to x$ ultraweakly, for all $x \in A^{**}$.  An approximation argument, using Proposition \ref{prop:cpmapsfrommatrix} and the fact that $\M_k(A)$ is ultraweakly dense in $\M_k(A^{**})$ for all $k \in \N$, allows us to assume that $\psi_n(\M_{k(n)} (\C)) \subset A$ for all $n$.  

Here's the punchline: For each $a \in A$, since the ultraweak topology on $A^{**}$ restricts to the weak topology on $A$, the Hahn-Banach theorem implies that $a$ belongs to the \emph{norm}-closed convex hull of $\{ \psi_n(\p_n(a)) \}$!!  That is, we can find positive numbers $\theta_1, \ldots, \theta_k$ that sum to one and natural numbers $n_1,\ldots, n_k$ such that 
\[
\xymatrix{ A \ar@{-->}[dr]_{\oplus_{i=1}^k \p_{n_i}} \ar[rr]^{\id_A} & & \, A \\
& \oplus_{i=1}^k \M_{k(n_i)}(\C) \ar@{-->}[ur]_{\sum_{i=1}^k \theta_i \psi_{n_i}} &}
\]
almost commutes on $a$. 

Finally, a standard direct-sum trick allows us to replace individual operators $a \in A$ with finite sets, thereby completing the proof. 
\end{proof} 

The converse of the previous proposition holds too, but it's much harder.  C$^*$-tensor products are a key ingredient in the proof, so the next section is devoted to recalling the necessary definitions and facts.

\section{Tensor Products and The Trick} 

To a von Neumann algebraist there is only one tensor product.  This is a problem.  Indeed, a wonderful feature of the C$^*$-theory is its complexity. This exposes new ideas and sometimes, in the right hands, provides insight that would otherwise remain out of sight. 

\subsection{The spatial and maximal C$^*$-norms}
\label{sec:minmaxnorms}

When $A$ and $B$ are C$^*$-algebras, it can happen that 
numerous different norms make $A\odot B$ (the algebraic tensor product) 
into a pre-C$^*$-algebra. In other words, $A\odot B$ may carry
more than one C$^*$-norm.

\begin{subdefn} A {\em $\mathrm{C}^*$-norm} $\| \cdot \|_{\alpha}$ on $A \odot
B$ is a norm such that $\| xy \|_{\alpha} \leq \| x \|_{\alpha}\|
y \|_{\alpha}$, $\| x^* \|_{\alpha} = \| x \|_{\alpha}$ and $\|
x^*x \|_{\alpha} = \| x \|_{\alpha}^2$ for all $x, y \in A\odot
B$.  We will let $A\otimes_{\alpha} B$ denote the completion of
$A\odot B$ with respect to $\| \cdot \|_{\alpha}$.
\end{subdefn}

It's a fact that C$^*$-norms on algebraic tensor products always exist.  Here are the two
most natural candidates.

\begin{subdefn}(Maximal norm) Given $A$ and $B$, we define the {\em
maximal $\mathrm{C}^*$-norm} on $A\odot B$ to be 
\[
\| x \|_{\max} = \sup \{ \|\pi(x)\| : \pi\colon A\odot B \to \B(\hh)\
\mbox{a $*$-homomorphism}\}
\]
for $x \in A\odot B$. We let $A\otimes_{\max}B$
denote the completion of $A\odot B$ with respect to $\| \cdot
\|_{\max}$.
\end{subdefn}

\begin{subdefn}(Spatial norm) Let $\pi\colon A \to \B(\hh)$ and
$\sigma\colon B \to \B(\hk)$ be {\em faithful} representations.
Then the {\em spatial} (or {\em minimal}) $\mathrm{C}^*$-norm on $A\odot B$
is $$\| \sum a_i \otimes b_i \|_{\min} = \| \sum \pi(a_i)\otimes
\sigma(b_i)\|_{\B(\hh\otimes \hk)}.$$ The completion of $A \odot B$
with respect to $\| \cdot \|_{\min}$ is denoted $A\otimes
B$.\footnote{You will also see $A\otimes_{\min} B$ in the
literature.}
\end{subdefn}

\begin{subrem} 
\label{prop:independent}
It is an important fact that $A \otimes B$ does not depend on the representations $\pi$ and $\sigma$ (see \cite[Proposition 3.3.11]{Me-n-Taka}). 
\end{subrem}

\begin{subrem}[Von Neumann algebra tensor products]
\label{rem:vNtensorproduct}  If $M \subset \B(\hh)$ and $N \subset
\B(\hk)$ are von Neumann algebras, then the {\em von Neumann
algebraic} tensor product $M \vt N$ of $M$ and $N$ is the weak closure of 
$M\otimes N \subset \B(\hh\otimes \hk)$.
\end{subrem}

The following universal property of $\mtp$ is a simple consequence of the definition. 

\begin{subprop}[Universality]
\label{prop:maxuniversality} If $\pi\colon A \odot B \to C$ is a
$*$-homomorphism, then there exists a unique $*$-homomorphism
$A\otimes_{\max}B \to C$ which extends $\pi$.  In particular, a 
pair of $*$-homomorphisms with commuting ranges $\pi_A\colon A \to
C$ and $\pi_B\colon B\to C$ induces a $*$-homomorphism
$$\pi_A \times \pi_B\colon A\otimes_{\max}B \to C.$$
\end{subprop}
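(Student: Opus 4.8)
The plan is to exploit the fact that the maximal norm is \emph{defined} as a supremum over all $*$-representations, which builds the universal property directly into its construction. First I would reduce to the situation where $C \subseteq \B(\hh)$ by fixing a faithful representation of $C$ (Gelfand--Naimark), so that $\pi$ becomes a $*$-homomorphism $A \odot B \to \B(\hh)$. Then, straight from the definition of $\|\cdot\|_{\max}$, for every $x \in A \odot B$ one has $\|\pi(x)\| \leq \|x\|_{\max}$, since $\|x\|_{\max}$ is by definition the supremum of $\|\rho(x)\|$ over all $*$-homomorphisms $\rho \colon A\odot B \to \B(\hh)$, and our $\pi$ is one of these. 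Note that I cannot simply invoke ``$*$-homomorphisms are contractive'' here, because $A \odot B$ is not complete; it is precisely the role of the maximal norm to supply this estimate.

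Having shown that $\pi$ is $\|\cdot\|_{\max}$-contractive, I would extend it by density. Since $A \odot B$ is dense in $A \mtp B$ by construction, the contraction $\pi$ extends uniquely to a bounded linear map $\bar{\pi}\colon A \mtp B \to \B(\hh)$. Because multiplication and involution are norm-continuous and $\pi$ respects them on the dense subalgebra $A \odot B$, the extension $\bar{\pi}$ is again a $*$-homomorphism. Its range lies in the norm-closure of $\pi(A \odot B) \subseteq C$, so $\bar{\pi}$ actually maps into $C$. Uniqueness is immediate, since any two extensions must agree on the dense set $A \odot B$.

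For the ``in particular'' clause, given $\pi_A$ and $\pi_B$ with commuting ranges, I would define $\pi\colon A \odot B \to C$ on elementary tensors by $\pi(a \otimes b) = \pi_A(a)\pi_B(b)$ and extend linearly, then apply the first part to obtain $\pi_A \times \pi_B$. The only points requiring genuine care—and the closest thing to an obstacle in an otherwise definitional argument—are verifying that this formula really does give a well-defined $*$-homomorphism on the algebraic tensor product. Well-definedness of the linear extension comes from the universal property of $\odot$ for bilinear maps, while both multiplicativity and the $*$-identity rest squarely on the commuting-range hypothesis: multiplicativity uses $\pi_A(a')\pi_B(b) = \pi_B(b)\pi_A(a')$, and the $*$-property uses that adjoints of commuting operators commute. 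Everything else is forced by the design of the maximal norm.
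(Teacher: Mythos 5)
Your proof is correct and is precisely the argument the paper has in mind: the paper offers no written proof, remarking only that the universality is ``a simple consequence of the definition,'' and your argument---faithfully represent $C$, observe $\|\pi(x)\|\leq\|x\|_{\max}$ because $\pi$ is one of the representations in the defining supremum, extend by density, and build $\pi_A\times\pi_B$ on elementary tensors using the commuting-range hypothesis for multiplicativity and the $*$-identity---is exactly that definitional argument spelled out. Your observation that one cannot shortcut via ``$*$-homomorphisms are contractive'' on the incomplete algebra $A\odot B$ is a correctly identified subtlety, and the rest of the details (well-definedness via bilinearity, uniqueness via density and automatic continuity on the completion) are all in order.
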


It's a remarkable fact that $\| \cdot \|_{\min}$ is really the smallest possible C$^*$-norm on $A\odot B$.   For a proof, see \cite[Section 3.4]{Me-n-Taka}. 

\begin{subthm}[Takesaki]\label{thm:takesaki}
For arbitrary $\mathrm{C}^*$-algebras $A$ and $B$, $\| \cdot \|_{min}$ is the smallest $\mathrm{C}^*$-norm on $A \odot B$.
\end{subthm}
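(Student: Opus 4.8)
The plan is to fix an arbitrary C$^*$-norm $\|\cdot\|_\alpha$ on $A\odot B$ and prove the single inequality $\|x\|_{\min}\le\|x\|_\alpha$ for every $x\in A\odot B$; it then passes to the completion automatically. Fix faithful representations $\pi\colon A\to\B(\hh)$ and $\sigma\colon B\to\B(\hk)$, which is legitimate by Remark \ref{prop:independent}. Since a $*$-representation of a C$^*$-algebra is automatically contractive, it would suffice to extend the spatial representation $\pi\otimes\sigma\colon A\odot B\to\B(\hh\otimes\hk)$ to the completion $A\otimes_\alpha B$ — but that extension is exactly the claim $\|(\pi\otimes\sigma)(x)\|\le\|x\|_\alpha$. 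Writing $\|x\|_{\min}^2=\|x^*x\|_{\min}=\sup\{\langle(\pi\otimes\sigma)(x^*x)\xi,\xi\rangle:\xi\in\hh\otimes\hk,\ \|\xi\|\le1\}$, the whole problem collapses to showing that each spatial vector functional $\omega_\xi(z)=\langle(\pi\otimes\sigma)(z)\xi,\xi\rangle$ extends to a state on $A\otimes_\alpha B$ of norm $\le\|\xi\|^2$; indeed $\omega_\xi(x^*x)\le\|x\|_\alpha^2$ would finish the proof.

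For a simple tensor $\xi=\eta\otimes\zeta$ the functional $\omega_\xi$ is the product $f\odot g$ of the states $f=\langle\pi(\cdot)\eta,\eta\rangle$ and $g=\langle\sigma(\cdot)\zeta,\zeta\rangle$, so the product case is the heart of the matter. A general entangled vector $\xi=\sum_{k=1}^n\eta_k\otimes\zeta_k$ reduces to it by matrix amplification: setting $Ve_k=\eta_k$, $We_k=\zeta_k$ yields c.c.p.\ compressions $\Phi(a)=V^*\pi(a)V$ and $\Psi(b)=W^*\sigma(b)W$ into $\M_n(\C)$, and with $\Xi=\sum_k e_k\otimes e_k$ a direct computation gives $\omega_\xi=\omega_\Xi\circ(\Phi\otimes\Psi)$, where $\Phi\otimes\Psi\colon A\odot B\to\M_n(\C)\odot\M_n(\C)=\M_{n^2}(\C)$ and the target carries a unique C$^*$-norm. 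Thus, once the product data are controlled for $\alpha$, the entangled case follows from the fact — whose verification leans on Proposition \ref{prop:cpmapsfrommatrix} — that $\Phi\otimes\Psi$ is assembled from completely positive maps into matrix algebras.

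Everything therefore rests on the crux: for states $f$ on $A$ and $g$ on $B$, the product functional $f\odot g$ is $\alpha$-bounded, $|(f\odot g)(x)|\le\|x\|_\alpha$. I would phrase this through slice maps: the slice $f\otimes\id_B\colon A\odot B\to B$ should extend to a c.c.p.\ contraction $A\otimes_\alpha B\to B$, after which $f\odot g=g\circ(f\otimes\id_B)$ is $\alpha$-bounded. I expect this to be the genuine obstacle, and I would emphasize that the tempting shortcut is a trap: Cauchy--Schwarz for the positive functional $f\odot g$ only gives $|(f\odot g)(z)|^2\le(f\odot g)(z^*z)$, and bounding the right-hand side by $\|z^*z\|_\alpha$ already presupposes the very $\alpha$-boundedness one is trying to establish, so the argument is circular.

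To break the circularity I would represent $A\otimes_\alpha B\subset\B(\hh)$ faithfully, so that $\pi(A)$ and $\sigma(B)$ have commuting ranges and $\|(\pi\times\sigma)(x)\|=\|x\|_\alpha$. One extends the state $f$ from $\pi(A)$ to a c.c.p.\ map on all of $\B(\hh)$ by Arveson's extension theorem and Stinespring-dilates it; the decisive structural input is the commutation $\pi(a)\sigma(b)=\sigma(b)\pi(a)$, which allows one to ``integrate out'' the $A$-variable and realize $f\otimes\id_B$ as a compression of $\pi\times\sigma$ landing in $\sigma(B)''$, yielding $\|(f\otimes\id_B)(x)\|\le\|(\pi\times\sigma)(x)\|=\|x\|_\alpha$. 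It is precisely this commutation that produces $\alpha$ here rather than the maximal norm — dilating the two variables independently would land everything on a tensor-product Hilbert space and force the spatial norm, collapsing the argument. Granting the slice-map property, the two reductions above assemble into $\|x\|_{\min}\le\|x\|_\alpha$, and the deepest point — making the slice map $\alpha$-bounded for a completely arbitrary C$^*$-norm — is where the real content of Takesaki's theorem resides.
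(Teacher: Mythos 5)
Your global architecture is sound and matches the shape of the standard argument (the paper itself gives no proof, deferring to \cite[Section 3.4]{Me-n-Taka}, so the comparison is with Takesaki's proof as presented there): reducing $\|x\|_{\min}\le\|x\|_\alpha$ to $\alpha$-boundedness of vector functionals, reducing entangled vectors to product functionals by amplification and polarization, and using positivity to upgrade mere boundedness to the sharp bound $\|\xi\|^2$ are all correct, and you correctly flag the Cauchy--Schwarz circularity. But the crux step, as you propose to execute it, has a genuine gap. An Arveson (here really just Hahn--Banach) extension of the state $f$ from $\pi(A)$ to a state $F$ on $\B(\hh)$ carries no compatibility whatsoever with $\sigma(B)$: since $f$ is not multiplicative, $\pi(A)$ does not lie in the multiplicative domain of $F$, so nothing yields $F(\pi(a)\sigma(b))=f(a)F(\sigma(b))$; and there is no isometry through which a Stinespring/GNS dilation of $F$ compresses $\pi\times\sigma$ to a map sending $\pi(a)\sigma(b)\mapsto f(a)\sigma(b)$. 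The phrase ``integrate out the $A$-variable'' presupposes that $\hh$ factors as a tensor product over the commuting pair, which is exactly what fails for $\alpha>\min$ -- if it held, the representation would already dominate only the spatial norm. Note also that $\alpha$-boundedness of the slice $f\otimes\id_B$ for \emph{all} states $f$ is literally equivalent (via your own reductions) to the theorem, so any soft argument producing it from commutation alone would make the theorem trivial; it isn't. Positivity of $f\odot g$ on algebraic squares does not rescue you either, because $\sqrt{1-x}$ need not lie in $A\odot B$, so positivity on a dense $*$-subalgebra does not imply $\alpha$-boundedness.

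The missing idea is \emph{purity}, and it is where Takesaki's actual proof lives. One first observes that $\|x\|_{\min}$ is computed by pure product states, so it suffices to treat $f,g$ pure. The rigidity that breaks the circle is: if $h$ is any state on $A\otimes_\alpha B$ with $h|_A=f$ pure, then for $a\ge0$ the functional $a\mapsto h(a\otimes b^*b)$ is dominated by $\|b\|^2 f$, and positive functionals dominated by a multiple of a pure state are scalar multiples of it; hence $h(a\otimes b)=f(a)\,h(1\otimes b)$, i.e., \emph{every} $\alpha$-continuous extension of $f$ is automatically a product state $f\odot g_0$. Extensions exist by Hahn--Banach, so some $f\odot g_0$ is $\alpha$-continuous; one then enlarges the set of admissible $g$'s by the compressions $g\mapsto g(x^*\cdot x)/g(x^*x)$ (implemented by $h\mapsto h\bigl((1\otimes x)^*\cdot(1\otimes x)\bigr)$, which still restricts to $f$ on $A$ by the product form) together with a weak-$*$ density argument, until the $\alpha$-continuous product states dominate the minimal norm. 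This pure-state mechanism (or, in other treatments, excision) is the real content your sketch elides; everything before and after it in your write-up can be kept as is.
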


\begin{subcor}
\label{cor:minmax}
For any $A$ and $B$ and any $\mathrm{C}^*$-norm $\|
\cdot \|_{\alpha}$ on $A \odot B$ we have natural surjective
$*$-homomorphisms
$$A\mtp B \to A\otimes_{\alpha} B \to A\otimes B.$$
\end{subcor}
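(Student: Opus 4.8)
The plan is to build both arrows by extending the identity map on the algebraic tensor product $A \odot B$ by continuity, the whole content being a comparison of the relevant C$^*$-norms. Two soft facts will do the bookkeeping: a $*$-homomorphism between C$^*$-algebras is automatically contractive, and its image is a closed C$^*$-subalgebra. Consequently, any $*$-homomorphism whose range contains a dense set is automatically surjective. So once I produce the two maps as $*$-homomorphisms, surjectivity is immediate from the density of $A \odot B$ in each completion.

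For the first arrow I would establish $\|x\|_{\alpha} \leq \|x\|_{\max}$ for every $x \in A \odot B$. Represent the C$^*$-algebra $A \otimes_{\alpha} B$ faithfully on some Hilbert space $\hh$; restricting to the dense subalgebra gives a $*$-homomorphism $\pi_{\alpha} \colon A \odot B \to \B(\hh)$ with $\|\pi_{\alpha}(x)\| = \|x\|_{\alpha}$. Since $\|\cdot\|_{\max}$ is by definition the supremum of $\|\pi(x)\|$ over all $*$-representations $\pi$, this particular $\pi_{\alpha}$ already witnesses $\|x\|_{\alpha} \leq \|x\|_{\max}$. (Equivalently, one may invoke the universality Proposition~\ref{prop:maxuniversality} directly to the inclusion $A \odot B \hookrightarrow A \otimes_{\alpha} B$.) This inequality says the identity on $A \odot B$ is contractive from the max-norm to the $\alpha$-norm, hence extends uniquely to a contractive $*$-homomorphism $A \mtp B \to A \otimes_{\alpha} B$.

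For the second arrow I would invoke Takesaki's Theorem~\ref{thm:takesaki}, which supplies the reverse inequality $\|x\|_{\min} \leq \|x\|_{\alpha}$ for all $x \in A \odot B$. As before, the identity on $A \odot B$ is then contractive from the $\alpha$-norm to the min-norm, and so extends by continuity to a $*$-homomorphism $A \otimes_{\alpha} B \to A \otimes B$.

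In each case the extension is a genuine $*$-homomorphism because it is multiplicative and $*$-preserving on the dense subalgebra $A \odot B$, and these identities pass to the limit by continuity; and it is surjective because its range is a closed subalgebra containing the dense set $A \odot B$. The word ``natural'' simply records that each arrow restricts to the identity on $A \odot B$, so that the composite $A \mtp B \to A \otimes B$ is the canonical map. The only substantive input beyond this routine ``extend by continuity, use density for surjectivity'' argument is Takesaki's Theorem, furnishing the lower bound needed for the second arrow; that is where the real mathematics lives, while the remaining steps present no genuine obstacle.
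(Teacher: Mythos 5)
Your proposal is correct and is precisely the argument the paper intends: the first arrow comes from the fact that the $\alpha$-norm is witnessed by a representation in the supremum defining $\|\cdot\|_{\max}$ (equivalently, from the universality in Proposition~\ref{prop:maxuniversality}), the second from Takesaki's Theorem~\ref{thm:takesaki}, with surjectivity following since a $*$-homomorphism has closed range containing the dense subalgebra $A \odot B$. The paper states the corollary without proof exactly because this routine extension-by-continuity argument, with Takesaki's theorem as the only substantive input, is the intended one.
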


The previous corollary gets used, both explicitly and implicitly, all of the time.  For example, in the literature it is often written that $A\odot B$ has a unique C$^*$-norm if and only if $A\mtp B = A\otimes B$.  

Those tensor products which have a unique C$^*$-norm form an important subclass.  For such examples, the minimal tensor product -- the easier one to get a handle on -- inherits the universal property of maximal tensor products; this is extraordinarily useful, as we'll see later when we come to The Trick.  

Let's record the simplest case of this phenomenon. 

\begin{subprop}
\label{prop:matrix}  For each $\mathrm{C}^*$-algebra $A$ there is a unique 
$\mathrm{C}^*$-norm on the algebraic tensor product $\M_n(\C) \odot A$.
\end{subprop}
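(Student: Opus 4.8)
The plan is to show that an \emph{arbitrary} $\mathrm{C}^*$-norm $\|\cdot\|_\alpha$ on $\M_n(\C)\odot A$ must coincide with the spatial norm $\|\cdot\|_{\min}$. By Corollary \ref{cor:minmax} every $\mathrm{C}^*$-norm is squeezed between $\|\cdot\|_{\min}$ and $\|\cdot\|_{\max}$, so producing a single identity $\|\cdot\|_\alpha = \|\cdot\|_{\min}$ valid for all $\alpha$ is exactly the asserted uniqueness. The idea is to describe \emph{every} representation of the completion concretely, exploiting the rigidity of the matrix units $e_{ij}$ of $\M_n(\C)$. Throughout I identify $\M_n(\C)\odot A$ with the $*$-algebra $\M_n(A)$ via $e_{ij}\otimes a \mapsto a\,e_{ij}$.

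First I would fix $\alpha$ and let $C$ be the $\|\cdot\|_\alpha$-completion, represented faithfully and nondegenerately as $\pi\colon C \to \B(\hh)$. The matrix units supply rigid structure: the elements $e_{ij}\otimes 1$ (or, when $A$ is nonunital, the strict limits of $e_{ij}\otimes u_\lambda$ along an approximate unit $(u_\lambda)$ of $A$, living in the multiplier algebra $M(C)$) form a system of $n\times n$ matrix units $E_{ij}$ with $\sum_i E_{ii} = 1_{M(C)}$. Setting $p = \pi(E_{11})$ and $\hh_0 = p\hh$, nondegeneracy forces $\sum_i \pi(E_{ii}) = 1_\hh$, and the partial isometries $\pi(E_{i1})$ identify each $\pi(E_{ii})\hh$ with $\hh_0$; this yields a unitary $\hh \cong \hh_0^{\oplus n}$ and a corresponding identification $\B(\hh)\cong \M_n(\B(\hh_0))$.

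The heart of the argument is then the computation $\pi(e_{ij}\otimes a) = \pi(E_{i1})\,\pi(e_{11}\otimes a)\,\pi(E_{1j})$, which follows from the relation $E_{i1}(e_{11}\otimes a)E_{1j} = e_{ij}\otimes a$ in $M(C)$. Since $e_{11}\otimes a = E_{11}(e_{11}\otimes a)E_{11}$, the operator $\pi(e_{11}\otimes a)$ lives in the corner $p\,\B(\hh)\,p \cong \B(\hh_0)$, so $\rho(a) := \pi(e_{11}\otimes a)|_{\hh_0}$ defines a representation $\rho\colon A \to \B(\hh_0)$; it is faithful because $\pi$ is faithful and $\|\cdot\|_\alpha$ restricts to a genuine norm on $e_{11}\otimes A$. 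Under the identification above these formulas say precisely that $\pi = \id_{\M_n}\otimes \rho$. I expect this verification --- that $\pi$ is globally a tensor-product representation, together with the multiplier-algebra bookkeeping in the nonunital case --- to be the only real obstacle; everything else is formal.

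To finish, I would invoke the independence of the spatial norm from the chosen faithful representations (Remark \ref{prop:independent}): since $\id_{\M_n}$ is faithful on $\M_n(\C)$ and $\rho$ is faithful on $A$, the representation $\id_{\M_n}\otimes\rho$ computes exactly $\|\cdot\|_{\min}$. Hence for every $x \in \M_n(\C)\odot A$ we get $\|x\|_\alpha = \|\pi(x)\| = \|(\id_{\M_n}\otimes\rho)(x)\| = \|x\|_{\min}$, and as $\alpha$ was arbitrary the $\mathrm{C}^*$-norm is unique.
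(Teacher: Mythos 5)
Your argument is correct, but it takes a genuinely different route from the paper. The paper's proof is a two-liner: it identifies $\M_n(\C)\odot A \cong \M_n(A)$ as $*$-algebras, observes that this algebraic tensor product is therefore \emph{already complete} in its canonical norm, and invokes uniqueness of the norm on a C$^*$-algebra (equivalently, that an injective $*$-homomorphism between C$^*$-algebras is isometric). You never use completeness of $\M_n(\C)\odot A$; instead you prove the stronger representation-theoretic statement that every nondegenerate representation $\pi$ of any C$^*$-completion is unitarily equivalent to $\id_{\M_n}\otimes\rho$ for a (faithful, when $\pi$ is) representation $\rho$ of $A$, and then conclude via Remark \ref{prop:independent} that every C$^*$-norm is the spatial one. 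Your multiplier-algebra bookkeeping in the nonunital case does go through: since $b\mapsto e_{ll}\otimes b$ is a $*$-homomorphism from the C$^*$-algebra $A$ into the completion, one gets $\|e_{il}\otimes b\|_\alpha \le \|b\|$, which makes the nets $e_{ij}\otimes u_\lambda$ strictly Cauchy and makes $1\otimes u_\lambda$ a (uniformly bounded) approximate unit, so $\sum_i E_{ii}=1$ in the multipliers and nondegeneracy gives $\sum_i\pi(E_{ii})=1_{\hh}$; the corner computation $E_{i1}(e_{11}\otimes a)E_{1j}=e_{ij}\otimes a$ and faithfulness of $\rho$ (a C$^*$-norm is a genuine norm on $e_{11}\odot A$) are as you say. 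What each approach buys: the paper's is maximally soft and short, with all the force concentrated in the automatic-isometry fact (which, it's worth noting, also underlies your step ``$\pi$ faithful $\Rightarrow$ isometric''); yours is longer but more informative --- it classifies all representations of $\M_n(A)$, identifies the unique norm concretely as $\|\cdot\|_{\min}$ rather than abstractly as ``the C$^*$-norm of $\M_n(A)$,'' and its matrix-unit rigidity is the germ of arguments that survive in settings where the algebraic tensor product is \emph{not} complete and the paper's shortcut is unavailable.
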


\begin{proof}  One checks that there is an algebraic $*$-isomorphism 
$$\M_n(\C) \odot A \cong \M_n(A).$$  Since C$^*$-algebras have unique norms, and $\M_n(A)$ is a C$^*$-algebra, the result follows. \end{proof}

\subsection{Continuity of tensor product maps}
\label{sec:tensorproductmaps}

The tensor product of bounded linear maps need not be bounded. But c.p.\ maps are nice.  

\begin{subthm}[Continuity of tensor product maps]\label{thm:tenprodmapscont}
Let $\p\colon A \to C$ and $\psi\colon B \to D$ be c.p.\ maps.  
Then the algebraic tensor product map
$$\p\odot \psi\colon A\odot B \to C\odot D$$ extends to a c.p.\
(hence continuous) map on both the minimal and maximal tensor
products. Moreover, letting $\p\otimes_{\max}\psi\colon
A\otimes_{\max} B \to C\otimes_{\max} D$ and $\p\otimes \psi\colon
A\otimes B \to C\otimes D$ denote the extensions, we have
$$\|\p\otimes_{\max} \psi \| = \|\p\otimes \psi \| = \|\p \|\|
\psi \|.$$
\end{subthm}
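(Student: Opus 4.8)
The plan is to realize each extension as a compression of an honest $*$-representation, combining Stinespring's dilation theorem with the universality and representation-independence results above. I would first dispose of the lower bound, which works uniformly. Recall that elementary tensors carry the same norm $\|a\otimes b\|=\|a\|\,\|b\|$ in every C$^*$-norm (for the minimal norm this is the spatial definition; the maximal norm agrees by a standard approximate-unit argument, and in any case $\|\cdot\|_{\min}\le\|\cdot\|_{\max}$ by Corollary \ref{cor:minmax}). Hence feeding a norm-one elementary tensor $a\otimes b$ into $\p\otimes\psi$ or $\p\mtp\psi$ returns $\p(a)\otimes\psi(b)$, of norm $\|\p(a)\|\,\|\psi(b)\|$; taking the supremum over $\|a\|,\|b\|\le 1$ gives $\|\p\otimes\psi\|,\|\p\mtp\psi\|\ge\|\p\|\,\|\psi\|$. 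Everything then reduces to producing completely positive extensions of norm at most $\|\p\|\,\|\psi\|$.

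For the minimal tensor product, fix faithful representations $C\subset\B(\hh)$ and $D\subset\B(\hk)$ and apply Stinespring: write $\p(a)=V^*\pi(a)V$ and $\psi(b)=W^*\rho(b)W$ with $*$-representations $\pi\colon A\to\B(\hh_\pi)$, $\rho\colon B\to\B(\hk_\rho)$ and bounded operators $V\colon\hh\to\hh_\pi$, $W\colon\hk\to\hk_\rho$ satisfying $\|V\|^2=\|\p\|$ and $\|W\|^2=\|\psi\|$. Enlarging $\pi$ and $\rho$ by a faithful direct summand on which $V$ and $W$ act as $0$, I may assume $\pi$ and $\rho$ are faithful. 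Then representation-independence of the spatial norm (Remark \ref{prop:independent}) says precisely that $\pi\otimes\rho$ extends to a representation of $A\otimes B$ on $\hh_\pi\otimes\hk_\rho$. Compressing, the map $(V\otimes W)^*(\pi\otimes\rho)(\cdot)(V\otimes W)$ is completely positive of norm at most $\|V\|^2\|W\|^2=\|\p\|\,\|\psi\|$, sends $a\otimes b$ to $\p(a)\otimes\psi(b)$, and, being continuous with range generated by these elementary tensors, carries $A\otimes B$ into $C\otimes D\subset\B(\hh\otimes\hk)$. This is the extension $\p\otimes\psi$.

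For the maximal tensor product the \emph{target} is the difficulty. I would fix a faithful representation $C\mtp D\subset\B(\mathcal L)$; by universality (Proposition \ref{prop:maxuniversality}) it is $\sigma\times\tau$ for commuting representations $\sigma\colon C\to\B(\mathcal L)$, $\tau\colon D\to\B(\mathcal L)$. Then $\Phi:=\sigma\circ\p$ and $\Psi:=\tau\circ\psi$ are completely positive maps into $\B(\mathcal L)$ with \emph{commuting} ranges, and the problem becomes to show that $a\otimes b\mapsto\Phi(a)\Psi(b)$ extends to a completely positive map $\Theta\colon A\mtp B\to\B(\mathcal L)$. Granting this, the range of $\Theta$ lies in $C^*(\sigma(C)\cup\tau(D))=(\sigma\times\tau)(C\mtp D)$, so faithfulness of $\sigma\times\tau$ lets me transport $\Theta$ back to a completely positive map $\p\mtp\psi\colon A\mtp B\to C\mtp D$ with $a\otimes b\mapsto\p(a)\otimes\psi(b)$; since $\sigma,\tau$ are contractive, $\|\p\mtp\psi\|=\|\Theta\|\le\|\Phi\|\,\|\Psi\|\le\|\p\|\,\|\psi\|$.

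The heart of the matter — and the step I expect to be the main obstacle — is the claim that two commuting completely positive maps into $\B(\mathcal L)$ multiply to a completely positive map on $A\mtp B$. I would attack it by a \emph{simultaneous} dilation: take a minimal Stinespring dilation $\Phi=V^*\pi(\cdot)V$ with $\pi\colon A\to\B(\hk_\Phi)$, and invoke the commutation theorem for minimal dilations to lift $\Psi$, whose range commutes with $\Phi(A)$, to a completely positive map $\tilde\Psi\colon B\to\pi(A)'\subset\B(\hk_\Phi)$. Since $\tilde\Psi(B)$ commutes with $\pi(A)$, a further dilation of $\tilde\Psi$ inside $\pi(A)'$ assembles with $\pi$ — via the universal property of $\mtp$ (Proposition \ref{prop:maxuniversality}) applied to the resulting commuting representations — into a single representation of $A\mtp B$ whose compression is exactly $\Theta$. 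Getting the commutation/lifting step to produce the right identities is the genuinely technical part; the norm and complete-positivity bookkeeping is then routine, and together with the lower bound it yields $\|\p\mtp\psi\|=\|\p\otimes\psi\|=\|\p\|\,\|\psi\|$.
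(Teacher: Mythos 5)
Your proposal is correct and takes essentially the same approach as the paper, whose proof (via the cited reference) consists of exactly your two ingredients: Stinespring dilation plus compression for the minimal case, and Arveson's commutant-lifting version of Stinespring to assemble commuting representations for the maximal case. The only cosmetic difference is that you dilate and lift both legs simultaneously, where the standard treatment factors $\p \mtp \psi$ through the one-sided maps $\p \mtp \id$ and $\id \mtp \psi$; the substance is identical.
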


\begin{proof}  Both cases are consequences of Stinespring's Dilation Theorem. With that result in hand, the minimal case is a routine exercise; the maximal case is only harder because one needs to know Arveson's `commutant lifting' version of Stinespring's result.  See \cite[Theorem 3.5.3]{Me-n-Taka} for details. 
\end{proof}

The following corollary will be used frequently and without reference.

\begin{subcor}  Assume $\theta\colon A \to C$ and $\sigma\colon B\to
D$ are c.c.p.\ maps and $\theta_n\colon A\to C$ are c.c.p.\ maps
converging to $\theta$ in the point-norm topology (i.e., $\| \theta_n(a) - \theta(a)\| \to 0$ for all $a \in A$).  Then
$$\theta_n \mtp \sigma \to \theta \mtp \sigma$$ and $$\theta_n
\otimes \sigma \to \theta\otimes \sigma$$ in the point-norm
topology as well.
\end{subcor}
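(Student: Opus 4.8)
The plan is to reduce the statement to a standard ``uniform boundedness plus convergence on a dense set'' argument, treating the minimal and maximal cases in parallel since only formal properties of the two tensor norms are used. First I would record that, because $\theta$, $\theta_n$ and $\sigma$ are all c.c.p.\ (in particular contractive and completely positive), Theorem~\ref{thm:tenprodmapscont} guarantees that the extensions $\theta_n \mtp \sigma$, $\theta \mtp \sigma$, $\theta_n \otimes \sigma$ and $\theta \otimes \sigma$ all exist as completely positive maps of norm $\|\theta_n\|\,\|\sigma\| \le 1$ (resp.\ $\|\theta\|\,\|\sigma\|\le 1$). Thus every map in sight is a contraction, and in particular $\sup_n \|\theta_n \mtp \sigma - \theta \mtp \sigma\| \le 2$, and likewise for the minimal versions.

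Next I would verify convergence on the dense subalgebra $A \odot B$. On an elementary tensor $a \otimes b$ we have $(\theta_n \mtp \sigma)(a\otimes b) - (\theta\mtp\sigma)(a\otimes b) = (\theta_n(a) - \theta(a)) \otimes \sigma(b)$, and since both tensor norms are cross norms,
\[
\| (\theta_n(a) - \theta(a)) \otimes \sigma(b) \| = \|\theta_n(a) - \theta(a)\|\,\|\sigma(b)\| \longrightarrow 0
\]
by the hypothesis that $\theta_n \to \theta$ in point-norm. Extending by linearity and the triangle inequality, $(\theta_n \mtp \sigma)(y) \to (\theta \mtp \sigma)(y)$ for every $y$ in the algebraic tensor product, and identically in the minimal norm.

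Finally I would run the usual $\e/3$ estimate to pass from the dense subalgebra to the completion. Given $x \in A \mtp B$ and $\e > 0$, choose $y \in A \odot B$ with $\|x - y\| < \e/3$; then, bounding $\|(\theta_n \mtp \sigma)(x) - (\theta\mtp\sigma)(x)\|$ by the three contributions coming from $x-y$, $y$, and $y-x$, the outer two are each $< \e/3$ by contractivity and the middle one is eventually $< \e/3$ by the previous paragraph. The same computation handles the minimal tensor product verbatim. The only point requiring care -- and the nearest thing to an obstacle -- is that $\theta_n - \theta$ need \emph{not} be completely positive, so one cannot simply apply the norm formula of Theorem~\ref{thm:tenprodmapscont} to the difference map; the role of that theorem here is solely to supply the uniform bound on the individual maps, after which density does the rest.
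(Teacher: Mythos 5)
Your proof is correct, and it is precisely the routine argument the paper has in mind: the paper states this corollary without proof (remarking only that it will be used ``frequently and without reference''), and the intended justification is exactly your combination of the uniform bound $\|\theta_n \mtp \sigma\| \le 1$ from Theorem~\ref{thm:tenprodmapscont}, termwise convergence on the dense subalgebra $A \odot B$ via the cross-norm estimate $\|c \otimes d\| \le \|c\|\,\|d\|$, and an $\e/3$ approximation. Your closing caveat is also well taken: since $\theta_n - \theta$ need not be c.p., one must estimate the difference of the two extensions rather than extend the difference, which is exactly why the uniform bound on the individual maps is the right tool.
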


Generalizing the matrix case (Proposition \ref{prop:matrix}), our next result is extremely important. 

\begin{subprop} 
\label{prop:tensornuc}  
If $A$ is nuclear, then for every $B$ there is a unique C$^*$-norm on $A \odot B$. 
\end{subprop}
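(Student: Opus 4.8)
The plan is to prove the stronger statement that the minimal and maximal norms coincide on $A \odot B$; by Corollary \ref{cor:minmax} every C$^*$-norm on $A \odot B$ is squeezed between these two, so their equality forces uniqueness. The governing idea is to use nuclearity to factor the identity map through matrix algebras, where the minimal and maximal norms automatically agree by Proposition \ref{prop:matrix}, thereby ``transporting'' the maximal estimate back down to the minimal one.

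First I would invoke nuclearity (Definition \ref{defn:nuclear}) to obtain c.c.p.\ maps $\p_n \colon A \to \M_{k(n)}(\C)$ and $\psi_n \colon \M_{k(n)}(\C) \to A$ with $\psi_n \circ \p_n \to \id_A$ in the point-norm topology. Tensoring each with $\id_B$ and applying the continuity theorem (Theorem \ref{thm:tenprodmapscont}), I obtain c.c.p.\ maps
$$\p_n \otimes \id_B \colon A \otimes B \to \M_{k(n)}(\C) \otimes B, \qquad \psi_n \mtp \id_B \colon \M_{k(n)}(\C) \mtp B \to A \mtp B.$$
Here is the key maneuver: since $\M_{k(n)}(\C) \odot B$ carries a unique C$^*$-norm (Proposition \ref{prop:matrix}), its minimal and maximal completions coincide, so the target of the first map \emph{is} the domain of the second. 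Composing therefore yields well-defined c.c.p.\ (hence contractive) maps
$$T_n = (\psi_n \mtp \id_B) \circ (\p_n \otimes \id_B) \colon A \otimes B \to A \mtp B.$$

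On an elementary tensor one computes $T_n(a \otimes b) = \psi_n(\p_n(a)) \otimes b$, which converges to $a \otimes b$ in $A \mtp B$ because $\psi_n(\p_n(a)) \to a$ in norm; by linearity $T_n(x) \to x$ in $A \mtp B$ for every $x \in A \odot B$. Since each $T_n$ is contractive from the minimal norm on its domain to the maximal norm on its range, for $x \in A \odot B$ I conclude
$$\|x\|_{\max} = \lim_n \|T_n(x)\|_{\max} \leq \|x\|_{\min},$$
and combined with the reverse inequality from Corollary \ref{cor:minmax} this gives $\|x\|_{\min} = \|x\|_{\max}$, whence the norm is unique.

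The main obstacle — really the crux of the argument — is recognizing that the factorization through matrix algebras is exactly what bridges the minimal and maximal worlds. The contractivity of the first leg of $T_n$ rests on the minimal half of Theorem \ref{thm:tenprodmapscont}, its landing space is reinterpreted via the uniqueness of the matrix-algebra norm, and the second leg's contractivity rests on the maximal half of the same theorem; only nuclearity makes all three available at once. A secondary point requiring care is confirming that $T_n$ is genuinely contractive as a map from the \emph{minimal} completion to the \emph{maximal} completion (rather than min-to-min or max-to-max), since that mixed estimate is what ultimately collapses the two norms.
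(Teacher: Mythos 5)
Your proposal is correct and is essentially identical to the paper's own proof: the same factorization $T_n = (\psi_n \mtp \id_B) \circ (\p_n \otimes \id_B)$ (the paper's $\theta_n$), the same use of the identification $\M_{k(n)}(\C) \mtp B = \M_{k(n)}(\C) \otimes B$ from Proposition \ref{prop:matrix}, and the same concluding estimate $\|x\|_{\max} = \lim_n \|T_n(x)\|_{\max} \leq \|x\|_{\min}$. Your closing remarks about the mixed min-to-max contractivity being the crux are an accurate gloss on what the paper leaves implicit.
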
 

\begin{proof}  It suffices to show that if $x \in A\odot B$, then $\| x \|_{\max} \leq \| x \|_{\min}$, since this implies the canonical quotient mapping $A \mtp B \to A\otimes B$ is isometric on a dense set. 

So, let $x \in A\odot B$ be given.  Since $A$ is nuclear, there are c.c.p.\ maps $\p_n \colon A \to \M_{k(n)}(\C)$ and $\psi_n \colon \M_{k(n)}(\C) \to A$ converging to $\id_A$ in the point-norm topology.  Thus we can define c.c.p.\ maps $\theta_n \colon A\otimes B \to A \mtp B$ by $$\theta_n = (\psi_n \mtp \id_B) \circ (\p_n \otimes \id_B),$$ where we've used the identification $\M_{k(n)}(\C) \mtp B = \M_{k(n)}(\C) \otimes B$ to make sense of the composition. Evidently $\| x - \theta_n(x) \|_{\max} \to 0$ and hence $\| x \|_{\max} = \lim \| \theta_n(x)\| \leq \| x \|_{\min}$, as desired. 
\end{proof} 

\subsection{Inclusions and The Trick}
\label{sec:inclusions}

C$^*$-tensor products can be subtle; they don't always behave like algebraic tensor products.  Let's have a look at an important subtlety, as well as The Trick to which it leads. 

The issue is whether or not inclusions of C$^*$-algebras
give rise to inclusions of tensor products.  For algebraic tensor products this is always the case, hence  {\em spatial} tensor products are also kind and inclusive. 

\begin{subprop}
\label{prop:mininclusion} If $A \subset B$ and $C$ are
$\mathrm{C}^*$-algebras, then there is a natural inclusion $$A\otimes C
\subset B\otimes C.$$
\end{subprop}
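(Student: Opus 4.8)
The plan is to exploit the independence of the spatial norm from the choice of faithful representations (Remark \ref{prop:independent}), and to pick representations of $A$ and $B$ that are compatible with the inclusion $A \subset B$. First I would fix a faithful representation $\sigma \colon C \to \B(\hk)$ together with a faithful representation $\rho \colon B \to \B(\hh)$. Since $\rho$ is injective on all of $B$, its restriction $\rho|_A \colon A \to \B(\hh)$ is automatically a faithful representation of $A$. These are precisely the data one is allowed to use to compute both spatial norms.

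Next I would recall that at the algebraic level the inclusion $A \subset B$ already yields an inclusion of vector spaces $A \odot C \hookrightarrow B \odot C$ (algebraic tensor products over a field preserve injections). Now for a typical element $x = \sum_i a_i \otimes c_i \in A \odot C$, regarded as sitting inside $B \odot C$, the two spatial norms are computed by the \emph{same} formula, because $(\rho|_A)(a_i) = \rho(a_i)$:
\[
\Big\| \sum_i (\rho|_A)(a_i) \otimes \sigma(c_i) \Big\|_{\B(\hh \otimes \hk)} = \Big\| \sum_i \rho(a_i) \otimes \sigma(c_i) \Big\|_{\B(\hh \otimes \hk)}.
\]
In other words, the spatial norm on $A \odot C$ is exactly the restriction of the spatial norm on $B \odot C$. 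Finally, passing to completions, the equality of norms on the dense $*$-subalgebra $A \odot C \subset B \odot C$ shows that $A \otimes C$ is isometrically $*$-isomorphic to the norm closure of $A \odot C$ inside $B \otimes C$, which is a C$^*$-subalgebra; this is the asserted natural inclusion.

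The single point requiring care — and the only genuine obstacle — is the independence of the spatial norm from the chosen faithful representations. It is exactly this independence that allows one to align the representations of $A$ and $B$ along the inclusion, so that the two norm computations literally coincide on $A \odot C$. Once that is granted (it is the content of Remark \ref{prop:independent}), the remainder of the argument is routine bookkeeping.
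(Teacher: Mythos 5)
Your proposal is correct and is precisely the paper's argument: both proofs restrict a faithful representation of $B$ to $A$, observe that the spatial norms on $A \odot C$ and on its image in $B \odot C$ are then computed by the same formula, and invoke the representation-independence of the spatial norm (Remark \ref{prop:independent}) to conclude. You merely spell out the bookkeeping that the paper summarizes with ``there is nothing to prove.''
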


\begin{proof} Perhaps we should first point out what this
proposition is really asserting.  Since we have a natural
algebraic inclusion $$A\odot C \subset B\odot C,$$ one can ask which norm we get on $A\odot C$ by restricting the spatial
norm on $B\odot C$.  This proposition asserts
that we just get the spatial norm on $A\odot C$.

Having understood the meaning of the result, there is nothing to prove since we can choose faithful representations of $B$ and $C$ to construct $A\otimes C$ (see Remark \ref{prop:independent}).
\end{proof}

Similarly a pair of inclusions $A \subset B$ and $C \subset D$ gives rise to an inclusion $A\otimes C \subset B\otimes D$.

For maximal tensor products this inclusion business doesn't always work, which may
seem a little puzzling at first.  However, when reformulated at
the algebraic level, it becomes clear what can go wrong.  Indeed, 
what we are really asking is whether or not the maximal norm on $B
\odot C$ restricts to the maximal norm on $A\odot C \subset B\odot
C$. But the maximal norm is defined via a supremum over
representations and since every representation of $B\odot C$ gives
a representation of the smaller algebra $A\odot C$, it is clear
that the supremum only over representations of $B\odot C$ will
always be less than or equal to the supremum over all
representations of $A\odot C$.

Having seen what the problem could be, here's a case where everything goes well. 

\begin{subprop}
\label{cor:nuclearWEP} If $A \subset B$, $A$ is nuclear and $C$ is arbitrary, then we have a natural inclusion $A\mtp C \subset B\mtp C.$
\end{subprop}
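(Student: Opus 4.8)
The plan is to run a sandwich (squeeze) argument assembled from three facts already in hand: since $A$ is nuclear, Proposition \ref{prop:tensornuc} tells us the max and min norms on $A \odot C$ coincide, so $A\mtp C = A\otimes C$; the spatial inclusion $A\otimes C \subset B\otimes C$ is automatic by Proposition \ref{prop:mininclusion}; and for any C$^*$-norm the canonical maps of Corollary \ref{cor:minmax} are $*$-homomorphisms, hence contractive.

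First I would set up the natural $*$-homomorphisms at the algebraic level. The algebraic inclusion $A\odot C \subset B\odot C$ together with the universal property of the maximal norm yields a $*$-homomorphism $\iota\colon A\mtp C \to B\mtp C$, and composing with the canonical quotient $q\colon B\mtp C \to B\otimes C$ gives $q\circ\iota\colon A\mtp C \to B\otimes C$. The key observation is that on the dense subalgebra $A\odot C$ this composite is nothing but the inclusion $A\odot C \subset B\odot C$ followed by passage to the spatial completion; that is, $q\circ\iota$ restricted to $A\odot C$ agrees with the isometric map of Proposition \ref{prop:mininclusion}. Pinning down that all three ``natural'' maps really are the canonical ones — so that the composite is genuinely the spatial inclusion and not some other map — is the only point requiring (minor) care.

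With the diagram fixed, the norms do all the work. Fix $x \in A\odot C$. Since $\iota$ and $q$ are $*$-homomorphisms between C$^*$-algebras, they are contractive, so $\|x\|_{\max} \geq \|\iota(x)\|_{\max} \geq \|q(\iota(x))\|_{\min}$, where the last two quantities are computed in $B\mtp C$ and $B\otimes C$ respectively. By the previous paragraph the rightmost term equals the spatial norm of $x$ computed in $A\otimes C$, and by nuclearity of $A$ that in turn equals $\|x\|_{\max}$. Hence the chain of inequalities collapses to a chain of equalities; in particular $\|\iota(x)\|_{\max} = \|x\|_{\max}$ for every $x \in A\odot C$, so $\iota$ is isometric and extends to the desired inclusion $A\mtp C \subset B\mtp C$. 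I do not anticipate a serious obstacle here: the argument is an elegant squeeze, and once the commuting diagram of canonical maps is verified, nuclearity forces the two ends of the inequality to agree.
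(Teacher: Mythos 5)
Your proposal is correct and follows essentially the same route as the paper: both use nuclearity (Proposition \ref{prop:tensornuc}) to identify $A\mtp C = A\otimes C$, combine this with the spatial inclusion of Proposition \ref{prop:mininclusion}, and conclude that the composite $A\mtp C \to B\mtp C \to B\otimes C$ is the (injective) spatial inclusion, forcing $A\mtp C \to B\mtp C$ to be injective. The paper phrases the conclusion as the composite having trivial kernel, while you make the norm squeeze on the dense subalgebra $A\odot C$ explicit; these are the same argument, since an injective $*$-homomorphism between C$^*$-algebras is automatically isometric.
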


\begin{proof}  Since $A\mtp C = A\otimes C \subset B\otimes C$, by Propositions \ref{prop:tensornuc} and \ref{prop:mininclusion}, it follows that the canonical $*$-homomorphism $A\mtp C \to B\mtp C \to B\otimes C$ can't have a nontrivial kernel. Thus $A\mtp C \to B\mtp C$ must be injective. 
\end{proof} 

Here's a more general result. 

\begin{subprop}
\label{prop:relwklyinj} Let $A \subset B$ be an inclusion of
$\mathrm{C}^*$-algebras and assume that for every nondegenerate
$*$-homomorphism $\pi\colon A\to \B(\hh)$ there exists a c.c.p.\
map $\p\colon B\to \pi(A)^{\prime\prime}$ such that $\p(a) =
\pi(a)$ for all $a \in A$.  Then for every $\mathrm{C}^*$-algebra $C$ there
is a natural inclusion $$A\mtp C \subset B\mtp C.$$
\end{subprop}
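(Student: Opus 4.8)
The plan is to show that the canonical $*$-homomorphism $A\mtp C \to B\mtp C$ (which exists and is contractive by the universal property, Proposition \ref{prop:maxuniversality}) is isometric on the dense subalgebra $A\odot C$; injectivity follows at once. Since that map is a $*$-homomorphism, $\|x\|_{B\mtp C}\le\|x\|_{A\mtp C}$ for every $x\in A\odot C$, so the whole problem is the reverse inequality $\|x\|_{A\mtp C}\le\|x\|_{B\mtp C}$. To get a handle on the left side I would compute it representation-theoretically: by Proposition \ref{prop:maxuniversality}, $\|x\|_{A\mtp C}$ is the supremum of $\|\sum\pi_A(a_i)\pi_C(c_i)\|$ over all pairs of $*$-homomorphisms $\pi_A\colon A\to\B(\hh)$, $\pi_C\colon C\to\B(\hh)$ with commuting ranges, where $x=\sum a_i\otimes c_i$. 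Fix such a pair nearly attaining the supremum; a routine reduction, cutting down by the projection onto $\overline{\pi_A(A)\hh}$ (which lies in $\pi_A(A)''$ and hence commutes with $\pi_C(C)$), lets me assume $\pi_A$ is nondegenerate without changing the relevant norm.

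Now I would invoke the hypothesis. Put $M=\pi_A(A)''$. Since $\pi_C(C)$ commutes with $\pi_A(A)$ it commutes with $M$, i.e.\ $\pi_C(C)\subset M'$. Applying the hypothesis to the nondegenerate representation $\pi_A$ produces a c.c.p.\ map $\p\colon B\to M$ with $\p|_A=\pi_A$. The crux is to glue $\p$ (on $B$) and $\pi_C$ (on $C$) into a single c.c.p.\ map $\Psi\colon B\mtp C\to\B(\hh)$ with $\Psi(b\otimes c)=\p(b)\pi_C(c)$. Granting this, I am essentially done: restricted to $A\odot C$, $\Psi(a\otimes c)=\p(a)\pi_C(c)=\pi_A(a)\pi_C(c)$, so $\Psi$ agrees there with $\pi_A\times\pi_C$; contractivity of $\Psi$ then gives $\|\sum\pi_A(a_i)\pi_C(c_i)\|=\|\Psi(x)\|\le\|x\|_{B\mtp C}$, and the supremum over pairs yields exactly $\|x\|_{A\mtp C}\le\|x\|_{B\mtp C}$.

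The main obstacle is therefore the gluing lemma: a c.c.p.\ map $\p\colon B\to M$ and a $*$-homomorphism $\pi_C\colon C\to M'$ combine to a c.c.p.\ map on $B\mtp C$. I would prove this by dilation. Let $(\hk,\rho,V)$ be the minimal Stinespring dilation of $\p$, so $\rho\colon B\to\B(\hk)$ is a $*$-homomorphism, $V\colon\hh\to\hk$ a contraction, $\p(b)=V^*\rho(b)V$, and $\rho(B)V\hh$ dense in $\hk$. The heart of the matter, which is precisely Arveson's commutant-lifting refinement of Stinespring, is to lift $\pi_C$ to a $*$-homomorphism $\tilde\pi_C\colon C\to\rho(B)'$ with $\tilde\pi_C(c)V=V\pi_C(c)$, by declaring $\tilde\pi_C(c)\big(\sum\rho(b_i)V\xi_i\big)=\sum\rho(b_i)V\pi_C(c)\xi_i$. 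Checking that $\tilde\pi_C(c)$ is well defined and bounded is where the hypotheses enter: the Gram computation gives $\|\tilde\pi_C(c)\eta\|^2=\langle\vec\xi,\,S^*S\,P\,\vec\xi\rangle$, where $\eta=\sum\rho(b_i)V\xi_i$, the matrix $P=[\p(b_i^*b_j)]$ is positive because $\p$ is completely positive, and $S$ is the diagonal ampliation of $\pi_C(c)$. Since $\pi_C(c)\in M'$, the operator $S$ commutes with $P\in\M_n(M)$, and commutativity of two positive operators turns this into the clean estimate $\langle\vec\xi,S^*S\,P\,\vec\xi\rangle\le\|S\|^2\langle\vec\xi,P\vec\xi\rangle=\|\pi_C(c)\|^2\|\eta\|^2$; that $\tilde\pi_C$ is multiplicative and $*$-preserving is a similar dense-subspace check.

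Once $\tilde\pi_C$ is built, $\rho$ and $\tilde\pi_C$ have commuting ranges (as $\tilde\pi_C(C)\subset\rho(B)'$), so by Proposition \ref{prop:maxuniversality} they induce a genuine $*$-homomorphism $\rho\times\tilde\pi_C\colon B\mtp C\to\B(\hk)$. Compressing by $V$, set $\Psi=V^*(\rho\times\tilde\pi_C)(\,\cdot\,)V$; this is c.c.p.\ and satisfies $\Psi(b\otimes c)=V^*\rho(b)\tilde\pi_C(c)V=V^*\rho(b)V\pi_C(c)=\p(b)\pi_C(c)$, as required (a little care with an approximate unit of $B$ handles the identity $\tilde\pi_C(c)V=V\pi_C(c)$ in the nonunital case). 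This supplies the missing gluing map and completes the argument.
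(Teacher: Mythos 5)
Your proof is correct and is essentially the paper's argument: the paper likewise applies the hypothesis to the representation $\pi_A$ obtained by restricting a faithful representation of $A\mtp C$, and its composite $B\mtp C \xrightarrow{\p\mtp\pi_C} \pi_A(A)''\mtp\pi_C(C)\to\B(\hh)$ (via Theorem \ref{thm:tenprodmapscont} and Proposition \ref{prop:maxuniversality}) is exactly your gluing map $\Psi$ with $\Psi(b\otimes c)=\p(b)\pi_C(c)$. The only differences are cosmetic: you phrase injectivity as isometry on $A\odot C$ (with a nondegeneracy reduction) rather than via one faithful representation and a commutative diagram, and you re-prove the gluing step from scratch by Stinespring dilation plus Arveson's commutant lifting, which is precisely the ingredient the paper invokes by citing the maximal case of Theorem \ref{thm:tenprodmapscont}.
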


\begin{proof}  We must  show that if  $x \in A\mtp C$ is in the kernel of the canonical map $A\mtp C \to B\mtp C$, then $x = 0$.

Let $\pi\colon A\mtp C \to \B(\hh)$ be a faithful
representation.  It's a fact, nontrivial only in the nonunital case (see \cite[Theorem 3.2.6]{Me-n-Taka}), that one can find $*$-homomorphisms $\pi_A\colon A\to \B(\hh)$ and $\pi_C\colon C\to \B(\hh)$ (called the \emph{restrictions}) with commuting ranges such that $\pi(a\otimes c) = \pi_A(a) \pi_C(c)$ for all $a \in A$ and $c \in C$.  Since $\pi_C(C) \subset
\pi_A(A)^{\prime}$,  the commuting inclusions
$\pi_A(A)^{\prime\prime} \hookrightarrow \B(\hh)$, $\pi_C(C)
\hookrightarrow \B(\hh)$ induce, by universality, a product
$*$-homomorphism
$$\pi_A(A)^{\prime\prime} \mtp \pi_C(C) \longrightarrow \B(\hh).$$

Extend $\pi_A$ to a c.c.p.\ map $\p\colon B\to
\pi(A)^{\prime\prime}$ such that $\p(a) = \pi_A(a)$ for all $a \in
A$.  By Theorem \ref{thm:tenprodmapscont} we have the following
commutative diagram: 
\[
\xymatrix{ B\mtp C  \ar[rr]^{\p \mtp \pi_C} & & \,
\pi_A(A)^{\prime\prime} \mtp \pi_C(C) \ar[d] \\
A\mtp C \ar[u], \ar[rr]^{\pi} & & \, \B(\hh).}
\]
The fact that $\pi$ is faithful implies that the map on the left
is also injective.
\end{proof}

The converse of the previous result holds too. But to prove it, we need The Trick -- arguably the most useful observation about C$^*$-tensor products ever made. 

\begin{subprop}[The Trick]\label{prop:thetrick}
Let $A\subset B$ and $C$ be $\mathrm{C}^*$-algebras, $\|\cdot \|_{\alpha}$ 
be a $\mathrm{C}^*$-norm on $B\odot C$ and $\|\cdot \|_{\beta}$ be the
$\mathrm{C}^*$-norm on $A\odot C$ obtained by restricting $\|\cdot
\|_{\alpha}$ to $A\odot C \subset B\odot C$. If $\pi_A\colon A\to
\B(\hh)$, $\pi_C\colon C \to \B(\hh)$ are representations with
commuting ranges and if the product $*$-homomorphism $$\pi_A
\times \pi_C\colon A\odot C \to \B(\hh)$$ is continuous
with respect to $\| \cdot \|_{\beta}$, then there exists a c.c.p.\
map $\p\colon B\to \pi_C(C)^{\prime}$ which extends $\pi_A$.
\end{subprop}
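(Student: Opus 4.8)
The plan is to push the product representation all the way up to $B\otimes_\alpha C$ and then recover $\p$ by evaluating at tensors of the form $b\otimes 1$. To expose the idea cleanly, suppose first that $A$, $B$, $C$ are unital with $1_A=1_B$ and that $\pi_A$, $\pi_C$ are unital. Since $\pi_A\times\pi_C$ is $\|\cdot\|_\beta$-continuous by hypothesis, it extends to a unital $*$-representation $\rho\colon A\otimes_\beta C \to \B(\hh)$ with $\rho(a\otimes c)=\pi_A(a)\pi_C(c)$. Moreover, because $\|\cdot\|_\beta$ is by definition the restriction of $\|\cdot\|_\alpha$ to $A\odot C$, the completion $A\otimes_\beta C$ is precisely the norm closure of $A\odot C$ inside $B\otimes_\alpha C$; that is, we have a C$^*$-subalgebra $A\otimes_\beta C \subset B\otimes_\alpha C$ containing the common unit.

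Now I would apply Arveson's extension theorem: as $\B(\hh)$ is injective, the u.c.p.\ map $\rho$ extends to a u.c.p.\ map $\tilde\rho\colon B\otimes_\alpha C \to \B(\hh)$ with $\tilde\rho|_{A\otimes_\beta C}=\rho$. The key point is that $\tilde\rho$ restricts to the genuine $*$-homomorphism $\pi_C$ on the copy $1\otimes C$, so $1\otimes C$ lies in the multiplicative domain of $\tilde\rho$. Hence $\tilde\rho$ is a $\pi_C(C)$-bimodule map: for all $c\in C$ and $x\in B\otimes_\alpha C$,
\[
\tilde\rho\big((1\otimes c)x\big)=\pi_C(c)\,\tilde\rho(x), \qquad \tilde\rho\big(x(1\otimes c)\big)=\tilde\rho(x)\,\pi_C(c).
\]

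Finally, set $\p\colon B\to\B(\hh)$, $\p(b)=\tilde\rho(b\otimes 1)$. As the composite of the $*$-homomorphism $b\mapsto b\otimes 1$ with the c.c.p.\ map $\tilde\rho$, the map $\p$ is c.c.p.; and for $a\in A$ we get $\p(a)=\rho(a\otimes 1)=\pi_A(a)$, so $\p$ extends $\pi_A$. To see that $\p$ takes values in $\pi_C(C)^{\prime}$, apply the two bimodule identities with $x=b\otimes 1$: since $(1\otimes c)(b\otimes 1)=b\otimes c=(b\otimes 1)(1\otimes c)$, both $\pi_C(c)\p(b)$ and $\p(b)\pi_C(c)$ equal $\tilde\rho(b\otimes c)$, whence $\p(b)$ commutes with every $\pi_C(c)$, as required.

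The step I expect to be the real obstacle is the absence of units. The evaluation ``$b\otimes 1$'' and the multiplicative-domain theorem are tailored to the unital setting, so for general $A$, $B$, $C$ one must either pass to unitizations (extending $\pi_C$ to a unital representation on a possibly enlarged Hilbert space and checking that $\|\cdot\|_\alpha$ extends suitably to $B^+\odot C^+$) or replace $1$ by an approximate unit $(u_\lambda)$ of $C$ and extract $\p(b)$ as a limit of $\tilde\rho(b\otimes u_\lambda)$. The bimodule computation survives these maneuvers, but re-establishing that $\p$ remains c.c.p., still extends $\pi_A$, and still lands in $\pi_C(C)^{\prime}$ demands the careful nonunital bookkeeping that I would defer to \cite{Me-n-Taka}.
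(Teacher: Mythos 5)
Your proposal is correct and follows essentially the same route as the paper's proof: extend the product map to $A\otimes_\beta C \subset B\otimes_\alpha C$, apply Arveson's Extension Theorem, define $\p(b)=\tilde\rho(b\otimes 1)$, and use the multiplicative domain of $\tilde\rho$ (containing $1\otimes C$) to see that $\p$ lands in $\pi_C(C)^{\prime}$. Even your deferral of the nonunital bookkeeping to \cite[Proposition 3.6.5]{Me-n-Taka} matches the paper, which makes the identical unitality assumptions ($1_A=1_B$) and cites the same reference for the general case.
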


\begin{proof}  To avoid annoying details, we will assume  that $A$, $B$ and $C$ are all unital
and, moreover, that $1_A = 1_B$ (see \cite[Proposition 3.6.5]{Me-n-Taka} for the general case). Let $$\pi_A \times_{\beta}
\pi_C\colon A\otimes_{\beta} C \to \B(\hh)$$ be the extension of
the product map to $A\otimes_{\beta} C$.  Since $A\otimes_{\beta}
C \subset B\otimes_{\alpha} C$, we apply Arveson's Extension
Theorem (\cite[Theorem 1.6.1]{Me-n-Taka}) to get a u.c.p.\ extension $\Phi\colon B\otimes_{\alpha} C
\to \B(\hh)$. The desired map is just $\p(b) = \Phi(b\otimes
1_C)$.

To see that $\p$ takes values in $\pi_C(C)^{\prime}$ is a simple
multiplicative domain argument.\footnote{Multiplicative domains come up several times in these notes.  If $\psi \colon A \to B$ is a c.c.p.\ map, then the multiplicative domain of $\psi$ is the C$^*$-subalgebra $$A_\psi = \{ a\in A: \psi(aa^*) = \psi(a)\psi(a^*) \mbox{ and } \psi(a^*a) = \psi(a^*)\psi(a) \}.$$ This is precisely the elements that commute with the Stinespring projection and hence one has $\psi(ab) = \psi(a)\psi(b)$ and $\psi(ba) = \psi(b)\psi(a)$ for all $a \in A_\psi$ and all $b \in A$ (cf.\ \cite[Proposition 1.5.7]{Me-n-Taka}).}  Indeed, $\C 1_B\otimes C$ lives in the multiplicative domain of $\Phi$ since $\Phi|_{\C 1_B\otimes C} =
\pi_C$ is a $*$-homomorphism.  Thus, for every $b \in B$ and $c \in C$ we have 
\begin{align*} 
\p(b)\pi_C(c) &= \Phi(b\otimes 1_C) \Phi(1_B \otimes c)\\ 
& = \Phi((b \otimes 1_C)(1_B \otimes c))\\ 
& = \Phi((1_B \otimes c)(b \otimes 1_C))\\ 
& = \pi_C(c)\p(b) 
\end{align*}
\end{proof}

The Trick is hard to appreciate until you see what it can do for you.  But before demonstrating its utility, let me emphasize the point. Given an inclusion $A \subset B$ and a representation $\pi\colon
A\to \B(\hh)$, Arveson's Extension Theorem always allows one to
extend $\pi$ to a c.c.p.\ map $\p\colon B \to \B(\hh)$.  When The
Trick is applicable, \emph{one has better control on the range of this extension}; the point of The Trick is  that $\p(B) \subset \pi_C(C)^{\prime}$. 

As a first application, let's prove the converse of Proposition \ref{prop:relwklyinj}. An inclusion satisfying one of the following equivalent conditions is called {\em relatively weakly injective}.

\begin{subprop}\label{prop:relwklyinjII}
Let $A \subset B$ be an inclusion. Then
the following are equivalent:
\begin{enumerate}
\item there exists a c.c.p.\ map $\p\colon B \to A^{**}$ such that
$\p(a) = a$ for all $a \in A$;

\item for every $*$-homomorphism $\pi\colon A\to \B(\hh)$ there
exists a c.c.p.\ map $\p\colon B\to \pi(A)^{\prime\prime}$ such
that $\p(a) = \pi(a)$ for all $a \in A$;

\item for every $\mathrm{C}^*$-algebra $C$ there is a natural inclusion
$$A\mtp C \subset B\mtp C.$$
\end{enumerate}
\end{subprop}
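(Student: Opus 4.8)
The plan is to close a small cycle of implications, harvesting Proposition~\ref{prop:relwklyinj} for the easy direction and reserving The Trick for the crux. Observe first that $(2)\Rightarrow(3)$ is \emph{exactly} Proposition~\ref{prop:relwklyinj}, so that implication is already in hand. Next I would dispatch the equivalence $(1)\Leftrightarrow(2)$, which is really a reformulation built on the universal property of the double dual. For $(2)\Rightarrow(1)$, apply $(2)$ to the universal representation $\pi_u\colon A\to\B(\hh_u)$: since $\pi_u(A)^{\prime\prime}$ is canonically the enveloping von Neumann algebra $A^{**}$ and $\pi_u(a)=a$ under this identification, the c.c.p.\ map $B\to\pi_u(A)^{\prime\prime}=A^{**}$ furnished by $(2)$ fixes $A$ pointwise. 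For $(1)\Rightarrow(2)$, given any representation $\pi\colon A\to\B(\hh)$, use the universal property of $A^{**}$ to extend $\pi$ to a normal $*$-homomorphism $\tilde\pi\colon A^{**}\to\B(\hh)$; normality forces $\tilde\pi(A^{**})\subset\pi(A)^{\prime\prime}$, so $\tilde\pi\circ\p\colon B\to\pi(A)^{\prime\prime}$ is c.c.p.\ and restricts to $\pi$ on $A$.

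The real content is $(3)\Rightarrow(2)$, and this is where The Trick earns its keep. Given a representation $\pi\colon A\to\B(\hh)$, the idea is to choose the auxiliary algebra to be the commutant: set $C=\pi(A)^{\prime}\subset\B(\hh)$, let $\pi_A=\pi$, and let $\pi_C\colon C\hookrightarrow\B(\hh)$ be the inclusion. Then $\pi_A$ and $\pi_C$ have commuting ranges by construction, and the decisive observation is that $\pi_C(C)^{\prime}=(\pi(A)^{\prime})^{\prime}=\pi(A)^{\prime\prime}$ --- precisely the target algebra into which we want our extension to land.

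To invoke Proposition~\ref{prop:thetrick} I must supply a C$^*$-norm $\|\cdot\|_{\alpha}$ on $B\odot C$ whose restriction $\|\cdot\|_{\beta}$ to $A\odot C$ makes the product $*$-homomorphism $\pi_A\times\pi_C\colon A\odot C\to\B(\hh)$ continuous. This is exactly where hypothesis $(3)$ is used: take $\|\cdot\|_{\alpha}=\|\cdot\|_{\max}$ on $B\odot C$. Because $(3)$ supplies the inclusion $A\mtp C\subset B\mtp C$, the restricted norm $\|\cdot\|_{\beta}$ is the \emph{maximal} norm on $A\odot C$. Since $\pi_A\times\pi_C$ is a genuine $*$-homomorphism, it is automatically continuous for $\|\cdot\|_{\max}=\|\cdot\|_{\beta}$ by universality of the maximal norm, so all hypotheses of The Trick are met. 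It then produces a c.c.p.\ map $\p\colon B\to\pi_C(C)^{\prime}=\pi(A)^{\prime\prime}$ extending $\pi_A=\pi$, which is precisely $(2)$.

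The hard part is spotting the right auxiliary algebra: everything hinges on taking $C=\pi(A)^{\prime}$ so that $\pi_C(C)^{\prime}=\pi(A)^{\prime\prime}$, after which $(3)$ together with the universality of $\|\cdot\|_{\max}$ hands us the continuity hypothesis for free. The two double-dual facts feeding $(1)\Leftrightarrow(2)$ --- the identification $\pi_u(A)^{\prime\prime}=A^{**}$ and the normal extension of an arbitrary representation --- are standard consequences of the Double Dual Theorem, and are what let the abstract condition $(1)$ and the representation-theoretic condition $(2)$ communicate.
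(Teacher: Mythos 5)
Your proposal is correct and follows essentially the same route as the paper: $(2)\Rightarrow(3)$ is Proposition~\ref{prop:relwklyinj}, $(1)\Leftrightarrow(2)$ comes from the universal representation and normal extensions to $A^{**}$, and $(3)\Rightarrow(2)$ applies The Trick with the auxiliary algebra $C=\pi(A)^{\prime}$, using universality of the maximal norm to verify the continuity hypothesis. You merely spell out in more detail what the paper compresses into ``by universality, we can apply The Trick.''
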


\begin{proof}  Since every representation of $A$ extends to a
normal representation of $A^{**}$, the equivalence of the first two
statements is easy.

Assume condition (3) and let $\pi\colon A\to \B(\hh)$ be a
representation.  Let $C = \pi(A)^{\prime}$ and, by universality,
we can apply The Trick to the product map induced by the commuting
representations $\pi\colon A\to \B(\hh)$ and $\pi(A)^{\prime}
\hookrightarrow \B(\hh)$.  That's it.
\end{proof}

Our second application of The Trick is just as easy.  Recall that a von Neumann algebra $M \subset \B(\hh)$ is called \emph{injective} if there exists a conditional expectation $\B(\hh) \to M$ -- i.e., a u.c.p.\ map $\Phi\colon \B(\hh) \to M$ such that $\Phi(x) = x$ for all $x \in M$. 

\begin{subprop}  
\label{prop:comminj}
If $A$ is nuclear and $\pi\colon A \to \B(\hh)$ is a representation, then $\pi(A)^{\prime}$ is injective. 
\end{subprop}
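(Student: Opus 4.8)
The plan is to produce a conditional expectation $\B(\hh) \to \pi(A)'$ directly from The Trick (Proposition \ref{prop:thetrick}), using nuclearity of $A$ only to verify the one nontrivial hypothesis.

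First I would set up the roles. Take $B = \B(\hh)$ together with the inclusion $\pi(A)' \subset \B(\hh)$; this plays the part of ``$A \subset B$'' in The Trick. For the separate algebra ``$C$'' I would take $A$ itself, carrying the given representation $\pi\colon A \to \B(\hh)$. The two representations with commuting ranges that I feed into The Trick are the inclusion $\iota\colon \pi(A)' \hookrightarrow \B(\hh)$ and $\pi\colon A \to \B(\hh)$; that their ranges commute is the very definition of the commutant. The Trick then hands back a c.c.p.\ map $\p\colon \B(\hh) \to \pi_C(C)' = \pi(A)'$ extending $\iota$, i.e.\ with $\p|_{\pi(A)'} = \id$. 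Since $1 \in \pi(A)'$, this map is unital; and having range inside $\pi(A)'$ while restricting to the identity there, it is idempotent onto $\pi(A)'$. Thus $\p$ is a conditional expectation $\B(\hh) \to \pi(A)'$, which is exactly the assertion that $\pi(A)'$ is injective.

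The only point requiring work is the continuity hypothesis of The Trick, and this is where nuclearity enters. Fix any $\mathrm{C}^*$-norm $\|\cdot\|_{\alpha}$ on $\B(\hh) \odot A$ (say the maximal one) and let $\|\cdot\|_{\beta}$ be its restriction to $\pi(A)' \odot A$; I must check that the product $*$-homomorphism $\iota \times \pi\colon \pi(A)' \odot A \to \B(\hh)$, $c \otimes a \mapsto c\,\pi(a)$, is $\|\cdot\|_{\beta}$-continuous. Because $A$ is nuclear, Proposition \ref{prop:tensornuc} forces $\pi(A)' \odot A$ to carry a \emph{unique} $\mathrm{C}^*$-norm; as $\|\cdot\|_{\beta}$ is a $\mathrm{C}^*$-norm on this algebra, it must be that unique norm, which is at once the minimal and the maximal norm. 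Meanwhile, the commuting pair $\iota,\pi$ induces, by the universal property of the maximal tensor product (Proposition \ref{prop:maxuniversality}), an honest $*$-homomorphism $\pi(A)' \mtp A \to \B(\hh)$; since $*$-homomorphisms of $\mathrm{C}^*$-algebras are contractive, $\iota \times \pi$ is $\|\cdot\|_{\max}$-continuous. As $\|\cdot\|_{\max} = \|\cdot\|_{\beta}$ here, the hypothesis holds and The Trick applies.

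I expect this continuity verification to be the crux, and it is worth stressing why. For a general (non-nuclear) $A$ there is no reason the restriction $\|\cdot\|_{\beta}$ of a norm living on $\B(\hh) \odot A$ should agree with the norm that makes the product map bounded, so The Trick would simply fail to apply. Nuclearity collapses every $\mathrm{C}^*$-norm on $\pi(A)' \odot A$ into a single one, making this matching automatic; everything else is a formal invocation of The Trick together with the elementary observation that a unital c.c.p.\ idempotent onto a subalgebra is a conditional expectation.
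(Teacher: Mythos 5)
Your proof is correct and is essentially the paper's own argument: both apply The Trick with $C = A$, using universality of the maximal tensor product to get continuity of $\iota \times \pi$ and nuclearity of $A$ (Proposition \ref{prop:tensornuc}) to identify that norm with the restricted norm $\|\cdot\|_{\beta}$. The only cosmetic difference is that the paper takes $\|\cdot\|_{\alpha}$ to be the minimal norm, applying The Trick to $\pi(A)' \otimes A \subset \B(\hh) \otimes A$, whereas you take $\|\cdot\|_{\alpha}$ to be the maximal norm -- immaterial here, since nuclearity collapses all $\mathrm{C}^*$-norms on $\pi(A)' \odot A$ to a single one.
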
 

\begin{proof}  The canonical $*$-homomorphism $\iota \times \pi\colon \pi(A)^{\prime} \odot A \to \B(\hh)$, where $\iota\colon \pi(A)^{\prime}  \to \B(\hh)$ is inclusion, extends to $\pi(A)^{\prime} \mtp A$ by universality; i.e., by Proposition \ref{prop:tensornuc}, it extends to $\pi(A)^{\prime} \otimes A$.  Thus we can apply The Trick to  $\pi(A)^{\prime} \otimes A \subset \B(\hh) \otimes A$, and we're done. 
\end{proof} 

Though it won't be needed, let's look at one more application before getting back to nuclearity and von Neumann algebras.  I can't overstate the power and importance of The Trick, so please forgive the digression. 

\begin{subdefn} 
\label{defn:wep}
A $\mathrm{C}^*$-algebra $A\subset\B(\hh)$ is said to have Lance's \emph{weak expectation property} (WEP) if there exists a u.c.p.\ map $\Phi\colon \B(\hh) \to A^{**}$ such that $\Phi(a) = a$ for all $a \in A$.
\end{subdefn} 

A simple application of Arveson's Extension Theorem shows that the WEP is independent of the choice of faithful representation.

\begin{subcor}
\label{cor:WEPifftensorincl}  A $\mathrm{C}^*$-algebra $A$ has the WEP if
and only if for every inclusion $A\subset B$ and arbitrary $C$ we
have a natural inclusion $A\mtp C \subset B\mtp C$.
\end{subcor}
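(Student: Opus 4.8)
The plan is to recognize this corollary as a packaging of Proposition~\ref{prop:relwklyinjII} together with Arveson's Extension Theorem: I will show that $A$ has the WEP precisely when the inclusion $A \subset B$ satisfies condition~(1) of that proposition \emph{for every} $B \supset A$, and the latter is exactly condition~(3) (the tensor inclusion) for every $B$ and $C$. The bridge is the observation that the WEP is nothing but the special case $B = \B(\hh)$ of condition~(1): a weak expectation $\Phi\colon \B(\hh) \to A^{**}$ fixing $A$ is precisely a c.c.p.\ map from the ambient $\B(\hh)$ into $A^{**}$ extending the identity on $A$. Throughout I fix a single faithful representation $A \subset \B(\hh)$, which is harmless because (as noted before the statement) the WEP does not depend on this choice.

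For the forward implication, suppose $A$ has the WEP and let $A \subset B$ and $C$ be arbitrary. Let $\Phi\colon \B(\hh) \to A^{**}$ be a weak expectation. The inclusion $A \hookrightarrow \B(\hh)$ is a $*$-homomorphism, hence c.c.p., so Arveson's Extension Theorem produces a c.c.p.\ map $\tilde\p\colon B \to \B(\hh)$ with $\tilde\p(a) = a$ for all $a \in A$. Composing, $\Phi \circ \tilde\p\colon B \to A^{**}$ is c.c.p.\ and, since $\Phi$ fixes $A \subset A^{**}$, satisfies $\Phi(\tilde\p(a)) = \Phi(a) = a$ for all $a \in A$. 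This is precisely condition~(1) of Proposition~\ref{prop:relwklyinjII}, so by that proposition we obtain the natural inclusion $A \mtp C \subset B \mtp C$.

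For the converse, assume the tensor inclusion $A \mtp C \subset B \mtp C$ holds for every inclusion $A \subset B$ and every $C$. Apply this to the particular inclusion $A \subset \B(\hh)$, i.e.\ take $B = \B(\hh)$. For this $B$ the hypothesis is exactly condition~(3) of Proposition~\ref{prop:relwklyinjII}, so the implication (3)$\Rightarrow$(1) yields a c.c.p.\ map $\Phi\colon \B(\hh) \to A^{**}$ with $\Phi(a) = a$ for all $a \in A$, which is a weak expectation. Hence $A$ has the WEP.

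The only genuine content is imported from Proposition~\ref{prop:relwklyinjII} (whose proof rests on The Trick) and from Arveson's Extension Theorem; the corollary merely assembles them. Accordingly, the sole point requiring care --- and the closest thing to an obstacle --- is the cosmetic gap between the u.c.p.\ map demanded in Definition~\ref{defn:wep} and the c.c.p.\ maps furnished by Proposition~\ref{prop:relwklyinjII}. This is routine: if $(e_\lambda)$ is an approximate unit of $A$, then $\Phi(1_{\B(\hh)}) \geq \Phi(e_\lambda) = e_\lambda$, and the $e_\lambda$ increase ultraweakly to $1_{A^{**}}$, so $\Phi(1_{\B(\hh)}) \geq 1_{A^{**}}$; combined with $\Phi(1_{\B(\hh)}) \leq 1_{A^{**}}$ from contractivity, this forces $\Phi(1_{\B(\hh)}) = 1_{A^{**}}$, so the map is automatically unital.
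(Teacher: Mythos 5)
Your proof is correct and takes essentially the same route as the paper: the forward direction (weak expectation composed with an Arveson extension, then Proposition~\ref{prop:relwklyinjII}) is identical, and your converse simply cites the implication (3)$\Rightarrow$(1) of Proposition~\ref{prop:relwklyinjII} with $B = \B(\hh)$, which is exactly the Trick-with-$C=(A^{**})'$ argument the paper re-runs at the universal representation. Your closing remark resolving the c.c.p.\ versus u.c.p.\ discrepancy via an approximate unit is a correct extra detail the paper leaves implicit.
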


\begin{proof} Assume first that $A\subset\B(\hh)$ has the WEP and $A \subset B$.
The inclusion $A\hookrightarrow\B(\hh)$ extends to a c.c.p.\ map $\Psi\colon B \to
\B(\hh)$ by Arveson's Extension Theorem. Composing with $\Phi$ gives a map $B \to A^{**}$
which restricts to the identity on $A$ and then Proposition \ref{prop:relwklyinjII}
applies. The converse uses The Trick just as in the previous proposition.
This time take $B = \B(\hh_{\mathcal{U}})$, the universal representation of $A$, 
and $C=(A^{**})^{\prime}$.
\end{proof}

\section{Nuclearity and Injectivity} 
 
Perhaps I should remind you of the first goal of these notes: To describe an important C$^*$-theorem which requires a W$^*$-proof. This section contains such a theorem.  However, I think it's very instructive to specialize to the case of group C$^*$-algebras before getting into the general case.  Moreover, we can essentially give a complete proof in the group case (the W$^*$-machinery required by the general case is far too hard and long to include in these notes). 
 
\subsection{Reduced Group C$^*$-algebras} 

Mostly to establish notation, let me quickly review some basics. For a discrete group $\G$ we let $\lambda\colon \G \to \B(\ell^2(\G))$ denote the {\em left regular representation}: 
$\lambda_s(\delta_t) = \delta_{st}$ for all $s,t \in \G$, where
$\{\delta_t:t \in \G\}\subset \ell^2(\Gamma)$ is the canonical
orthonormal basis.  We'll also need the \emph{right regular representation} $\rho\colon \G 
\to \B(\ell^2(\G))$, defined by $\rho_s(\delta_t) = \delta_{ts^{-1}}$. 
Note that $\lambda$ and $\rho$ are unitarily equivalent; the intertwining unitary is 
defined by $U\delta_t=\delta_{t^{-1}}$. Also, note that the left and right regular representations commute, i.e., $\lambda_s \rho_t = \rho_t \lambda_s$ for all $s,t \in \G$. 

There is a canonical left action of $\G$ on $\ell^{\infty}(\Gamma)$.  For $f \in
\ell^{\infty}(\Gamma)$ and $s \in \G$ we let $s.f \in
\ell^{\infty}(\Gamma)$ be the function $s.f(t) = f(s^{-1}t)$; simple calculations
show that $f \mapsto s.f$ defines a group action of $\Gamma$
on $\ell^{\infty}(\Gamma)$. 
This action is {\em spatially
implemented by the left regular representation}  -- a very important fact. That is, if we
regard $\ell^{\infty}(\Gamma) \subset \B(\ell^2(\G))$ as
multiplication operators (i.e., $f\delta_t = f(t)\delta_t$), then a
calculation shows $$\lambda_s f \lambda_s^* = s.f$$
for all $f \in \ell^{\infty}(\Gamma)$ and $s \in \G$.

The \emph{reduced} C$^*$-algebra of $\G$, denoted $C_{\lambda}^*(\G)$,\footnote{
You will also see $C^*_r(\G)$ in the literature.}
is the C$^*$-algebra generated by $\{ \lambda_s : s \in \G \}$. In these notes it will be important to distinguish this algebra from $C_{\rho}^*(\G)$, which is just 
the C$^*$-algebra generated by the right regular representation (even though these algebras are isomorphic). Since the left and right regular representations commute, we have a canonical (and very important!) $*$-homomorphism $$C_{\lambda}^*(\G) \odot C_{\rho}^*(\G) \to \B(\ell^2(\G)).$$

The {\em group von Neumann algebra of $\G$} is defined to be
$$L(\G) := C_{\lambda}^*(\G)^{\prime\prime} \subset \B(\ell^2(\G)).$$
A fundamental theorem of Murray and von Neumann states that $L(\G)$
is the commutant of the right regular representation -- i.e., $L(\G) =
C_{\rho}^*(\G)^{\prime}$ and $L(\G)^{\prime} =
C_{\rho}^*(\G)^{\prime\prime}$.  As is well known, the vector state $T \mapsto \langle T \delta_e, \delta_e\rangle$, where $e \in \G$ denotes the neutral element, defines a faithful trace on $L(\G)$, which we will denote by $\tau$.

\begin{subdefn}\label{defn:amenablegrp}
A group $\G$ is \emph{amenable} if there exists a state $\mu$ on
$\ell^\infty(\G)$ which is invariant under the left translation action, i.e., $\mu(s.f) = \mu(f)$ 
 for all $s \in \G$ and $f \in \ell^\infty(\G)$.

Such a state $\mu$ is called an \emph{invariant mean}.
\end{subdefn}

Here's a theoretically useful way to get an invariant mean.  

\begin{sublem} 
\label{lem:invmean} Assume there exists a u.c.p.\ map $\Phi\colon \B(\ell^2(\G)) \to L(\G)$ such that $\Phi(x) = x$ for all $x \in C^*_{\lambda}(\G)$.  Then $\G$ is amenable. 
\end{sublem}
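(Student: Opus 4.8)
The plan is to manufacture an invariant mean directly from the hypothesis, by composing $\Phi$ with the canonical trace on $L(\G)$ and then verifying invariance through a multiplicative-domain argument. Concretely, I would set
\[
\mu(f) = \tau(\Phi(f)), \qquad f \in \ell^\infty(\G),
\]
where $\ell^\infty(\G) \subset \B(\ell^2(\G))$ is viewed as multiplication operators and $\tau$ is the faithful trace on $L(\G)$. Since $\Phi$ is u.c.p.\ it is positive and unital, and $\tau$ is a state, so $\mu$ is a state on $\ell^\infty(\G)$. It then remains only to check left invariance.

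The key observation is that $\Phi$ restricts to the identity on $C^*_\lambda(\G)$, which is (trivially) a $*$-homomorphism, so every $\lambda_s$ lies in the multiplicative domain of $\Phi$. Indeed $\Phi(\lambda_s^*\lambda_s) = \Phi(1) = 1 = \Phi(\lambda_s)^*\Phi(\lambda_s)$, and likewise with the roles of $\lambda_s$ and $\lambda_s^*$ reversed, which is exactly the defining condition for membership in the multiplicative domain. Invoking the bimodule property of multiplicative domains (recalled in the footnote following The Trick), I would then compute, for $s \in \G$ and $f \in \ell^\infty(\G)$,
\[
\Phi(\lambda_s f \lambda_s^*) = \lambda_s\,\Phi(f)\,\lambda_s^*,
\]
pulling the unitaries $\lambda_s, \lambda_s^*$ out of $\Phi$ precisely because they sit in its multiplicative domain.

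Now I would combine this identity with the spatial implementation $\lambda_s f \lambda_s^* = s.f$ and the traciality of $\tau$ on $L(\G)$:
\[
\mu(s.f) = \tau\big(\Phi(\lambda_s f \lambda_s^*)\big) = \tau\big(\lambda_s \Phi(f) \lambda_s^*\big) = \tau\big(\Phi(f)\lambda_s^*\lambda_s\big) = \tau(\Phi(f)) = \mu(f),
\]
using $\lambda_s^*\lambda_s = 1$ at the penultimate step. Hence $\mu$ is a left-invariant mean and $\G$ is amenable. The main obstacle, and the only step requiring genuine insight, is the middle identity $\Phi(\lambda_s f \lambda_s^*) = \lambda_s \Phi(f)\lambda_s^*$: a general u.c.p.\ map does not intertwine conjugation by a unitary, and what rescues us is exactly that $\Phi$ fixes $C^*_\lambda(\G)$, forcing the $\lambda_s$ into the multiplicative domain. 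Everything else — positivity and unitality of $\mu$, and the final appeal to the trace property — is routine.
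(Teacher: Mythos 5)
Your proposal is correct and follows essentially the same route as the paper's proof: defining $\mu = \tau\circ\Phi|_{\ell^\infty(\G)}$, placing $C^*_\lambda(\G)$ in the multiplicative domain of $\Phi$ because $\Phi$ fixes it, and then combining the spatial implementation $\lambda_s f\lambda_s^* = s.f$ with the trace property of $\tau$ to get invariance. Your explicit verification that each $\lambda_s$ lies in the multiplicative domain (via $\Phi(\lambda_s^*\lambda_s) = 1 = \Phi(\lambda_s)^*\Phi(\lambda_s)$) is a detail the paper leaves implicit, but the argument is identical in substance.
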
 

\begin{proof} The state $\mu := \tau\circ \Phi|_{\ell^{\infty}(\G)}$ turns out to be an invariant mean.  Indeed, since $C^*_{\lambda}(\G)$ falls in the multiplicative domain of $\Phi$ and the left translation action is spatially implemented, we have $$\mu(s.f) = \tau( \Phi(\lambda_s f \lambda^*_s)) = \tau( \lambda_s \Phi(f) \lambda^*_s) = \tau(\Phi(f)) = \mu(f),$$ for all $f \in \ell^{\infty}(\G)$ and $s \in \G$. 
\end{proof}

The \emph{symmetric difference} of two sets $E$ and $F$, 
denoted $E\bigtriangleup F$, is $(E\cup F) \setminus (E\cap F)$. 

\begin{subdefn}\label{defn:Folner}
We say $\G$ satisfies the \emph{F{\o}lner condition} if for any
finite subset $E\subset\G$ and $\e>0$, there exists a finite
subset $F\subset\G$ such that
\[
\max_{s\in E}\frac{|sF\bigtriangleup F|}{|F|}< \e,
\]
where $sF=\{st :
t\in F\}$.\footnote{Since $sF\bigtriangleup F =
[sF\setminus(sF\cap F)] \cup [F\setminus (sF\cap F)]$, it follows
that $\frac{|sF\bigtriangleup F|}{|F|} = 2 - 2\frac{|F\cap
sF|}{|F|}$. Hence the F{\o}lner condition is equivalent to
requiring $\max_{s\in E}\frac{|sF\cap F|}{|F|}> 1 - \e/2$, which
is often how it gets used in our context.} A sequence of finite 
sets $F_n \subset \G$ such that $$\frac{|sF_n\bigtriangleup F_n|}{|F_n|} \to 0$$ 
for every $s \in \G$ is called a \emph{F{\o}lner sequence}. 
\end{subdefn}

In the context of discrete groups, the following result illustrates the connections between nuclearity and von Neumann algebras.

\begin{subthm}
\label{thm:amenablegroup}
Let $\G$ be a discrete group. The following are equivalent:
\begin{enumerate}
\item\label{thm:grpam1} $\G$ is amenable;


\item\label{thm:grpam3} $\G$ satisfies the F{\o}lner condition;






\item\label{thm:grpam9} $C_{\lambda}^*(\G)$ is nuclear;

\item\label{thm:grpam9.5} the canonical $*$-homomorphism $C_{\lambda}^*(\G) \odot C_{\rho}^*(\G) \to \B(\ell^2(\G))$ is $\min$-continuous (i.e., extends to $C_{\lambda}^*(\G) \otimes C_{\rho}^*(\G)$); 

\item\label{thm:grpam10} $L(\G)$ is semidiscrete;

\item\label{thm:grpam11} $L(\G)$ is injective. 
\end{enumerate}
\end{subthm}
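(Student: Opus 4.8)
The plan is to prove everything by cycling through amenability, using Lemma~\ref{lem:invmean} as a universal gateway back to condition~\ref{thm:grpam1}. I would establish \ref{thm:grpam1}$\Rightarrow$\ref{thm:grpam3}$\Rightarrow$\ref{thm:grpam9}$\Rightarrow$\ref{thm:grpam9.5}$\Rightarrow$\ref{thm:grpam1} and, branching off the F{\o}lner condition, \ref{thm:grpam3}$\Rightarrow$\ref{thm:grpam10}$\Rightarrow$\ref{thm:grpam11}$\Rightarrow$\ref{thm:grpam1}. Since \ref{thm:grpam1}$\Rightarrow$\ref{thm:grpam3} feeds both branches and both branches return to \ref{thm:grpam1}, all six conditions become equivalent.

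The engine driving \ref{thm:grpam3}$\Rightarrow$\ref{thm:grpam9} and \ref{thm:grpam3}$\Rightarrow$\ref{thm:grpam10} is a single finite-dimensional model built from a F{\o}lner set. Fix a finite $F\subset\G$ and let $P_F\in\B(\ell^2(\G))$ be the projection onto $\mathrm{span}\{\delta_t:t\in F\}$, so that $P_F\B(\ell^2(\G))P_F\cong\M_{|F|}(\C)$. Define $\p_F\colon\B(\ell^2(\G))\to\M_{|F|}(\C)$ by $\p_F(x)=P_FxP_F$ (a compression, hence u.c.p.) and $\psi_F\colon\M_{|F|}(\C)\to C^*_\lambda(\G)$ on matrix units by $\psi_F(e_{r,t})=\frac1{|F|}\lambda_{rt^{-1}}$. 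The matrix $[\psi_F(e_{r,t})]_{r,t}=\frac1{|F|}[\lambda_r\lambda_t^*]_{r,t}$ is manifestly positive (it equals $\frac1{|F|}vv^*$ for the column $v=(\lambda_r)_{r\in F}$), so $\psi_F$ is u.c.p. by Proposition~\ref{prop:cpmapsfrommatrix}. A direct computation gives $\psi_F\p_F(\lambda_s)=\frac{|F\cap s^{-1}F|}{|F|}\lambda_s$, which tends to $\lambda_s$ precisely by the F{\o}lner condition. Restricting $\p_F$ to $C^*_\lambda(\G)$, contractivity upgrades convergence on the dense $*$-subalgebra of finite sums $\sum_s c_s\lambda_s$ to point-norm convergence $\psi_F\p_F\to\id$ on all of $C^*_\lambda(\G)$, i.e.\ nuclearity~\ref{thm:grpam9}. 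Restricting $\p_F$ to $L(\G)$ instead, one checks that the (now $\tau$-preserving) maps $\psi_F\p_F$ are $\|\cdot\|_2$-contractions by Kadison--Schwarz, so the same computation yields point-$\|\cdot\|_2$ convergence --- equivalently, point-ultraweak convergence on norm-bounded sets --- which is semidiscreteness~\ref{thm:grpam10}.

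For \ref{thm:grpam9}$\Rightarrow$\ref{thm:grpam9.5}: nuclearity of $C^*_\lambda(\G)$ forces a unique norm on $C^*_\lambda(\G)\odot C^*_\rho(\G)$ (Proposition~\ref{prop:tensornuc}), so the canonical product $*$-homomorphism into $\B(\ell^2(\G))$ --- which always extends to the maximal tensor product by Proposition~\ref{prop:maxuniversality} --- automatically extends to the spatial one. The two returns to amenability are where The Trick earns its keep. For \ref{thm:grpam9.5}$\Rightarrow$\ref{thm:grpam1} I apply Proposition~\ref{prop:thetrick} with $A=C^*_\lambda(\G)\subset\B(\ell^2(\G))=B$, $C=C^*_\rho(\G)$, and $\alpha=\beta=\min$ (the restriction of the spatial norm on $B\odot C$ to $A\odot C$ is again spatial by Proposition~\ref{prop:mininclusion}); condition~\ref{thm:grpam9.5} is exactly the hypothesis that the product map $\lambda\times\rho$ is $\min$-continuous. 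The Trick produces a u.c.p.\ map $\p\colon\B(\ell^2(\G))\to C^*_\rho(\G)'=L(\G)$ extending the inclusion of $C^*_\lambda(\G)$, i.e.\ $\p|_{C^*_\lambda(\G)}=\id$, and Lemma~\ref{lem:invmean} then hands me an invariant mean. For \ref{thm:grpam11}$\Rightarrow$\ref{thm:grpam1} there is nothing left to do: an injective $L(\G)$ comes with a conditional expectation $\B(\ell^2(\G))\to L(\G)$, which is a u.c.p.\ map restricting to the identity on $C^*_\lambda(\G)$, so Lemma~\ref{lem:invmean} applies verbatim.

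The step \ref{thm:grpam10}$\Rightarrow$\ref{thm:grpam11} is the one place I would cash in the compactness coveted in the introduction: given semidiscrete maps $\p_n\colon L(\G)\to\M_{k(n)}(\C)$ and $\psi_n\colon\M_{k(n)}(\C)\to L(\G)$, extend each $\p_n$ to $\B(\ell^2(\G))$ by Arveson's Extension Theorem, form $\psi_n\circ\p_n\colon\B(\ell^2(\G))\to L(\G)$, and take a point-ultraweak cluster point via Theorem~\ref{subthm:pointultraweak}; the limit is u.c.p.\ and equals the ultraweak limit of $\psi_n\p_n=\id$ on $L(\G)$, hence is a conditional expectation. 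Routing \emph{through} amenability this way lets me avoid the deep converse \ref{thm:grpam11}$\Rightarrow$\ref{thm:grpam10} (Connes' theorem): only the easy direction of semidiscrete~$\Leftrightarrow$~injective is needed. I expect the genuine analytic obstacle to be \ref{thm:grpam1}$\Rightarrow$\ref{thm:grpam3}, extracting honest F{\o}lner sets from an abstract invariant mean. The plan there is the classical Day--Namioka argument: first replace the mean by a net of finitely supported probability measures that are \emph{almost} invariant in $\ell^1$-norm (passing from weak to norm approximate invariance via the coincidence of weak and norm closures of convex sets), then feed these through the layer-cake identity $\|s.\xi-\xi\|_1=\int_0^\infty|s.\{\xi>t\}\bigtriangleup\{\xi>t\}|\,dt$ to locate a level set that is approximately translation-invariant.
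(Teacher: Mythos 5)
Your proposal is correct and follows essentially the same route as the paper: the identical F{\o}lner-set compressions $x\mapsto P_FxP_F$ and maps $e_{p,q}\mapsto\frac{1}{|F|}\lambda_p\lambda_{q^{-1}}$ drive (2)$\Rightarrow$(3) and (2)$\Rightarrow$(5), Proposition~\ref{prop:tensornuc} gives (3)$\Rightarrow$(4), The Trick plus Lemma~\ref{lem:invmean} gives (4)$\Rightarrow$(1), and the Arveson-extension/point-ultraweak cluster-point argument gives (5)$\Rightarrow$(6)$\Rightarrow$(1). The only divergences are minor refinements: you sketch the classical Day--Namioka argument for (1)$\Rightarrow$(2) where the paper simply cites it, and your instantiation of The Trick with ambient algebra $B=\B(\ell^2(\G))$ is the precise form needed (the paper's phrasing via $L(\G)\otimes C^*_\rho(\G)$, read literally, would only produce a map defined on $L(\G)$ rather than on all of $\B(\ell^2(\G))$).
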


\begin{proof} The equivalence of (1) and (2) is classical (see \cite[Theorem 2.6.8]{Me-n-Taka} for details).  

$(\ref{thm:grpam3})\Rightarrow(\ref{thm:grpam9})$:
Let $F_k \subset \Gamma$ be a sequence
of F{\o}lner sets. For each $k$ let $P_k \in
\B(\ell^2(\Gamma))$ be the orthogonal projection onto the finite-dimensional subspace spanned by $\{ \delta_g:g \in F_k\}$. Identify
$P_k \B(\ell^2(\Gamma))P_k$ with the
 matrix algebra $\M_{F_k}(\C)$ and let $\{
e_{p,q} \}_{p,q \in F_k}$ be the canonical matrix units of $\M_{F_k}(\C)$.  One can check that for each $s \in \Gamma$ we have
$e_{p,p}\lambda_s e_{q,q} = 0$ unless $sq = p$, and
$e_{p,p}\lambda_s e_{q,q} = e_{p,q}$ if $sq = p$. Since $P_k =
\sum_{p \in F_k} e_{p,p}$, we have $$P_k \lambda_s P_k = \sum_{p,q
\in F_k} e_{p,p}\lambda_s e_{q,q} = \sum_{p \in F_k \cap sF_k}
e_{p,s^{-1}p}.$$ Let $\p_k\colon C^*_{\lambda}(\G) \to
\M_{F_k}(\C)$ be the u.c.p.\ map defined by $x \mapsto P_k
xP_k$. Now define a map $\psi_k\colon \M_{F_k}(\C) \to C^*_{\lambda}(\G)$
by sending $$e_{p,q} \mapsto \frac{1}{|F_k|}
\lambda_p\lambda_{q^{-1}}.$$ Evidently this map is unital; it is also completely
positive, as one can check. 

The $\p_k$'s and $\psi_k$'s do the trick. Since the linear span of
$\{\lambda_s:s \in \Gamma\}$ is norm dense in $C^*_{\lambda}(\G)$, it
suffices to check that $\| \lambda_s - \psi_k\circ\p_k(\lambda_s)
\|\to 0$ for all $s \in \Gamma$. This follows from the
definition of F{\o}lner sets together with the following
computation:
$$\psi_k\circ\p_k(\lambda_s) = \psi_k( \sum_{p \in F_k \cap sF_k}
e_{p,s^{-1}p}) = \sum_{p \in F_k \cap sF_k}
\frac{1}{|F_k|}\lambda_s = \frac{|F_k \cap
sF_k|}{|F_k|}\lambda_s.$$ Hence the reduced group C$^*$-algebra is nuclear.

$(\ref{thm:grpam9})\Rightarrow(\ref{thm:grpam9.5})$:  This follows immediately from Proposition \ref{prop:tensornuc}, since $C_{\lambda}^*(\G) \odot C_{\rho}^*(\G) \to \B(\ell^2(\G))$ is always $\max$-continuous. 

$(\ref{thm:grpam9.5})\Rightarrow(\ref{thm:grpam1})$:  Applying The Trick to $C_{\lambda}^*(\G) \otimes C_{\rho}^*(\G) \subset L(\G) \otimes C_{\rho}^*(\G)$ we get a u.c.p.\ map $\Phi\colon \B(\ell^2(\G)) \to L(\G)$ such that $\Phi(x) = x$ for all $x \in C^*_{\lambda}(\G)$. Hence, Lemma \ref{lem:invmean} implies $\G$ is amenable. 

$(\ref{thm:grpam3})\Rightarrow(\ref{thm:grpam10})$ is similar to the proof of $(\ref{thm:grpam3})\Rightarrow(\ref{thm:grpam9})$. 

$(\ref{thm:grpam10})\Rightarrow(\ref{thm:grpam11})$: Let $\p_n\colon L(\G) \to \M_{k(n)}(\C)$ and $\psi_n\colon \M_{k(n)}(\C) \to L(\G)$ be as in the definition of semidiscreteness.  By Arveson's Extension Theorem, we may assume the $\p_n$'s are defined on all of $\B(\ell^2(\G))$. By Theorem \ref{subthm:pointultraweak}, we can find a point-ultraweak cluster point of the maps $\psi_n \circ \p_n \colon \B(\ell^2(\G)) \to L(\G)$ and this is evidently a conditional expectation. 

$(\ref{thm:grpam11})\Rightarrow(\ref{thm:grpam1})$ is an immediate consequence of Lemma \ref{lem:invmean}.
\end{proof}

As mentioned earlier, one of the reasons I've taken the time to prove the theorem above is that it isn't all that hard.  And it suggests there might be a more general result lurking in the bushes.

\subsection{W$^*$-algebras and the general case} 

In the proof of Theorem \ref{thm:amenablegroup} we used the group $\G$ in every implication -- except when proving $(\ref{thm:grpam9})\Rightarrow(\ref{thm:grpam9.5})$.  This implication followed from the general fact that if $A$ is nuclear, then there is a unique C$^*$-norm on $A\odot B$ for every $B$ (Proposition \ref{prop:tensornuc}). However, we've seen that  nuclear C$^*$-algebras have injective commutants in any representation (Proposition \ref{prop:comminj}).  Thanks to Haagerup standard form (\cite{haagerup}), we can take another step. 

\begin{subprop} 
\label{prop:nucinj} If $A$ is nuclear, then $A^{**}$ is injective. 
\end{subprop}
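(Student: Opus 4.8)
The plan is to realize $A^{**}$ and its commutant inside a single representation, apply Proposition~\ref{prop:comminj} to see that the commutant is injective, and then invoke Haagerup standard form to transport injectivity back to $A^{**}$ itself. The mention of \cite{haagerup} just before the statement is the tip-off: the commutant of $A^{**}$ is the only algebra Proposition~\ref{prop:comminj} hands us directly, so we need a mechanism to swap $A^{**}$ with its commutant, and a conjugation is exactly that.

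First I would record that, by the Double Dual Theorem, $A^{**}$ is a von Neumann algebra, so by \cite{haagerup} it admits a standard form: a faithful normal representation $A^{**}\subset\B(\hh)$ together with a conjugation (antiunitary involution) $J$, with $J^2=1$, satisfying $J A^{**} J = (A^{**})^{\prime}$. Composing $A\hookrightarrow A^{**}$ with this representation gives a representation $\pi\colon A\to\B(\hh)$. Because the ultraweak topology on $A^{**}$ agrees with its weak-$*$ topology (again the Double Dual Theorem) and $A$ is weak-$*$ dense in $A^{**}$ by Goldstine's theorem, $\pi(A)$ is ultraweakly dense in $A^{**}$; since the representation is normal this yields $\pi(A)^{\prime\prime}=A^{**}$, and hence $\pi(A)^{\prime}=(A^{**})^{\prime}$.

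Now, as $A$ is nuclear, Proposition~\ref{prop:comminj} applies to $\pi$ and tells us that $\pi(A)^{\prime}=(A^{**})^{\prime}$ is injective; fix a conditional expectation $E\colon\B(\hh)\to(A^{**})^{\prime}$. It remains to manufacture a conditional expectation onto $A^{**}$ itself, and this is where $J$ earns its keep. I would set $\Phi(T)=JE(JTJ)J$. Since $J(A^{**})^{\prime}J=A^{**}$, the map $\Phi$ takes values in $A^{**}$; and for $x\in A^{**}$ we have $JxJ\in(A^{**})^{\prime}$, so $E(JxJ)=JxJ$ and therefore $\Phi(x)=x$. Thus $\Phi$ is the identity on $A^{**}$, and it is unital because $\Phi(1)=JE(1)J=1$.

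The one point needing care --- and the main, if modest, obstacle --- is that $\Phi$ is genuinely a u.c.p.\ map despite being built from a conjugate-linear ingredient. Writing $\Ad_J(T)=JTJ$, the map $\Ad_J$ is a conjugate-linear $*$-automorphism of $\B(\hh)$, so applied entrywise it sends positive matrices to positive matrices at every matrix level (a positive $[T_{ij}]=[S_{ij}]^{*}[S_{ij}]$ is carried to $[\Ad_J(S_{ij})]^{*}[\Ad_J(S_{ij})]\geq 0$). Consequently $\Phi=\Ad_J\circ E\circ\Ad_J$ is \emph{linear} --- two conjugations compose to something linear --- and completely positive. With this $\Phi$ in hand we have a conditional expectation $\B(\hh)\to A^{**}$, so $A^{**}$ is injective.
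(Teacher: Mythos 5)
Your proof is correct and follows essentially the same route as the paper: represent $A^{**}$ in standard form, use Proposition~\ref{prop:comminj} to get a conditional expectation onto $(A^{**})'$, and conjugate by $J$ via $T \mapsto J\Phi(JTJ)J$. The paper leaves the final verification as a ``now check''; you have simply filled in those details (the density argument identifying $\pi(A)'$ with $(A^{**})'$, and the complete positivity of the $J$-conjugated map), and correctly.
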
 

\begin{proof} First we represent $A^{**} \subset \B(\hh)$ in standard form. Then, there is a conjugate-linear isometry $J\colon \hh \to \hh$ such that $J^2 = \id_{\hh}$ and $A^{**} = J(A^{**})^{\prime} J$.  By Proposition \ref{prop:comminj}, there is a conditional expectation $\Phi\colon \B(\hh) \to (A^{**})^{\prime}$.  Now check that $T \mapsto J\Phi(JTJ)J$ is a conditional expectation onto $A^{**}$, and we're done. 
\end{proof} 

Here's the big theorem we've been after. 

\begin{subthm}  
\label{thm:nucchar}
For a C$^*$-algebra $A$, the following are equivalent: 
\begin{enumerate} 
\item for every $B$ there is a unique C$^*$-norm on $A\odot B$;\footnote{Originally this was the definition of nuclearity.} 

\item $A$ is nuclear; 

\item $A^{**}$ is semidiscrete; 

\item $A^{**}$ is injective.  
\end{enumerate} 
\end{subthm}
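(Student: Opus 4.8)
The plan is to arrange the four conditions into a single cycle, most of whose links are already in hand, and to isolate the one genuinely deep implication. Three arrows come for free: Proposition~\ref{prop:seminuc} gives $(3)\Rightarrow(2)$, Proposition~\ref{prop:tensornuc} gives $(2)\Rightarrow(1)$, and Proposition~\ref{prop:nucinj} gives $(2)\Rightarrow(4)$. Thus the triangle among $(2)$, $(3)$, $(4)$ closes as soon as one supplies $(4)\Rightarrow(3)$, and the tensor-norm condition $(1)$ gets folded in via a single new arrow. I would aim for the cycle
\[
(1)\Rightarrow(4)\Rightarrow(3)\Rightarrow(2)\Rightarrow(1),
\]
so that only $(1)\Rightarrow(4)$ and $(4)\Rightarrow(3)$ require work (the free arrows $(2)\Rightarrow(1)$ and $(3)\Rightarrow(2)$ supply the rest, and Proposition~\ref{prop:nucinj} is a consistency check but is not logically needed).

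For $(1)\Rightarrow(4)$ I would simply re-run the proofs of Propositions~\ref{prop:comminj} and~\ref{prop:nucinj}, observing that nuclearity entered those arguments only to guarantee $\pi(A)'\mtp A=\pi(A)'\otimes A$ for a representation $\pi$, and this is precisely a special case of $(1)$. Concretely, fix a representation $\pi\colon A\to\B(\hh)$. The commuting pair $\pi(A)'\hookrightarrow\B(\hh)$ and $\pi\colon A\to\B(\hh)$ induces, by universality, a $*$-homomorphism on $\pi(A)'\mtp A$; hypothesis $(1)$ identifies this with $\pi(A)'\otimes A$ (and Proposition~\ref{prop:mininclusion} shows the relevant restricted norm is spatial), so the product map is $\min$-continuous and The Trick (Proposition~\ref{prop:thetrick}) applied to $\pi(A)'\otimes A\subset\B(\hh)\otimes A$ yields a conditional expectation onto $\pi(A)'$. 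Hence $\pi(A)'$ is injective for \emph{every} $\pi$. Now represent $A^{**}$ in standard form on a Hilbert space $\hh$ and let $\pi$ be the restriction of this representation to $A\subset A^{**}$; since $A$ is ultraweakly dense in $A^{**}$ and the representation is normal, $\pi(A)''=A^{**}$ and therefore $\pi(A)'=(A^{**})'$. The conjugate-linear isometry $J$ of Haagerup standard form then converts the conditional expectation $\Phi\colon\B(\hh)\to(A^{**})'$ into $T\mapsto J\Phi(JTJ)J$, a conditional expectation onto $A^{**}$. This is exactly $(4)$.

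The remaining arrow $(4)\Rightarrow(3)$ -- that an injective von Neumann algebra is semidiscrete -- is the main obstacle, and it is genuinely hard. Unlike the group setting of Theorem~\ref{thm:amenablegroup}, where F{\o}lner sets (or amenability) hand one semidiscreteness almost for free, there is no shortcut here: this is the deep theorem of Connes, together with the work of Choi--Effros and Kirchberg needed to remove separability and finiteness hypotheses. I would not reproduce it but cite it (see \cite{Me-n-Taka}); this is precisely the ``far too hard and long'' W$^*$-machinery flagged at the opening of this section. With that single external input, the cycle $(1)\Rightarrow(4)\Rightarrow(3)\Rightarrow(2)\Rightarrow(1)$ is complete and all four conditions are equivalent.
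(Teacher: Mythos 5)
Your proposal is correct and follows essentially the same route as the paper: the paper also closes the cycle via $(3)\Rightarrow(2)$ (Proposition~\ref{prop:seminuc}), $(2)\Rightarrow(1)$ (Proposition~\ref{prop:tensornuc}), and $(1)\Rightarrow(4)$ -- citing Proposition~\ref{prop:nucinj} for the latter precisely because, as you observe, its proof (through Proposition~\ref{prop:comminj} and The Trick) only ever uses the unique-norm hypothesis rather than full nuclearity -- and then cites the deep Connes/Choi--Effros theorem for $(4)\Rightarrow(3)$. Your explicit spelling-out of the standard-form step and of why $\pi(A)'=(A^{**})'$ merely makes visible what the paper leaves implicit.
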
 

\begin{proof} We've already seen $(3) \Longrightarrow (2)$ (Proposition \ref{prop:seminuc}), $(2) \Longrightarrow (1)$ (Proposition \ref{prop:tensornuc}) and $(1) \Longrightarrow (4)$ (Proposition \ref{prop:nucinj}). 

The remaining implication, mainly due to Connes and Choi-Effros, is very hard work -- too hard for these notes. It depends on some deep results in von Neumann algebra theory (e.g., existence of modular automorphisms) and, in my opinion, is one of the great W$^*$-achievements.  See \cite[Section 9.3]{Me-n-Taka} for more on the proof. \end{proof} 

At present there is no proof of the equivalence of conditions (1) and (2) above -- a purely C$^*$-algebraic statement! -- that avoids von Neumann algebras. 

If $J \triangleleft A$ is a closed 2-sided ideal, then $A^{**} = J^{**} \oplus (A/J)^{**}$. Hence the next corollary is easily deduced from the previous theorem. 

\begin{subcor} 
\label{cor:nucquot}
Let $0 \to I \to A \to A/I \to 0$ be a short exact sequence.  Then $A$ is nuclear if and only if both $I$ and $A/I$ are nuclear. 
\end{subcor}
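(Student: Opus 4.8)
The plan is to reduce everything to the characterization from Theorem~\ref{thm:nucchar} that $A$ is nuclear if and only if $A^{**}$ is injective, combined with the decomposition $A^{**} = I^{**} \oplus (A/I)^{**}$ recorded just above. So the corollary will follow at once from the following purely W$^*$-lemma: a direct sum $M = M_1 \oplus M_2$ of von Neumann algebras, acting on $\hh = \hh_1 \oplus \hh_2$, is injective if and only if each $M_i$ is injective. I would prove this lemma and then assemble the chain of equivalences.

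For the lemma, the backward direction is the routine one. Given conditional expectations $\Phi_i \colon \B(\hh_i) \to M_i$, write an arbitrary $T \in \B(\hh)$ as a $2\times 2$ operator matrix $[T_{ij}]$ with $T_{ij}\colon \hh_j \to \hh_i$, and set $\Phi(T) = \Phi_1(T_{11}) \oplus \Phi_2(T_{22})$. Each corner compression $T \mapsto T_{ii}$ is u.c.p., so $\Phi$ is u.c.p., and for $x = x_1 \oplus x_2 \in M$ the off-diagonal corners vanish and $\Phi(x) = x$; hence $M$ is injective. For the forward direction, let $\Phi\colon \B(\hh) \to M$ be a conditional expectation and let $p \in M$ be the central projection with $p\hh = \hh_1$ (i.e.\ the unit of $M_1$). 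Since $p \in M$ and $\Phi|_M = \id$, the projection $p$ lies in the multiplicative domain of $\Phi$, so $\Phi(pXp) = p\,\Phi(X)\,p$ for all $X \in \B(\hh)$. Identifying $\B(\hh_1)$ with $p\B(\hh)p$ (operators of the form $T \oplus 0$), the map $\Phi_1(T) := \Phi(T \oplus 0)$ then takes values in $pMp = M_1$, is u.c.p.\ (it sends $\id_{\hh_1} = p$ to $p$), and restricts to the identity on $M_1$. Thus $M_1$ is injective, and symmetrically so is $M_2$.

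Finally I would assemble
\[
A \text{ nuclear} \iff A^{**} \text{ injective} \iff I^{**} \text{ and } (A/I)^{**} \text{ both injective} \iff I \text{ and } A/I \text{ both nuclear},
\]
where the first and last equivalences are Theorem~\ref{thm:nucchar} applied to $A$, to $I$, and to $A/I$, while the middle one is the lemma applied to $A^{**} = I^{**} \oplus (A/I)^{**}$. The only genuine content is the multiplicative-domain observation in the forward direction of the lemma; once that is noted, the corollary is indeed immediate, exactly as advertised.
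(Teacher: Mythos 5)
Your proposal is correct and follows exactly the route the paper intends: the paper's entire proof is the remark that $A^{**} = I^{**} \oplus (A/I)^{**}$ together with Theorem~\ref{thm:nucchar}, and you have simply supplied the (routine, correctly executed) missing detail that a direct sum of von Neumann algebras is injective if and only if each summand is, via corner compressions in one direction and the multiplicative-domain trick for the central projection in the other.
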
 

To the C$^*$-algebraist that doesn't yet appreciate von Neumann algebras, I have a challenge: Find a C$^*$-proof of the fact that nuclearity passes to quotients.  Good luck... 

Here's another nice C$^*$-application (due to Blackadar). It requires the fact that nuclearity passes to quotients, hence von Neumann algebras deserve much of the credit.  

\begin{subcor}
\label{cor:nuctypeI}
A separable $\mathrm{C}^*$-algebra is type \emph{I} if and only if every subalgebra
is nuclear.
\end{subcor}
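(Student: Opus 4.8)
The plan is to prove the two implications separately, letting Corollary~\ref{cor:nucquot} --- nuclearity passes to ideals and quotients, equivalently is preserved under extensions --- do the heavy lifting in each.

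\textbf{Type I $\Rightarrow$ every subalgebra nuclear.} First I would recall that a separable type I C$^*$-algebra admits a composition series whose successive subquotients are built from nuclear pieces such as $C_0(X)\otimes\mathcal K$; running these through Corollary~\ref{cor:nucquot} (to handle the extensions) and using continuity of nuclearity under inductive limits shows that every type I C$^*$-algebra is nuclear. To promote ``$A$ nuclear'' to ``every C$^*$-subalgebra of $A$ nuclear,'' I would cite the classical fact that type I is inherited by C$^*$-subalgebras (see, e.g., Dixmier): any $D\subset A$ is again type I, hence nuclear by the previous sentence. That settles the easy half.

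\textbf{Every subalgebra nuclear $\Rightarrow$ type I.} Here I would argue the contrapositive: assuming $A$ is separable and \emph{not} type I, I want to manufacture a non-nuclear subalgebra. Two deep external inputs make this possible. First, Glimm's dichotomy provides a C$^*$-subalgebra $B\subset A$ and a closed ideal $I\triangleleft B$ with $B/I$ isomorphic to the CAR algebra $\mathcal C$. Second --- and this is the subtle point --- $\mathcal C$ itself contains a non-nuclear C$^*$-subalgebra $D$; its existence is a theorem of Blackadar (with further examples due to Dadarlat). Granting both, let $q\colon B\to B/I=\mathcal C$ be the quotient map and put $E=q^{-1}(D)\subset A$. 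Then
\[
0\longrightarrow I\longrightarrow E\stackrel{q}{\longrightarrow} D\longrightarrow 0
\]
is short exact, so by Corollary~\ref{cor:nucquot} the algebra $E$ is nuclear precisely when both $I$ and $D$ are. As $D$ is not nuclear, $E$ is a non-nuclear C$^*$-subalgebra of $A$, which is exactly the contradiction I am after.

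\textbf{The main obstacle.} Essentially all the difficulty is concentrated in the two ingredients of the hard direction. Glimm's theorem is a deep structural result whose proof I would only cite. The genuinely delicate point is producing a non-nuclear subalgebra of the CAR algebra: because any C$^*$-subalgebra of $\mathcal C$ is automatically both exact and quasidiagonal, the usual non-nuclear suspects are ruled out --- reduced free-group C$^*$-algebras fail quasidiagonality and full free-group C$^*$-algebras fail exactness --- so one needs a genuinely cleverer construction. Once such a $D$ is available, the return trip to $A$ is the soft pullback above, and the only tensor-product/von Neumann machinery invoked is Corollary~\ref{cor:nucquot}, which (as promised in the narrative) is where the W$^*$-theory quietly pays for the C$^*$-conclusion.
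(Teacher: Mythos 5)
Your proof is correct, and at the strategic level it coincides with the paper's: both directions ultimately run through Glimm's theorem together with Corollary \ref{cor:nucquot}. The differences are in the supporting inputs. For the easy direction the paper avoids composition series and the Dixmier citation entirely: it deduces heredity of type I directly from Glimm's subquotient characterization (a non-type-I subalgebra would have UHF subquotients, hence so would the ambient algebra) and then simply cites nuclearity of type I algebras; your route via composition series, extensions, and inductive limits is also valid but imports more machinery. For the hard direction your key lemma differs from the paper's: you invoke Blackadar's theorem that the CAR algebra contains a non-nuclear subalgebra $D$ and pull it back along the quotient map, whereas the paper cites the fact that $C^*_\lambda(\F_2)$ is a subquotient of every UHF algebra (\cite[Corollary 8.2.5]{Me-n-Taka}) and produces $C\subset B$ having $C^*_\lambda(\F_2)$ as a quotient, with non-nuclearity of $C$ following from Corollary \ref{cor:nucquot} exactly as in your pullback. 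These two inputs are interchangeable modulo Corollary \ref{cor:nucquot}: the paper's subalgebra of the UHF algebra with non-nuclear quotient is itself a concrete instance of the non-nuclear $D$ you postulate, and conversely. Your diagnosis of why this is the delicate point --- subalgebras of the CAR algebra are exact and quasidiagonal, so $C^*_\lambda(\F_2)$ and $C^*(\F_2)$ cannot embed directly --- is precisely the reason the paper works with a subquotient rather than a subalgebra of $\F_2$-type. A small point in your favor: your explicit exact sequence $0\to I\to E\to D\to 0$ makes the pullback step transparent, where the paper compresses it into the one-line ``we can find $C\subset B$.''
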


\begin{proof}  Recall Glimm's Theorem: A separable C$^*$-algebra
$A$ is \emph{not} type I if and only if every UHF algebra arises
as a subquotient of $A$ (see \cite[Section 6.8]{pedersen}).

This implies, first of all, that subalgebras of type I are again type I.
Indeed, if a subalgebra were not type I, then it would have UHF
subquotients and hence the larger algebra would too.  Since type
I C$^*$-algebras are nuclear (\cite[Proposition 2.7.4]{Me-n-Taka}),
this evidently implies the ``only if" direction.

For the opposite direction, assume $A$ is not type I.  By Glimm's Theorem, we
can find a subalgebra $B \subset A$ with a prescribed
UHF quotient.  It is a fact that  $C^*_\lambda(\F_2)$ is a subquotient of every UHF algebra (cf.\ \cite[Corollary 8.2.5]{Me-n-Taka}), hence we can find $C \subset B$ which has $C^*_\lambda(\F_2)$ as a quotient.  Since $C^*_\lambda(\F_2)$ is not nuclear (cf.\ Theorem \ref{thm:amenablegroup}), $C$ is not nuclear. 
\end{proof}

\section{Solid von Neumann Algebras} 

Now that we've seen a few C$^*$-theorems that require W$^*$-proofs, I want to turn the tables and show you a few C$^*$-contributions to von Neumann algebra theory.

\subsection{Exact C$^*$-algebras and local reflexivity} 

Here's a horribly short summary of some C$^*$-facts we'll need. 

\begin{subdefn} A C$^*$-algebra $A$ is \emph{exact} if there exists a faithful, nuclear (see Definition \ref{defn:nuclear}) $*$-representation $\pi\colon A \to \B(\hh)$.\footnote{As with nuclearity, this is not the historically correct definition. A deep theorem of Kirchberg states that the original definition (which involved tensor products and short exact sequences) is equivalent to the definition above (see \cite[Theorem 3.9.1]{Me-n-Taka}).}  A discrete group $\G$ is exact if $C^*_{\lambda}(\G)$ is exact. 
\end{subdefn}

It turns out that exact C$^*$-algebras enjoy an important approximation property that isn't as well known as it should be. 

\begin{subdefn}\label{defn:locref}
A $\mathrm{C}^*$-algebra $A$ is {\em locally reflexive} if for every finite-dimensional operator system\footnote{That is, $E$ is a self-adjoint linear subspace containing the unit of $A^{**}$.} $E\subset A^{**}$, there exists a net
of c.c.p.\ maps $\p_i\colon E\to A$ which converges to $\id_E$ in
the point-ultraweak topology.
\end{subdefn}

In a brutal tour de force, Kirchberg proved the following remarkable result (see \cite[Chapter 9]{Me-n-Taka}).

\begin{subthm}\label{cor:exactlocref}
Exact $\mathrm{C}^*$-algebras are locally reflexive.\footnote{This result depends on some older tensor product work of Archbold and Batty, which, in turn, depends on Theorem \ref{thm:nucchar}.   As such, it is another example of a C$^*$-theorem that requires von Neumann algebras. But we're about to come full circle and use this result to prove Ozawa's solidity theorem for von Neumann algebras, so it's a bit misleading to say that Ozawa's work is an application of C$^*$-theory to W$^*$-algebras.  It's really a brilliant reminder of the unity of Operator Algebras!}
\end{subthm}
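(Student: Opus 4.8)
The plan is to trade the approximation property of Definition \ref{defn:locref} for an equivalent \emph{tensorial} statement and then feed exactness into it. Recall that $A$ is locally reflexive precisely when every finite-dimensional operator system $E \subset A^{**}$ admits c.c.p.\ maps $\varphi_i \colon E \to A$ with $\varphi_i \to \id_E$ point-ultraweakly. My first step is the reduction, due to Archbold--Batty, that $A$ is locally reflexive if and only if for every C$^*$-algebra $B$ the canonical $*$-homomorphism
\[
A^{**} \otimes B \longrightarrow (A \otimes B)^{**}
\]
is injective (equivalently, isometric on the spatial tensor product). One direction is immediate by tensoring the $\varphi_i$ with $\id_B$ and invoking Theorem \ref{thm:tenprodmapscont}; the reverse is a Hahn--Banach / slice-map duality in which the sought c.c.p.\ maps $E \to A$ are manufactured from the normal functionals that separate $A^{**} \otimes B$ inside $(A \otimes B)^{**}$.

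Second, I would verify this criterion in the nuclear case, where Theorem \ref{thm:nucchar} does the heavy lifting. If $N$ is nuclear then $N^{**}$ is injective, hence semidiscrete, so $N^{**} \otimes B$ embeds in $N^{**} \vt B^{**} = (N \otimes B)^{**}$; thus nuclear algebras satisfy the Archbold--Batty condition (indeed the stronger form involving $B^{**}$ as well) and are automatically locally reflexive. This is the anchor from which the exact case must be bootstrapped.

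Third, I would exploit exactness directly through its definition: $A$ carries a faithful nuclear representation $\pi \colon A \to \B(\hh)$, so there are c.c.p.\ maps $\varphi_n \colon A \to \M_{k(n)}(\C)$ and $\psi_n \colon \M_{k(n)}(\C) \to \B(\hh)$ with $\psi_n \circ \varphi_n \to \pi$ in the point-norm topology. Extending each $\varphi_n$ to a normal map $A^{**} \to \M_{k(n)}(\C)$ (harmless, as the matrix algebra is its own bidual) and tensoring with $\id_B$ produces honest approximants in $\M_{k(n)}(\C) \otimes B$, an algebra for which the criterion is free since matrices are nuclear and the minimal and maximal norms coincide there. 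The remaining task is to transport these matricial approximants back down into $A \otimes B$ and thereby conclude injectivity of $A^{**} \otimes B \to (A \otimes B)^{**}$; here one uses that exactness makes the functor $A \otimes (-)$ preserve short exact sequences, together with the slice-map description of the Fubini product that underlies the Archbold--Batty equivalence.

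The hard part is precisely this last transport -- the problem of \emph{coming back to $A$}. The nuclear factorization only returns maps landing in $\B(\hh)$, not in $A$, so one cannot read off the c.c.p.\ maps $E \to A$ directly; reconciling the matricial approximations with the genuine range $A$ requires the full strength of the tensorial consequences of exactness and, beneath them, Theorem \ref{thm:nucchar}. This is the tour de force that these notes wisely leave to \cite[Chapter 9]{Me-n-Taka}.
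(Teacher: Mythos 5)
Your outline gets the architecture right, but it is not a proof, and the crux is missing at exactly the point you concede it. Note first that the paper itself offers no argument here: the statement is attributed to Kirchberg and deferred wholesale to \cite[Chapter 9]{Me-n-Taka}, so the relevant comparison is with Kirchberg's actual proof. Against that benchmark, your first two steps are sound in spirit: the Archbold--Batty reduction of local reflexivity to injectivity of the canonical map $A^{**}\otimes B \to (A\otimes B)^{**}$ (property C$''$) is the standard reformulation, and anchoring the nuclear case on Theorem \ref{thm:nucchar} is correct. But one identity you assert along the way is false: $N^{**}\mathbin{\bar{\otimes}}B^{**}$ is \emph{not} equal to $(N\otimes B)^{**}$. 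The bidual of $N\otimes B$ is the universal enveloping von Neumann algebra of $N \otimes B$ and is vastly larger than the spatial von Neumann tensor product of the biduals; what Archbold--Batty actually prove is that semidiscreteness of $N^{**}$ forces the canonical map $N^{**}\otimes B \to (N\otimes B)^{**}$ to be injective, via a point-ultraweak approximation through matrix algebras, not via any identification of the two algebras.

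The genuine gap is your third step. Nuclear embeddability hands you c.c.p.\ factorizations $A \to \M_{k(n)}(\C) \to \B(\hh)$ whose compositions converge to a faithful representation -- maps landing in $\B(\hh)$, never in $A$ -- and the entire content of Kirchberg's theorem is the passage from this to property C$''$ (equivalently, to c.c.p.\ maps $E \to A$). Your proposed bridge, ``exactness makes $A\otimes(-)$ preserve short exact sequences, together with slice maps,'' cannot carry the weight: the equivalence of sequence-exactness with the tensorial slice-map property C$'$, and the further step from C$'$ to C (hence C$''$), are themselves the deep Kirchberg results, proved in \cite[Chapter 9]{Me-n-Taka} by a delicate representation-theoretic and convexity argument -- the ``brutal tour de force'' the paper alludes to. Deferring that step, as your final paragraph does, means the proposal is a correct plan of attack with the theorem itself left unproved; nothing in the elementary toolkit assembled earlier in these notes (The Trick, Theorem \ref{thm:tenprodmapscont}, Theorem \ref{thm:nucchar}) closes it.
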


\subsection{Solid von Neumann algebras}

In hopes of making Ozawa's work easier to digest, I'll cut it into bite-size chunks.  The first bite is a tasty application of local reflexivity. 

\begin{sublem}\label{lem:lem} 
Let $M \subset \B(\hh)$ be a von Neumann algebra which contains a weakly dense exact C$^*$-algebra $B \subset M$.  Assume $N \subset M$ is a von Neumann subalgebra with a weakly continuous conditional expectation $\Phi\colon M \to N$, such that there exists a u.c.p.\ map $\Psi\colon \B(\hh) \to M$ with the property that $\Phi|_B = \Psi|_B$.  Then $N$ is injective. 
\end{sublem}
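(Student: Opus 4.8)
The plan is to exhibit a concrete conditional expectation $\B(\hh)\to N$, namely the composition $\Theta:=\Phi\circ\Psi\colon\B(\hh)\to N$, which is u.c.p. Once I know that $\Theta$ restricts to the identity on $N$, it is by definition a conditional expectation onto $N$, and $N$ is injective. The first (and only painless) observation is that $\Theta$ already agrees with $\Phi$ on the weakly dense subalgebra $B$: for $b\in B$ we have $\Psi(b)=\Phi(b)\in N$, hence $\Theta(b)=\Phi(\Phi(b))=\Phi(b)$, using that $\Phi$ is the identity on $N$. So $\Theta|_B=\Phi|_B$.

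Next I would reduce the entire problem to the single claim that $\Theta|_M=\Phi$. This is enough: for $x\in N\subseteq M$ it gives $\Theta(x)=\Phi(x)=x$, so $\Theta$ is the desired conditional expectation. Now $\Theta|_M$ and $\Phi$ are two u.c.p.\ maps $M\to N$ that coincide on the weakly dense C$^*$-subalgebra $B$, and $\Phi$ is \emph{normal} by hypothesis. Thus the claim $\Theta|_M=\Phi$ is equivalent to the assertion that $\Theta|_M=\Phi\circ\Psi|_M$ is normal — equivalently, that $\Phi$ annihilates the singular part of the (a priori non-normal) map $\Psi|_M$. The trouble is that $\Psi$ carries no continuity whatsoever, so the naive ``two maps agreeing on a weakly dense set must agree'' fails: every attempt to evaluate $\Theta$ on a weak limit of elements of $B$ stalls on the non-normality of $\Psi$.

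This normality is the crux, and it is exactly here that exactness of $B$ is spent, via local reflexivity (Theorem \ref{cor:exactlocref}). The mechanism I would use is this: for each finite-dimensional operator system $E\subseteq M$, local reflexivity supplies a net of c.c.p.\ maps $\eta_i\colon E\to B$ converging to the inclusion $E\hookrightarrow M$ in the point-ultraweak topology. Since each $\eta_i$ takes values in $B$, where $\Psi$ and $\Phi$ literally coincide, the offending map $\Psi$ gets replaced on $E$ by completely controlled maps, and the normality of $\Phi$ then lets one pass to the ultraweak limit; the reason genuine local reflexivity (rather than mere weak density of $B$) is required is that it computes the relevant minimal tensor norms on $M\odot N'$ out of those on $B\odot N'$, and this is what promotes the point-weak identity $\Theta|_B=\Phi|_B$ to the honest equality $\Theta|_M=\Phi$. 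I expect this promotion of weak agreement to equality — that is, neutralizing the non-normality of $\Psi$ using exactness — to be the main obstacle; everything else is formal. As an alternative packaging in the spirit of the paper, one could instead try to deduce injectivity of $N$ from The Trick (Proposition \ref{prop:thetrick}) applied to $N\subseteq\B(\hh)$ with $C=N'$ and $\alpha=\min$ (legitimate because the minimal norm restricts to the minimal norm, Proposition \ref{prop:mininclusion}); this reduces the whole statement to the $\min$-continuity of the product map $N\odot N'\to\B(\hh)$, which is again precisely what exactness of $B$, together with the compatibility $\Psi|_B=\Phi|_B$, is meant to provide.
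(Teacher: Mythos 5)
Your instinct about where exactness enters is right, but your central reduction is a dead end. You propose to prove that $\Theta = \Phi\circ\Psi$ itself satisfies $\Theta|_M = \Phi$ (equivalently, that $\Phi\circ\Psi|_M$ is normal), and to extract this from local reflexivity. Local reflexivity cannot deliver that. Given $\eta_i\colon E\to B$ with $\eta_i\to\id_E$ point-ultraweakly, what you can control is $\Phi\circ\Psi\circ\eta_i$: indeed $\Phi(\Psi(\eta_i(x)))=\Phi(\eta_i(x))\to\Phi(x)$ by normality of $\Phi$, since $\eta_i(x)\in B$ where $\Psi=\Phi$. But this says nothing about $\Phi(\Psi(x))$, because moving the limit inside $\Psi$ requires exactly the normality of $\Psi$ that you correctly identified as unavailable two sentences earlier -- you have re-encountered your own obstruction, not resolved it. The cluster of the net $\Phi\circ\Psi\circ\eta_i$ is a \emph{new} u.c.p.\ map on $E$; it is not $\Theta|_E$. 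Moreover, the hypotheses genuinely do not pin $\Psi$ down off $B$: $\Psi$ is merely u.c.p.\ with $\Psi|_B=\Phi|_B$ (hence a $B$-bimodule map by multiplicative-domain considerations), and nothing forces $\Phi\circ\Psi(x)=\Phi(x)$, or even $\Phi\circ\Psi(x)=x$, for $x\in N$. So the identity your whole plan rests on is unjustified and should not be expected to hold.

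The repair -- and it is the paper's actual proof -- is to abandon $\Phi\circ\Psi$ as the candidate expectation and manufacture the conditional expectation by clustering. For each finite-dimensional operator system $E\subset N$, take $\eta_i\colon E\to B$ from local reflexivity (Theorem \ref{cor:exactlocref}); then -- and this step is absent from your sketch -- extend each $\eta_i$ to all of $\B(\hh)$ by Arveson's Extension Theorem \emph{before} composing, so that $\Phi\circ\Psi\circ\eta_i\colon\B(\hh)\to N$ are globally defined u.c.p.\ maps (the extension must be applied to $\eta_i$, not to the limiting map, or the values fail to land in $N$). On $E$ these compositions factor through $B$, so normality of $\Phi$ gives $\Phi\circ\Psi\circ\eta_i(x)\to x$ ultraweakly for $x\in E$; a point-ultraweak cluster point (Theorem \ref{subthm:pointultraweak}) is a u.c.p.\ map $\theta_E\colon\B(\hh)\to N$ fixing $E$, and one more cluster over all $E$ yields the expectation $\B(\hh)\to N$. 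Note that the output bears no relation to $\Phi\circ\Psi$ as a single map. Your alternative packaging via The Trick (reducing to $\min$-continuity of $N\odot N'\to\B(\hh)$) is a legitimate criterion for injectivity, but it merely relocates the whole difficulty: deriving that $\min$-continuity from exactness of $B$ is at least as hard as the lemma itself, and the direct cluster-point argument above is precisely how the paper avoids it.
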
 

\begin{proof} Let $E \subset N$ be a finite-dimensional operator system and $\p_n \colon E \to B$ be contractive c.p.\ maps converging to $\id_E$ in the point-ultraweak topology.  By Arveson's Extension Theorem, we may assume each $\p_n$ is defined on all of $\B(\hh)$ (and now takes values in $\B(\hh)$). Then one readily checks that $\Phi\circ\Psi \circ \p_n \colon \B(\hh) \to N$ are u.c.p.\ maps with the property that $\Phi\circ\Psi \circ \p_n(x) \to x$ ultraweakly for all $x \in E$ (since $\Phi\circ\Psi \circ \p_n(x) = \Phi(\p_n(x))$ for all $x \in E$).  Taking a cluster point in the point-ultraweak topology we get a u.c.p.\ map $\theta_E\colon \B(\hh) \to N$ which restricts to the identity on $E$.  Taking another cluster point of the maps $\theta_E$ (over all finite-dimensional operator systems $E \subset N$) we get the desired conditional expectation $\B(\hh) \to N$. (I love Theorem \ref{subthm:pointultraweak}!) 
\end{proof} 

Now, let's specialize to the case of group von Neumann algebras.  Another spectacular feature of finite von Neumann algebras is the existence of conditional expectations; that is, if we had assumed $M$ to be finite in the previous proposition, then the existence of a conditional expectation $M \to N$ would have been automatic (see \cite[Lemma 1.5.11]{Me-n-Taka}).

\begin{sublem}\label{lem:thelem} Assume $\G$ is exact and let $N \subset L(\G)$ be a von Neumann sublagebra with trace-preserving conditional expectation $\Phi\colon L(\G) \to N$. If $$\Phi|_{C^*_\lambda(\G)} \times \id_{C^*_{\rho}(\G)} \colon C^*_\lambda(\G) \odot C^*_{\rho}(\G) \to \B(\ell^2(\G))$$ is $\min$-continuous, then $N$ is injective. 
\end{sublem}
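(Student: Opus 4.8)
The plan is to reduce to Lemma \ref{lem:lem}. Set $M = L(\G)$ acting on $\hh = \ell^2(\G)$ and $B = C^*_\lambda(\G)$. Then $B$ is weakly dense in $M = C^*_\lambda(\G)''$ and is exact (because $\G$ is), while the trace-preserving conditional expectation $\Phi\colon L(\G) \to N$ is normal, hence weakly continuous. So every hypothesis of Lemma \ref{lem:lem} is in place except one: the existence of a u.c.p.\ map $\Psi\colon \B(\ell^2(\G)) \to L(\G)$ with $\Psi|_{C^*_\lambda(\G)} = \Phi|_{C^*_\lambda(\G)}$. Constructing such a $\Psi$ from the $\min$-continuity hypothesis is the entire point, and it is precisely the kind of range-controlled extension that The Trick produces.

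To build $\Psi$, I would apply The Trick (Proposition \ref{prop:thetrick}) with $A = C^*_\lambda(\G)$ sitting inside $\B(\ell^2(\G))$, with $C = C^*_\rho(\G)$, and with the commuting maps $\pi_A = \Phi|_{C^*_\lambda(\G)}$ and $\pi_C = \id_{C^*_\rho(\G)}$ (the inclusion). Their ranges commute because $\Phi(C^*_\lambda(\G)) \subset N \subset L(\G) = C^*_\rho(\G)'$ by Murray--von Neumann. Choosing $\|\cdot\|_\alpha = \|\cdot\|_{\min}$ on $\B(\ell^2(\G)) \odot C^*_\rho(\G)$, its restriction to $C^*_\lambda(\G) \odot C^*_\rho(\G)$ is again the minimal norm (Proposition \ref{prop:mininclusion}), so the standing hypothesis says exactly that $\pi_A \times \pi_C$ is continuous for the restricted norm. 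Arveson's Extension Theorem then extends the induced u.c.p.\ map on $C^*_\lambda(\G) \otimes C^*_\rho(\G)$ to a u.c.p.\ map $\widetilde{\Phi}\colon \B(\ell^2(\G)) \otimes C^*_\rho(\G) \to \B(\ell^2(\G))$, and I set $\Psi(b) = \widetilde{\Phi}(b \otimes 1)$. A multiplicative-domain argument---using that $1 \otimes C^*_\rho(\G)$ lies in the multiplicative domain of $\widetilde{\Phi}$, since $\widetilde{\Phi}$ restricts to the $*$-homomorphism $\id$ there---forces $\Psi(\B(\ell^2(\G))) \subset C^*_\rho(\G)' = L(\G)$, and $\Psi(a) = \widetilde{\Phi}(a \otimes 1) = \Phi(a)$ for $a \in C^*_\lambda(\G)$, as required.

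The one point needing care---and the main obstacle---is that Proposition \ref{prop:thetrick} is stated with $\pi_A$ a $*$-homomorphism, whereas here $\pi_A = \Phi|_{C^*_\lambda(\G)}$ is only u.c.p. I would therefore re-inspect the proof of The Trick and note that it goes through unchanged for u.c.p.\ $\pi_A$: the product map remains c.c.p., the extension step is just Arveson's Extension Theorem, and the multiplicative-domain computation uses only that $\pi_C$---not $\pi_A$---is a $*$-homomorphism. Once $\Psi$ is produced, Lemma \ref{lem:lem} applies verbatim and yields that $N$ is injective.
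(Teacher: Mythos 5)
Your proposal is correct and is essentially the paper's own proof: the paper applies The Trick to $\Phi|_{C^*_\lambda(\G)} \times \id_{C^*_{\rho}(\G)}$ via the inclusion $C^*_\lambda(\G)\otimes C^*_{\rho}(\G)\subset \B(\ell^2(\G))\otimes C^*_{\rho}(\G)$ to produce $\Psi$ and then invokes Lemma \ref{lem:lem}, exactly as you do. Your careful point that The Trick works when $\pi_A$ is merely u.c.p.\ (since only $\pi_C$ needs to be $*$-homomorphic for the multiplicative-domain step) is well taken, and indeed the paper itself makes this very remark later, in the proof of Proposition \ref{prop:injcric}.
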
 

\begin{proof} Since $C^*_\lambda(\G) \otimes C^*_{\rho}(\G) \subset \B(\ell^2(\G)) \otimes C^*_{\rho}(\G)$, The Trick applied to $\Phi|_{C^*_\lambda(\G)} \times \id_{C^*_{\rho}(\G)}$ yields a u.c.p.\ map $\Psi\colon \B(\ell^2(\G)) \to L(\G)$ such that $\Psi|_{C^*_\lambda(\G)} = \Phi|_{C^*_\lambda(\G)}$.  Hence Lemma \ref{lem:lem} applies. 
\end{proof} 

\begin{subdefn} A von Neuman algebra $M$ is called \emph{solid} if the relative commutant of every diffuse\footnote{I.e., has no minimal projections.} von Neumann subalgebra is injective. 
\end{subdefn} 

When I first saw this definition, I thought, ``Huh? What's that all about?" Well, it turns out to be a very strong structural statement.  For example, solidity passes to subalgebras and implies that nontrivial tensor product decompositions are impossible (for non-injective algebras). 

\begin{subprop} 
\label{prop:prime} 
If $M$ is finite and solid, and $N \subset M$ is a non-injective subfactor, then $N$ is prime (i.e., $N$ is not isomorphic to the tensor product of II$_1$-factors).  
\end{subprop}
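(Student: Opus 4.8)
The plan is to argue by contradiction, and the essential move is to apply solidity to the ambient algebra $M$ rather than to $N$ itself. Suppose $N$ were \emph{not} prime, so that $N \cong N_1 \vt N_2$ for some II$_1$-factors $N_1$ and $N_2$. I will manufacture conditional expectations that force both $N_1$ and $N_2$ to be injective; since injectivity is stable under von Neumann tensor products, this makes $N$ injective, contradicting the hypothesis.

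First I would observe that each $N_i$, being an infinite-dimensional factor, has no minimal projections and is therefore diffuse; in particular $N_1$ is a diffuse von Neumann subalgebra of $M$ (via the inclusion $N \subset M$). Since $M$ is solid, its relative commutant $N_1' \cap M$ is injective. Next I would pin down the smaller relative commutant inside $N$: because $N_1$ is a factor, one has $N_1' \cap N = N_2$, so that $N_2 = N_1' \cap N \subseteq N_1' \cap M$. As $M$ is finite, so is $N_1' \cap M$, and it therefore admits a trace-preserving normal conditional expectation onto its subalgebra $N_2$; composing the expectation $\B(\hh) \to N_1' \cap M$ that witnesses injectivity with this one shows that $N_2$ is injective. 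Running the identical argument with the roles of $N_1$ and $N_2$ interchanged shows that $N_1$ is injective as well.

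Finally I would invoke the stability of injectivity under von Neumann tensor products — tensoring conditional expectations $\B(\hh_i) \to N_i$ produces a conditional expectation onto $N_1 \vt N_2$ — to conclude that $N \cong N_1 \vt N_2$ is injective. This contradicts the assumption that $N$ is non-injective, so no such decomposition can exist and $N$ is prime.

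The hard part here is conceptual rather than computational: one must resist using solidity of $N$ (which is not even assumed) and instead feed the diffuse subalgebra $N_1 \subset M$ into the solidity hypothesis for $M$, then transport the resulting injectivity of the \emph{large} relative commutant $N_1' \cap M$ down to the complementary factor $N_2 = N_1' \cap N$. The supporting facts — that a factor's relative commutant in a tensor product is the complementary factor, that a finite von Neumann algebra carries trace-preserving conditional expectations onto its subalgebras, that injectivity descends along normal conditional expectations, and that it is preserved under tensor products — are all standard, so the whole argument is short once solidity is aimed at the right subalgebra.
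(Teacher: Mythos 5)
Your proof is correct and takes essentially the same route as the paper's: feed the diffuse tensor factor $N_1$ into the solidity of $M$, note that $N_2 = N_1' \cap N \subseteq N_1' \cap M$, and pass injectivity down through the trace-preserving conditional expectation that finiteness provides. The only (cosmetic) difference is that the paper relabels so that $N_2$ is non-injective at the outset and contradicts once, whereas you show both factors are injective and then invoke stability of injectivity under $\vt$ --- a fact the paper's relabeling step implicitly uses anyway.
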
 

\begin{proof} Assume $N$ is not prime.  Then we can write $N = N_1 \bar{\otimes} N_2$ where $N_1$ is diffuse and $N_2$ is not injective. By definition, the relative commutant of $N_1$ (in $M$) is injective.  But this commutant contains $N_2$, which is a contradiction (since there is a conditional expectation $N_1^\prime \cap M \to N_2$, hence injectivity would pass to $N_2$). 
\end{proof} 

Here's Ozawa's celebrated solidity theorem. 

\begin{subthm} 
\label{thm:taka}
Assume $\G$ is exact and the canonical map $$C^*_\lambda(\G) \odot C^*_{\rho}(\G) \to \B(\ell^2(\G)) \to \B(\ell^2(\G))/\K(\ell^2(\G))$$ is $\min$-continuous.  Then $L(\G)$ is solid. 
\end{subthm}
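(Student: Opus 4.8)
The plan is to combine Lemma \ref{lem:thelem} with a conjugation-and-averaging argument that converts the Calkin-algebra hypothesis into the required $\min$-continuity. First I would fix a diffuse von Neumann subalgebra $Q \subset L(\G)$ and set $N = Q' \cap L(\G)$; the goal is to show that $N$ is injective. Because $L(\G)$ is finite with faithful trace $\tau$, there is automatically a trace-preserving conditional expectation $\Phi\colon L(\G) \to N$ (\cite[Lemma 1.5.11]{Me-n-Taka}). By Lemma \ref{lem:thelem} it therefore suffices to prove that the product map
$$\Phi|_{C^*_\lambda(\G)} \times \id_{C^*_\rho(\G)}\colon C^*_\lambda(\G) \odot C^*_\rho(\G) \to \B(\ell^2(\G)), \qquad \lambda_s \otimes \rho_t \mapsto \Phi(\lambda_s)\rho_t,$$
is $\min$-continuous. (This is well defined since $\Phi(\lambda_s) \in N \subset L(\G) = C^*_\rho(\G)'$ commutes with $\rho_t$.)

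Next I would exploit diffuseness. Since $Q$ is diffuse it contains unitaries tending to $0$ weakly, and conjugation by such unitaries annihilates compacts: if $u_n \in \U(Q)$ with $u_n \to 0$ weakly and $k \in \K(\ell^2(\G))$, then $u_n k u_n^* \to 0$ ultraweakly, because $k$ carries weakly-null sequences to norm-null ones. The role of the hypothesis is that the canonical $*$-homomorphism $\pi\colon C^*_\lambda(\G)\odot C^*_\rho(\G) \to \B(\ell^2(\G))$, $\lambda_s\otimes\rho_t \mapsto \lambda_s\rho_t$, becomes $\min$-continuous after composition with the quotient onto the Calkin algebra. Equivalently, $\dist(\pi(z), \K(\ell^2(\G))) \leq \|z\|_{\min}$ for every $z$ in the algebraic tensor product, so one may write $\pi(z) = r + k$ with $k$ compact and $\|r\| \leq \|z\|_{\min} + \e$. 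Since each $u_n \in Q$ commutes with $C^*_\rho(\G)$, conjugation fixes the $\rho$-variable: $u_n \pi(z) u_n^* = \sum_i (u_n \lambda_{s_i} u_n^*)\rho_{t_i}$ for $z = \sum_i \lambda_{s_i}\otimes\rho_{t_i}$.

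The heart of the matter is to assemble these conjugations into maps that simultaneously reproduce $\Phi$ and kill the compact error. I would use the standard averaging description of the relative-commutant expectation: there is a net of unital completely positive maps $\Theta_\alpha(x) = \sum_j c_{j,\alpha}\, u_{j,\alpha}\, x\, u_{j,\alpha}^*$, finite convex combinations of conjugations by unitaries $u_{j,\alpha}\in \U(Q)$, converging point-ultraweakly to $\Phi$ on $L(\G)$. Lifting verbatim to $\B(\ell^2(\G))$ gives u.c.p.\ maps $\tilde\Theta_\alpha$ that fix $C^*_\rho(\G)$, so that $\tilde\Theta_\alpha(\pi(\lambda_s\otimes\rho_t)) = \Theta_\alpha(\lambda_s)\rho_t \to \Phi(\lambda_s)\rho_t$ point-ultraweakly; i.e.\ $\tilde\Theta_\alpha\circ\pi \to \Phi|_{C^*_\lambda(\G)}\times\id$. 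On the other hand, with $\pi(z) = r+k$ as above one has $\|\tilde\Theta_\alpha(r)\| \le \|r\| \le \|z\|_{\min}+\e$, while the compact part $\tilde\Theta_\alpha(k)$ tends to $0$; passing to the limit yields $\|(\Phi|_{C^*_\lambda(\G)}\times\id)(z)\| \le \|z\|_{\min}$, the desired $\min$-continuity. Lemma \ref{lem:thelem} then gives that $N$ is injective, and as $Q$ was an arbitrary diffuse subalgebra, $L(\G)$ is solid.

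The main obstacle is precisely this last coordination: one must choose the averaging net so that it both (i) converges to the relative-commutant expectation $\Phi$ and (ii) annihilates compact operators in the limit. Convergence to $\Phi$ is the classical averaging realization of $E_{Q'\cap L(\G)}$, whereas compact-annihilation forces the unitaries to remain ``weakly spread''; reconciling the two is exactly where diffuseness of $Q$ is indispensable, and it is the one step I would expect to require genuine care rather than routine bookkeeping. The compatibility of the two demands at the level of the limit map is underwritten by the identity $\Phi(uxu^*) = \Phi(x)$ for $u \in \U(Q)$, which follows from trace-preservation together with $[Q,N]=0$.
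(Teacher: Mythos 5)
Your overall architecture is the paper's: reduce solidity to injectivity of a relative commutant, invoke Lemma \ref{lem:thelem}, and get the required $\min$-continuity by implementing the expectation $\Phi$ through an averaging map that factors through the Calkin algebra. But the step you flag as needing ``genuine care'' is a genuine gap, not bookkeeping, and your construction as stated does not close it. The classical Dixmier-type realization of $E_{Q'\cap L(\G)}$ as a point-ultraweak limit of convex combinations $\Theta_\alpha = \sum_j c_{j,\alpha}\, u_{j,\alpha}(\cdot)u_{j,\alpha}^*$ with $u_{j,\alpha}\in\U(Q)$ gives no control whatsoever on $\K(\ell^2(\G))$: each $\tilde{\Theta}_\alpha$ maps compacts to compacts of comparable norm, and nothing in the convergence $\Theta_\alpha\to\Phi$ on $L(\G)$ forces a cluster point to vanish on $\K(\ell^2(\G))$. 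Your observation that $u_nku_n^*\to 0$ for a single weakly-null sequence of unitaries does not apply to finite convex combinations of conjugations by unitaries chosen only to approximate $\Phi$ (some of which may be close to $1$). So properties (i) and (ii) in your final paragraph are witnessed by two different nets, and you never produce one net satisfying both simultaneously --- which is precisely the heart of the theorem.

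The paper resolves this by \emph{not} working with the arbitrary diffuse $Q$ directly: it passes to a masa $A\subset Q$, which, being diffuse and abelian, is generated by a single Haar unitary $u$ with $u^n\to 0$ ultraweakly. A point-ultraweak cluster point $\Psi$ of the Ces\`aro averages $T\mapsto \frac{1}{n}\sum_{i=1}^n u^iTu^{-i}$ then does both jobs at once and for free: it annihilates $\K(\ell^2(\G))$ because $u^n\to 0$ weakly, and it is automatically a conditional expectation onto $A'\cap\B(\ell^2(\G))$ --- the commutant of the averaging unitaries \emph{is} the target, so no coordination between an averaging net and a prescribed limit is needed; by uniqueness of trace-preserving expectations, $\Psi|_{L(\G)} = E_{A'\cap L(\G)}$. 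One then proves injectivity of the larger algebra $A'\cap L(\G)$ and transfers it to $Q'\cap L(\G)\subset A'\cap L(\G)$ via the conditional expectation between the relative commutants. Your scheme could be repaired along these lines (e.g., by composing your Dixmier net with the Haar-unitary Ces\`aro averages, so that the compacts are killed before the averaging over $\U(Q)$ is performed), but at that point the masa reduction is strictly simpler: you never need any net converging to $E_{Q'\cap L(\G)}$ itself, only injectivity of the bigger commutant $A'\cap L(\G)$.
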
 

\begin{proof} Let $M \subset L(\G)$ be diffuse and $A \subset M$ be a masa (in $M$).  Since $A^\prime \cap L(\G) \supset M^\prime \cap L(\G)$ and there is a conditional expectation $A^\prime \cap L(\G) \to M^\prime \cap L(\G)$, it suffices to prove $A^\prime \cap L(\G)$ is injective. Since $M$ is diffuse, $A$ is non-atomic -- i.e., $A \cong L^\infty (\mathbb{T})$. Hence we can find a generating unitary $u \in A$ such that $u^n \to 0$ ultraweakly.  

Let $N = A^\prime \cap L(\G)$ and $\Phi\colon L(\G) \to N$ be the unique trace-preserving conditional expectation. As is well known, we can define a conditional expectation $\Psi\colon \B(\ell^2(\G)) \to A^\prime \cap \B(\ell^2(\G))$ by taking a cluster point of the maps $$\p_n(T) := \frac{1}{n} \sum_{i=1}^n u^i T u^{-i}.$$  Note that $\Psi$ contains $\K(\ell^2(\G))$ in its kernel, since $u^n \to 0$ ultraweakly.

Evidently $\Psi|_{L(\G)}$ is a trace-preserving conditional expectation of $L(\G)$ onto $N$; hence, by uniqueness, $\Psi|_{L(\G)} = \Phi$. Moreover, since $C^*_{\rho}(\G) \subset  A^\prime \cap \B(\ell^2(\G))$ we have that $C^*_{\rho}(\G)$ lies in the multiplicative domain of $\Psi$.  Thus, $$\Psi(\sum_{j=1}^k x_j y_j) = \sum_{j=1}^k \Phi(x_j) y_j,$$ for all $x_j \in L(\G)$ and $y_j \in C^*_{\rho}(\G)$.  In particular, $$\Phi|_{C^*_\lambda(\G)} \times \id_{C^*_{\rho}(\G)} (x \otimes y) = \Phi(x)y = \Psi(xy),$$ for all $x \in C^*_{\lambda} (\G)$ and $y \in C^*_{\rho}(\G)$.

By Lemma \ref{lem:thelem}, to prove $N$ is injective, it suffices to show that $$\Phi|_{C^*_\lambda(\G)} \times \id_{C^*_{\rho}(\G)} \colon C^*_\lambda(\G) \odot C^*_{\rho}(\G) \to \B(\ell^2(\G))$$ is $\min$-continuous. But we assumed that $$C^*_\lambda(\G) \odot C^*_{\rho}(\G) \to \B(\ell^2(\G)) \to \B(\ell^2(\G))/\K(\ell^2(\G))$$ is $\min$-continuous, and the map $\Psi$ factors through the Calkin algebra (since 
$\K(\ell^2(\G))$ is in its kernel), so $\min$-continuity follows from the fact that $\Phi|_{C^*_\lambda(\G)} \times \id_{C^*_{\rho}(\G)} (x \otimes y) = \Psi(xy)$ for all $x \in C^*_{\lambda} (\G)$ and $y \in C^*_{\rho}(\G)$.
\end{proof}

Recall that $\G$ is amenable if and only if the canonical $*$-homomorphism $C^*_\lambda(\G) \odot C^*_{\rho}(\G) \to \B(\ell^2(\G))$ is $\min$-continuous (Theorem \ref{thm:amenablegroup}).  Since the hypotheses of Ozawa's result looks frighteningly close to this, one should worry about the existence of nonamenable groups to which the result applies.  However, it turns out that many such groups exist. Even better, proving that they exist is a beautiful synthesis of geometric group theory, nuclearity and C$^*$-crossed products.  Hence our next section is a quick summary of the theory of amenable actions and their crossed products.  After that we'll come to examples. 

\section{Crossed Products, Amenable Actions and the Roe Algebra} 

The crossed product construction is fundamental in operator algebras and appears in several texts. Hence we won't recall the details -- see \cite[Section 4.1]{Me-n-Taka} -- but we will review the set-up. If $\G$ is a discrete group and $\alpha\colon \G \to \Aut(A)$ is a group homomorphism into the group of automorphisms of a C$^*$-algebra $A$, then one can construct a ``full" (or ``universal") crossed product $A \rtimes_\alpha \G$ -- satisfying the universal property that any covariant representation of $(A, \G, \alpha)$ extends to $A \rtimes_\alpha \G$ -- and a ``reduced" crossed product $A \rtimes_{\alpha,r} \G$ that is defined via a left-representation-like covariant representation.  When $\G$ is amenable, the two constructions yield the same algebra (see \cite[Theorem 4.6.2]{Me-n-Taka}). However, we'll be interested mainly in nonamenable groups -- that \emph{act} amenably.

\subsection{Amenable Actions} 

We've seen that amenable groups are defined via the canonical action of $\G$ on $\ell^\infty(\G)$. To define an amenable action we consider another canonical action.  

\begin{subdefn}  $\prob(\Gamma)$ is the set of probability measures on $\Gamma$ -- which
we identify with the set of positive, norm-one elements in $\ell^1(\Gamma)$ and topologize by restricting the weak-$*$ topology (coming from $c_0(\G)$). 
\end{subdefn} 

Note that $\G$ acts by left translation on $\ell^1(\Gamma)$ (just restrict the action on $\ell^\infty(\G)$ to absolutely summable sequences) and this leaves $\prob(\Gamma)$ invariant.  Hence we have a canonical action of $\G$ on $\prob(\Gamma)$ which we'll denote by $m \mapsto s.m$ for all $s \in \G$ and $m \in \prob(\Gamma)$. (As before, $s.m(g) = m(s^{-1}g)$.) Though $\prob(\Gamma)$ is never compact (when $\G$ is infinite), it is the prototype of an amenable action.

We call a compact Hausdorff space $X$ a \emph{$\G$-space} if it is equipped with an 
action of $\G$ (by homeomorphisms). 
We let $x \mapsto s.x$
denote the action of $s \in \G$ on $x \in X$. 

\begin{subdefn}\label{defn:amenableactionII} 
An action of $\G$ on a compact space
$X$ is called (topologically) {\em amenable} (or, equivalently, $X$
is an \emph{amenable $\G$-space}) if there exists a net of continuous
maps $m_i\colon X\to \prob(\Gamma)$, such that for each $s \in \Gamma$,
$$\lim_{i \to \infty}\Big( \sup_{x \in X} \| s.m_i^x -
m_i^{s.x}\|_1 \Big) = 0,$$ where $m_i^{x} \in \prob(\Gamma)$ is the probability measure that $m_i$ associates to the point $x \in X$.\footnote{Note that $m\colon
X\to \prob(\Gamma)$ is continuous if and only if for each convergent
net $x_i \to x \in X$ we have $m^{x_i}(g) \to m^x(g)$ for all $g \in
\Gamma$.}
\end{subdefn}

In other words, an action of $\G$ on $X$ is amenable if it can be asymptotically intertwined with the canonical action of $\G$ on $\prob(\Gamma)$: 
$$\begin{CD}
\hspace{5mm} X \hspace{5mm} @>m_i>> \hspace{5mm} \prob(\Gamma)\\ 
@V\mathrm{Given}V\mathrm{action}V @V\mathrm{Canonical}V\mathrm{action}V\\ 
\hspace{5mm} X \hspace{5mm}  @>m_i>> \hspace{5mm}  \prob(\Gamma).\\ 
\end{CD}$$

It is worthwhile to check that every action of an amenable group is amenable in the sense defined above. (If $F_i \subset \G$ is a F{\o}lner sequence, let $m_i$ map every point to the normalized characteristic function over $F_i$.) However, it is a remarkable fact that virtually all groups one normally encounters act amenably on some compact space. (For example, all linear groups -- see \cite{GHW}.) 

\begin{subrem} 
\label{rem:borel}
Definition \ref{defn:amenableactionII} requires the maps $m_i$ to be continuous. However, it is sufficient for the $m_i$'s to be Borel, meaning that for every $g \in \G$ the function $X \to \R$, $x \mapsto m_i^x(g)$ is Borel.  See \cite[Proposition 5.2.1]{Me-n-Taka} for a proof of this useful fact. 
\end{subrem} 

The following result connects exactness with amenable actions.  It is due to Ozawa and, independently, Anantharaman-Delaroche, following an important contribution by Guentner and Kaminker.  See \cite[Chapter 5]{Me-n-Taka} for details. 

\begin{subthm}
\label{thm:hmm}
For a discrete group $\G$, the following are equivalent:
\begin{enumerate}
\item\label{thm:actsc1}
$\G$ is exact;
\item\label{thm:actsc3}
$\G$ acts amenably on some compact topological space.
\end{enumerate}
\end{subthm}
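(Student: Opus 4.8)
The plan is to prove the two implications separately. The direction $(\ref{thm:actsc3})\Rightarrow(\ref{thm:actsc1})$ will rest on the nuclearity of crossed products by amenable actions, while the harder converse will require extracting finitely supported combinatorial data from a nuclear embedding.

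For $(\ref{thm:actsc3})\Rightarrow(\ref{thm:actsc1})$, suppose $\G$ acts amenably on a compact space $X$. First I would show the reduced crossed product $C(X)\rtimes_r\G$ is nuclear, by an argument parallel to the F{\o}lner computation in Theorem \ref{thm:amenablegroup}: the intertwining maps $m_i\colon X\to\prob(\G)$ of Definition \ref{defn:amenableactionII} play the role formerly played by the F{\o}lner sets. Using the (essentially finitely supported) measures $m_i^x$ one builds u.c.p.\ compressions $\p_i$ of $C(X)\rtimes_r\G$ into matrix algebras and u.c.p.\ liftings $\psi_i$ back, and the asymptotic equivariance $\sup_x\|s.m_i^x - m_i^{s.x}\|_1\to 0$ forces $\psi_i\circ\p_i\to\id$ in point-norm on the dense $*$-algebra $C(X)\odot\C[\G]$. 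Hence $C(X)\rtimes_r\G$ is nuclear, and nuclear C*-algebras are exact. Since $C^*_\lambda(\G)$ embeds in $C(X)\rtimes_r\G$ as the closed span of the canonical group unitaries, and subalgebras of exact C*-algebras are again exact (a deep theorem of Kirchberg), $C^*_\lambda(\G)$ is exact; that is, $\G$ is exact.

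For $(\ref{thm:actsc1})\Rightarrow(\ref{thm:actsc3})$, the serious half, I would take the compact space to be $X=\beta\G$, the spectrum of $\ell^\infty(\G)$, with the translation action. Exactness means the inclusion $C^*_\lambda(\G)\subset\B(\ell^2(\G))$ is nuclear, so there are c.c.p.\ maps $\p_i\colon C^*_\lambda(\G)\to\M_{n_i}(\C)$ and $\psi_i\colon\M_{n_i}(\C)\to\B(\ell^2(\G))$ with $\psi_i\circ\p_i\to\id$ in point-norm. The plan is to manufacture the maps $m_i$ out of this data. Testing the composition $\psi_i\circ\p_i$ against the matrix units $\lambda_s$ and pairing with the vector states $h\mapsto\langle h\,\delta_t,\delta_t\rangle$ produces kernels $k_i\colon\G\times\G\to\C$; after a perturbation making them finitely supported near the diagonal and a positivity-and-renormalization step, these yield maps $t\mapsto\xi_i^t\in\prob(\G)$ of finite \emph{propagation} with $\sup_{s^{-1}t\in E}\|\xi_i^s-\xi_i^t\|_1\to 0$ for every finite $E\subset\G$ --- i.e., Property A. Property A is precisely what is needed to define near-equivariant maps $m_i\colon\beta\G\to\prob(\G)$; invoking Remark \ref{rem:borel} to allow Borel rather than continuous maps, one checks the condition of Definition \ref{defn:amenableactionII}, so the action on $\beta\G$ is amenable.

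The main obstacle is this extraction in $(\ref{thm:actsc1})\Rightarrow(\ref{thm:actsc3})$. A nuclear factorization only supplies \emph{norm} approximation of the identity by maps through matrices, with no built-in control on supports or on positivity of the scalar functions one reads off. Converting it to $\prob(\G)$-valued maps needs (i) a perturbation arranging near-diagonal, finitely supported approximants, and (ii) a positivity argument --- using the Stinespring/Schur-product description of completely positive matrix maps from Proposition \ref{prop:cpmapsfrommatrix} --- to ensure the extracted functions are genuinely positive and can be normalized to probability measures. I would cite \cite[Chapter 5]{Me-n-Taka} for the combinatorial bookkeeping in this step, since grinding it out would obscure the main idea.
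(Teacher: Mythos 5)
First, a point of comparison: the paper does not actually prove Theorem \ref{thm:hmm} --- it attributes the result to Ozawa and Anantharaman-Delaroche (following Guentner--Kaminker) and defers entirely to \cite[Chapter 5]{Me-n-Taka} --- so your proposal must be measured against the standard argument in that reference. Your direction $(\ref{thm:actsc3})\Rightarrow(\ref{thm:actsc1})$ is correct and is exactly the argument the paper itself sketches in the footnote to Proposition \ref{prop:AO}; you could simply cite Theorem \ref{thm:crossedproductamenableaction} rather than re-deriving crossed-product nuclearity, and note that with the paper's definition of exactness (existence of a nuclear faithful representation) heredity to subalgebras is immediate, since the restriction of a nuclear map to a subalgebra is nuclear --- Kirchberg's deep theorem is only needed to reconcile this definition with the original tensor-product one. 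Likewise, in $(\ref{thm:actsc1})\Rightarrow(\ref{thm:actsc3})$, the extraction of Property A data (finitely supported, almost-invariant $\xi_i\colon\G\to\prob(\G)$ with $\supp\xi_i^t\subset tF_i$) from the nuclearity of $C^*_\lambda(\G)\subset\B(\ell^2(\G))$ is the genuine Guentner--Kaminker step, and your outline of it, including the Schur-product positivity bookkeeping via Proposition \ref{prop:cpmapsfrommatrix}, is sound.

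The gap is the final step, where you assert that ``Property A is precisely what is needed to define near-equivariant maps $m_i\colon\beta\G\to\prob(\G)$'' and that one then just ``checks the condition.'' As written this fails. If $\supp\xi^t\subset tF$ with $F$ finite, then for each fixed $g\in\G$ the coordinate function $t\mapsto\xi^t(g)$ is supported on the finite set $gF^{-1}$, hence lies in $c_0(\G)$; its Gelfand extension to $\beta\G$ therefore vanishes at every corona point, so the naively extended ``measure'' $m^x$ has total mass $0$ for all $x\in\beta\G\setminus\G$. This is not a measurability issue, so Remark \ref{rem:borel} cannot rescue it: the obstruction is tightness of the family $\{\xi^t\}$, and a family of measures of uniformly bounded propagation is never tight on an infinite group, whereas any map $\beta\G\to\prob(\G)$ witnessing amenability must restrict on $\G$ to a tight family --- compare the prefix measures $m_N^t$ used for $\F_2$ in the paper, whose mass is spread along the geodesic from $e$ to $t$, not concentrated near $t$. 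The standard repairs are genuinely different mechanisms: either one recenters and extends the \emph{positive-type kernels} rather than the measures (the kernel $v(t,g):=k(t,tg)$ is bounded, supported in finitely many $g$, and equals $1$ at $g=e$, so it extends to $\beta\G\times\G$ with no loss of normalization, yielding amenability of the transformation groupoid in the Anantharaman-Delaroche--Renault sense), or one follows Ozawa's route taken in \cite[Chapter 5]{Me-n-Taka}: use the tube-supported positive-type kernels as Schur multipliers to prove that $C^*_u(\G)=\ell^\infty(\G)\rtimes_{\alpha,r}\G$ is nuclear (cf.\ Proposition \ref{prop:Roecross}) and then invoke the converse of Theorem \ref{thm:crossedproductamenableaction} for commutative coefficients (nuclear crossed product implies amenable action), which delivers amenability of the translation action on $\beta\G$. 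Your proposal needs one of these two bridges spelled out; without it, the passage from Property A to the amenable action on $\beta\G$ is precisely the missing content of the theorem.
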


For our purposes, the most important fact about amenable actions is that the associated crossed products are nuclear.  The proof is somewhat technical, but is close in spirit to the proof of $(\ref{thm:grpam3}) \Longrightarrow (\ref{thm:grpam9})$ in Theorem \ref{thm:amenablegroup}, in that one can construct approximating maps by hand.  See Theorem 4.3.4 and Lemma 4.3.7 in \cite{Me-n-Taka} for a proof of the following fact. 

\begin{subthm}\label{thm:crossedproductamenableaction}
If $\G$ acts amenably on $X$, then the associated (reduced) crossed product is nuclear.\footnote{It turns out that the full and reduced crossed products by amenable actions are isomorphic (\cite[Theorem 4.3.4]{Me-n-Taka}).}
\end{subthm}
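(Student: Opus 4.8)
The plan is to imitate the F\o lner-set proof of $(\ref{thm:grpam3})\Rightarrow(\ref{thm:grpam9})$ in Theorem \ref{thm:amenablegroup}, manufacturing the approximating maps by hand out of the amenability data. Write $B = C(X)\rtimes_{r}\G$ and recall that nuclearity of $B$ means $\id_B$ is a point-norm limit of c.c.p.\ maps factoring through matrix algebras. I would first produce c.c.p.\ maps $\p_i\colon B \to \M_{F_i}(C(X))$ and $\psi_i\colon \M_{F_i}(C(X)) \to B$, indexed by finite sets $F_i\subset\G$, with $\psi_i\circ\p_i \to \id_B$ in point-norm, and then collapse the auxiliary coefficient algebra $C(X)$. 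Since $C(X)$ is commutative, hence nuclear, the identity of $\M_{F_i}(C(X)) = \M_{F_i}(\C)\otimes C(X)$ factors approximately through genuine matrix algebras (e.g.\ via a partition of unity on $X$), so a standard diagonalization exhibits $\id_B$ as a point-norm limit of matrix-factored c.c.p.\ maps. Thus the real content is the construction of the pair $(\p_i,\psi_i)$.

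For that construction I would start from the continuous maps $m_i\colon X \to \prob(\G)$ witnessing amenability and pass to square roots $\xi_i^x := (m_i^x)^{1/2} \in \ell^2(\G)$, a continuous field of unit vectors. The Powers--St\o rmer inequality $\|(m)^{1/2}-(m')^{1/2}\|_2^2 \le \|m-m'\|_1$ converts the $\ell^1$-almost-equivariance $\sup_{x}\|s.m_i^x - m_i^{s.x}\|_1 \to 0$ into the $\ell^2$-statement $\sup_{x}\|s.\xi_i^x - \xi_i^{s.x}\|_2 \to 0$ for each fixed $s$. Using Remark \ref{rem:borel} together with a truncation, I may assume each $\xi_i^x$ is supported in a fixed finite set $F_i\subset\G$, with the resulting error controlled. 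Modeling the group case --- where the F\o lner vector is the normalized indicator $\chi_{F_k}/|F_k|^{1/2}$ and $\psi_k(e_{p,q}) = \xi_k(p)\xi_k(q)\lambda_{pq^{-1}}$ --- I define $\p_i$ on a generator $f\lambda_g$ by letting its $(s,t)$-entry be the function $x\mapsto [\,s=gt\,]\,\overline{\xi_i^x(s)}\,\xi_i^x(t)$ times the appropriate value of $f$, and define $\psi_i$ on matrix units by spreading $e_{s,t}\otimes h$ back to an element of $B$ concentrated on the group element $st^{-1}$, weighted by $\xi_i$ and $h$. Then $\p_i$ is a compression and so manifestly c.c.p., while complete positivity of $\psi_i$ follows from Proposition \ref{prop:cpmapsfrommatrix}, because its defining coefficients assemble into a positive-type kernel built from the genuine vectors $\xi_i^x$.

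Finally I would check convergence on the generators $f\lambda_g$, which span a dense subalgebra of $B$. Exactly as in the group case, the composition $\psi_i\circ\p_i(f\lambda_g)$ collapses to $f\lambda_g$ multiplied by a coefficient of the form $\langle \xi_i^{s.x},\, s.\xi_i^{x}\rangle$, whose deviation from the value forcing $\id$ is governed by $\sup_{x}\|s.\xi_i^x - \xi_i^{s.x}\|_2$ and by the truncation error. Since both tend to $0$, one gets $\psi_i\circ\p_i(f\lambda_g)\to f\lambda_g$ in norm, and the usual finite-set/direct-sum argument upgrades point-norm convergence on generators to all of $B$.

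I expect the main obstacle to be twofold, and both issues live in the ``$\sup_{x\in X}$'' of Definition \ref{defn:amenableactionII}. First, one must verify that $\psi_i$ is honestly c.c.p.\ --- the coefficient bookkeeping has to produce a genuine positive kernel over the whole algebra, not merely positivity fibered pointwise in $x$. Second, one needs norm convergence rather than mere convergence pointwise in $x$, which is precisely what the uniform estimate in the definition of an amenable action supplies, provided the finite-support truncation of the $\xi_i$ is controlled with the same uniformity.
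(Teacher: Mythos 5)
Your proposal is correct and takes essentially the same route as the proof the paper points to (Theorem 4.3.4 and Lemma 4.3.7 of \cite{Me-n-Taka}, described in the text as a by-hand construction of approximating maps ``close in spirit'' to the F\o lner argument): truncate the $m_i$ to uniformly finitely supported fields, pass to square roots via Powers--St\o rmer, factor $\id$ through $\M_{F_i}(C(X))$ by the crossed-product analogues of the compression and matrix-unit maps, and then collapse the nuclear coefficient algebra $C(X)$ to genuine matrix algebras. The one cosmetic remark is that the uniform finite-support truncation comes from compactness of $X$ (Dini's theorem applied to $x\mapsto m_i^x(F)$) rather than from Remark \ref{rem:borel}, which concerns relaxing continuity to Borel measurability, but this is exactly the step the cited Lemma 4.3.7 supplies.
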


There is much more that could, and probably should, be said about amenable actions, but I don't want to distract us from our goal of W$^*$-applications.  Hence I'll close this subsection with an exercise -- a very important exercise that we'll need later. 

\begin{subexer} 
\label{exer:biexact}  Assume $\G \times \G$ acts on $C(X)$ and there
exist two $\G\times \G$-invariant subalgebras $A,B \subset C(X)$
such that (i) $\G \times \{e\}|_A$ is amenable while $\{e\}\times \G|_A$
is trivial and (ii) $\G \times \{e\}|_B$ is trivial while
$\{e\}\times \G|_B$ is amenable.  Prove that the action of $\G \times \G$
on $C(X)$ is amenable. 
\end{subexer}

\subsection{The Uniform Roe Algebra} 

Since our goal is applications to group von Neumann algebras, it will be necessary to stay on the Hilbert space $\ell^2(\G)$. Crossed products don't naturally act on this Hilbert space; at least, it isn't obvious that they do.  

\begin{subdefn}  The \emph{uniform Roe algebra}  $C^*_u(\G)$ of $\G$ is the C$^*$-subalgebra of
$\B(\ell^2(\G))$ generated by $C^*_\lambda(\G)$ and
$\ell^\infty(\G)$.
\end{subdefn} 

It turns out that the uniform Roe algebra is a crossed product -- a very important one, too. The following result is a simple computation, but since we didn't define reduced crossed products, we refer the reader to \cite[Proposition 5.1.3]{Me-n-Taka} for a proof. 

\begin{subprop}
\label{prop:Roecross}
Let $\alpha\colon\Gamma \to \Aut(\ell^{\infty}(\Gamma))$ be the left translation action.  Then there is an isomorphism $$\pi\colon C^*_u(\G) \to \ell^{\infty}(\Gamma)\rtimes_{\alpha,r} \Gamma$$ such that $\pi(f) = f$ and $\pi(\lambda_s) = \lambda_s$ for all $f \in \ell^{\infty}(\Gamma)$ and $s \in \G$.
\end{subprop}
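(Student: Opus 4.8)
The plan is to produce a concrete covariant representation of the dynamical system $(\ell^\infty(\G),\G,\alpha)$ that lives on $\ell^2(\G)$ and whose generated C$^*$-algebra is visibly $C^*_u(\G)$, and then to recognize that representation as the regular (reduced) one. The starting point is already recorded above: viewing $\ell^\infty(\G)\subset\B(\ell^2(\G))$ as multiplication operators, the left translation action is spatially implemented, $\lambda_s f\lambda_s^* = s.f = \alpha_s(f)$. Hence the pair consisting of the multiplication representation of $\ell^\infty(\G)$ and the left regular representation $\lambda$ is covariant, and by the universal property of the full crossed product (Section 4) it integrates to a surjective $*$-homomorphism $\ell^\infty(\G)\rtimes_\alpha\G \to C^*_u(\G)$ sending $f\mapsto f$ and $u_s\mapsto\lambda_s$. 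Everything then reduces to showing that this map factors through an isomorphism out of the \emph{reduced} crossed product.

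For that I would invoke the characterization of $\ell^\infty(\G)\rtimes_{\alpha,r}\G$ as the crossed-product completion carrying a faithful conditional expectation onto the coefficient copy of $\ell^\infty(\G)$ that agrees with the canonical expectation $\sum_s f_s u_s\mapsto f_e$ on finitely supported elements. The natural candidate on the Roe side is the diagonal compression $E\colon\B(\ell^2(\G))\to\ell^\infty(\G)$, $E(T)=\sum_t\langle T\delta_t,\delta_t\rangle\,e_{tt}$, which is a u.c.p.\ idempotent onto the diagonal masa and hence a conditional expectation; note $E(T)\in\ell^\infty(\G)$ since $\|E(T)\|\le\|T\|$. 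The key computation is that $E$ implements the canonical expectation: for $f\in\ell^\infty(\G)$ and $s\in\G$,
\[
\langle f\lambda_s\delta_t,\delta_t\rangle = f(st)\,\langle\delta_{st},\delta_t\rangle = \begin{cases} f(t), & s=e,\\ 0, & s\ne e,\end{cases}
\]
so that $E(f\lambda_s)=f$ when $s=e$ and $E(f\lambda_s)=0$ otherwise; hence $E(\sum_s f_s\lambda_s)=f_e$, exactly the canonical expectation.

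Faithfulness of $E$ on $C^*_u(\G)$ (indeed on all of $\B(\ell^2(\G))$) is then immediate: if $T\ge 0$ and $E(T)=0$, write $T=S^*S$, so $\|S\delta_t\|^2=\langle T\delta_t,\delta_t\rangle=0$ for every $t$, forcing $S=0$ and $T=0$. Combining the surjection out of the full crossed product with the faithful, canonical-expectation-implementing $E$ yields the desired isomorphism $C^*_u(\G)\cong\ell^\infty(\G)\rtimes_{\alpha,r}\G$ with the stated values on generators.

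The main obstacle is not any single calculation but pinning down the correct working characterization of the reduced crossed product, since it was never defined in these notes: one must invoke (or prove) that a covariant representation generating a C$^*$-algebra equipped with a faithful conditional expectation onto the coefficient algebra, compatible with $\sum_s f_s u_s\mapsto f_e$, is automatically isomorphic to the reduced crossed product. An alternative that sidesteps this is Fell's absorption principle, identifying the multiplication-plus-$\lambda$ representation on $\ell^2(\G)$ with the regular representation on $\ell^2(\G)\otimes\ell^2(\G)$ up to unitary equivalence; but the conditional-expectation route is cleaner and more in keeping with the methods of these notes.
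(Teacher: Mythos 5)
Your proposal is correct and takes essentially the route the paper intends: the paper itself gives no proof, deferring to \cite[Proposition 5.1.3]{Me-n-Taka}, where the isomorphism is obtained exactly as you do --- integrate the covariant pair (multiplication operators, $\lambda$), which exists by covariance since $\lambda_s f \lambda_s^* = s.f$, to a surjection out of the full crossed product, and then identify $C^*_u(\G)$ with the reduced crossed product using the diagonal compression $E\colon \B(\ell^2(\G)) \to \ell^\infty(\G)$, which you correctly check is faithful and implements the canonical expectation $\sum_s f_s u_s \mapsto f_e$. The ``obstacle'' you flag is just the standard fact that the canonical conditional expectation on $\ell^\infty(\G)\rtimes_{\alpha,r}\G$ is faithful; granted that, both $C^*_u(\G)$ and the reduced crossed product are quotients of the full crossed product carrying compatible faithful expectations onto $\ell^\infty(\G)$, so their kernels coincide and your argument is complete modulo that routine citation.
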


Note that $\ell^{\infty}(\Gamma)\rtimes_{\alpha,r} \Gamma$ is universal in the following sense: If $X$ is a compact $\G$-space, then there is a covariant homomorphism $C(X) \to \ell^{\infty}(\Gamma)$ and, thus, a $*$-homomorphism $C(X) \rtimes_r \G \to \ell^{\infty}(\Gamma)\rtimes_{\alpha,r} \Gamma$. To see this simply pick a point $x \in X$ and consider the orbit $\{ s.x : s \in \G\}$. The (automatically) continuous map $\G \to X$ defined by $s \mapsto s.x$ extends to a continuous map from the Stone-\v{C}ech compactification $\beta \G$ to $X$. Moreover, it is equivariant when $\beta \G$ is equipped with the (extension of the) left translation action of $\G$ on $\G$.  In other words, picking a point $x \in X$ determines a commutative diagram 
$$\begin{CD}
\beta\G @>>> X\\ 
@V\mathrm{Canonical}V\mathrm{action}V @V\mathrm{\G}V\mathrm{action}V\\ 
\beta \G   @>>> X.\\ 
\end{CD}$$

From this observation it is easy to see that if $\G$ acts amenably on $X$, then the left translation action of $\G$ on $\ell^{\infty}(\Gamma)$ is also amenable. It follows that $\G$ admits an amenable action on some compact space $X$ if and only if the left translation action of $\G$ on $\ell^{\infty}(\Gamma)$ is  amenable. This remark is essentially useless if one hopes to determine whether or not a given group acts amenably on some space.  But it's useful for other things; combining the results above, we have 

\begin{subcor}  $\Gamma$ admits an amenable action on some compact Hausdorff space if and only if  $C^*_u(\G)$ is nuclear. 
\end{subcor}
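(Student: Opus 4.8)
The plan is to prove both directions by chaining together the structural results just assembled, with the forward implication essentially immediate and the reverse requiring only a short permanence argument for exactness.

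First I would dispatch the forward direction. Suppose $\G$ admits an amenable action on some compact Hausdorff space. By the observation established immediately before the corollary (namely, that an amenable action on any compact $X$ forces the left translation action on $\ell^\infty(\G)$ to be amenable), the left translation action of $\G$ on $\ell^\infty(\G) = C(\beta\G)$ is itself amenable. Applying Theorem \ref{thm:crossedproductamenableaction} to this action, with the compact space taken to be $\beta\G$, shows that the reduced crossed product $\ell^\infty(\G) \rtimes_{\alpha,r} \G$ is nuclear. Finally, Proposition \ref{prop:Roecross} identifies this crossed product with $C^*_u(\G)$, so $C^*_u(\G)$ is nuclear. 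This half is purely a matter of quoting the stated results in sequence.

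For the reverse direction, suppose $C^*_u(\G)$ is nuclear. I would first record that any nuclear C$^*$-algebra is exact: fixing any faithful representation $\pi$ and composing a nuclear decomposition $\psi_n \circ \p_n \to \id$ of the identity with $\pi$ (noting $\pi\circ\psi_n$ is still c.c.p.) exhibits $\pi$ as a faithful nuclear representation. Next I would use that exactness passes to C$^*$-subalgebras, which is immediate from the definition given in the excerpt: restricting the maps $\p_n$ of a faithful nuclear representation of the ambient algebra to a subalgebra produces a faithful nuclear representation of that subalgebra. Since $C^*_\lambda(\G) \subset C^*_u(\G)$, it follows that $C^*_\lambda(\G)$ is exact, i.e., $\G$ is exact. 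The implication (\ref{thm:actsc1})$\Rightarrow$(\ref{thm:actsc3}) of Theorem \ref{thm:hmm} then yields an amenable action of $\G$ on some compact space, completing the proof.

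The only step that is not a verbatim citation is the passage in the reverse direction from nuclearity of $C^*_u(\G)$ to exactness of $\G$, and I expect this to be the main (mild) obstacle. One must remember the subtle point that, although nuclearity does \emph{not} descend to arbitrary subalgebras, exactness does, and that nuclearity implies exactness. Both facts are elementary consequences of the definition of exactness in the excerpt, so no new machinery is needed; the real content of the corollary lies entirely in the deep inputs (Theorem \ref{thm:crossedproductamenableaction}, Proposition \ref{prop:Roecross}, and Theorem \ref{thm:hmm}) being stitched together.
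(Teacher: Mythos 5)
Your proposal is correct and follows essentially the route the paper intends when it says ``combining the results above'': the forward direction chains the pre-corollary observation with Theorem \ref{thm:crossedproductamenableaction} and Proposition \ref{prop:Roecross}, while the converse passes through exactness of $C^*_\lambda(\G)$ and Theorem \ref{thm:hmm}, exactly as the paper itself does in the footnote to Proposition \ref{prop:AO} (``every subalgebra of a nuclear C$^*$-algebra is exact''). Your two elementary verifications -- that nuclearity implies exactness and that exactness descends to subalgebras -- are valid as stated for the definition of exactness used in the paper, so nothing is missing.
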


\subsection{Small Compactifications and the Akemann-Ostrand property} 

The problem of constructing amenable actions is a hard one, at least for an operator algebraist.  We'll soon see that our friends in geometric group theory are far better equipped to handle this problem, but before getting concrete let's set the stage with some general preparations.

\begin{subdefn}
\label{defn:grpcompactification}
A \emph{(covariant) compactification} of a group $\G$ is a compact topological space
$\bar{\G}=\G\cup\partial\G$ containing $\G$ as an open dense subset and with the property that the left translation action of $\G$ on $\G$
extends to a continuous action of $\G$ on $\bar{\G}$.
\end{subdefn}

For such a compact space $\bar{\G}$, there is a canonical embedding $C(\bar{\G}) \rtimes_r \G \subset C^*_u(\G)$ determined by the neutral element $e \in\G$.  Namely, a function $f \in C(\bar{\G})$ is identified with $(f(g))_{g\in \G} \in \ell^{\infty} (\G)$ while $\lambda_s \in C(\bar{\G}) \rtimes_r \G$ is identified with $\lambda_s \in C^*_u(\G)$. (Injectivity follows from the density of $\G$ in $\bar{\G}$.) \emph{For the remainder of these notes, we will make this identification without reference.} 

\begin{subdefn}
\label{defn:small}
A compactification $\bar{\G}$ is said to be \emph{small at infinity}
if for every net $\{ s_n\} \subset \G$ converging to a boundary point $x\in\partial\G$
and every $t\in\G$, one has that $s_nt\to x$.\footnote{For those in the know, this is the same as saying that $\bar{\G}$ is a quotient of the Higson corona of $\G$.}
\end{subdefn}

To understand this definition, recall that $\G$ also acts on $\ell^{\infty} (\G)$ by right translation; i.e., the mapping $f \mapsto f^t$, where $f \in\ell^{\infty} (\G)$, $t \in \G$ and $f^t(g) = f(gt^{-1})$ for all $g \in \G$, defines a right action of $\G$ on $\ell^{\infty} (\G)$.  It is worth pointing out that this action, like its leftist counterpart, is spatially implemented. But this time the right regular representation does the job: $f^t = \rho_t^* f \rho_t$, for all $f \in\ell^{\infty} (\G)$ and $t \in \G$. 

To see what being small at infinity has to do with the right translation action, we suggest proving the following lemma. 

\begin{sublem}
Let $\G$ be a group and $\bar{\G}=\G\cup\partial\G$
be a compactification.
The following are equivalent:
\begin{enumerate}
\item
the compactification $\bar{\G}$ is small at infinity;
\item
the right translation action extends to a continuous action
on $\bar{\G}$ in such a way that it is trivial on $\partial\G$;
\item
one has $f^t-f\in c_0(\G)$ for every $f\in C(\bar{\G})$ and $t\in\G$.
\end{enumerate}
\end{sublem}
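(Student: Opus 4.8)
The plan is to fix, once and for all, a few topological facts about the compactification and then read off the three equivalences from the function-theoretic description of $\bar\G$. Recall first that $\G$ is discrete, so as an open dense subset of $\bar\G$ each point of $\G$ is isolated; consequently a net in $\G$ can converge to a boundary point $x\in\partial\G$ only by eventually leaving every finite subset of $\G$. Next, every $h\in c_0(\G)$ extends continuously to $\bar\G$ by setting it equal to $0$ on $\partial\G$ (if $s_\alpha\to x\in\partial\G$ then $s_\alpha$ escapes every finite set, so $h(s_\alpha)\to 0$); thus $c_0(\G)\subset C(\bar\G)$ and in fact $c_0(\G)=\{f\in C(\bar\G):f|_{\partial\G}=0\}$, the kernel of the restriction map $C(\bar\G)\to C(\partial\G)$. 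Finally I would use two standard consequences of $\bar\G$ being compact Hausdorff with its topology the initial topology from $C(\bar\G)$: a net $p_\alpha\to p$ in $\bar\G$ if and only if $f(p_\alpha)\to f(p)$ for every $f\in C(\bar\G)$, and a self-map $\phi\colon\bar\G\to\bar\G$ is continuous if and only if $f\circ\phi\in C(\bar\G)$ for every $f\in C(\bar\G)$. With these in hand I would prove $(1)\Leftrightarrow(3)$ and $(2)\Leftrightarrow(3)$ separately, which sidesteps any delicate hands-on net argument for continuity.

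For $(3)\Rightarrow(1)$, suppose $s_\alpha\to x\in\partial\G$ and fix $t\in\G$. For any $f\in C(\bar\G)$ write $f(s_\alpha t)=f^{t^{-1}}(s_\alpha)=f(s_\alpha)+h(s_\alpha)$, where $h:=f^{t^{-1}}-f\in c_0(\G)$ by hypothesis. Since $f(s_\alpha)\to f(x)$ and $h(s_\alpha)\to 0$, we get $f(s_\alpha t)\to f(x)$ for every $f$, hence $s_\alpha t\to x$; that is exactly smallness at infinity. For the converse $(1)\Rightarrow(3)$, argue by contradiction: if $f^t-f\notin c_0(\G)$ for some $f,t$, there are $\e>0$ and $g_\alpha$ leaving every finite set with $|f(g_\alpha t^{-1})-f(g_\alpha)|\ge\e$. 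By compactness pass to a subnet $g_\alpha\to x\in\bar\G$; since the $g_\alpha$ escape every finite set and points of $\G$ are isolated, necessarily $x\in\partial\G$. Smallness at infinity (applied to $t^{-1}$) gives $g_\alpha t^{-1}\to x$, so $f(g_\alpha t^{-1})-f(g_\alpha)\to f(x)-f(x)=0$, contradicting the lower bound $\e$.

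For $(2)\Leftrightarrow(3)$, let $\bar R_t\colon\bar\G\to\bar\G$ be the extension of the right translation $g\mapsto gt$ defined to be the identity on $\partial\G$. Assuming $(3)$, for each $f\in C(\bar\G)$ the composite $f\circ\bar R_t$ equals $f$ on $\partial\G$ and equals $g\mapsto f(gt)=f^{t^{-1}}(g)$ on $\G$; since $f^{t^{-1}}-f\in c_0(\G)$, the function $f\circ\bar R_t$ coincides pointwise with $f+(f^{t^{-1}}-f)\in C(\bar\G)$, so $f\circ\bar R_t\in C(\bar\G)$ and the criterion above shows $\bar R_t$ is continuous; it is trivial on $\partial\G$ by construction, and the maps $\{\bar R_t\}$ satisfy the group law because they do so on the dense subset $\G$ and are continuous into a Hausdorff space. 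Conversely, assuming $(2)$, continuity of $\bar R_{t^{-1}}$ gives $f\circ\bar R_{t^{-1}}\in C(\bar\G)$; its restriction to $\G$ is $f^t$ and, because $\bar R_{t^{-1}}$ is trivial on $\partial\G$, it agrees with $f$ on $\partial\G$. Hence $f^t-f\in C(\bar\G)$ vanishes on $\partial\G$, i.e.\ $f^t-f\in c_0(\G)$, which is $(3)$.

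The main obstacle is not any single estimate but the topological bookkeeping underpinning the argument: establishing that points of $\G$ are isolated, the clean identification $c_0(\G)=\{f\in C(\bar\G):f|_{\partial\G}=0\}$, and -- most importantly -- producing an honestly continuous extension of right translation in $(3)\Rightarrow(2)$, which the composition criterion $f\circ\phi\in C(\bar\G)$ reduces to the purely algebraic statement $(3)$. I would also be careful throughout to keep the left/right conventions straight, in particular the bookkeeping of $t$ versus $t^{-1}$ in $f^t(g)=f(gt^{-1})$, since a slip there is the easiest way to break the argument.
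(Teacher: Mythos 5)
Your proof is correct. Note that the paper itself offers no proof of this lemma --- it is explicitly left as an exercise to the reader (``we suggest proving the following lemma'') --- so there is no argument to compare against; but your solution is complete and the strategy is a clean one: rather than chasing nets to verify continuity of the extended right translations directly, you first pin down the function-theoretic skeleton ($c_0(\G)=\{f\in C(\bar{\G}): f|_{\partial\G}=0\}$, convergence and continuity tested against $C(\bar{\G})$, which is legitimate since the compact Hausdorff topology on $\bar{\G}$ coincides with the initial topology from $C(\bar{\G})$) and then each implication reduces to a one-line pointwise identity such as $f\circ\bar{R}_t=f+(f^{t^{-1}}-f)$. This is exactly the form in which the lemma gets used later (the proof of Proposition \ref{prop:AO} invokes item (3) precisely via the embedding $c_0(\G)\subset C(\bar{\G})$ by extension by zero), so your packaging is well adapted to the paper. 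Two small points worth making explicit: first, your opening claim that points of $\G$ are isolated in $\bar{\G}$ uses the standing convention that the \emph{discrete} group $\G$ embeds with its discrete topology as an open subset --- this is implicit in Definition \ref{defn:grpcompactification} (and needed for the paper's identification $C(\bar{\G})\subset\ell^\infty(\G)$), but it is an assumption, not a consequence of openness alone; second, in $(1)\Rightarrow(3)$, for a possibly uncountable group the infinite set $\{g: |f^t(g)-f(g)|\ge\e\}$ should be turned into a net escaping every finite subset (e.g.\ indexed by the finite subsets of $\G$ ordered by inclusion) rather than a sequence --- your wording already accommodates this, and the rest of the contradiction argument, including the careful $t$ versus $t^{-1}$ bookkeeping in $f^t(g)=f(gt^{-1})$, checks out.
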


Having wandered into the trees a bit, let's step back and take a look at the forest. Recall that our goal is to give examples of groups satisfying the hypotheses of Theorem \ref{thm:taka}. Our next result says that it suffices to find groups with sufficiently nice compactifications.

\begin{subprop}
\label{prop:AO}
Assume $\G$ admits a compactification $\bar{\G}$ which is small at infinity and such that the left action of $\G$ on $\bar{\G}$ is amenable. 
Then the canonical map $$C^*_\lambda(\G) \odot C^*_{\rho}(\G) \to \B(\ell^2(\G)) \to \B(\ell^2(\G))/\K(\ell^2(\G))$$is $\min$-continuous. Hence, $L(\G)$ is solid.\footnote{Since every subalgebra of a nuclear C$^*$-algebra is exact, the existence of an amenable action implies exactness of the group (since the associated crossed product is nuclear and contains $C^*_\lambda(\G)$; cf.\ Theorem \ref{thm:hmm}). Thus the hypotheses of this proposition imply that Theorem \ref{thm:taka} is applicable.} 
 
\end{subprop}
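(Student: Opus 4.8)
The plan is to reduce everything to a commutation relation in the Calkin algebra $\mathcal{Q} := \B(\ell^2(\G))/\K(\ell^2(\G))$ and then exploit nuclearity of the crossed product. Write $D := C(\bar{\G}) \rtimes_r \G$ and recall the standing identification $D \subset C^*_u(\G) \subset \B(\ell^2(\G))$, under which $C^*_\lambda(\G) \subset D$ (the two copies of each $\lambda_s$ agree). Since the left action of $\G$ on $\bar{\G}$ is amenable, Theorem \ref{thm:crossedproductamenableaction} tells us $D$ is nuclear. Granting the asserted $\min$-continuity, solidity of $L(\G)$ is then immediate from Theorem \ref{thm:taka}: the existence of the amenable action forces $\G$ to be exact (as $C^*_\lambda(\G)$ sits inside the nuclear algebra $D$), so both hypotheses of that theorem hold. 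Thus the entire content is the $\min$-continuity statement.

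First I would extract the consequence of ``small at infinity.'' Recall that the right action is spatially implemented, $f^t = \rho_t^* f \rho_t$ for $f \in C(\bar{\G}) \subset \ell^\infty(\G)$, and that small at infinity is equivalent to $f^t - f \in c_0(\G)$ for all $t \in \G$. Since $c_0(\G) \subset \K(\ell^2(\G))$, this gives $\rho_t^* f \rho_t - f \in \K(\ell^2(\G))$, hence $[\rho_t, f] \in \K(\ell^2(\G))$ for every $f \in C(\bar{\G})$ and $t \in \G$. In other words, the image of $C(\bar{\G})$ commutes with the image of $C^*_\rho(\G)$ in $\mathcal{Q}$. As $\lambda_s$ and $\rho_t$ commute exactly, and $D$ is generated by $C(\bar{\G})$ together with $\{\lambda_s\}$, the image $\bar D$ of $D$ in $\mathcal{Q}$ commutes with the image $\overline{C^*_\rho(\G)}$.

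Now I would turn on nuclearity. The algebra $\bar D$ is a quotient of the nuclear algebra $D$, hence nuclear by Corollary \ref{cor:nucquot}. Because $\bar D$ and $\overline{C^*_\rho(\G)}$ commute in $\mathcal{Q}$, universality (Proposition \ref{prop:maxuniversality}) produces a product $*$-homomorphism $\bar D \mtp \overline{C^*_\rho(\G)} \to \mathcal{Q}$; since $\bar D$ is nuclear, Proposition \ref{prop:tensornuc} identifies this with a $\min$-continuous map $\bar D \otimes \overline{C^*_\rho(\G)} \to \mathcal{Q}$. Restricting along the inclusion $\overline{C^*_\lambda(\G)} \otimes \overline{C^*_\rho(\G)} \subset \bar D \otimes \overline{C^*_\rho(\G)}$, which is isometric by Proposition \ref{prop:mininclusion}, and precomposing with the $\min$-contractive map $C^*_\lambda(\G) \otimes C^*_\rho(\G) \to \overline{C^*_\lambda(\G)} \otimes \overline{C^*_\rho(\G)}$ induced by the two quotient $*$-homomorphisms (Theorem \ref{thm:tenprodmapscont}), I obtain a $\min$-continuous map $C^*_\lambda(\G) \otimes C^*_\rho(\G) \to \mathcal{Q}$. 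On the algebraic tensor product this sends $a \otimes b$ to the image of $ab$, i.e.\ it agrees with the canonical composite $C^*_\lambda(\G) \odot C^*_\rho(\G) \to \B(\ell^2(\G)) \to \mathcal{Q}$; hence that composite is $\min$-continuous, as required.

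The main obstacle --- really the one genuinely new idea --- is the second paragraph: recognizing that ``small at infinity'' is precisely the statement that $C(\bar{\G})$ commutes with $C^*_\rho(\G)$ modulo the compacts, so that after dropping to the Calkin algebra the nuclear crossed product $D$ delivers commuting, and hence $\min$-tensorially controlled, copies of $C^*_\lambda(\G)$ and $C^*_\rho(\G)$. Everything after that is functoriality and inclusivity of the minimal tensor product. The one routine point worth verifying is that $C^*_\lambda(\G) \subset D$ under the chosen identification and that the product map on the algebraic level really does coincide with the canonical one; both are straightforward.
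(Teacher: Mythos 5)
Your proposal is correct and follows essentially the same route as the paper: the nuclear crossed product $A := C(\bar{\G}) \rtimes_r \G \subset C^*_u(\G)$ containing $C^*_\lambda(\G)$, the observation that smallness at infinity (condition (3) of the lemma) gives $[f,\rho_t] \in \K(\ell^2(\G))$ and hence commutation with $C^*_\rho(\G)$ in the Calkin algebra, and then nuclearity together with universality of $\mtp$ to extract $\min$-continuity of the canonical map. The only divergence is cosmetic and harmless: the paper works upstairs, mapping $A \mtp C^*_\rho(\G) = A \otimes C^*_\rho(\G) \supset C^*_\lambda(\G) \otimes C^*_\rho(\G)$ directly into the Calkin algebra via the product of the two commuting quotient representations, whereas you first pass to the quotient images $\bar{D}$, $\overline{C^*_\rho(\G)}$ and invoke Corollary \ref{cor:nucquot} to keep nuclearity --- an extra step the paper's formulation avoids.
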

\begin{proof}
It suffices to show that there exists a nuclear C$^*$-algebra $A \subset \B(\ell^2(\G))$
such that $C^*_\lambda(\G) \subset A$ and $\pi(A)$ commutes with $\pi(C^*_\rho(\G))$, where $\pi\colon \B(\ell^2(\G)) \to \B(\ell^2(\G))/\K(\ell^2(\G))$ is the quotient map.
Indeed, if such $A$ exists, then we have an inclusion
$C^*_\lambda(\G)\otimes C^*_\rho(\G)\subset A \otimes C^*_\rho(\G) = A \mtp C^*_\rho(\G)$
and a natural $*$-homomorphism $A \mtp C^*_\rho(\G) \to \B(\ell^2(\G))/\K(\ell^2(\G))$.

So, let $\bar{\G}$ be a left-amenable compactification which is small at infinity.  According to Theorem \ref{thm:crossedproductamenableaction}, the crossed product $$A := C(\bar{\G}) \rtimes_r \G \subset C^*_u(\G) \subset \B(\ell^2(\G))$$ is nuclear and contains $C^*_\lambda(\G)$.  So we only have to check that $A$ commutes with $C^*_\rho(\G)$, after passing to the Calkin algebra.  

Since $A$ is generated by $C(\bar{\G})$ and $C^*_\lambda(\G)$ -- and $C^*_\rho(\G)$ commutes with $C^*_\lambda(\G)$ -- it suffices to check that $[f,\rho_t] \in \K(\ell^2(\G))$ for every $f \in C(\bar{\G})$ and $t \in \G$.  Which is where being small at infinity comes in.  Indeed, condition (3) in the previous lemma says that 
\[
\rho_t^*f\rho_t-f=f^t-f\in c_0(\G)\subset\K(\ell^2(\G))
\]
for any $f\in C(\bar{\G})$ and any $t\in\G$. Thus $[f,\rho_t] = \rho_t(\rho_t^*f\rho_t-f) \in \K(\ell^2(\G))$, as desired. 
\end{proof}

\begin{subrem}  Akemann and Ostrand first proved that if $\G$ is a free group, then $$C^*_\lambda(\G) \odot C^*_{\rho}(\G) \to \B(\ell^2(\G)) \to \B(\ell^2(\G))/\K(\ell^2(\G))$$ is $\min$-continuous (cf.\ \cite{akemann-ostrand}). Hence, we refer to this as the \emph{Akemann-Ostrand property}. 
\end{subrem} 

A theorem of Choi implies that free groups are exact (cf.\ \cite{choi:free}). Together with the work of Akemann and Ostrand, this implies that free groups satisfy the hypotheses of Theorem \ref{thm:taka}.  Hence, modulo citation, we have shown that free group factors are solid.\footnote{Popa has found a W$^*$-proof of solidity of free group factors (cf.\ \cite{popa:solid}), but it doesn't extend to the hyperbolic groups and other examples that C$^*$-techniques can reach.}  In particular, we deduce a celebrated result that was first proved using free probability theory. 

\begin{subthm}[Ge \cite{ge}] Free group factors are prime.  
\end{subthm}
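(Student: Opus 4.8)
The plan is to deduce this immediately from Proposition \ref{prop:prime}, which is precisely the structural consequence of solidity we set up for this kind of application. Having just argued (modulo citation) that the free group factor $M := L(\F_n)$, for $2 \le n \le \infty$, is solid, I would feed $M$ into that proposition as \emph{both} the ambient finite solid algebra and the non-injective subfactor, taking $N = M$.

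First I would check that $M$ satisfies the standing hypotheses of Proposition \ref{prop:prime}. Since $\F_n$ is ICC for $n \ge 2$, the group von Neumann algebra $L(\F_n)$ is a $\mathrm{II}_1$ factor, hence finite; and we have just established that it is solid. The one genuinely new point to verify is that $M$ is non-injective, and here I would invoke the equivalence $(\ref{thm:grpam1})\Leftrightarrow(\ref{thm:grpam11})$ of Theorem \ref{thm:amenablegroup}, which asserts that $L(\G)$ is injective precisely when $\G$ is amenable. Because free groups on at least two generators are non-amenable --- a classical fact, e.g.\ they violate the F{\o}lner condition of Definition \ref{defn:Folner} --- it follows that $M = L(\F_n)$ is non-injective.

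With these verifications in hand, applying Proposition \ref{prop:prime} with $N = M$ shows that $M$ is prime, i.e., $L(\F_n)$ is not isomorphic to any tensor product $N_1 \vt N_2$ of $\mathrm{II}_1$ factors, which is exactly the claim. The main point to emphasize is that there is essentially no obstacle remaining at this stage: all the genuine difficulty has already been absorbed into establishing solidity (Theorem \ref{thm:taka}, via exactness of free groups and the Akemann--Ostrand property) and into Proposition \ref{prop:prime} itself. The only input not already packaged is the non-amenability of $\F_n$, which is elementary.
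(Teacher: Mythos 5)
Your proposal is correct and takes exactly the route the paper intends: solidity of $L(\F_n)$ (obtained from exactness of free groups plus the Akemann--Ostrand property, feeding Theorem \ref{thm:taka}) combined with Proposition \ref{prop:prime} applied with $N = M = L(\F_n)$, where non-injectivity comes from non-amenability of $\F_n$ via Theorem \ref{thm:amenablegroup}. The paper leaves these verifications implicit in its ``in particular''; you have simply spelled them out correctly.
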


Though there is nothing wrong with relying on the work of Choi and Akemann-Ostrand, it turns out that more can be said -- much more -- if we instead prove that free groups satisfy the hypotheses of Proposition \ref{prop:AO}.  That will be the topic of the next section.

\section{The Free Group $\F_2$} 

In this section we exploit Gromov's view of discrete groups as geometric objects to show that the free group on two generators satisfies the hypotheses of Proposition \ref{prop:AO}. It turns out that this approach works equally well for general hyperbolic groups, but the ideas are particularly transparent if we first look at $\F_2$.  

\subsection{A Compactification of $\F_2$}

Recall that the Cayley graph of a group has vertices labeled by the group elements and an edge between two vertices whenever the corresponding group elements differ by a generator (in particular, it depends on the choice of a generating set).  If we choose the canonical generating set of $\F_2$, denoted by the letters $\{a, b, a^{-1}, b^{-1}\}$, then the ``center" of the Cayley graph looks like this:\footnote{Sorry, but I can't do pictures in LaTeX. I can't even do them by hand, so Yuri saved me (again).}



\vspace{3mm}

\hspace{2.5cm}\includegraphics[scale=0.75]{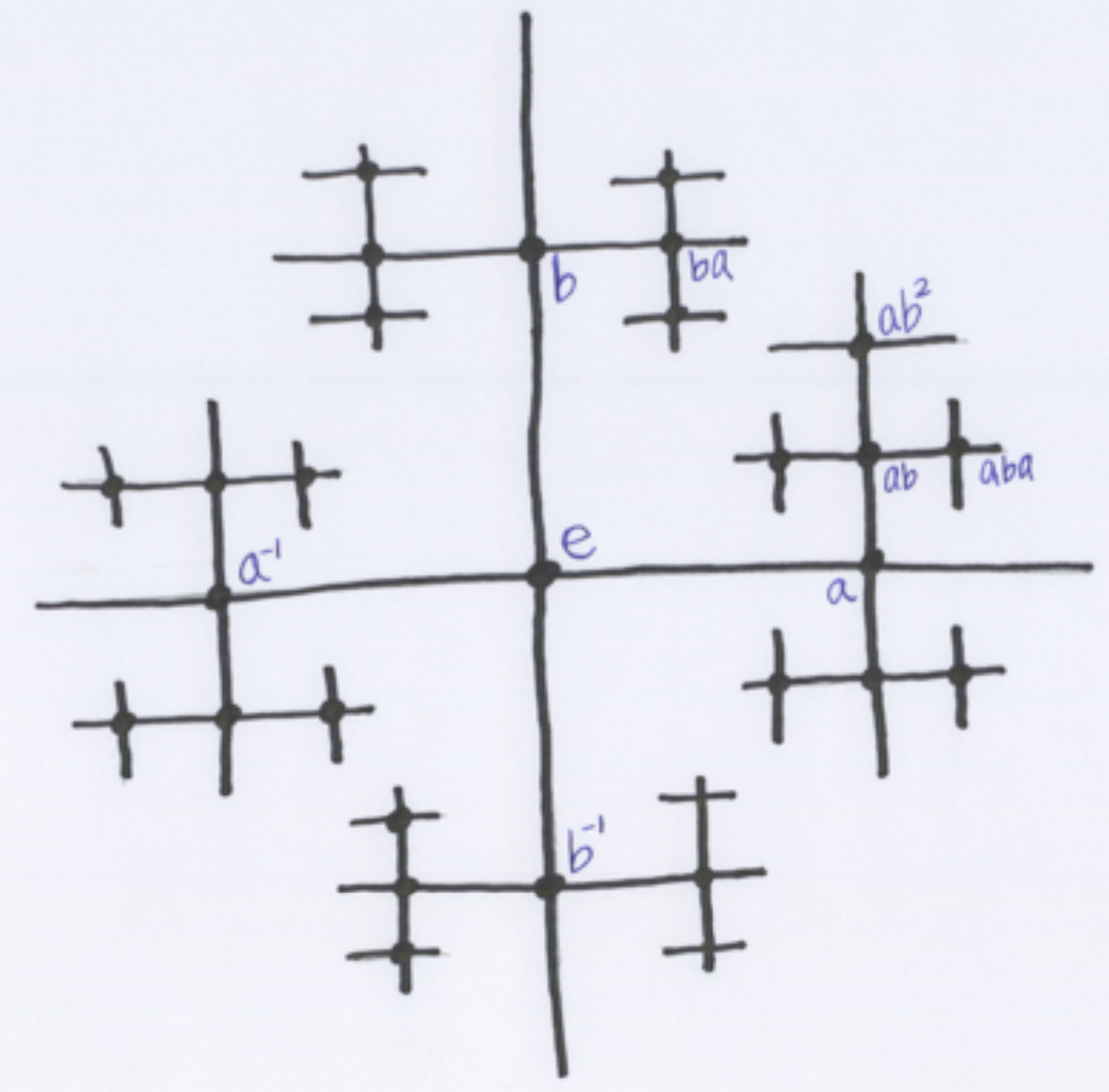}

To construct the right compactification of $\F_2$, we have to consider geodesics in the Cayley graph -- i.e., paths that never backtrack.  Note that since the Cayley graph of $\F_2$ is a tree, there is a \emph{unique} geodesic connecting any pair of vertices.  However, we must consider infinite geodesics, too. 

\begin{subdefn} Let $\bar{\F}_2$ denote the set of geodesics (both finite and infinite) in the Cayley graph of $\F_2$ which start at the neutral element $e$. 
\end{subdefn} 

We can view this compactification as follows:  

\vspace{3mm}

\hspace{2cm}\includegraphics[scale=0.60]{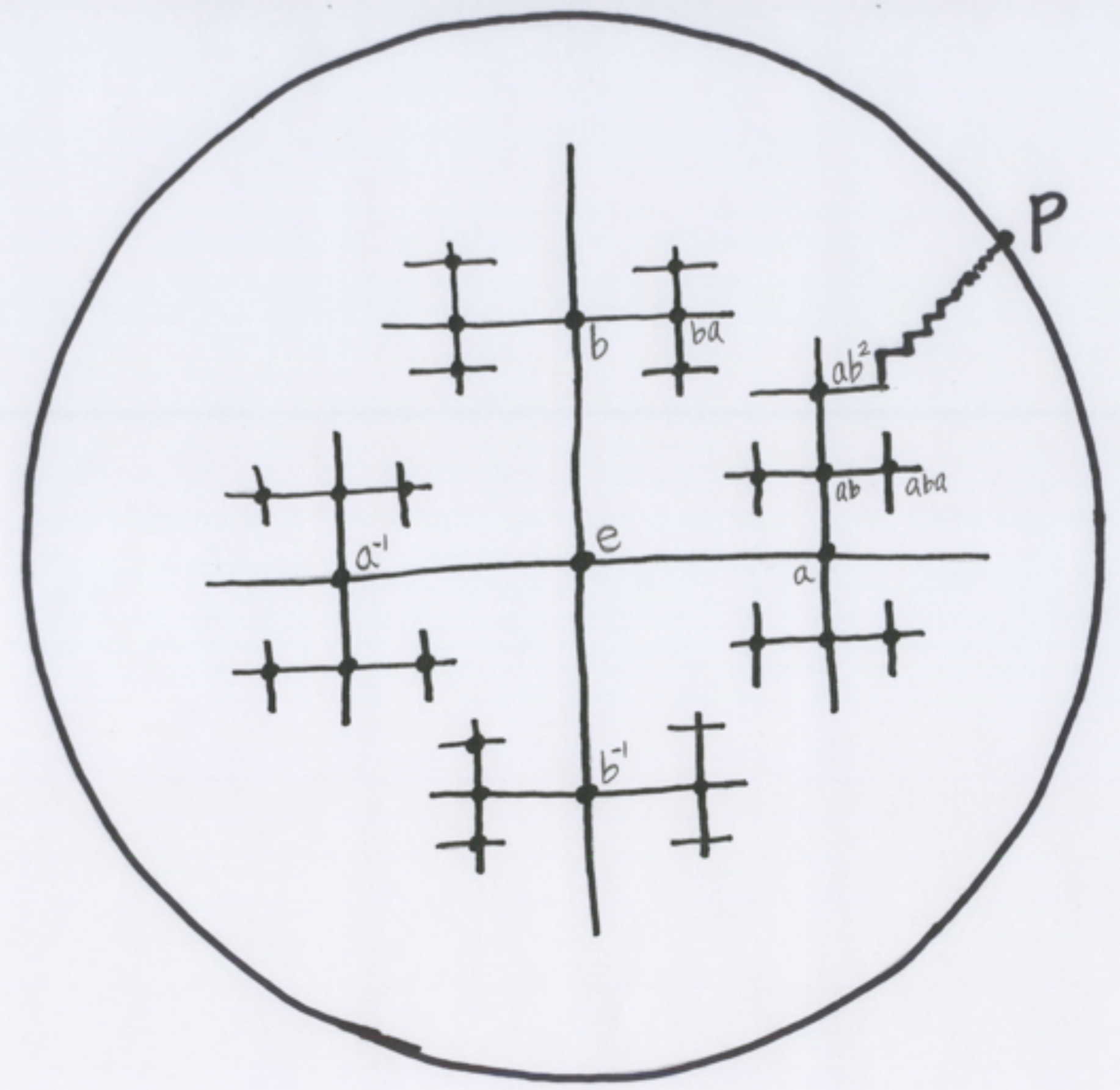}

{\noindent The} circle around the outside represents the infinite geodesics and we think of each such path as converging to a boundary point.

Let me first describe the topology on $\bar{\F}_2$ in words.  If $p \in \bar{\F}_2$ is a finite path, then a net of paths $\{ p_\lambda \}$ converges to $p$ if and only if $p_\lambda = p$ for all large $\lambda$. However, if $p$ is infinite, then $p_\lambda \to p$ means that the $p_\lambda$'s agree with $p$ more and more and more; a bit more precisely, for every $k \in \N$ there exists a $\lambda_0$ such that the first $k$-steps along the path $p_\lambda$ agree with the first $k$-steps along $p$, for all $\lambda \geq \lambda_0$.  However, the topology on $\bar{\F}_2$ is best understood pictorially; $p_n \to p$ means: 

\vspace{3mm}

\hspace{3.5cm}\includegraphics[scale=0.50]{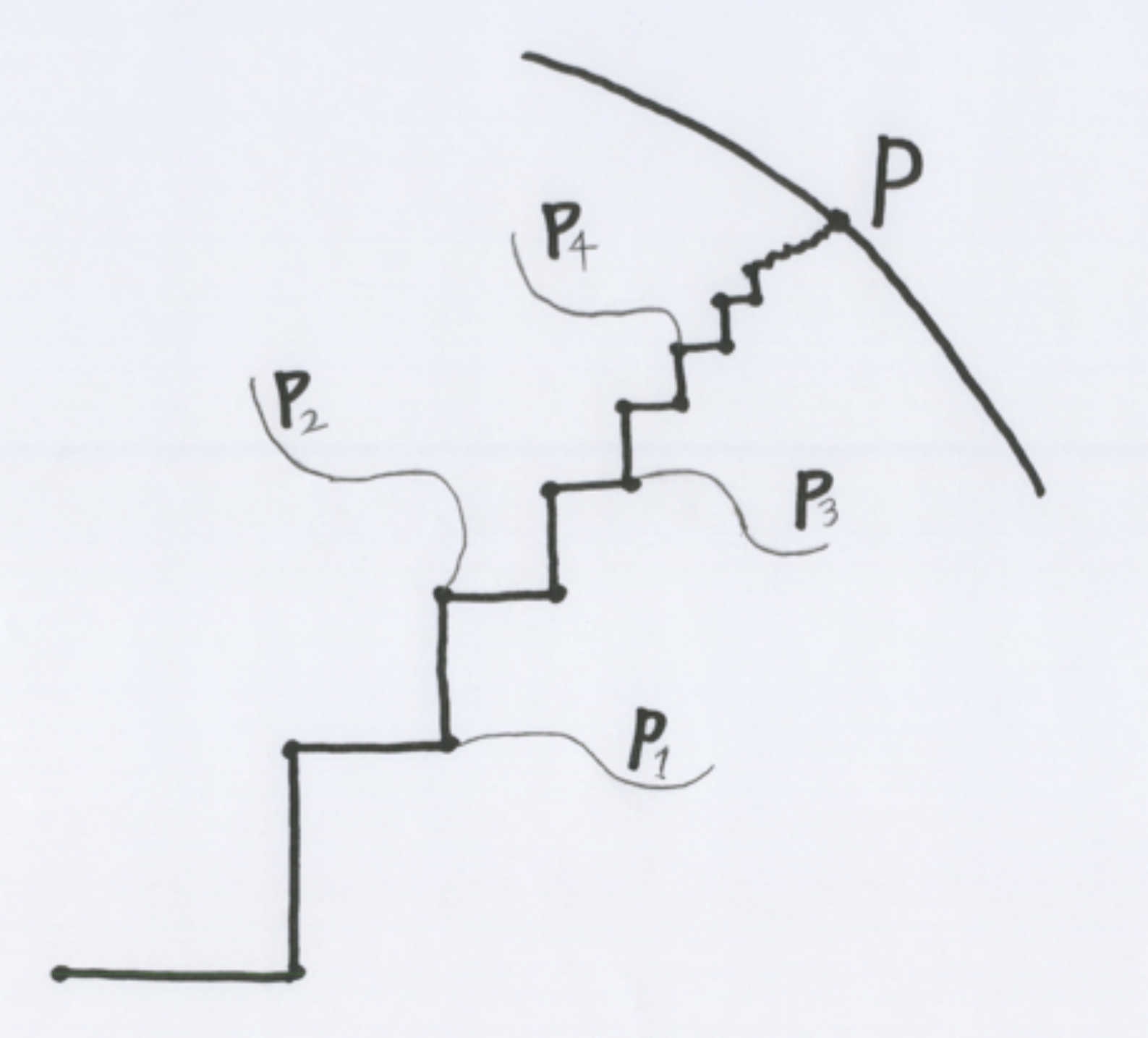}


For those uncomfortable with loose definitions (like me), let's be more rigorous.  First we identify a path $p \in \bar{\F}_2$ with a (finite or infinite) reduced word in the obvious way: 

\vspace{3mm}

\hspace{2.5cm}\includegraphics[scale=0.55]{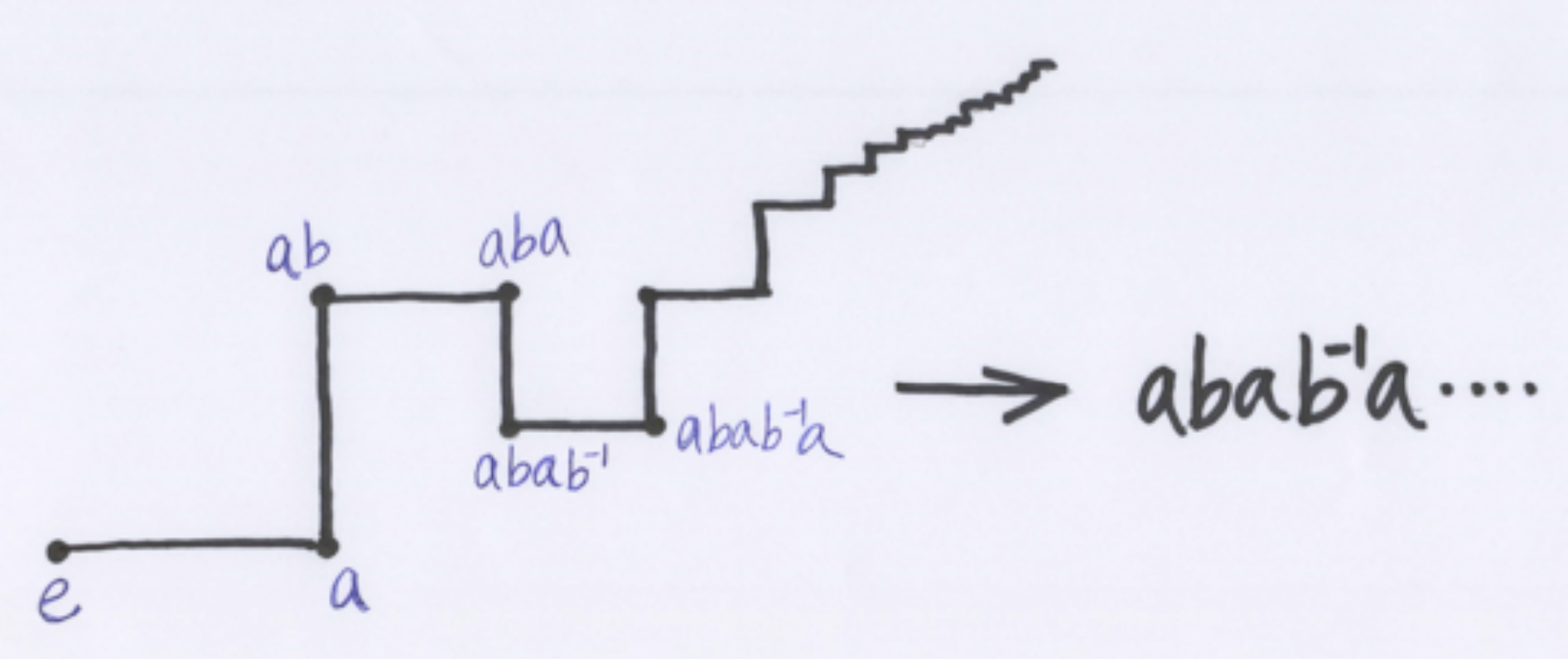}

Then we identify reduced words with the sequences $(x_i)$ in $\prod_\N \{a, b, a^{-1}, b^{-1}\}$ determined by the following rule: Either $x_{i+1} \neq x_i^{-1}$ for all $i \in \N$ (infinite words) or, if $x_{i+1} = x_i^{-1}$, then $x_j = x_i^{-1}$ for all $j > i$ (finite words).  It is a good exercise to check that restricting the product topology on $\prod_\N \{a, b, a^{-1}, b^{-1}\}$ to this copy of  $\bar{\F}_2$ yields the topology described above. Moreover, the complement of $\bar{\F}_2$ in $\prod_\N \{a, b, a^{-1}, b^{-1}\}$ is open (another simple exercise), so $\bar{\F}_2$ is compact.  

\subsection{$\bar{\F}_2$ is an Amenable $\F_2$-space and Small at Infinity} 

Now we must show that $\bar{\F}_2$ is a compactification in the sense of Definition \ref{defn:grpcompactification}; that it is small at infinity; and that the left action of $\F_2$ on $\bar{\F}_2$ is amenable. 

First, we identify $\F_2$ with the set of finite paths in $\bar{\F}_2$. It is easy to check that $\F_2 \subset \bar{\F}_2$ is dense and open.  To describe the left action of $\F_2$ on $\bar{\F}_2$, we use the reduced-word picture.  Indeed, if $s = x_1 x_2 \cdots x_k \in \F_2$ is a reduced word and $y = y_1y_2 \cdots \in \bar{\F}_2$ then $s$ acts on $y$ by left concatenation and cancellation of any inverses, i.e., if $x_k \neq y_1^{-1}$, then $s.y = x_1 x_2 \cdots x_k y_1y_2 \cdots \in \bar{\F}_2$ (and otherwise a bit of cancellation may occur).  Clearly this extends the left action of $\F_2$ on itself, and it's easily seen to be continuous.  Thus $\bar{\F}_2$ is a compactification. 

Next we check the small-at-infinity condition -- which is my favorite part, because pictorially it's completely trivial.  Indeed, the right action of $\F_2$ on itself is also by concatenation so if $p_n \in \F_2 \subset \bar{\F}_2$ converge to an infinite path $p \in \bar{\F}_2$ and $t \in \F_2$ is arbitrary, then evidently $p_n t \to p$ as  well. 

\vspace{3mm}

\hspace{4cm}\includegraphics[scale=0.45]{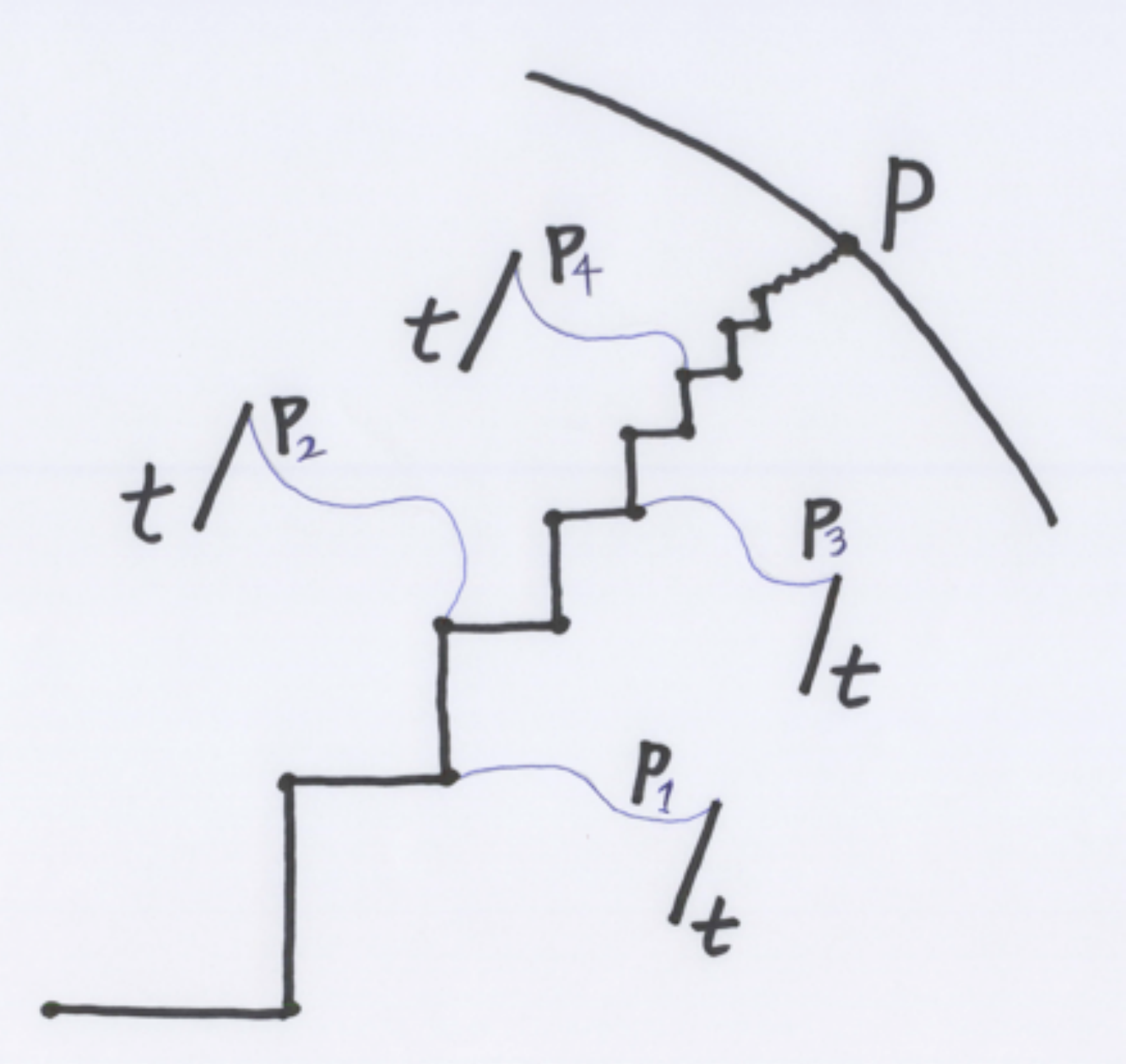}

Finally, we must see why the left action of $\F_2$ on $\bar{\F}_2$ is amenable.  That is, we must construct continuous maps $m_i\colon \bar{\F}_2 \to \prob(\F_2)$, such that for each $s \in \F_2$, $$\lim_{i \to \infty}\Big( \sup_{x \in \bar{\F}_2} \| s.m_i^x - m_i^{s.x}\|_1 \Big) = 0.$$ Given $x \in \bar{\F}_2$, we write it as a reduced word $x = x_1x_2x_3\cdots$ and define $x(k) = x_1x_2\cdots x_k$, where $x(0) := e$.  (If $x = x_1\cdots x_j$ is a finite word, then we put $x(k) = x$ for all $k \geq j$.)  Now, for $N \in \N$ we define $m_N \colon \bar{\F}_2 \to \prob(\F_2)$ by $$m_N^x = \frac{1}{N} \sum_{k = 0}^{N-1} \delta_{x(k)},$$ where $\delta_{x(k)}$ is the point mass concentrated at $x(k)$. In the case that $x$ is an infinite word, $m_N^x$ is just the normalized characteristic function over the first $N$ steps along the path determined by $x$; when $x$ is a finite word, $m_N^x$ is converging to the point mass at $x$ (as $N \to \infty$). One checks that each $m_N$ is continuous, and for every $s \in \F_2$ we have $$\sup_{x \in \bar{\F}_2} \| s.m_N^x - m_N^{s.x}\|_1 \leq \frac{2|s|}{N},$$ where $|s|$ denotes the word length of $s$.  One can verify this inequality by hand, but it's easier to see geometrically.  Indeed, if $x = x_1x_2\cdots$ is an infinite word and $s = s_1\cdots s_m \in \F_2$, then after cancellation we have $s.x = s_1\cdots s_{d}x_{(m-d)+1} x_{(m-d)+2}\cdots$ for some $d\leq m$.  A computation shows that $s.m_N^x$ is just the normalized characteristic function over the first $N$ steps along the geodesic that starts at $s$, goes back to $s_1\cdots s_{d}$, and then proceeds along the remainder of $s.x$.  Thus, in pictures, we have: 

\vspace{3mm}

\hspace{2cm}\includegraphics[scale=0.75]{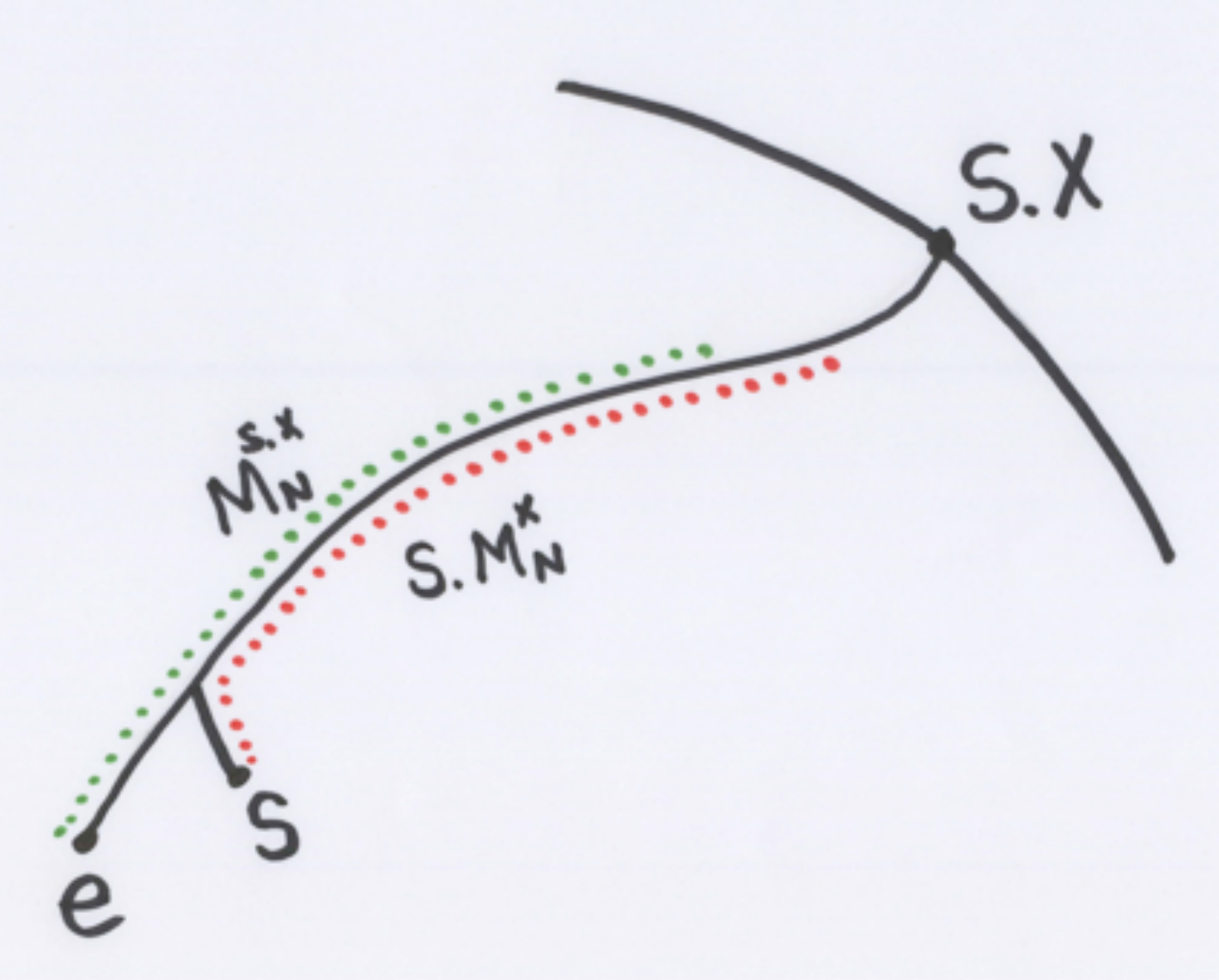}   

{\noindent It is} clear from this picture that $s.m_i^x - m_i^{s.x}$ is zero over the intersection of the two paths,  and the number of points where the paths don't overlap is bounded above by $2|s|$.  The case that $x$ is a finite path is similar, so we conclude that $\sup_{x \in \bar{\F}_2} \| s.m_N^x - m_N^{s.x}\|_1 \leq \frac{2|s|}{N}$. Letting $N \to \infty$ completes the proof.   

\subsection{Hyperbolic Groups} 

It turns out that everything we've done in this section extends to hyperbolic groups.  The details are significantly more annoying, but the geometric intuition is virtually identical.  Hence, I'll sketch the ideas, but refer the reader to \cite[Section 5.3]{Me-n-Taka} for details.

Let $K$ be a connected graph with the graph metric $d$ (the distance between vertices is the length of the shortest path connecting them). For every pair of vertices $x,y\in K$,
there exists a (not necessarily unique) geodesic path connecting $x$ to $y$ and we let $[x,y]$ denote such a geodesic (though more than one may exist).
For every subset $A\subset K$ and $r>0$, we define
\[
d(x,A)=\inf\{ d(x,a) : a\in A\}\ \mbox{ and }\
N_r(A)=\{ x\in K : d(x,A)< r\}.
\]
The set $N_r(A)$ is called the $r$-tubular neighborhood of $A$ in $K$.

%

\begin{subdefn}
Let $K$ be a connected graph. A  \emph{geodesic
triangle} $\triangle$ in $K$  consists of three points $x,y,z$ in $K$
and three geodesic paths $[x,y],[y,z],[z,x]$
connecting them.
\end{subdefn}

Quick question: If $K$ is a tree, what does a geodesic triangle look like?  That's right, it's awfully skinny. 

\begin{subdefn}[Hyperbolic graph]\label{defn:hyperbolicgraph}
For $\delta>0$, we say a geodesic triangle
$\triangle$ is \emph{$\delta$-slim} if
each of its sides is contained in the open
$\delta$-tubular neighborhood of the union of the other two --
i.e., $[x,y]\subset N_\delta([y,z]\cup[z,x])$
and similarly for the other two sides.
We say that the graph $K$ is \emph{hyperbolic}
if there exists $\delta>0$ such that
every geodesic triangle in $K$ is $\delta$-slim.
\end{subdefn}

\hspace{2cm}\includegraphics[scale=0.75]{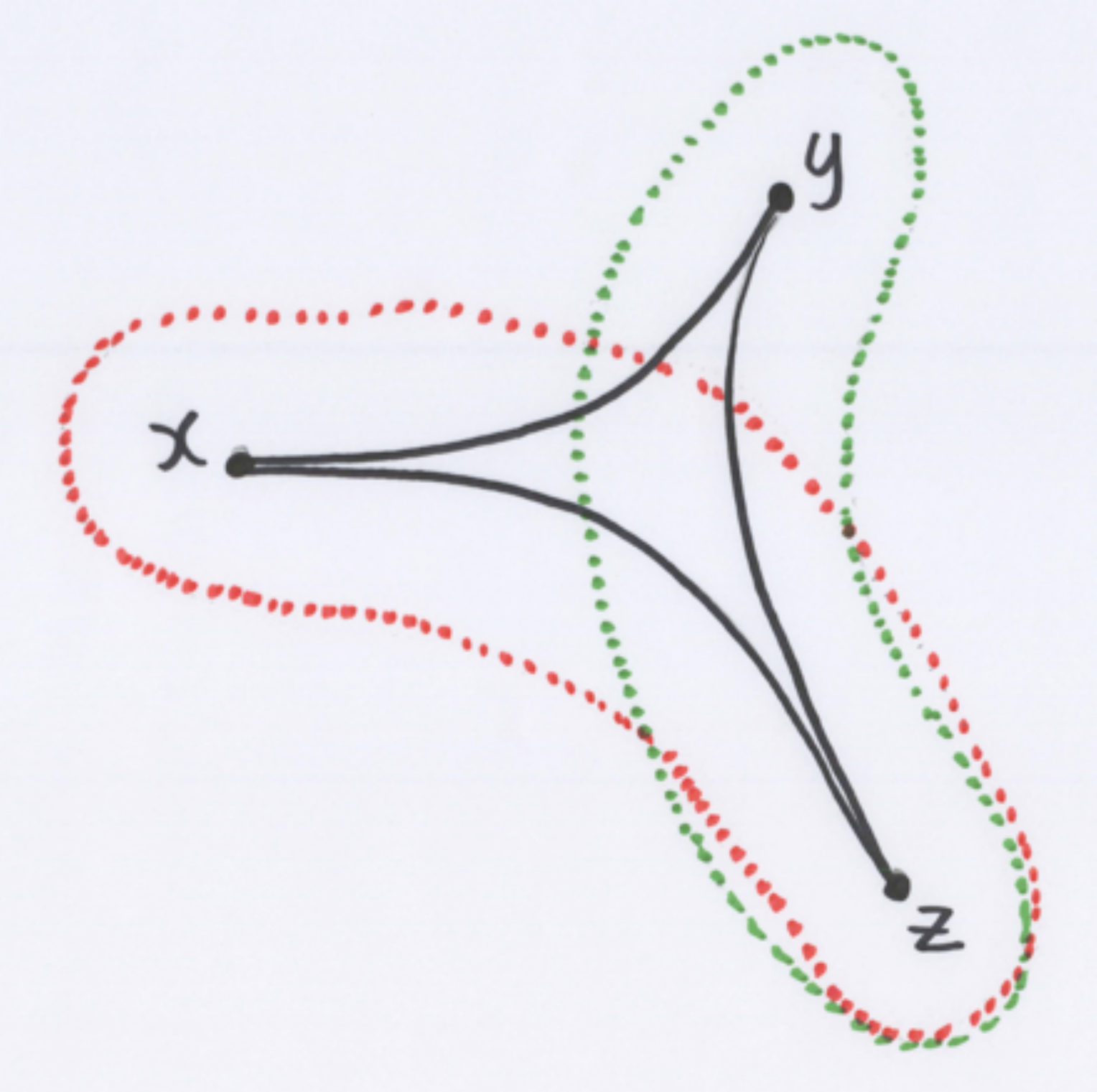}
%


\begin{subdefn}[Hyperbolic group]
Let $\G$ be a finitely generated group.
We say that $\G$ is \emph{hyperbolic} if its Cayley graph is hyperbolic.
\end{subdefn}

It can be shown that this definition does not depend on the choice of generating set -- i.e., it's an intrinsic property of the group $\G$. 

\begin{subrem}  Clearly all finite groups are hyperbolic (take $\delta > |\G|$). Another simple example is $\Z$.  The simplest example of a non-hyperbolic group is $\Z^2$ (since, for any $\delta > 0$, one can find a geodesic triangle that is much fatter than $\delta$). 

Among nonamenable groups, free groups
are hyperbolic (since their Cayley graph is a tree).  Other examples include co-compact lattices in
simple Lie groups of real rank one and the fundamental groups of
compact Riemannian manifolds of negative sectional curvature (cf.\ \cite{gromov}).
\end{subrem}

If $\G$ is hyperbolic, then one can compactify it in much the same way as we did for $\F_2$.  Very roughly, the \emph{Gromov compactification} $\bar{\G} = \G \cup \partial \G$ is comprised of (certain equivalence classes of) geodesics in the Cayley graph, with $\partial \G$ corresponding to infinite paths. 
Geometrically, we again view $\bar{\G}$ as the Cayley graph of $\G$ with a large circle around it -- each point on the circle representing the (equivalence class of) infinite path(s) that ``points in that direction."  

Since the Cayley graph of $\G$ need not be a tree, the geometry of $\bar{\G}$ is not quite as nice as the free group case.  However, the $\delta$-slim condition forces really big geodesic triangles to look like the figure after Definition \ref{defn:hyperbolicgraph}. Hence, if two infinite geodesics head toward the same boundary point -- i.e., go off to infinity ``in the same direction" -- it looks something like our next picture.  

\hspace{2cm}\includegraphics[scale=0.75]{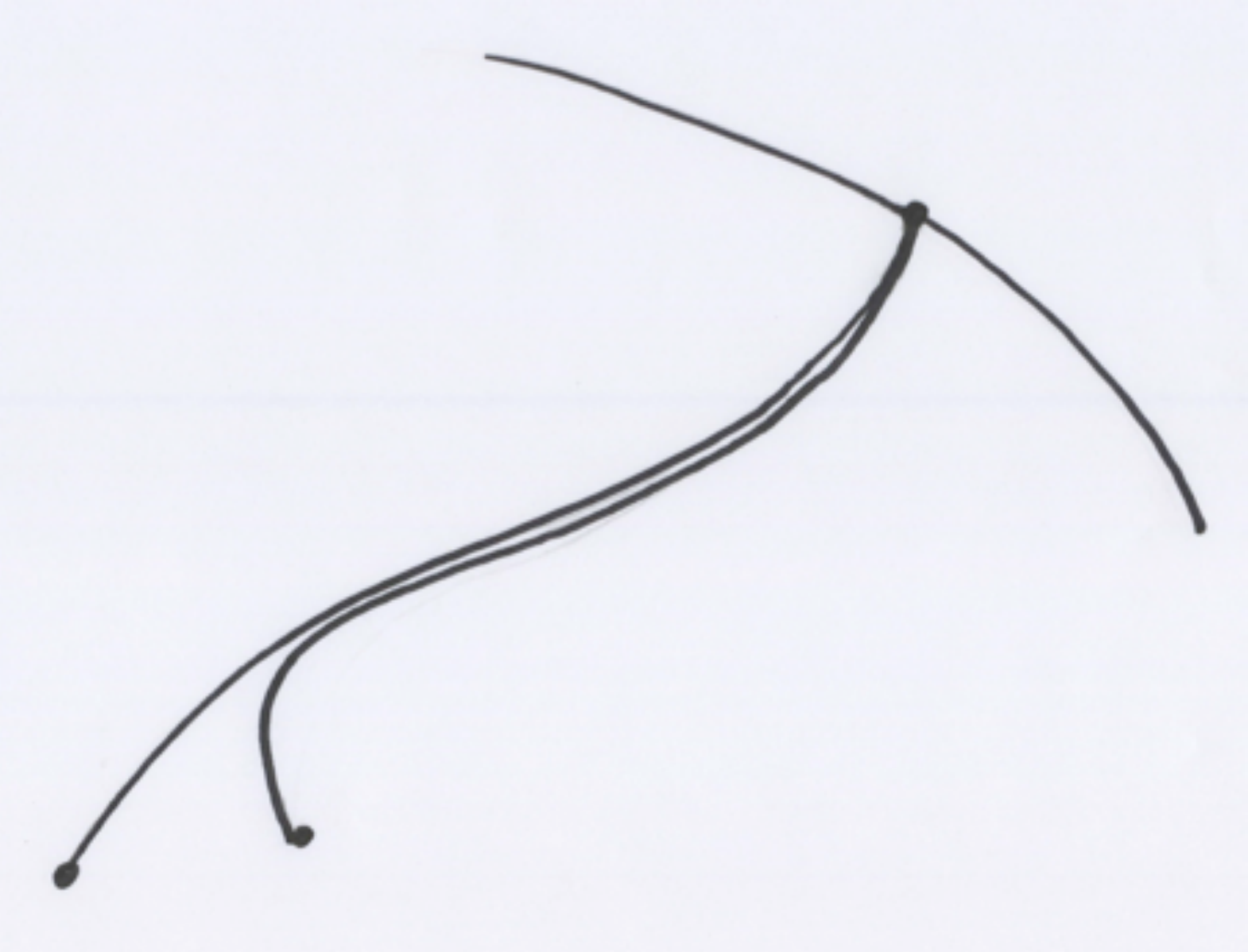}

To show that a hyperbolic group acts amenably on its Gromov boundary, one must construct maps $m_N \colon \bar{\G} \to \prob{\G}$ with the right properties.  For free groups, we used normalized characteristic functions concentrated on geodesics. But this only works because the Cayley graph is a tree and hence geodesics which point in the same direction must eventually flow together.  Thus, for general hyperbolic groups, we have to fatten up our characteristic functions a bit; namely, we take normalized characteristic functions over tubular neighborhoods of geodesics, as in our next illustration.  

\vspace{3mm}

\hspace{2cm}\includegraphics[scale=0.75]{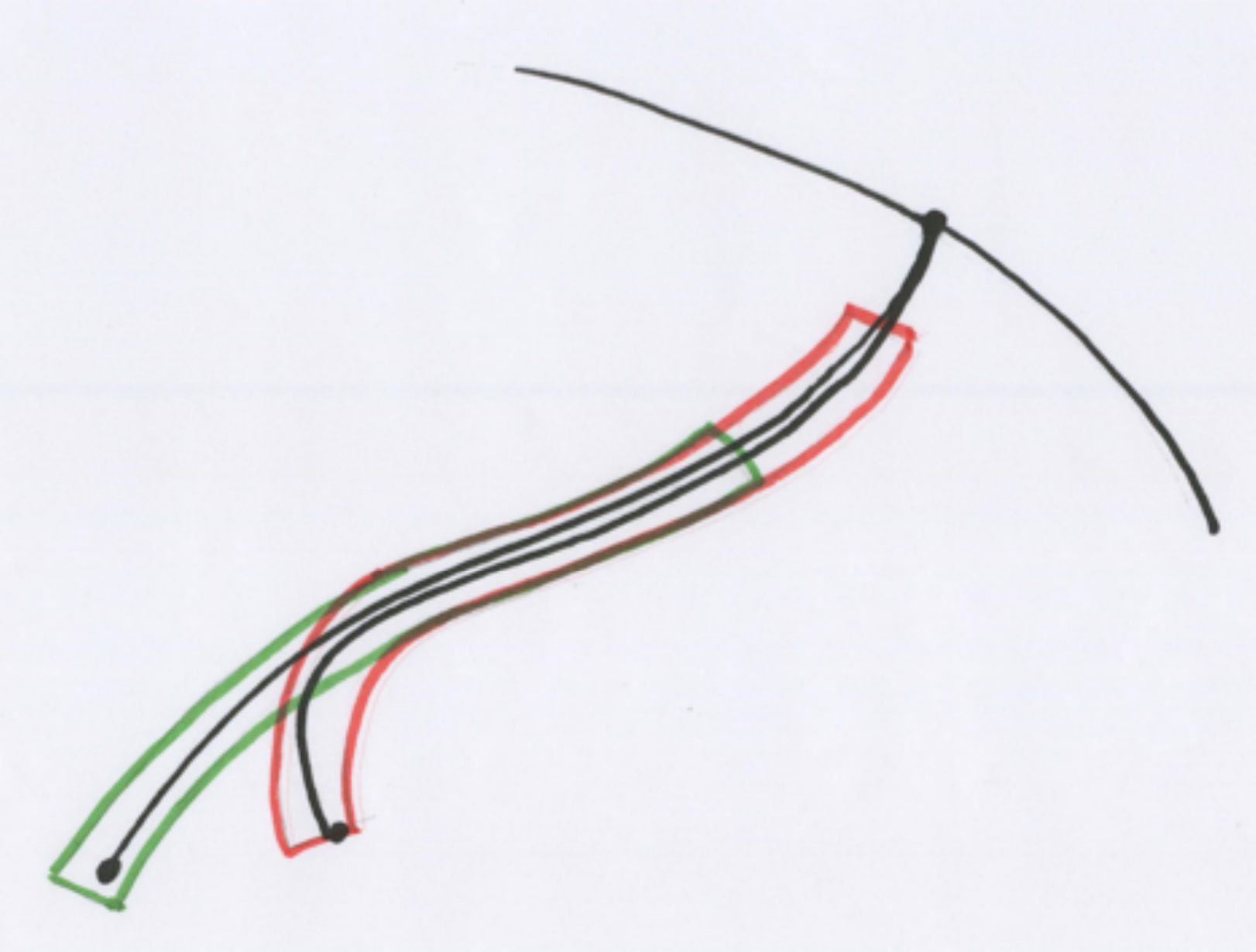}

Vigorously waving my hands, shouting down righteous objections and embracing the proof-by-intimidation mentality, I've essentially demonstrated the following theorem (see \cite[Theorem 5.3.15 and Proposition 5.3.18]{Me-n-Taka}).\footnote{By the way, this modus operandi is adored by Fundamentalists of all stripes -- please forgive the relapse. } 

\begin{subthm}\label{thm:grbdam} If $\G$ is hyperbolic, then it acts amenably (from the left) on $\bar{\G}$, and $\bar{\G}$ is small at infinity.  Hence, $L(\G)$ is solid. 
\end{subthm}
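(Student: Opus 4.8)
The plan is to verify the two geometric hypotheses of Proposition \ref{prop:AO}---that $\bar{\G}$ is small at infinity and that the left action of $\G$ on $\bar{\G}$ is amenable---after which the solidity of $L(\G)$ is immediate. The geometric intuition is exactly that of the free group case; the only real work is replacing the tree-specific arguments (unique geodesics that flow together exactly) with their coarse-geometric analogues, where the $\delta$-slim condition supplies the needed substitute.

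First I would dispatch the compactification and small-at-infinity claims, which are comparatively soft. That $\G\subset\bar{\G}$ is open and dense and that the left translation action extends continuously are built into the construction of the Gromov compactification. For small at infinity, the point is that right multiplication by a fixed $t\in\G$ displaces every vertex by at most $|t|$ in the graph metric; since convergence $s_n\to x\in\partial\G$ is governed by the (coarse) behavior of the geodesics $[e,s_n]$, a bounded perturbation of the endpoint cannot alter the limiting direction, so $s_n t\to x$ as well. Concretely, I would verify condition (3) of the preceding lemma, namely $f^t-f\in c_0(\G)$ for every $f\in C(\bar{\G})$ and $t\in\G$: uniform continuity of $f$ combined with the bounded displacement $d(g,gt^{-1})\le |t|$ forces $|f^t(g)-f(g)|$ to vanish as $g\to\infty$.

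The substantive step is amenability of the left action. Following the $\F_2$ template, for each $x\in\bar{\G}$ I would fix a geodesic $\gamma_x$ from $e$ toward $x$ (an infinite ray when $x\in\partial\G$) and define $m_N^x\in\prob(\G)$ to be the normalized characteristic function of the radius-$r$ tubular neighborhood of the initial length-$N$ segment of $\gamma_x$. The fattening to a tube of width $r\sim\delta$ is forced precisely because the Cayley graph is no longer a tree: distinct geodesics pointing in the same direction need not coincide, but by $\delta$-slimness they fellow-travel within bounded distance (this is the content of the picture after Definition \ref{defn:hyperbolicgraph}). I would first record this fellow-traveling lemma and then use it twice---once to check that each $m_N$ is continuous (nearby points have fellow-traveling geodesics, so their tubes nearly agree), and once for the crucial estimate $\sup_{x\in\bar{\G}}\|s.m_N^x-m_N^{s.x}\|_1\le C|s|/N$. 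The latter comes from comparing the two measures: $s.m_N^x$ is supported on the tube about $s\gamma_x$, a geodesic from $s$ toward $s.x$, while $m_N^{s.x}$ is supported on the tube about $\gamma_{s.x}$, a geodesic from $e$ toward $s.x$; since these head to the common endpoint $s.x$ from starting points a distance $\le |s|$ apart, fellow-traveling bounds their symmetric difference by a set of size $O(|s|)$ independent of $N$, so after normalizing by the length ($\sim N$) the $\ell^1$-distance is $O(|s|/N)$.

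I expect the main obstacle to be this uniform $\ell^1$-estimate. In the tree case the geodesics literally merge and the count $2|s|$ is exact; in the hyperbolic case one must control, using only the single constant $\delta$ and local finiteness of the graph, (i) the non-uniqueness of geodesics, (ii) the cardinality $Z_N^x$ of the tube (the normalizing constant) uniformly in $x$, and (iii) the discrepancy between $s\cdot\gamma_x$ and $\gamma_{s.x}$ and their respective tubes. Managing these bounds uniformly over \emph{all} $x\in\bar{\G}$, including finite-word $x$ where the segment terminates, is the technical heart of the argument and is exactly what the author defers to \cite[Section 5.3]{Me-n-Taka}. Once the estimate is in hand, letting $N\to\infty$ yields amenability of the action, and Proposition \ref{prop:AO} then delivers that $L(\G)$ is solid.
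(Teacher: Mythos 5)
Your proposal matches the paper's argument essentially step for step: the paper likewise proves the theorem by verifying the hypotheses of Proposition \ref{prop:AO} for the Gromov compactification, replacing the exact tree count by normalized characteristic functions over tubular neighborhoods of geodesics with $\delta$-slimness supplying the fellow-traveling estimates, and likewise defers the uniform $\ell^1$-bookkeeping to \cite[Section 5.3]{Me-n-Taka}. One small caveat worth noting: a per-point choice of geodesic $\gamma_x$ need not make $m_N$ continuous, but this is harmless since Remark \ref{rem:borel} shows Borel maps suffice, which is how the cited details handle it.
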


\section{More W$^*$-Applications} 

In addition to the hyperbolic-group generalization, there is another reason I've discussed the geometric-group-theory proof of exactness of free groups.  Namely, this approach leads to a concept which proves useful for further applications. 

We've seen that the left-translation action of a hyperbolic group $\G$ on $\ell^\infty(\G)$ is amenable,
but much more is true: The action of $\G\times \G$ on $\ell^\infty(\G)$
(given by the left and right translations) is amenable mod $c_0(\G)$.  

\begin{thm}  
\label{cor:hypbiexact}
If $\G$ is hyperbolic, then $\G\times \G$ acts amenably on $\ell^\infty(\G)/c_0(\G)$.
\end{thm}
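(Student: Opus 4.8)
The plan is to apply Exercise \ref{exer:biexact} to the commutative C$^*$-algebra $\ell^\infty(\G)/c_0(\G)$. First I would identify this quotient with $C(X)$, where $X=\beta\G\setminus\G$ is the Stone--\v{C}ech corona of $\G$; since the left and right translation actions of $\G$ on $\ell^\infty(\G)$ commute and both preserve $c_0(\G)$, they descend to commuting actions on $C(X)$, i.e.\ to an action of $\G\times\G$. It then suffices to exhibit two $\G\times\G$-invariant subalgebras $A,B\subset C(X)$ of the kind required by the Exercise: one on which the left factor acts amenably and the right factor trivially, and a mirror-image one with the roles reversed.

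For $A$ I would take the image in $\ell^\infty(\G)/c_0(\G)$ of $C(\bar\G)$ under the usual identification $f\mapsto (f(g))_{g\in\G}$, where $\bar\G=\G\cup\partial\G$ is the Gromov compactification furnished by Theorem \ref{thm:grbdam}. Because that compactification is small at infinity (Definition \ref{defn:small}), the characterization $f^t-f\in c_0(\G)$ holds for every $f\in C(\bar\G)$ and $t\in\G$; hence in the quotient the right factor $\{e\}\times\G$ acts trivially on $A$, so $A$ is in particular $\G\times\G$-invariant, and restriction to the boundary identifies it with $C(\partial\G)$. The left factor $\G\times\{e\}$ acts on $A$ through the left action of $\G$ on $\partial\G$, which is amenable by Theorem \ref{thm:grbdam}. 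Thus $A$ satisfies hypothesis~(i) of the Exercise.

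For $B$ I would exploit the symmetry between the two translation actions. The coordinate flip $\iota(f)(g)=f(g^{-1})$ is a $*$-automorphism of $\ell^\infty(\G)$ which preserves $c_0(\G)$ and intertwines the left and right actions up to inversion in $\G$; I would let $B$ be the image of $\iota(C(\bar\G))$ in the quotient. A short computation with $\iota$, again using $f^t-f\in c_0(\G)$, shows that the left factor acts trivially on $B$ while the right factor acts by a relabelling of the amenable boundary action, hence amenably, giving hypothesis~(ii). Exercise \ref{exer:biexact} then delivers amenability of the $\G\times\G$-action on $\ell^\infty(\G)/c_0(\G)$. The only genuinely delicate points are the bookkeeping that certifies invariance of $A$ and $B$ and that pins down the triviality claims to the small-at-infinity estimate $f^t-f\in c_0(\G)$; the amenability input and the combination of the two one-sided data are packaged, respectively, into Theorem \ref{thm:grbdam} and Exercise \ref{exer:biexact}.
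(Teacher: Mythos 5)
Your proposal is correct and follows essentially the same route as the paper's proof: embed $C(\bar{\G})$ in $\ell^\infty(\G)$, pass to the quotient to get the subalgebra $A$ (right action trivial by smallness at infinity, left action amenable by Theorem \ref{thm:grbdam}), obtain $B$ by symmetry, and conclude via Exercise \ref{exer:biexact}. The only difference is that you make the paper's ``by symmetry'' step explicit through the coordinate flip $\iota(f)(g)=f(g^{-1})$, which is a welcome clarification rather than a departure.
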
 

\begin{proof}  Since the Gromov boundary is small at infinity, identifying $C(\bar{\G}) \subset \ell^\infty(\G)$ and passing to the quotient $\ell^\infty(\G)/c_0(\G)$, we can find a $(\G\times \G)$-invariant
subalgebra $A \subset \ell^\infty(\G)/c_0(\G)$ with the following properties: The action of $\G\times \{e\}$ on $A$ is amenable, while the action of $\{e\}\times \G$ on $A$ is trivial. By symmetry, we can also find a $\G \times \G$-invariant subalgebra $B \subset \ell^\infty(\G)/c_0(\G)$ such that $\G\times \{e\}$ acts trivially on $B$, while $\{e\}\times \G$ acts amenably. 

The result now follows from Exercise \ref{exer:biexact}. 
\end{proof}

This is a special case of so-called \emph{bi-exactness}. 

\subsection{Bi-exactness} 

Let $\G$ be a group and $\grp$ be a family of subgroups of $\G$.  For a net $(s_i)$ in $\G$,  we write $s_i\to\infty/\grp$ if $s_i\notin s\Lambda t$ eventually\footnote{$\forall s,t, \forall\Lambda, \exists i_0$ such that $\forall i$ we have the implication $i\geq i_0\Rightarrow s_i\notin s\Lambda t$.} 
for every $s,t\in\G$ and $\Lambda\in\grp$. In other words, $s_i\to\infty/\grp$ if $s_i$ eventually escapes any finite set of translates of subgroups in $\grp$. Now we define $$c_0(\G;\grp) = \{ f \in \ell^\infty(\G): \lim_{s\to\infty/\grp}f(s)=0 \},$$ where the notation $\lim_{s\to\infty/\grp}f(s)=0$ means that for every net $(s_i)$ such that $s_i\to\infty/\grp$ and every $\e > 0$, there exists $i_0$ such that $|f(s_i)| < \e$ for all $i \geq i_0$.  

\begin{subexample}
Consider the case that $\grp$ consists only of the trivial subgroup $\{e\}$.  Then, $s_i\to\infty/\grp$ means that $s_i$ eventually escapes any finite set of elements. Thus one easily checks that $c_0(\G;\grp) = c_0(\G)$.  
\end{subexample}

Note that $c_0(\G;\grp)$ is an ideal in $\ell^\infty(\G)$ and, moreover, it is invariant under the left$\times$right-translation action of $\G \times \G$ on $\ell^\infty(\G)$; hence the left$\times$right action of $\G \times \G$ descends to the quotient algebra $\ell^\infty(\G)/c_0(\G;\grp)$.

\begin{subdefn}  We say $\G$ is \emph{bi-exact} relative to a family of subgroups $\grp$ if the left$\times$right action of $\G \times \G$ on $\ell^\infty(\G)/c_0(\G;\grp)$ is amenable.\footnote{This is not the definition given in \cite{Me-n-Taka}, but it's equivalent (see \cite[Proposition 15.2.3]{Me-n-Taka}).}
\end{subdefn} 

With this definition, Theorem \ref{cor:hypbiexact} says that hyperbolic groups are bi-exact relative to the trivial subgroup. 

Let $\K(\G;\grp)$ be the hereditary C$^*$-subalgebra of $\B(\ell^2(\G))$ 
generated by $c_0(\G;\grp)$: 
\[
\K(\G;\grp)=\mbox{the norm closure of }c_0(\G;\grp)\B(\ell^2(\G))c_0(\G;\grp).
\]
Since the left and right regular representations $\lambda$ and, respectively, $\rho$ 
normalize $c_0(\G;\grp)$, the reduced group C$^*$-algebras 
$C^*_\lambda(\G)$ and $C^*_\rho(\G)$ are in the multipliers of $\K(\G;\grp)$. 

Here's an analogue of the Akemann-Ostrand property for bi-exact groups. 

\begin{sublem}\label{lem:clrmid}
If $\G$ is bi-exact relative to $\grp$, then  
there exists a u.c.p.\ map 
\[
\theta\colon C^*_\lambda(\G)\otimes C^*_\rho(\G)\to\B(\ell^2(\G))
\]
such that $\theta(a\otimes b)-ab\in\K(\G;\grp)$ 
for every $a\in C^*_\lambda(\G)$ and $b\in C^*_\rho(\G)$.\footnote{Actually, this characterizes bi-exactness -- see \cite[Lemma 15.1.4]{Me-n-Taka}.} 
\end{sublem}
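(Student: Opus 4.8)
The plan is to realize all the data inside the corona algebra of $\K(\G;\grp)$, build a $*$-homomorphism out of the nuclear crossed product furnished by bi-exactness, and then lift. Write $\mathcal{Q} := \ell^\infty(\G)/c_0(\G;\grp)$, and let $\mathcal{M}(\K(\G;\grp))$ denote the multiplier algebra of $\K(\G;\grp)$, realized as the idealizer of $\K(\G;\grp)$ inside $\B(\ell^2(\G))$ (this is legitimate since $\K(\ell^2(\G))\subset\K(\G;\grp)$, so $\K(\G;\grp)$ acts nondegenerately). First I would record the elementary containments. Since $|f|^{1/2}\in c_0(\G;\grp)$ whenever $f\in c_0(\G;\grp)$, writing $f=(u|f|^{1/2})|f|^{1/2}$ shows $c_0(\G;\grp)\subset\K(\G;\grp)$; and since $\ell^\infty(\G)$, $C^*_\lambda(\G)$ and $C^*_\rho(\G)$ all normalize $c_0(\G;\grp)$, they sit inside $\mathcal{M}(\K(\G;\grp))$. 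Passing to the corona $\mathcal{C}:=\mathcal{M}(\K(\G;\grp))/\K(\G;\grp)$ and writing $q$ for the quotient map, the ideal $c_0(\G;\grp)$ dies, so the covariant data (multiplication operators $f$, together with the unitaries $\lambda_s\rho_t$ spatially implementing the left$\times$right action, $\lambda_s\rho_t f\rho_t^*\lambda_s^* = (s,t).f$) descend to a covariant representation of $(\mathcal{Q},\G\times\G)$ in $\mathcal{C}$.

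Next I would feed this into amenability. By the universal property of the full crossed product, this covariant representation integrates to a $*$-homomorphism $\mathcal{Q}\rtimes(\G\times\G)\to\mathcal{C}$. Because $\G$ is bi-exact relative to $\grp$, the action on $\mathcal{Q}$ is amenable, so the full and reduced crossed products coincide and, by Theorem \ref{thm:crossedproductamenableaction}, $\mathcal{Q}\rtimes_r(\G\times\G)$ is nuclear. Call the resulting map $\Theta\colon\mathcal{Q}\rtimes_r(\G\times\G)\to\mathcal{C}$; on the canonical unitaries it sends $u_{(s,t)}\mapsto q(\lambda_s\rho_t)$. Now I would invoke the standard identifications: for the unital coefficient algebra $\mathcal{Q}$ the reduced crossed product contains a canonical copy of $C^*_\lambda(\G\times\G)=C^*_\lambda(\G)\otimes C^*_\lambda(\G)$, and via the unitary equivalence $\lambda\cong\rho$ (intertwined by $U\delta_t=\delta_{t^{-1}}$) this copy is $C^*_\lambda(\G)\otimes C^*_\rho(\G)$. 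This yields an embedding $\iota\colon C^*_\lambda(\G)\otimes C^*_\rho(\G)\hookrightarrow\mathcal{Q}\rtimes_r(\G\times\G)$ with $\iota(\lambda_s\otimes\rho_t)=u_{(s,t)}$, so that $\Theta\circ\iota(a\otimes b)=q(ab)$ for all $a\in C^*_\lambda(\G)$ and $b\in C^*_\rho(\G)$ (check this on generators and extend by continuity).

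Finally, the lift. Since $\mathcal{Q}\rtimes_r(\G\times\G)$ is nuclear, the Choi--Effros lifting theorem produces a u.c.p.\ lift $\tilde\Theta\colon\mathcal{Q}\rtimes_r(\G\times\G)\to\mathcal{M}(\K(\G;\grp))\subset\B(\ell^2(\G))$ with $q\circ\tilde\Theta=\Theta$. Setting $\theta:=\tilde\Theta\circ\iota$ gives a u.c.p.\ map $C^*_\lambda(\G)\otimes C^*_\rho(\G)\to\B(\ell^2(\G))$ satisfying $q(\theta(a\otimes b))=\Theta(\iota(a\otimes b))=q(ab)$, i.e.\ $\theta(a\otimes b)-ab\in\ker q=\K(\G;\grp)$, as required.

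I expect the last step to be the main obstacle: what is needed is a \emph{genuine} u.c.p.\ lift through the corona quotient, not merely an approximate one. A naive cluster-point argument in the von Neumann algebra $\B(\ell^2(\G))$ (Theorem \ref{subthm:pointultraweak}) would produce a u.c.p.\ map but lose the key identity, since the corona quotient is not ultraweakly continuous and so does not commute with taking point-ultraweak limits. It is precisely the nuclearity of $\mathcal{Q}\rtimes_r(\G\times\G)$ that rescues the argument, via the lifting theorem. (When $\G$ is countable one may arrange $\mathcal{Q}\rtimes_r(\G\times\G)$ to be separable and apply Choi--Effros directly; the general case needs only cosmetic care.)
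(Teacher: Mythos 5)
Your proof is essentially the paper's proof in different packaging: the paper works with the concrete algebra $D=C^*(C^*_\lambda(\G),C^*_\rho(\G),\ell^\infty(\G))+\K(\G;\grp)$, identifies $D/\K(\G;\grp)$ with the crossed product of $\ell^\infty(\G)/c_0(\G;\grp)$ by $\G\times\G$ (nuclear by amenability of the action, i.e.\ bi-exactness), and lifts the canonical copy of $C^*_\lambda(\G)\otimes C^*_\rho(\G)$ via Choi--Effros, so your route through the corona of $\K(\G;\grp)$ and the universal property of the full crossed product is a cosmetic variant. One correction to your final parenthetical: $\mathcal{Q}=\ell^\infty(\G)/c_0(\G;\grp)$ is inherently nonseparable whenever $c_0(\G;\grp)$ is proper (take an almost disjoint family of subsets of a sequence tending to $\infty/\grp$; the corresponding characteristic functions are $1$-separated in the quotient), so you cannot ``arrange'' $\mathcal{Q}\rtimes_r(\G\times\G)$ to be separable and the lift $\tilde\Theta$ on the whole crossed product is not licensed by Choi--Effros. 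The correct (and the paper's) fix is to lift only the restriction $\Theta\circ\iota$ on the separable subalgebra $C^*_\lambda(\G)\otimes C^*_\rho(\G)$: this map is nuclear because it factors through the nuclear crossed product, so the lifting theorem applies -- exactly what the paper means by ``a u.c.p.\ splitting on any separable C$^*$-subalgebra.''
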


\begin{proof} We define a C$^*$-algebra $D$ by  
\[
D=C^*(C^*_\lambda(\G),C^*_\rho(\G),\ell^\infty(\G))+\K(\G;\grp)\subset\B(\ell^2(\G)).
\]
Evidently $\K(\G;\grp)$ is an ideal in $D$ and 
$D/\K(\G;\grp)$ is a quotient of 
the crossed product of $\ell^\infty(\G)/c_0(\G;\grp)$ by $\G\times\G$ (actually, it's isomorphic to this crossed product). 
By assumption, the canonical $*$-homomorphism 
$C^*_\lambda(\G)\odot C^*_\rho(\G)\to D/\K(\G;\grp)$ 
is $\min$-continuous and $D/\K(\G;\grp)$ is nuclear (since we assumed $\G \times \G$ acts amenably on $\ell^\infty(\G)/c_0(\G;\grp)$ -- see Theorem \ref{thm:crossedproductamenableaction}). 
Hence, by the Choi-Effros Lifting Theorem (\cite[Theorem C.3]{Me-n-Taka}), the quotient map from $D$ to $D/\K(\G;\grp)$ has a u.c.p.\ splitting on any separable C$^*$-subalgebra. In particular, we can find a u.c.p.\ splitting $\theta \colon C^*_\lambda(\G)\otimes C^*_\rho(\G) \to D \subset \B(\ell^2(\G))$, which completes the proof. \end{proof}

\begin{subprop}\label{prop:injcric}
Let $M\subset\B(\hh)$ be a finite von Neumann algebra and $p\in M$ be a projection. 
Let $P\subset pMp$ be a von Neumann subalgebra and $E_P\colon pMp \to P$ be 
the trace-preserving 
conditional expectation. 
Consider the bi-normal u.c.p.\ map 
\[
\Phi_P\colon M\odot M'\ni\sum_k a_k\otimes b_k\mapsto
\sum_k E_P(pa_kp)b_kp\in\B(p\hh).
\]
Suppose that there are weakly dense $\mathrm{C}^*$-subalgebras 
$C_l\subset M$ and $C_r\subset M'$ 
such that $C_l$ is exact and $\Phi_P$ is $\min$-continuous on $C_l\odot C_r$. 
Then $P$ is injective.  
\end{subprop}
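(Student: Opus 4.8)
The plan is to reduce everything to the mechanism behind Lemma \ref{lem:lem}, carried out on the corner Hilbert space $p\hh$. Write $M_p = pMp|_{p\hh}$ for the reduced von Neumann algebra, so that $P \subset M_p \subset \B(p\hh)$ and $E_P \colon M_p \to P$ is the given (normal, trace-preserving) conditional expectation. The two ``legs'' of $\Phi_P$ are the u.c.p.\ map $\phi_l \colon M \to P$, $\phi_l(a) = E_P(pap)$, and the $*$-homomorphism $\pi_r \colon M' \to \B(p\hh)$, $\pi_r(b) = bp|_{p\hh}$ (well defined since $b$ commutes with $p$), and on elementary tensors $\Phi_P(a \otimes b) = \phi_l(a)\pi_r(b)$. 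Their ranges commute: by the reduction (commutation) theorem for the projection $p \in M$ one has $(M_p)' = M'p|_{p\hh} \supseteq \pi_r(M')$, while $\phi_l(M) = P \subset M_p$. My goal is to produce a conditional expectation $\B(p\hh) \to P$, which exhibits $P$ as injective.

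First I would manufacture a single u.c.p.\ map $\Psi_0 \colon \B(\hh) \to M_p$ with $\Psi_0|_{C_l} = \phi_l|_{C_l}$, by feeding the min-continuity hypothesis into the argument of The Trick (Proposition \ref{prop:thetrick}), exactly as in the group case of Lemma \ref{lem:thelem}. Concretely, the hypothesis extends $\Phi_P$ to a u.c.p.\ map on $C_l \otimes C_r$; since $C_l \subset \B(\hh)$ gives $C_l \otimes C_r \subset \B(\hh)\otimes C_r$ (Proposition \ref{prop:mininclusion}), Arveson's Extension Theorem produces a u.c.p.\ map $\Theta \colon \B(\hh)\otimes C_r \to \B(p\hh)$. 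Because $\Theta|_{1\otimes C_r} = \pi_r$ is a $*$-homomorphism, $1 \otimes C_r$ lies in the multiplicative domain of $\Theta$, so $\Psi_0(x) := \Theta(x\otimes 1)$ takes values in $\pi_r(C_r)'$ and restricts to $\phi_l$ on $C_l$. Here is the crux: since $C_r$ is weakly dense in $M'$ and $\pi_r$ is normal, $\pi_r(C_r)$ is weakly dense in $M'p|_{p\hh} = (M_p)'$, whence $\pi_r(C_r)' = (M_p)'' = M_p$. Thus $\Psi_0$ does land in $M_p = pMp$, as required. I expect this identification of the range, via the reduction theorem, to be the one genuinely delicate point.

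With $\Psi_0$ in hand, I would run the local-reflexivity argument from Lemma \ref{lem:lem}. Fix a finite-dimensional operator system $E \subset P$. Since $C_l$ is exact, hence locally reflexive (Theorem \ref{cor:exactlocref}) and weakly dense in $M = C_l''$, there are c.c.p.\ maps $\varphi_n \colon E \to C_l$ with $\varphi_n \to \id_E$ in the point-ultraweak topology. Extend each $\varphi_n$ to a u.c.p.\ map $\tilde\varphi_n \colon \B(p\hh) \to \B(\hh)$ by Arveson's Extension Theorem, and form the u.c.p.\ maps
\[
E_P \circ \Psi_0 \circ \tilde\varphi_n \colon \B(p\hh) \to P.
\]
For $x \in E$ one has $\tilde\varphi_n(x) = \varphi_n(x) \in C_l$, so $\Psi_0(\tilde\varphi_n(x)) = \phi_l(\varphi_n(x)) = E_P(p\varphi_n(x)p)$; since $E_P$ is a normal conditional expectation and $\varphi_n(x) \to x$ ultraweakly with $pxp = x$, this gives $E_P \circ \Psi_0 \circ \tilde\varphi_n(x) \to E_P(x) = x$ ultraweakly. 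Hence a point-ultraweak cluster point $\theta_E \colon \B(p\hh)\to P$, which exists by Theorem \ref{subthm:pointultraweak}, restricts to the identity on $E$.

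Finally, taking a further point-ultraweak cluster point of the net $\{\theta_E\}$, indexed by the finite-dimensional operator systems $E \subset P$ directed by inclusion, yields a u.c.p.\ map $\B(p\hh) \to P$ that restricts to the identity on all of $P$, i.e.\ a conditional expectation onto $P$. Therefore $P$ is injective. This is precisely the corner analogue of Lemma \ref{lem:thelem}: the case $p = 1$, $C_l = C^*_\lambda(\G)$, $C_r = C^*_\rho(\G)$ recovers that lemma. The only steps needing real care are the range computation for $\Psi_0$ above and the verification that local reflexivity of $C_l$ supplies the approximating maps $\varphi_n \colon E \to C_l$ for operator systems $E$ living in the weak closure $M$, both of which are handled exactly as in the proofs of Lemmas \ref{lem:lem} and \ref{lem:thelem}.
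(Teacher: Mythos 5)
Your proof is correct, but it is organized differently from the paper's. The paper first upgrades the hypothesis: citing \cite[Lemma 9.2.9]{Me-n-Taka} (which is where exactness, i.e.\ local reflexivity of $C_l$, is actually consumed), it shows that the bi-normal map $\Phi_P$ is $\min$-continuous on all of $M\odot M'$; a single application of The Trick --- with only the right leg $\C 1\otimes M'$ required to be multiplicative --- then extends $\Phi_P|_M$ to a u.c.p.\ map $\psi\colon\B(\hh)\to(pM')'=pMp$ satisfying $\psi(a)=E_P(pap)$ for \emph{every} $a\in M$, so $E_P\circ\psi|_{\B(p\hh)}$ is already the desired conditional expectation and no approximation argument is needed. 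You instead apply The Trick at the C$^*$-level, where the hypothesis lives, obtaining $\Psi_0\colon\B(\hh)\to pMp$ that agrees with $a\mapsto E_P(pap)$ only on $C_l$; you then spend local reflexivity explicitly, rerunning the cluster-point bootstrap of Lemma \ref{lem:lem} on the corner $p\hh$ to upgrade agreement-on-$C_l$ to a conditional expectation onto $P$. In effect you reprove, inside the argument, the special case of the cited lemma that is needed; this makes your proof self-contained relative to these notes (it is literally the corner analogue of Lemmas \ref{lem:lem} and \ref{lem:thelem}, as you note), whereas the paper's route is shorter given the citation and yields the stronger intermediate statement that $\Phi_P$ is $\min$-continuous on the whole of $M\odot M'$. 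Your range computation $\pi_r(C_r)'=(M_p)''=pMp$ --- via normality of $b\mapsto bp|_{p\hh}$, Kaplansky density, and the commutation theorem $(pMp)'=M'p$ on $p\hh$ --- is exactly the point the paper compresses into ``$(pM')'=pMp$'' and you handle it correctly; the only gloss, shared with the paper's own treatment of The Trick, is the tacit assumption that $C_l$ and $C_r$ contain the unit (needed to pin down $\Theta$ on $1\otimes C_r$ and $\Psi_0(a)=\Theta(a\otimes 1)$), which is repaired by the standard nonunital version of The Trick referenced in Proposition \ref{prop:thetrick}.
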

\begin{proof}
It can be shown that our assumptions imply that 
$\Phi_P$ is $\min$-continuous on $M\odot M'$ (cf.\ \cite[Lemma 9.2.9]{Me-n-Taka}). 
By The Trick, $\Phi_P|_M$ extends to a u.c.p.\ map 
$\psi$ from $\B(\hh)$ into $(pM')'=pMp$. 
(Note that the argument for The Trick only requires 
$\Phi_P|_{\C1\otimes M'}$ to be $*$-homomorphic.) 
Thus $E_P\circ\psi|_{\B(p\hh)}$ 
is a conditional expectation from $\B(p\hh)$ onto $P$.
\end{proof}

We primarily consider $\Phi_P$ in the case where $P=B'\cap pMp$ 
for a projection $p\in M$ and a \emph{diffuse}  
abelian von Neumann subalgebra $B\subset pMp$. 
Every diffuse abelian von Neumann algebra 
$B$ with separable predual is $*$-isomorphic to 
$L^\infty[0,1]$ and hence is generated by 
a single unitary element $u_0\in B$ (e.g., $u_0(t)=e^{2\pi i t}$). 
Fixing such a generator, we define a c.p.\ map $\Psi_B$ from $\B(\hh)$ 
into $\B(p\hh)$ by 
\[
\Psi_B(x)=\mbox{ultraweak-}\lim_n \frac{1}{n}\sum_{k=1}^n u_0^k x u_0^{-k},
\]
where the limit is taken along some fixed ultrafilter. 
It is not hard to see that $\Psi_B$ is a (nonunital) conditional 
expectation onto $B'\cap\B(p\hh)$ and that $\Psi_B|_{pMp}$ 
is a trace-preserving conditional expectation from $pMp$ onto $B'\cap pMp$. 
By uniqueness, one has 
$\Psi_B(a)=E_P(pap)$ for every $a\in M$. 
It follows that 
\[
\Psi_B(\sum_k a_kb_k)=\sum_k E_P(pa_kp)b_kp=\Phi_P(\sum_k a_k\otimes b_k)
\]
for $a_k\in M$ and $b_k\in M'$.

\subsection{A Bi-exact Version of Theorem \ref{thm:taka}}

Aren't the closing remarks of the preceding subsection reminiscent of the proof of Theorem \ref{thm:taka}?  Well, we'll soon use them to prove an analogue of that result in the context of bi-exactness.  But first, we have to recall an important theorem (of Popa) that is also needed in the proof (see \cite[Appendix F]{Me-n-Taka} for details). 

If $A\subset M$ are finite von Neumann algebras, then $\langle M,A\rangle$ denotes the algebra arising from Jones's basic construction.

\begin{subthm}\label{thm:popafinbimod}
Let $A\subset M$ be finite von Neumann algebras with separable predual
and let $p\in M$ be a nonzero projection.
Then, for a von Neumann subalgebra $B\subset pMp$,
the following are equivalent:
\begin{enumerate}
\item\label{thm:popafinbimod1}
there is no sequence $(w_n)$ of unitary elements\footnote{A unitary
element $w$ in $B$ is a partial isometry in $M$ such that $w^*w=p=ww^*$.}
in $B$ such that
$\|E_A(b^*w_na)\|_2\to0$ for every $a,b\in M$;
\item\label{thm:popafinbimod2}
there exists a positive element $d\in\langle M,A\rangle$ with
$\Tr(d)<\infty$ such that the ultraweakly closed convex hull of
$\{ w^*dw : w\in B\mbox{ unitary}\}$ does not contain $0$;
\item\label{thm:popafinbimod3}
there exists a $B$-$A$-submodule $\hh$ of $pL^2(M)$
with $\dim_A\hh<\infty$;
\item\label{thm:popafinbimod4}
there exist nonzero projections $e\in A$ and $f\in B$,
a unital normal $*$-homomorphism $\theta\colon fBf\to eAe$ and
a nonzero partial isometry $v\in M$ such that
\[
\forall x\in fBf, \quad xv=v\theta(x)
\]
and such that $v^*v\in\theta(fBf)'\cap eMe$ and $vv^*\in (fBf)'\cap fMf$.
\end{enumerate}
\end{subthm}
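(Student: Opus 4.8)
The plan is to run the standard web of implications $(\ref{thm:popafinbimod1})\Leftrightarrow(\ref{thm:popafinbimod2})$, $(\ref{thm:popafinbimod2})\Rightarrow(\ref{thm:popafinbimod3})$, $(\ref{thm:popafinbimod3})\Rightarrow(\ref{thm:popafinbimod4})$ and $(\ref{thm:popafinbimod4})\Rightarrow(\ref{thm:popafinbimod1})$, with Jones's basic construction as the central device. So I would first set up $\langle M,A\rangle=(JAJ)'\subset\B(L^2(M))$, its Jones projection $e_A$ (the orthogonal projection of $L^2(M)$ onto $L^2(A)$, satisfying $e_Axe_A=E_A(x)e_A$ for $x\in M$), and the canonical semifinite trace $\Tr$ determined by $\Tr(xe_Ay)=\tau(xy)$. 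The key dictionary entry I would establish is that finite-trace projections $q\in\langle M,A\rangle$ correspond exactly to $A$-submodules $qL^2(M)$ of finite Murray--von Neumann dimension $\dim_A$, and that $q\in B'\cap\langle M,A\rangle$ precisely when the associated submodule is a $B$-$A$-bimodule.

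For $(\ref{thm:popafinbimod1})\Leftrightarrow(\ref{thm:popafinbimod2})$ I would translate the quantities $\|E_A(b^*wa)\|_2$ into $\Tr$-pairings inside $\langle M,A\rangle$. A direct computation gives $\Tr\big(w^*(xe_Ay)w\,(ze_At)^*\big)=\tau\big(x\,E_A(ywt^*)\,z^*w^*\big)$, so a sequence $(w_n)$ as in the negation of $(\ref{thm:popafinbimod1})$ forces $w_n^*dw_n\to0$ ultraweakly for every finite-trace $d$ (first for $d$ of the form $xe_Ay$, then by $\|\cdot\|_{2,\Tr}$-density), which is exactly the negation of $(\ref{thm:popafinbimod2})$ with $0$ in the closed convex hull. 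The converse direction is where I expect to spend care: from the negation of $(\ref{thm:popafinbimod2})$ (where it suffices to test $d=pe_Ap$) one must combine a Hahn--Banach separation with the separability of the predual to run a diagonal argument producing an honest sequence $(w_n)$ of unitaries in $B$ witnessing the negation of $(\ref{thm:popafinbimod1})$.

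The conceptual heart is $(\ref{thm:popafinbimod2})\Rightarrow(\ref{thm:popafinbimod3})$. In the $\|\cdot\|_{2,\Tr}$-closed convex hull $K$ of $\{w^*dw:w\in\U(B)\}$ (a closed bounded convex subset of the Hilbert space of finite-$\Tr$ elements, which still avoids $0$) I would take the unique element $d_0$ of minimal $\|\cdot\|_{2,\Tr}$-norm. Since conjugation by a unitary $w\in B$ preserves that norm and maps $K$ into itself, uniqueness forces $w^*d_0w=d_0$, i.e. $d_0\in B'\cap\langle M,A\rangle$; it is positive, nonzero (as $0\notin K$) and of finite trace. A spectral projection $q=\chi_{[\e,\infty)}(d_0)$ then lies in $B'\cap\langle M,A\rangle$ with $\Tr(q)<\infty$, and after cutting by $p$ its range is a $B$-$A$-submodule of $pL^2(M)$ with $\dim_A<\infty$, giving $(\ref{thm:popafinbimod3})$.

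I expect the genuine technical obstacle to be $(\ref{thm:popafinbimod3})\Rightarrow(\ref{thm:popafinbimod4})$, where the abstract finite-dimensional bimodule must be converted into the concrete intertwiner. Here I would represent the bimodule by a finite-trace projection $q\in B'\cap\langle M,A\rangle$, use the comparison theory for the semifinite trace to locate a subprojection equivalent to one of the form $v^*v$ with $v$ a partial isometry in $M$, and read off from the bimodule structure a corner $fBf$ together with a unital normal $*$-homomorphism $\theta\colon fBf\to eAe$ satisfying $xv=v\theta(x)$, with the required commutation conditions on $v^*v$ and $vv^*$. Finally $(\ref{thm:popafinbimod4})\Rightarrow(\ref{thm:popafinbimod1})$ is the easy closing step: the relation $xv=v\theta(x)$ yields $v^*wv=\theta(w)v^*v$ for unitaries $w$, so $\|E_A(v^*wv)\|_2=\|\theta(w)E_A(v^*v)\|_2$ stays bounded away from $0$ (as $\theta(w)$ is a unitary in $eAe$ and $v\neq0$), which is incompatible with the existence of a sequence as in the negation of $(\ref{thm:popafinbimod1})$.
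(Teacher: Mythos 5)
You should first note that the paper does not actually prove this theorem: it is quoted as Popa's intertwining-by-bimodules theorem with the proof deferred to \cite[Appendix F]{Me-n-Taka}, so your sketch can only be measured against that standard cited proof. Its architecture you reproduce faithfully: the basic construction $\langle M,A\rangle$ with $e_A$ and $\Tr$, the translation of condition (1) into $\Tr$-pairings for (1)$\Leftrightarrow$(2), the unique element of minimal $\|\cdot\|_{2,\Tr}$-norm in the closed convex hull (giving a $B$-central, positive, finite-trace element) followed by spectral cutting for (2)$\Rightarrow$(3), and the extraction of the intertwiner for (3)$\Rightarrow$(4), which you rightly flag as the technical heart (one further wrinkle there: getting $\theta$ to land in a corner $eAe$ of $A$ itself, rather than in a compression $q\M_n(A)q$ of a matrix amplification, requires an extra reduction your sketch doesn't mention).

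The genuine gap is in your closing implication (4)$\Rightarrow$(1), which is not the ``easy one-line step'' you make it. The identity $v^*wv=\theta(w)v^*v$ holds only for unitaries $w$ in the corner $fBf$, whereas condition (1) concerns unitaries of $B$, i.e.\ $w^*w=ww^*=p$. For such $w$ one only gets $v^*wv=v^*v\,\theta(fwf)$, where $fwf\in fBf$ is a contraction, not a unitary; $\theta(fw_nf)$ can be $\|\cdot\|_2$-small (e.g.\ if $w_n$ moves $f$ to a nearly orthogonal position then $fw_nf\approx0$), so $\|E_A(v^*w_nv)\|_2=\|E_A(v^*v)\,\theta(fw_nf)\|_2$ need \emph{not} stay bounded away from $0$, and testing $a=b=v$ alone yields no contradiction. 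This corner-versus-global mismatch is precisely the content of the implication; the standard repair (Popa's, reproduced in the cited appendix) chooses partial isometries $v_1,v_2,\dots\in B$ with $v_i^*v_i\le f$ and $\sum_iv_iv_i^*$ equal to the central support of $f$ in $B$, tests (1) against the fixed elements $a=v_jv$, $b=v_iv$ to get $\|E_A(v^*v)\,\theta(v_i^*w_nv_j)\|_2\to0$, and then runs a summability/trace argument on the matrix $(v_i^*w_nv_j)_{i,j}$ over $fBf$, whose rows satisfy $\sum_j(v_i^*w_nv_j)(v_i^*w_nv_j)^*=v_i^*v_i$ --- roughly half a page of work. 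A secondary inaccuracy: in $\neg(2)\Rightarrow\neg(1)$ your parenthetical claim that it suffices to test $d=pe_Ap$ is unjustified, since pairing conjugates of $e_A$ against $ae_Aa^*$ controls only the one-sided quantities $\|E_A(w_na)\|_2$; the two-sided condition requires applying $\neg(2)$ to $d=\sum_{a\in F}ae_Aa^*$ and using $\|E_A(b^*wa)\|_2^2=\Tr\left(w(ae_Aa^*)w^*\,(be_Ab^*)\right)$. That slip is harmless, because $\neg(2)$ quantifies over all finite-trace $d$, but the remark as stated is not known to be true.
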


\begin{subdefn}\label{defn:embinside}  
Let $A\subset M$ and $B\subset pMp$ be finite von Neumann algebras.
We say \emph{$B$ embeds in $A$ inside $M$}
if one of the conditions
in Theorem \ref{thm:popafinbimod} holds.
\end{subdefn}

Note that if there is a nonzero projection $p_0\in B$
such that $p_0Bp_0$ embeds in $A$ inside $M$, then
$B$ embeds in $A$ inside $M$ (as condition (\ref{thm:popafinbimod4})
in Theorem \ref{thm:popafinbimod} evidently implies).

The proof of the following corollary can be found in \cite[Appendix F]{Me-n-Taka}. 

\begin{subcor}\label{cor:popafinbimod}
Let $M$ be a finite von Neumann algebra with separable predual
and $(A_n)$ be a sequence of von Neumann subalgebras.
Let $N\subset pMp$ be a von Neumann subalgebra
such that $N$ does not embed in $A_n$ inside $M$ for any $n$.
Then, there exists a diffuse abelian von Neumann subalgebra
$B\subset N$ such that $B$ does not embed in $A_n$ inside $M$ for any $n$.
\end{subcor}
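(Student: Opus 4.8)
The plan is to collapse the countable family $(A_n)$ to a single subalgebra by a weighted direct-sum trick, reduce to producing a diffuse abelian subalgebra in that one-subalgebra situation, and read the conclusion back off coordinate-wise. Throughout I use the equivalent characterizations in Theorem \ref{thm:popafinbimod}, in particular the sequence criterion (\ref{thm:popafinbimod1}) and the basic-construction criterion (\ref{thm:popafinbimod2}). Fix weights $t_n>0$ with $\sum_n t_n=1$ and form the finite von Neumann algebra with separable predual $\tilde M=\bigoplus_n (M,t_n\tau)$, with trace $\tilde\tau=\sum_n t_n\tau$, together with the subalgebra $\tilde A=\bigoplus_n A_n\subset\tilde M$, whose trace-preserving conditional expectation is $E_{\tilde A}=\bigoplus_n E_{A_n}$. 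Embed $N$ diagonally, $N\hookrightarrow \tilde p\tilde M\tilde p$ with $\tilde p=(p,p,\dots)$.

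The first key point is that $N$ does \emph{not} embed in $\tilde A$ inside $\tilde M$. Granting this, the corollary reduces to its one-subalgebra case: I apply that case to the single inclusion $\tilde A\subset\tilde M$ to obtain a diffuse abelian $B\subset N$ that does not embed in $\tilde A$, and then observe that if such a $B$ embedded in some fixed $A_{n_0}$ inside $M$, then---testing criterion (\ref{thm:popafinbimod1}) with elements supported on the $n_0$-th block and using $\|E_{\tilde A}(\cdot)\|_2^2=\sum_n t_n\|E_{A_n}(\cdot)\|_2^2\ge t_{n_0}\|E_{A_{n_0}}(\cdot)\|_2^2$---it would embed in $\tilde A$, a contradiction. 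Hence this $B$ does not embed in any $A_n$. To prove the first key point I would use criterion (\ref{thm:popafinbimod2}) with an iterated-averaging argument: non-embedding means that for every positive $\tilde d=(d_n)\in\langle\tilde M,\tilde A\rangle=\bigoplus_n\langle M,A_n\rangle$ with $\Tr(\tilde d)<\infty$, the weakly closed convex hull of $\{\tilde w^*\tilde d\,\tilde w:\tilde w\in\mathcal U(N)\}$ contains $0$. Since $\sum_n\Tr(d_n)<\infty$, I discard a tail of small total trace and then, for each surviving coordinate in turn, average over unitaries of $N$ to drive that coordinate toward $0$; at each step the hypothesis that $N$ does not embed in $A_n$, applied to the (still positive, finite-trace) element produced by the previous averagings, guarantees the averaging can be performed, while convexity keeps the already-treated coordinates small.

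It then remains to settle the one-subalgebra statement: if $N\subset pMp$ does not embed in a single $A$, then $N$ contains a diffuse abelian subalgebra that does not embed in $A$ either. Here $N$ is necessarily diffuse---a minimal projection $q\in N$ would make $qNq=\C q$, which embeds in $A$ by criterion (\ref{thm:popafinbimod4}), forcing $N$ itself to embed via the projection remark following Definition \ref{defn:embinside}. Since $N$ is diffuse it contains diffuse abelian subalgebras, and I would select a non-embedding one by a maximality argument: assuming every diffuse abelian subalgebra of $N$ embeds in $A$, one exhausts $N$ by orthogonal corners on which the module/partial-isometry data of criteria (\ref{thm:popafinbimod3})--(\ref{thm:popafinbimod4}) assemble into a single finite-$A$-dimensional $N$-$A$-submodule of $pL^2(M)$, contradicting non-embedding of $N$.

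The main obstacle is exactly this last step---upgrading ``every diffuse abelian subalgebra embeds'' to ``$N$ embeds''---since it is where the commutativity constraint on the sought-after $B$ must be reconciled with a global module estimate over all of $N$; the joint-convexity bookkeeping needed to make the iterated-averaging lemma converge simultaneously in finitely many coordinates is the second delicate point. Both are carried out in detail in \cite[Appendix F]{Me-n-Taka}.
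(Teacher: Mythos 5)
Your direct-sum reduction is essentially sound, and its ``first key point'' is true: with $\langle\tilde M,\tilde A\rangle=\bigoplus_n\langle M,A_n\rangle$, the element of minimal $\|\cdot\|_{2,\Tr}$-norm in the closed convex hull of $\{\tilde w^*\tilde d\tilde w:\tilde w\in\mathcal{U}(N)\}$ is unique, hence invariant under conjugation by $\mathcal{U}(N)$, and any nonzero coordinate $c_n$ of it would be a positive, finite-trace, $\mathcal{U}(N)$-invariant element of $\langle M,A_n\rangle$, so that condition (\ref{thm:popafinbimod2}) of Theorem \ref{thm:popafinbimod} would force $N$ to embed in $A_n$ inside $M$ --- a contradiction; this single-stroke argument also repairs your iterated averaging, whose bookkeeping does close up if smallness is measured in $\|\cdot\|_{2,\Tr}$ (which convex averaging over unitaries only contracts) rather than ultraweakly. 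Likewise your observations that $N$ must be diffuse and that an embedding of $B$ into some $A_{n_0}$ inside $M$ passes to an embedding into $\tilde A$ inside $\tilde M$ are correct.

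The genuine gap is the ``one-subalgebra case'' itself. It is not an available lemma you may apply: it is exactly the case $A_n\equiv A$ of the corollary being proved, and it carries all of the difficulty, so the reduction buys nothing. Worse, the argument you sketch for it --- upgrading ``every diffuse abelian subalgebra of $N$ embeds in $A$ inside $M$'' to ``$N$ embeds'' by exhausting $N$ by corners and assembling the data of conditions (\ref{thm:popafinbimod3})--(\ref{thm:popafinbimod4}) --- does not work. For each diffuse abelian $B\subset N$ one gets \emph{some} finite-$A$-dimensional $B$-$A$-submodule of $pL^2(M)$ (or a corner $fBf$, a $\theta$ and a $v$), but these objects depend on $B$, sit in unrelated locations, and satisfy no compatibility as $B$ varies; distinct abelian subalgebras of $N$ are not orthogonal corners of $N$ and admit no exhaustion, so there is no mechanism producing an $N$-$A$-submodule. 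The proof in \cite[Appendix F]{Me-n-Taka} (which the paper cites in lieu of giving one) runs in the opposite, constructive direction: fixing a $\|\cdot\|_2$-dense sequence $(x_j)$ in $M$ (this is where separability of the predual enters), one builds an increasing sequence of finite-dimensional abelian subalgebras $B_k\subset N$, given by partitions of unity whose atoms have traces tending to $0$, together with finite-spectrum unitaries $u_k\in B_k$ satisfying $\|E_{A_n}(x_i^*u_kx_j)\|_2\le 1/k$ for all $i,j,n\le k$. The inductive refinement takes place inside corners $qNq$, which still do not embed in any $A_n$ by the remark following Definition \ref{defn:embinside}, and uses precisely the joint, finitely-many-$A_n$ version of condition (\ref{thm:popafinbimod1}) that your minimal-norm observation furnishes; then $B=(\bigcup_kB_k)''$ is diffuse abelian and the single sequence $(u_k)\subset\mathcal{U}(B)$ witnesses condition (\ref{thm:popafinbimod1}) for every $A_n$ simultaneously by a diagonal argument. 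So your averaging/direct-sum idea is a correct ingredient, but the refinement-plus-diagonalization construction --- the actual content of the corollary --- is missing, and the maximality argument offered in its place would fail.
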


Now we're ready for the analogue of Theorem \ref{thm:taka}. 

\begin{subthm}\label{thm:gvnacn}
Assume $\G$ is bi-exact relative to $\grp$ and  
let $p\in L(\G)$ be a projection and $N\subset pL(\G)p$ be a von Neumann subalgebra. 
If the relative commutant $N'\cap pL(\G)p$ is noninjective, 
then there exists $\Lambda\in\grp$ such that $N$ embeds in $L(\Lambda)$ inside $L(\G)$. 
\end{subthm}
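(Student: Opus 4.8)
The plan is to prove the contrapositive: assume that $N$ does not embed in $L(\Lambda)$ inside $L(\G)$ for any $\Lambda \in \grp$, and show that $N' \cap pL(\G)p$ is injective. The machinery of Proposition \ref{prop:injcric} is designed precisely for this — it reduces injectivity of a relative commutant $P = B' \cap pMp$ to a $\min$-continuity statement about the bi-normal map $\Phi_P$. So the strategy is to manufacture a suitable diffuse abelian subalgebra $B$, verify the $\min$-continuity hypothesis using bi-exactness (via Lemma \ref{lem:clrmid}), and then bridge the gap between $B' \cap pL(\G)p$ and the sought-after $N' \cap pL(\G)p$.

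First I would apply Corollary \ref{cor:popafinbimod} with $M = L(\G)$ and the sequence $(A_n)$ ranging over the subalgebras $L(\Lambda)$, $\Lambda \in \grp$ (countability is where separability of the predual enters). Since $N$ does not embed in any $L(\Lambda)$ inside $L(\G)$, the corollary hands me a diffuse abelian subalgebra $B \subset N$ that \emph{also} does not embed in any $L(\Lambda)$ inside $L(\G)$. Setting $P = B' \cap pL(\G)p$, I note that $N' \cap pL(\G)p \subset B' \cap pL(\G)p = P$ (since $B \subset N$), and there is a trace-preserving conditional expectation $P \to N' \cap pL(\G)p$; hence injectivity of $P$ passes to $N' \cap pL(\G)p$, exactly as in the masa argument of Theorem \ref{thm:taka}. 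So it suffices to prove $P$ is injective.

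To invoke Proposition \ref{prop:injcric} I take $C_l = C^*_\lambda(\G) \subset L(\G) = M$ (which is exact by hypothesis on $\G$) and $C_r = C^*_\rho(\G) \subset L(\G)' = C^*_\rho(\G)''$, and I must check that $\Phi_P$ is $\min$-continuous on $C_l \odot C_r$. Using the averaging map $\Psi_B$ built from a generating unitary $u_0 \in B$ (as at the close of the previous subsection), one has $\Phi_P(\sum_k a_k \otimes b_k) = \Psi_B(\sum_k a_k b_k)$, so $\min$-continuity of $\Phi_P$ on $C_l \odot C_r$ will follow if I can show that $\Psi_B$, restricted to products $ab$ with $a \in C^*_\lambda(\G)$ and $b \in C^*_\rho(\G)$, factors through the $\min$-tensor-product $\theta$ supplied by Lemma \ref{lem:clrmid}. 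The key point is that $\Psi_B$ kills $\K(\G;\grp)$: because $B$ does not embed in any $L(\Lambda)$, condition (1) of Theorem \ref{thm:popafinbimod} provides unitaries $w_n \in B$ with $\|E_{L(\Lambda)}(b^* w_n a)\|_2 \to 0$, which geometrically forces the orbit-averaging defining $\Psi_B$ to annihilate the compact-like ideal $\K(\G;\grp)$ generated by $c_0(\G;\grp)$. Since Lemma \ref{lem:clrmid} gives $\theta(a \otimes b) - ab \in \K(\G;\grp)$, composing with $\Psi_B$ yields $\Psi_B(ab) = \Psi_B(\theta(a\otimes b))$, and the right-hand side is manifestly $\min$-continuous in $a \otimes b$.

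\textbf{The hard part will be} the final $\min$-continuity verification — specifically, proving that $\Psi_B$ vanishes on $\K(\G;\grp)$. This is the crux where the non-embedding hypothesis gets converted into the C$^*$-algebraic statement needed to run The Trick, and it requires carefully relating the Popa-intertwining characterization (Theorem \ref{thm:popafinbimod}(\ref{thm:popafinbimod1})) to the asymptotic escape behavior $s \to \infty/\grp$ that defines $c_0(\G;\grp)$. Once $\Psi_B|_{\K(\G;\grp)} = 0$ is established, everything else is a matter of assembling the pieces: Proposition \ref{prop:injcric} delivers injectivity of $P = B' \cap pL(\G)p$, and the conditional-expectation argument of the first paragraph pushes this down to $N' \cap pL(\G)p$, contradicting noninjectivity and completing the contrapositive.
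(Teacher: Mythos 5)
Your architecture coincides with the paper's proof step for step: arguing by contradiction/contrapositive, invoking Corollary \ref{cor:popafinbimod} to replace $N$ by a diffuse abelian $B\subset N$ that embeds in no $L(\Lambda)$, reducing from $N'\cap pL(\G)p$ to $P=B'\cap pL(\G)p$ via a trace-preserving conditional expectation as in Theorem \ref{thm:taka}, feeding $C_l=C^*_\lambda(\G)$, $C_r=C^*_\rho(\G)$ into Proposition \ref{prop:injcric}, writing $\Phi_P=\Psi_B\circ\theta$ on $C^*_\lambda(\G)\odot C^*_\rho(\G)$ with $\theta$ from Lemma \ref{lem:clrmid}, and reducing everything to the single claim $\K(\G;\grp)\subset\ker\Psi_B$. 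All of that is correct and is exactly what the paper does.

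But the one step you explicitly leave open is the entire mathematical content of the theorem, and the mechanism you gesture at is not the one that works. You propose to use condition (\ref{thm:popafinbimod1}) of Theorem \ref{thm:popafinbimod} -- the unitaries $w_n\in B$ with $\|E_{L(\Lambda)}(b^*w_na)\|_2\to 0$ -- and assert this ``geometrically forces'' $\Psi_B$ to kill $\K(\G;\grp)$. There is no visible way to feed the $w_n$'s into $\Psi_B$, which averages over powers of one \emph{fixed} generator $u_0\in B$ and has nothing to do with the sequence $(w_n)$. The lever the paper actually pulls is condition (\ref{thm:popafinbimod2}): one observes that $\chi_\Lambda\in\ell^\infty(\G)$ \emph{is} the Jones projection $e_{L(\Lambda)}$ of the basic construction, so $\chi_{s\Lambda}=\lambda_s e_{L(\Lambda)}\lambda_s^*$ is a positive element of $\langle M,L(\Lambda)\rangle$ with $\Tr(\chi_{s\Lambda})=1$; hence $d:=\Psi_B(\chi_{s\Lambda})$ is a positive, finite-trace element of $p\langle M,L(\Lambda)\rangle p\cap B'$. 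Being $B$-central, $d$ satisfies $w^*dw=d$ for every unitary $w\in B$, so the ultraweakly closed convex hull of its $B$-conjugates is the singleton $\{d\}$ -- and the negation of condition (\ref{thm:popafinbimod2}) says that hull must contain $0$, forcing $d=0$. This is precisely why the convex-hull formulation, not the $\|\cdot\|_2$-asymptotics of condition (\ref{thm:popafinbimod1}), converts non-embedding into vanishing in one line. One then uses that $C^*_\rho(\G)$ lies in the multiplicative domain of $\Psi_B$ and $\chi_{s\Lambda t}=\rho_t^*\chi_{s\Lambda}\rho_t$ to get $\Psi_B(\chi_{s\Lambda t})=0$ for all $s,t,\Lambda$; finally, every positive $f\in c_0(\G;\grp)$ is, up to $\e$, dominated by a finite sum of such characteristic functions, and a Cauchy--Schwarz argument for c.p.\ maps spreads the vanishing from $c_0(\G;\grp)$ to the hereditary subalgebra $\K(\G;\grp)$. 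Until you supply this (or an equivalent) argument, your proof has a genuine gap at its crux; with it, it is the paper's proof.
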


\begin{proof}
By contradiction,  suppose that the conclusion of the theorem is not true. 
Then, by Corollary \ref{cor:popafinbimod}, there is 
a diffuse abelian von Neumann subalgebra $B\subset N$ 
such that $B$ does not embed in $L(\Lambda)$ inside $M=L(\G)$ for any $\Lambda$. To get our contradiction, it suffices to show $B' \cap pL(\G)p$ is injective (cf.\ the proof of Theorem \ref{thm:taka}).  

To do this, we'll use Proposition~\ref{prop:injcric} (and the remarks and notation which follow it) with $M = L(\G)$, $P = B' \cap pMp$, $C_l = C^*_\lambda(\G) \subset M$ and $C_r = C^*_\rho(\G) \subset M'$. Thus, it suffices to show $\Phi_P$ is $\min$-continuous on $C^*_\lambda(\G) \odot C^*_\rho(\G)$.  But to do this, it would be enough to know that $\Phi_P|_{C^*_\lambda(\G) \odot C^*_\rho(\G)} =\Psi_B\circ\theta|_{C^*_\lambda(\G) \odot C^*_\rho(\G)}$, where $\theta$ is the map given to us by Lemma~\ref{lem:clrmid}.  Since we already observed that 
\[
\Phi_P(\sum_k a_k\otimes b_k) = \Psi_B(\sum_k a_kb_k)
\]
for $a_k\in M$ and $b_k\in M'$, and we know $\theta(\sum_k a_kb_k) - \sum_k a_kb_k \in \K(\G;\grp)$, our task is further reduced to proving that $\K(\G;\grp)\subset\ker\Psi_B$ -- if we can do this, the proof is complete.

With $A=L(\Lambda) \subset M$, observe that $\chi_\Lambda\in\ell^\infty(\G)\subset\B(L^2(M))$ 
is the Jones projection $e_A$ onto $L^2(A) \subset L^2(M)$ and hence 
$\chi_{s\Lambda}=\lambda_s e_A\lambda_s^*\in\langle M,A\rangle_+$ 
with $\Tr(\chi_{s\Lambda})=1$. 
Therefore, it follows from the definition of $\Psi_B$ that $\Psi_B(\chi_{s\Lambda})$ is 
a positive element in $p\langle M,A\rangle p\cap B'$ such that 
$\Tr(\Psi_B(\chi_{s\Lambda}))\le1$. Since we're assuming $B$ does not embed in $A$ inside $M$,  the ultraweakly closed convex hull of $\{ w^*dw : w\in B\mbox{ unitary}\}$ contains $0$ for every element $d \in \langle M,A\rangle$ of finite trace; since $w^*\Psi_B(\chi_{s\Lambda})w = \Psi_B(\chi_{s\Lambda})$ for all unitaries $w\in B$, it follows that $\Psi_B(\chi_{s\Lambda})=0$. Finally, since $C^*_\rho(\G)$ is in the multiplicative domain of $\Psi_B$,  this implies that $\Psi_B(\chi_{s\Lambda t})=0$ for every $s,t\in\G$ and $\Lambda\in\grp$,   
or equivalently, $\K(\G;\grp)\subset\ker\Psi_B$. 
\end{proof}

To the von Neumann algebraist that doesn't yet appreciate C$^*$-algebras, I have a challenge: Find a proof of the previous theorem that doesn't depend on C$^*$-theory.\footnote{My earlier challenge to C$^*$-algebraists has the weight of history on its side -- unlike this W$^*$-challenge.  Hence, I'll be less surprised (but still quite surprised!) if someone finds a W$^*$-proof of Theorem \ref{thm:gvnacn} someday. Indeed, similar results have already been proved without appealing to C$^*$-algebras (cf.\ \cite{popa:productgap}, \cite{ipp}, \cite{peterson}).} The next two pages demonstrate this theorem's  remarkable power.

\subsection{More Solid Factors}

Here's a simple application of Theorem \ref{thm:gvnacn}. 

\begin{subcor}  
\label{cor:bisolid}
If $\G$ is bi-exact relative to the trivial subgroup $\{e\}$, then $L(\G)$ is solid.  
\end{subcor}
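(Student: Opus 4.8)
The plan is to deduce this directly from Theorem \ref{thm:gvnacn}; the only real work is interpreting correctly what bi-exactness relative to the \emph{trivial} subgroup gives us. Since solidity demands that the relative commutant of every diffuse von Neumann subalgebra be injective, I would begin by fixing a diffuse von Neumann subalgebra $N \subset L(\G)$ (so we take the projection $p = 1$) and aim to show that $N' \cap L(\G)$ is injective. The natural strategy is contradiction: suppose $N' \cap L(\G)$ is noninjective and derive an impossibility.

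Under that assumption, Theorem \ref{thm:gvnacn}, applied with $\grp = \{\{e\}\}$ and $p = 1$, hands us a subgroup $\Lambda \in \grp$ --- necessarily $\Lambda = \{e\}$ --- such that $N$ embeds in $L(\{e\}) = \C$ inside $L(\G)$. So the entire corollary comes down to the intuitively obvious statement that a \emph{diffuse} algebra cannot embed into the scalars. To make this precise I would invoke condition (\ref{thm:popafinbimod1}) of Theorem \ref{thm:popafinbimod} with $A = \C$: by definition, ``$N$ embeds in $\C$'' means there is \emph{no} sequence of unitaries $(w_n)$ in $N$ satisfying $\|E_{\C}(b^* w_n a)\|_2 \to 0$ for all $a,b \in L(\G)$. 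Since $E_{\C} = \tau(\cdot)\,1$, this quantity is simply $|\tau(b^* w_n a)| = |\langle w_n \widehat{1},\, (ba^*)\widehat{1}\rangle|$ in the standard representation.

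The one genuine ingredient --- and it is exactly the device already used in the proof of Theorem \ref{thm:taka} --- is the production of a sequence of unitaries in $N$ that converges ultraweakly to $0$. Because $N$ is diffuse it contains a diffuse abelian subalgebra, hence (in the separable-predual setting) a Haar unitary $u$ with $u \cong L^\infty(\T)$ whose powers satisfy $u^n \to 0$ ultraweakly. Putting $w_n = u^n$, ultraweak (equivalently, on the unit ball, weak-operator) convergence gives $\langle w_n \xi, \eta\rangle \to 0$ for all $\xi,\eta$, and in particular $|\tau(b^* w_n a)| \to 0$ for every $a,b \in L(\G)$. Thus condition (\ref{thm:popafinbimod1}) \emph{fails}, i.e. $N$ does \emph{not} embed in $\C$ inside $L(\G)$ --- contradicting the conclusion extracted from Theorem \ref{thm:gvnacn}. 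Therefore $N' \cap L(\G)$ must be injective, and since $N$ was an arbitrary diffuse subalgebra, $L(\G)$ is solid.

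I do not anticipate a substantive obstacle: once Theorem \ref{thm:gvnacn} is available, this is essentially a translation exercise, the sole nontrivial input being the standard fact that diffuseness yields weakly null unitaries. The place to be careful is purely bookkeeping --- matching the correct equivalent formulation of ``embeds inside'' from Theorem \ref{thm:popafinbimod} and verifying the $\|\cdot\|_2$-computation when $A = \C$ collapses to the trace.
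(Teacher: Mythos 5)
Your proof is correct, and its skeleton is the same as the paper's: assume $N'\cap L(\G)$ is noninjective, apply Theorem \ref{thm:gvnacn} with $\grp=\{\{e\}\}$ to conclude $N$ embeds in $L(\{e\})=\C$ inside $L(\G)$, and contradict diffuseness. Where you diverge is in how you refute ``a diffuse algebra embeds in $\C$'': you falsify condition (\ref{thm:popafinbimod1}) of Theorem \ref{thm:popafinbimod} by exhibiting a weakly null sequence of unitaries --- a Haar unitary $u$ in a diffuse abelian subalgebra of $N$ with $u^n\to 0$ ultraweakly, so that $\|E_{\C}(b^*u^na)\|_2=|\tau(b^*u^na)|\to 0$ for all $a,b$. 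The paper instead invokes condition (\ref{thm:popafinbimod4}): the embedding produces a projection $f\in N$ and a unital $*$-homomorphism $fNf\to\C$, so $\C$ is a direct summand of $fNf$, giving a minimal projection and an immediate contradiction with diffuseness. Both are legitimate since Theorem \ref{thm:popafinbimod} makes the conditions equivalent. The paper's route via (\ref{thm:popafinbimod4}) is purely algebraic and two lines long; your route via (\ref{thm:popafinbimod1}) costs you the (standard, but separable-predual) fact that diffuseness yields a Haar unitary, which you correctly flag --- and which is harmless here, as Theorem \ref{thm:popafinbimod} already assumes separable predual. A pleasant side effect of your choice is that it makes visible the same mechanism ($u^n\to 0$ ultraweakly for a generating unitary of a diffuse abelian algebra) that drives the averaging map $\Psi$ in the proof of Theorem \ref{thm:taka}, so the two solidity arguments line up conceptually. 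Your bookkeeping is sound: with $p=1$ the ``unitaries in $B$'' of condition (\ref{thm:popafinbimod1}) are genuine unitaries of $N$, and the identity $\tau(b^*w_na)=\langle w_n\widehat{1},(ba^*)\widehat{1}\rangle$ is a correct use of traciality.
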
 

\begin{proof} Let $B \subset L(\G)$ be diffuse and assume $B' \cap L(\G)$ is not injective. Then, by Theorem \ref{thm:gvnacn}, $B$ embeds in $L(\{e\})$ inside $L(\G)$. By condition (4) in Theorem \ref{thm:popafinbimod}, we can find a projection $f \in B$ and a unital $*$-homomorphism $fBf \to L(\{e\}) = \C$. Thus $\C$ is a direct summand of $fBf$, which implies $fBf$ has a minimal projection. This  contradicts our assumption that $B$ is diffuse. 
\end{proof} 

We've already seen that hyperbolic groups satisfy the hypothesis of the previous result, but there are others.  For example, \cite[Corollary 15.3.9]{Me-n-Taka} states that the wreath product of an amenable group by a group which is bi-exact relative to $\{e\}$ is again bi-exact relative to $\{e\}$.

\subsection{Product Groups} 

The proof of the following lemma is a good exercise. 

\begin{sublem}\label{lem:dpvnac}
Let $\G_1,\ldots,\G_n$ be groups that are bi-exact relative to $\{e\}$ and $\G=\prod_{i=1}^n\G_i$ be the direct product.
Define 
a family $\grp$ of subgroups of $\G$ by 
\[
\grp=\bigcup_{i=1}^n \{ \{e\} \times\prod_{j\neq i}\G_j \}.
\]
Then $\G$ is bi-exact relative to $\grp$.
\end{sublem}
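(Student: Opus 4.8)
The plan is to bypass the action definition and instead verify the C$^*$-algebraic characterization of bi-exactness recorded in Lemma \ref{lem:clrmid}: according to the footnote there (i.e.\ \cite[Lemma 15.1.4]{Me-n-Taka}), to prove that $\G$ is bi-exact relative to $\grp$ it suffices to produce a u.c.p.\ map $\theta\colon C^*_\lambda(\G)\otimes C^*_\rho(\G)\to\B(\ell^2(\G))$ with $\theta(a\otimes b)-ab\in\K(\G;\grp)$ for all $a\in C^*_\lambda(\G)$ and $b\in C^*_\rho(\G)$. (Note that $\G$ is exact, being a direct product of the exact groups $\G_i$, so this characterization is available.) I will build $\theta$ as a tensor product of the maps supplied by the individual factors.

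First I would unwind $c_0(\G;\grp)$ and $\K(\G;\grp)$. Each $\Lambda_i:=\{e\}\times\prod_{j\neq i}\G_j$ is normal, so a finite set of translates of $\grp$-subgroups is a finite union of $\Lambda_i$-cosets, and a coset of $\Lambda_i$ is pinned down by the $i$-th coordinate alone. Hence $s\to\infty/\grp$ exactly when the $i$-th coordinate $s_i\to\infty$ in $\G_i$ for \emph{every} $i$, so that $c_0(\G;\grp)=\{f:f(s)\to0\text{ as all }s_i\to\infty\}$. The crucial consequence is the inclusion
\[
\K(\ell^2(\G_i))\otimes\B\Big(\bigotimes_{j\neq i}\ell^2(\G_j)\Big)\subset\K(\G;\grp)\qquad(1\le i\le n).
\]
Indeed, writing $\pi_i\colon\G\to\G_i$ for the coordinate projection, the multiplication operator $\chi_E\circ\pi_i$ lies in $c_0(\G;\grp)$ for every finite $E\subset\G_i$, and compressing any ``compact-in-the-$i$-th-slot times bounded-elsewhere'' operator by such projections exhibits it as a norm limit of elements of $c_0(\G;\grp)\B(\ell^2(\G))c_0(\G;\grp)$.

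Next, for each $i$ the bi-exactness of $\G_i$ relative to $\{e\}$ gives, via Lemma \ref{lem:clrmid} together with $\K(\G_i;\{e\})=\K(\ell^2(\G_i))$, a u.c.p.\ map $\theta_i\colon C^*_\lambda(\G_i)\otimes C^*_\rho(\G_i)\to\B(\ell^2(\G_i))$ with $\theta_i(a\otimes b)-ab\in\K(\ell^2(\G_i))$. Using the standard identifications $C^*_\lambda(\G)\cong\bigotimes_i C^*_\lambda(\G_i)$, $C^*_\rho(\G)\cong\bigotimes_i C^*_\rho(\G_i)$ and the reshuffling $C^*_\lambda(\G)\otimes C^*_\rho(\G)\cong\bigotimes_i\big(C^*_\lambda(\G_i)\otimes C^*_\rho(\G_i)\big)$, I set $\theta:=\bigotimes_i\theta_i$; this is u.c.p.\ on the minimal tensor product by Theorem \ref{thm:tenprodmapscont} and lands in $\bigotimes_i\B(\ell^2(\G_i))\subset\B(\ell^2(\G))$. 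On an elementary tensor $a=\bigotimes_i a_i$, $b=\bigotimes_i b_i$, writing $\theta_i(a_i\otimes b_i)=a_ib_i+k_i$ with $k_i\in\K(\ell^2(\G_i))$ gives
\[
\theta(a\otimes b)-ab=\bigotimes_i\big(a_ib_i+k_i\big)-\bigotimes_i a_ib_i,
\]
and every surviving term of the expansion contains at least one compact factor $k_{i_0}$, hence lies in $\K(\G;\grp)$ by the inclusion above. Finally I pass to arbitrary $a,b$ by density: finite sums of such elementary tensors are dense in $C^*_\lambda(\G)$ and $C^*_\rho(\G)$, the bilinear maps $(a,b)\mapsto\theta(a\otimes b)$ and $(a,b)\mapsto ab$ are bounded, hence jointly continuous, and $\K(\G;\grp)$ is norm-closed, so $\theta(a\otimes b)-ab\in\K(\G;\grp)$ for all $a,b$. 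This produces the required $\theta$ and completes the proof.

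I expect the main obstacle to be the two bookkeeping points rather than any deep idea: (i) correctly identifying $c_0(\G;\grp)$ and $\K(\G;\grp)$ for the product family $\grp$ and proving the displayed inclusion of ``compact-in-one-coordinate'' operators; and (ii) the density step, where one must resist treating $a\otimes b\mapsto ab$ as a map on $C^*_\lambda(\G)\otimes C^*_\rho(\G)$ (it is \emph{not} $\min$-continuous — that would force amenability of $\G$) and instead approximate $a$ and $b$ separately, invoking joint continuity of multiplication on bounded sets.
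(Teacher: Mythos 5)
Your proposal is correct, but it takes a genuinely different route from the one the paper has in mind: the paper gives no proof at all (``a good exercise''), and the solution its toolkit points to is dynamical, in the spirit of Exercise \ref{exer:biexact}. Namely, using exactly the coset bookkeeping you carried out (normality of each $\Lambda_i$, a coset of $\Lambda_i$ being pinned down by the $i$-th coordinate), one checks that $f\circ\pi_i\in c_0(\G;\grp)$ if and only if $f\in c_0(\G_i)$, so each $A_i:=\ell^\infty(\G_i)/c_0(\G_i)$ sits inside $\ell^\infty(\G)/c_0(\G;\grp)$ as a $(\G\times\G)$-invariant subalgebra on which $\G_i\times\G_i$ acts amenably (bi-exactness of $\G_i$) and the remaining coordinates act trivially; since $\G\times\G\cong\prod_i(\G_i\times\G_i)$, the $n$-fold version of Exercise \ref{exer:biexact} --- take products $m^x=m_1^x\times\cdots\times m_n^x$ of the approximating measures --- gives amenability on the subalgebra the $A_i$ generate, hence on the whole quotient, which is the definition of bi-exactness. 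Your route instead verifies the u.c.p.\ characterization from the footnote to Lemma \ref{lem:clrmid}, and the details are sound: the identification of $c_0(\G;\grp)$, the inclusion $\K(\ell^2(\G_i))\otimes\B\bigl(\bigotimes_{j\neq i}\ell^2(\G_j)\bigr)\subset\K(\G;\grp)$, the tensor map $\theta=\bigotimes_i\theta_i$, and the two-variable density step (your caution that $a\otimes b\mapsto ab$ is not $\min$-continuous is exactly right). What this buys is brevity and no measure-theoretic estimates; what it costs is reliance on the \emph{converse} direction of Lemma \ref{lem:clrmid} (BO Lemma 15.1.4), a nontrivial theorem the paper never proves --- only the forward direction is established --- and which requires exactness. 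You correctly supply exactness of $\G$, but you assert exactness of each $\G_i$ without justification; it follows by restricting the amenable $\G_i\times\G_i$-action on $\ell^\infty(\G_i)/c_0(\G_i)$ to $\G_i\times\{e\}$ and invoking Theorem \ref{thm:hmm}. With that remark added, your argument is a complete and legitimate alternative, while the dynamical proof remains the self-contained one relative to this paper's resources.
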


Here's a lovely theorem of Ozawa and Popa.

\begin{subthm}
Let $\G_1,\ldots,\G_n$ be groups that are bi-exact relative to $\{e\}$ (e.g., hyperbolic groups) 
and $N_1,\ldots,N_m$ be noninjective $\mathrm{II}_1$-factors. 
If there exists an embedding 
\[
N_1\vt \cdots\vt N_m\hookrightarrow pL(\G_1\times\cdots\times\G_n)p,
\]
for some projection $p\in L(\G_1\times\cdots\times\G_n)$, then $m\le n$.
\end{subthm}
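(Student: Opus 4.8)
The plan is to induct on $n$, using Theorem \ref{thm:gvnacn} to ``peel off'' one factor at a time. The setup is $N = N_1 \vt \cdots \vt N_m \subset pL(\G)p$ where $\G = \G_1 \times \cdots \times \G_n$, and by Lemma \ref{lem:dpvnac} the group $\G$ is bi-exact relative to the family $\grp = \bigcup_{i=1}^n \{\{e\}\times \prod_{j\neq i}\G_j\}$. The base case $n=1$ asserts that if $N_1 \vt \cdots \vt N_m$ embeds in $pL(\G_1)p$ with each $N_k$ noninjective, then $m\leq 1$; since $L(\G_1)$ is solid (Corollary \ref{cor:bisolid}), a nontrivial tensor decomposition into noninjective pieces is impossible by Proposition \ref{prop:prime}, so $m \leq 1$.

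For the inductive step, the key is to apply Theorem \ref{thm:gvnacn} to a cleverly chosen subfactor $N_k$. First I would note that the relative commutant of $N_k$ inside $N$ (hence inside $pL(\G)p$) contains $\bar\otimes_{j\neq k} N_j$, which is noninjective whenever $m \geq 2$ (a tensor product containing a noninjective factor is noninjective, since there is a conditional expectation onto the remaining factor). Thus $N_k' \cap pL(\G)p$ is noninjective, and Theorem \ref{thm:gvnacn} yields some $\Lambda_k \in \grp$ — say $\Lambda_k = \{e\}\times \prod_{i\neq i_k}\G_i$ for some index $i_k$ — such that $N_k$ embeds in $L(\Lambda_k)$ inside $L(\G)$. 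The heart of the argument is then a pigeonhole/counting step: I want to show that the $m$ subfactors $N_1,\ldots,N_m$ cannot all embed into proper ``coordinate-hyperplane'' subgroups without forcing $m\leq n$.

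**The hard part will be** turning these embeddings into a rigid combinatorial bound. The natural strategy is to realize each $L(\Lambda_k)$ as $L(\prod_{i\neq i_k}\G_i)$, i.e.\ the group von Neumann algebra of $\G$ with the $i_k$-th coordinate deleted, and to argue that distinct factors $N_k$ must be associated to distinct deleted coordinates $i_k$ — giving an injection $k \mapsto i_k$ from $\{1,\ldots,m\}$ into $\{1,\ldots,n\}$, whence $m\leq n$. To justify that the $i_k$ are distinct I would argue by contradiction: if two factors, say $N_1$ and $N_2$, both embed in the same $L(\Lambda)$ with $\Lambda = \{e\}\times\prod_{i\neq i_0}\G_i$, then their mutual commutation inside $N$ should force (via the intertwining condition (\ref{thm:popafinbimod4}) of Theorem \ref{thm:popafinbimod}, i.e.\ partial isometries $v_k$ with $x v_k = v_k\theta_k(x)$) the product $N_1 \vt N_2$ to embed into $L(\Lambda)=L(\prod_{i\neq i_0}\G_i)$, and then one induces on the smaller product (which has only $n-1$ factors) to derive a contradiction with the inductive hypothesis.

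More precisely, the cleanest route is a downward induction realized through the bi-exactness of the smaller product: after locating the coordinate $i_k$ for each $N_k$, I would regroup and apply the inductive hypothesis to $\G' = \prod_{i\neq i_0}\G_i$, which is again a product of $n-1$ bi-exact groups, carrying the embedded tensor subproduct of the $N_k$'s with it. The delicate point — and the one I expect to consume most of the technical effort — is the transfer of the embedding $N_1\vt N_2 \hookrightarrow pL(\G)p$ to an embedding into $L(\Lambda)$-corner when both factors point to the same hyperplane; this requires manipulating the partial isometries from Popa's intertwining theorem and checking that the images commute appropriately inside the smaller algebra, so that the pieces genuinely reassemble into a tensor product over $L(\G')$. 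Once that regrouping is established, the count $m\leq n$ follows by the induction hypothesis, completing the proof.
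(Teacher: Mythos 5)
Your skeleton (induction on $n$, base case via solidity of $L(\G_1)$ and Proposition \ref{prop:prime}) matches the paper, but your inductive step contains a genuine gap. You apply Theorem \ref{thm:gvnacn} separately to each $N_k$ and then try to conclude by pigeonhole. The pigeonhole claim itself is not salvageable: when $m<n$ nothing prevents several (or all) of the $N_k$ from pointing at the same hyperplane algebra $L(\Lambda)$ --- e.g.\ the whole of $N$ could sit inside $L(\G_1\times\cdots\times\G_{n-1})$ --- so the assignment $k\mapsto i_k$ need not be injective and cannot give $m\le n$ directly. Your fallback, regrouping two factors that point at the same $\Lambda$, is where the real trouble lies: condition (\ref{thm:popafinbimod4}) of Theorem \ref{thm:popafinbimod} produces for $N_1$ and $N_2$ \emph{separately} data $(\theta_1,v_1)$ and $(\theta_2,v_2)$ with no compatibility whatsoever; the partial isometries are unrelated, the images $\theta_1(f_1N_1f_1)$ and $\theta_2(f_2N_2f_2)$ need not commute after conjugation, and there is no general principle that lets you intertwine $N_1\vt N_2$ into $L(\Lambda)$ from two separate intertwinings of the tensor factors. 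Even upgrading a \emph{single} intertwining to a unitary conjugacy requires extra hypotheses (cf.\ Theorem \ref{thm:finbimodII}, which needs the relative commutant to be a factor), and that handles one subalgebra at a time. So the ``delicate point'' you flag is not a technicality to be checked but a missing lemma, and it is essentially harder than the theorem itself.

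The paper avoids all of this with one cleaner move: apply Theorem \ref{thm:gvnacn} \emph{once} to the whole subproduct $B=N_1\vt\cdots\vt N_{m-1}$, whose relative commutant in $pL(\G)p$ contains $N_m$ and is therefore noninjective. With $\grp$ as in Lemma \ref{lem:dpvnac}, this embeds $B$ in some $L(\{e\}\times\prod_{j\neq i}\G_j)$ inside $L(\G)$; since all projections of equal trace in the II$_1$-factor $B$ are equivalent, the corner $fBf$ appearing in condition (\ref{thm:popafinbimod4}) may be taken of the form $e_1N_1e_1\vt N_2\vt\cdots\vt N_{m-1}$, which is again a tensor product of $m-1$ noninjective II$_1$-factors, now sitting in a corner of the group von Neumann algebra of a product of $n-1$ bi-exact groups. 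The induction hypothesis then yields $m-1\le n-1$ in a single stroke, with no pigeonhole and no joint-intertwining step. To repair your proof, replace the per-factor applications of Theorem \ref{thm:gvnacn} by this single application to $N_1\vt\cdots\vt N_{m-1}$, using $N_m$ only to witness noninjectivity of the relative commutant.
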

\begin{proof} The proof is by induction on $n$. If $n =1$, we're simply claiming that noninjective subfactors of $L(\G_1)$ are prime -- which follows from Corollary \ref{cor:bisolid}.  So assume the result holds for all products of length $n-1$ and we have an embedding $N_1\vt \cdots\vt N_m\subset pL(\G_1\times\cdots\times\G_n)p$. 

Since $$(N_1\vt \cdots\vt N_{m-1})' \cap pL(\G_1\times\cdots\times\G_n)p \supset N_m,$$ we see that  this relative commutant is noninjective.  Thus, Theorem~\ref{thm:gvnacn} and Lemma~\ref{lem:dpvnac} imply that, after permuting indices, one has 
\[
e_1N_1e_1\vt \cdots\vt N_{m-1} 
\hookrightarrow 
p_0L(\G_1\times\cdots\times\G_{n-1})p_0
\]
for some nonzero projections $e_1\in N_1$ and $p_0\in L(\G_1\times\cdots\times\G_{n-1})$.
By the induction hypothesis, $m-1 \leq n-1$, so we're done. 
\end{proof}  

In particular, $L(\F_2) \vt L(\F_2)$ and $N_1 \vt N_2 \vt N_3$ (for noninjective II$_1$-factors $N_i$) can't be isomorphic, which extends Ge's theorem that free group factors are prime.  However, free entropy degenerates in tensor products, so Ge's techniques can't be adapted to handle the case of higher tensor powers.

\subsection{Free Products} 

Inspired by the isomorphism problem for free group factors, we end this paper with one more application of Theorem \ref{thm:gvnacn}. But first we'll need two more of Popa's W$^*$-theorems.  Here's an older result on normalizers in free products (cf.\ \cite{popa:orthogonal}). 

\begin{subthm} 
\label{thm:freenormalizer} If $M_1$ and $M_2$ are finite von Neumann algebras and $N_1 \subset M_1$ is diffuse, then there is no unitary $u \in M_1 \ast M_2$ such that $u N_1 u^* \subset M_2$.  Moreover, if $u N_1 u^* \subset M_1$, then $u \in M_1$. 
\end{subthm}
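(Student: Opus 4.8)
The plan is to work inside the tracial free product $(M,\tau)=(M_1,\tau_1)\ast(M_2,\tau_2)$, exploit the free-product decomposition of $L^2(M)$, and reduce both assertions to a single ``mixing'' estimate on conditional expectations. Write $E_{M_i}\colon M\to M_i$ for the trace-preserving conditional expectations and $\|\cdot\|_2$ for the $2$-norm (operator norm denoted $\|\cdot\|$). Since $N_1$ is diffuse it contains a Haar unitary $v$ (so $\tau(v^n)=0$ for $n\neq0$ and $\{v^n\}_{n\in\Z}$ is orthonormal in $L^2(M)$); in particular $v^n\to0$ weakly as $n\to\infty$, and for any fixed unitary $w$ also $wv^nw^*\to0$ weakly. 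Everything rests on the following freeness lemma, which I would isolate first: \emph{if $(v_k)$ is a bounded sequence in $M_1$ with $v_k\to0$ weakly, then} (a) $\|E_{M_2}(a\,v_k\,b)\|_2\to0$ \emph{for all} $a,b\in M$, \emph{and} (b) $\|E_{M_1}(a\,v_k\,b)\|_2\to0$ \emph{for all} $a,b\in M$ \emph{with} $E_{M_1}(a)=E_{M_1}(b)=0$.

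For the first assertion, suppose toward a contradiction that a unitary $u$ with $uN_1u^*\subset M_2$ exists. Then each $w_n:=uv^nu^*$ is a unitary lying in $M_2$, so $E_{M_2}(w_n)=w_n$ and $\|w_n\|_2=1$ for every $n$. On the other hand $v^n\to0$ weakly, so part (a) applied with $a=u$, $b=u^*$ gives $\|E_{M_2}(uv^nu^*)\|_2\to0$, i.e.\ $\|w_n\|_2\to0$. This contradicts $\|w_n\|_2=1$, so no such $u$ can exist.

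For the second assertion, assume $uN_1u^*\subset M_1$ and set $e=E_{M_1}(u)$, $f=u-e$, so that $E_{M_1}(f)=0$ and $\|e\|_2^2+\|f\|_2^2=1$; the goal becomes $\|e\|_2=1$. Put $w_n:=uv^nu^*\in M_1$ (a unitary, with $w_n\to0$ weakly) and use $v^n=u^*w_nu$. Expanding $u^*w_nu=(e^*+f^*)w_n(e+f)$ and applying $E_{M_1}$, the module property of $E_{M_1}$ over $M_1$ kills the two cross terms $e^*w_nf$ and $f^*w_ne$ (since $E_{M_1}(f)=0$), leaving
\[
v^n=E_{M_1}(u^*w_nu)=e^*w_ne+E_{M_1}(f^*w_nf).
\]
Because $E_{M_1}(f^*)=E_{M_1}(f)=0$ and $w_n\to0$ weakly, part (b) gives $\|E_{M_1}(f^*w_nf)\|_2\to0$, hence $\|v^n-e^*w_ne\|_2\to0$. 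As $\|v^n\|_2=1$ this forces $\|e^*w_ne\|_2\to1$; but $\|e^*w_ne\|_2\le\|e\|\,\|w_n\|\,\|e\|_2\le\|e\|_2$ (using $\|e\|=\|E_{M_1}(u)\|\le1$ and $\|w_n\|=1$), so $\|e\|_2\ge1$. Combined with $\|e\|_2\le\|u\|_2=1$ this yields $\|e\|_2=1$, whence $f=0$ and $u=e\in M_1$.

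The real work, and the step I expect to be the main obstacle, is the freeness lemma itself; my plan for it is a two-step reduction. First, using Kaplansky density I would approximate $a$ and $b$ in $\|\cdot\|_2$ by finite sums of reduced words $a_1\cdots a_p$ (with $a_j$ alternately in $M_1^\circ=M_1\ominus\C1$ and $M_2^\circ$), keeping operator norms bounded; the boundedness of $(v_k)$ together with $\|E_{M_i}(x v_k y)\|_2\le\|v_k\|\,\|x\|_2\,\|y\|$ and $\|E_{M_i}(xv_ky)\|_2\le\|v_k\|\,\|x\|\,\|y\|_2$ then controls the approximation error uniformly in $k$. Second, for genuine reduced words I would compute $E_{M_i}(a\,v_k\,b)$ directly: writing $v_k=\tau(v_k)1+v_k^\circ$ and repeatedly absorbing $v_k^\circ$ into any adjacent same-factor letter, each reduction step either terminates in a reduced word of length $\ge2$ (orthogonal to $M_i$, giving an exact $0$) or peels off a scalar coefficient of the form $\langle v_k^\circ,\zeta\rangle$ with $\zeta\in L^2(M)$ fixed, which tends to $0$ by weak nullity of $v_k$. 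The hypotheses $E_{M_1}(a)=E_{M_1}(b)=0$ in (b) are precisely what rule out the ``resonant'' terms (where $a,b\in M_1$ and $E_{M_1}(av_kb)=av_kb$ need not vanish). Conceptually the same fact can be phrased as compactness of the operator $L^2(M_i)\ni c\mapsto E_{M_j}(a\,c\,b)$ combined with weak convergence $v_k\to0$; this bimodule viewpoint is cleaner but amounts to the same free-probability computation. Verifying the bookkeeping of the reduction uniformly — so that the finitely many surviving terms all carry coefficients tending to $0$ — is the one genuinely technical point.
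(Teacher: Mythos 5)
Your proof is correct, and it is worth noting that the paper itself gives no proof of this statement at all: it is quoted as a known theorem of Popa with a citation to \cite{popa:orthogonal}. So the relevant comparison is with Popa's original argument, which proceeds via his notion of orthogonal pairs of subalgebras in a finite von Neumann algebra; what you have written instead is the now-standard ``mixing'' argument, isolating the fact that $L^2(M)\ominus L^2(M_1)$ is a mixing $M_1$-$M_1$-bimodule (your freeness lemma, parts (a) and (b)) and deducing both assertions from it by the Haar-unitary trick. This route is more self-contained and conceptually cleaner, and it is the one that generalizes (it is essentially the mechanism behind the intertwining arguments of Ioana--Peterson--Popa for amalgamated free products); Popa's original approach buys a sharper structural statement about pairs of subalgebras but is less transparent. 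All the key steps in your write-up check out: $v^n\to0$ weakly follows from Bessel's inequality applied to the orthonormal family $\{v^n\Omega\}$, since the coefficients you actually need are of the form $\tau(c\,v_k)$ with $c\in M$ fixed; the cross terms $E_{M_1}(e^*w_nf)$ and $E_{M_1}(f^*w_ne)$ vanish exactly by $M_1$-bimodularity since $e,w_n\in M_1$; and the norm chain $\|e^*w_ne\|_2\le\|e\|\,\|e\|_2\le\|e\|_2$ together with $\|e\|_2^2+\|f\|_2^2=1$ closes the second assertion. Two cosmetic remarks: the statement implicitly concerns the \emph{tracial} free product (traces on $M_1,M_2$ must be fixed for $M_1\ast M_2$ and $E_{M_i}$ to make sense), which you correctly assume; and in the reduced-word computation for the lemma the terminal reduced word can also have length $1$ (a single letter in $M_1^\circ$ when projecting onto $M_2$, or in $M_2^\circ$ when projecting onto $M_1$), which is still orthogonal to the relevant corner, so your ``length $\ge2$'' bookkeeping should be amended to cover these cases --- in fact for genuinely reduced words at most one absorption occurs on each side of $v_k^\circ$, so the cascade terminates after a single decomposition and the uniformity you worry about is immediate.
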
 

In particular, this result implies that $N_1^\prime \cap (M_1 \ast M_2) = N_1^\prime \cap M_1$. (It is crucial that $N_1$ be diffuse -- this statement is false if $N_1 \cong \M_n(\C)$!) 

The second thing we need is an improvement of Theorem \ref{thm:popafinbimod} in the case of factorial commutants. (See \cite[Lemma F.18]{Me-n-Taka} for a more general result.) 

\begin{subthm} 
\label{thm:finbimodII} Assume $M_1$ and $M_2$ are II$_1$-factors, $B \subset M_1 \ast M_2$ is diffuse and $B' \cap (M_1\ast M_2)$ is a factor.  If $B$ embeds in $M_1$ inside $M_1 \ast M_2$, then there exists a unitary $u \in M_1 \ast M_2$ such that $uB u^* \subset M_1$. 
\end{subthm}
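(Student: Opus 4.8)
The plan is to feed the hypothesis $B\prec_{M}M_1$ (with $M=M_1\ast M_2$) into the intertwining machinery of Theorem~\ref{thm:popafinbimod}, then use the free-product rigidity of Theorem~\ref{thm:freenormalizer} to force everything to stay inside $M_1$, and finally amplify and exhaust to produce a genuine unitary. First I would apply condition~(\ref{thm:popafinbimod4}) of Theorem~\ref{thm:popafinbimod} (with $A=M_1$ and $p=1$), which hands me nonzero projections $e\in M_1$, $f\in B$, a unital normal $*$-homomorphism $\theta\colon fBf\to eM_1e$ and a partial isometry $v\in M$ with $xv=v\theta(x)$ for all $x\in fBf$, $vv^*\in (fBf)'\cap fMf$ and $v^*v\in\theta(fBf)'\cap eMe$.

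Two structural observations should do most of the work. The assumption that $B'\cap M$ is a factor forces $B$ to be a factor: $\Zt(B)=B\cap B'\subset B'\cap M$ commutes with all of $B'\cap M$, so $\Zt(B)\subset\Zt(B'\cap M)=\C$, and since $B$ is diffuse in the finite algebra $M$ it is a $\mathrm{II}_1$ factor. Consequently $fBf$ is a diffuse factor and $\theta$, a nonzero normal $*$-homomorphism out of a factor, is automatically injective, so $\theta(fBf)\subset M_1$ is diffuse. Now the free-product geometry enters: the consequence of Theorem~\ref{thm:freenormalizer} that $N_1'\cap M=N_1'\cap M_1$ for diffuse $N_1\subset M_1$, applied to $N_1=\theta(fBf)$, gives $\theta(fBf)'\cap M\subset M_1$, and hence $v^*v\in M_1$. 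Taking adjoints in $xv=v\theta(x)$ yields $v^*xv=\theta(x)\,v^*v$, so $v^*(fBf)v=\theta(fBf)\,v^*v\subset M_1$; the corner $fBf$ has been conjugated into $M_1$.

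Next I would amplify this corner intertwiner to all of $B$. Shrinking $f$ so that $\tau(f)=1/n$, pick partial isometries $w_i\in B$ with $w_i^*w_i=f$ and $\sum_{i=1}^n w_iw_i^*=1$, and (since $\tau(v^*v)\le\tau(f)=1/n$) partial isometries $s_i\in M_1$ with $s_i^*s_i=v^*v$ and the $s_is_i^*$ mutually orthogonal. Setting $V=\sum_i w_iv s_i^*$ and $\pi(b)=\sum_{i,j}s_i\theta(w_i^*bw_j)s_j^*\in M_1$, a direct computation using $w_i^*w_j=\delta_{ij}f$, $fv=v$ and $s_i^*s_j=\delta_{ij}v^*v$ gives $bV=V\pi(b)$ for all $b\in B$, with $V^*V=\sum_i s_is_i^*\in M_1$ and therefore $V^*bV=(V^*V)\pi(b)\in M_1$. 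The same bookkeeping shows $VV^*=\sum_i w_i(vv^*)w_i^*$ commutes with $B$, i.e. $VV^*\in B'\cap M$. So $V$ is a full intertwiner of $B$ into $M_1$ whose final projection sits in the factor $B'\cap M$.

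Finally I would exhaust to a unitary. Take a maximal family $\{V_k\}$ of such full intertwiners (with $bV_k=V_k\pi_k(b)$) whose final projections $V_kV_k^*\in B'\cap M$ are mutually orthogonal and whose initial projections $V_k^*V_k\in M_1$ are mutually orthogonal, and set $u=\sum_k V_k$. Because the $V_kV_k^*$ lie in $B'\cap M$ and are orthogonal, the cross terms in $u^*bu=\sum_{k,l}V_k^*bV_l$ vanish, leaving $u^*bu=\sum_k (V_k^*V_k)\pi_k(b)\in M_1$; thus it only remains to make $u$ unitary. By finiteness of $M$ it suffices to arrange $\sum_k V_kV_k^*=1$, for then $uu^*=1$ forces $u^*u=1$, and replacing $u$ by $u^*$ gives $uBu^*\subset M_1$. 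I expect this last exhaustion to be the main obstacle: one must show that if the final projections fail to sum to $1$, the leftover corner $1-\sum_k V_kV_k^*\in B'\cap M$ still supports a fresh intertwiner of $B$ into $M_1$ with initial projection orthogonal to those already used, contradicting maximality. This is precisely where factoriality of $B'\cap M$ is indispensable—it is what lets one tile $1$ by translates of the final projections—and it is carried out in full in \cite[Lemma F.18]{Me-n-Taka}.
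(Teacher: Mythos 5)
Your proposal cannot be compared line-by-line with the paper, because the paper offers no proof of this statement at all: it is quoted as one of ``Popa's W$^*$-theorems'' with a pointer to \cite[Lemma F.18]{Me-n-Taka}. So what you have written is a reconstruction of that lemma's proof, and the parts you actually execute are correct. I checked them: $\Zt(B)\subset \Zt(B'\cap M)=\C$ does force $B$ to be a II$_1$ factor; $\theta$ is injective, so $\theta(fBf)\subset eM_1e$ is diffuse; the consequence $N_1'\cap M=N_1'\cap M_1$ of Theorem \ref{thm:freenormalizer} places $v^*v$ in $M_1$ (strictly speaking you need the corner, non-unital version for $\theta(fBf)\subset eM_1e$, obtained by extending a diffuse abelian subalgebra of $\theta(fBf)$ by one of $(1-e)M_1(1-e)$ -- routine since $M_1$ is a II$_1$ factor); and the amplification computation for $V=\sum_i w_i v s_i^*$ is right, including $VV^*\in B'\cap M$ and the fact that $\pi$ is a normal $*$-homomorphism into $M_1$ with $\pi(1)=V^*V$. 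One small gloss: when shrinking $f$ to trace exactly $1/n$ you must keep the intertwiner alive, i.e.\ arrange $f_0vv^*\neq 0$; this holds because $fBf$ is a factor, so $x\mapsto \tau(x\,vv^*)$ is proportional to its trace.

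The genuine gap is exactly where you flagged it, but your diagnosis of what closes it is incomplete: factoriality of $B'\cap M$ alone does not finish the exhaustion. Comparison in the factor $B'\cap M$ gives a partial isometry $w\in B'\cap M$ with $ww^*\le q:=1-\sum_k V_kV_k^*$ and $w^*w\le V_1V_1^*$, and $W:=wV_1$ is again an intertwiner with final projection $ww^*\le q$ lying in $B'\cap M$; but its initial projection $W^*W=V_1^*w^*wV_1$ is \emph{not} visibly in $M_1$, and without that the new summand destroys the conclusion $u^*bu\in M_1$. The rescue is a second application of the free-product control: from $V_1^*b=\pi_1(b)V_1^*$ one sees $W^*W\in \pi_1(B)'\cap (V_1^*V_1)M(V_1^*V_1)$, and since $\pi_1$ is injective on the factor $B$, the algebra $\pi_1(B)\subset M_1$ is diffuse, so Theorem \ref{thm:freenormalizer} applies once more and yields $W^*W\in M_1$. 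Then right-multiplying $W$ by a partial isometry $y\in M_1$ with $yy^*=W^*W$ and $y^*y$ orthogonal to $\sum_k V_k^*V_k$ -- possible because $\tau(W^*W)\le\tau(q)=1-\sum_k\tau(V_k^*V_k)$ -- produces a fresh intertwiner contradicting maximality. Note also that invoking \cite[Lemma F.18]{Me-n-Taka} for this last step is borderline circular, since that lemma \emph{is} the theorem being proved; with the two additions above, your sketch closes into a complete and self-contained proof.
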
 

To apply Theorem \ref{thm:gvnacn} to free products, we need to know that $\G = \G_1 \ast \G_2 \ast \cdots \ast \G_n$ is bi-exact relative to the family of subgroups $\grp = \{ \G_1, \G_2, \ldots, \G_n\}$, whenever each $\G_i$ is exact.  This isn't trivial, but it is true -- see \cite[Proposition 15.3.12]{Me-n-Taka}. 

Combining these results, we have an important lemma. 

\begin{sublem} 
\label{lem:thelem2} Assume $\G = \G_1 \ast \G_2 \ast \cdots \ast \G_n$, with each $\G_i$ exact and i.c.c.,\footnote{Recall that this means $\{ gsg^{-1} : g \in \G_i\}$ is infinite for all $s \neq e$, and it ensures that $L(\G_i)$ is a factor.}  and $N = N_1 \vt N_2 \subset L(\G)$ has the following properties: Each $N_i$ is a II$_1$-factor, $N_1^\prime \cap L(\G) = N_2$ and $N_2$ is not injective.  Then, there exists $i \in \{1,\ldots,n\}$ and a unitary $u \in L(\G)$ such that $u N u^* \subset L(\G_i)$. 
\end{sublem}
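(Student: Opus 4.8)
The plan is to run the standard two-step Popa machine: first manufacture an \emph{embedding} of $N_1$ into one of the free factors out of the noninjectivity of its relative commutant, then promote that embedding to a genuine unitary conjugation and carry $N_2$ along for free.

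First I would record that, since each $\G_i$ is exact, $\G=\G_1\ast\cdots\ast\G_n$ is bi-exact relative to $\grp=\{\G_1,\dots,\G_n\}$ (the cited \cite[Proposition 15.3.12]{Me-n-Taka}). The crucial decision is to apply Theorem \ref{thm:gvnacn} to the subfactor $N_1\subset L(\G)$ (with $p=1$) rather than to $N$ itself. By hypothesis $N_1'\cap L(\G)=N_2$ is noninjective, so the theorem furnishes an index $i$ for which $N_1$ embeds in $L(\G_i)$ inside $L(\G)$, in the sense of Definition \ref{defn:embinside}.

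Next I would regroup the free product as $L(\G)=L(\G_i)\ast M_2$ with $M_2=L(\ast_{j\neq i}\G_j)$; since a free product of i.c.c.\ groups is again i.c.c., $M_2$ is a II$_1$-factor (the degenerate case $n=1$ being trivial, with $u=1$). Now $B:=N_1$ is diffuse and $B'\cap L(\G)=N_2$ is a factor, so Theorem \ref{thm:finbimodII}, applied with $M_1=L(\G_i)$, upgrades the embedding to a unitary $u\in L(\G)$ with $uN_1u^*\subset L(\G_i)$. Finally I would transport $N_2$: because $u\in L(\G)$, conjugation commutes with taking relative commutants in $L(\G)$, whence $uN_2u^*=u(N_1'\cap L(\G))u^*=(uN_1u^*)'\cap L(\G)$. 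The algebra $uN_1u^*\subset L(\G_i)$ is diffuse, so the consequence of Theorem \ref{thm:freenormalizer} that $B'\cap(M_1\ast M_2)=B'\cap M_1$ for diffuse $B\subset M_1$ yields $(uN_1u^*)'\cap L(\G)=(uN_1u^*)'\cap L(\G_i)\subset L(\G_i)$. Thus $uN_2u^*\subset L(\G_i)$ as well, and since $N=N_1\vt N_2$ is generated by $N_1$ and $N_2$ while $\mathrm{Ad}(u)$ is a normal automorphism, we conclude $uNu^*\subset L(\G_i)$.

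The conceptual crux — and the step where I expect a naive attempt to stall — is the very first one: $N$ itself is useless here, because $N'\cap L(\G)\subset N_2\cap N_2'=\C$ is injective, so one must feed the single tensor factor $N_1$ into Theorem \ref{thm:gvnacn} and exploit that \emph{its} relative commutant is precisely the noninjective $N_2$. Beyond that insight, the remaining work is bookkeeping: verifying the factoriality hypotheses ($M_2$ and $N_1'\cap L(\G)=N_2$ must both be factors) demanded by Popa's two theorems, and checking that the ``embeds inside'' conclusion of Theorem \ref{thm:gvnacn} is verbatim the ``embeds inside'' hypothesis of Theorem \ref{thm:finbimodII} (both phrased through Definition \ref{defn:embinside}), so that the two results chain together with no gap.
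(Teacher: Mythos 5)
Your proposal is correct and follows essentially the same route as the paper's proof: apply Theorem \ref{thm:gvnacn} to $N_1$ (whose relative commutant is the noninjective $N_2$), upgrade the embedding to unitary conjugacy via Theorem \ref{thm:finbimodII} using that $N_1$ is diffuse and $N_1'\cap L(\G)=N_2$ is a factor, and then use Theorem \ref{thm:freenormalizer} to locate $(uN_1u^*)'\cap L(\G)=uN_2u^*$ inside $L(\G_i)$. Your added bookkeeping (factoriality of $M_2=L(\ast_{j\neq i}\G_j)$, the observation that $N'\cap L(\G)=\C$ forces one to work with $N_1$ rather than $N$) is sound and merely makes explicit what the paper leaves implicit.
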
 

\begin{proof} By Theorem \ref{thm:gvnacn}, there is an $i$ such that $N_1$ embeds in $L(\G_i)$ inside $L(\G)$. Since $N_1$ is diffuse and $N_1^\prime \cap L(\G) = N_2$ is a factor, Theorem \ref{thm:finbimodII} provides a unitary $u$ such that $uN_1 u^* \subset L(\G_i)$. From Theorem \ref{thm:freenormalizer} we have that $(uN_1 u^*)' \cap L(\G) \subset L(\G_i)$. Since $uN_2 u^* = (uN_1 u^*)' \cap L(\G)$, the proof is complete. 
\end{proof} 

We say a group is a \emph{product group} if it's isomorphic to a Cartesian product $H \times K$ of groups. 

\begin{subthm}  Assume $\G_1,\ldots, \G_n$ and $\Lambda_1,\ldots,\Lambda_m$ are nonamenable, i.c.c., exact product groups.  If $$L(\G_1 \ast \cdots \ast \G_n) \cong L(\Lambda_1 \ast \cdots \ast \Lambda_m),$$ then $n = m$ and, modulo permutation of indices, $L(\G_i) \cong L(\Lambda_i)$ for all $i$. 
\end{subthm}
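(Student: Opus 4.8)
The plan is to use Lemma~\ref{lem:thelem2} as the engine, running it in both directions through the assumed isomorphism $\pi\colon L(\Lambda_1\ast\cdots\ast\Lambda_m)\to L(\G_1\ast\cdots\ast\G_n)=:M$, and then to rigidify the resulting inclusions into genuine isomorphisms using Popa's free-product normalizer theorem (Theorem~\ref{thm:freenormalizer}).

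First I would record the algebraic setup. Since each $\Lambda_j$ is i.c.c.\ and a product group, write $\Lambda_j\cong H_j\times K_j$ with $H_j,K_j$ necessarily i.c.c.\ (hence infinite, so $L(H_j),L(K_j)$ are II$_1$-factors), so that $L(\Lambda_j)=L(H_j)\vt L(K_j)$. Because $\Lambda_j$ is nonamenable, $L(\Lambda_j)$ is noninjective, and since a tensor product of injective factors is injective, at least one factor is noninjective; relabel so that $L(K_j)$ is the noninjective one. The commutation theorem gives $L(H_j)'\cap L(\Lambda_j)=L(K_j)$, and since $L(H_j)$ is diffuse, Theorem~\ref{thm:freenormalizer} upgrades this to $L(H_j)'\cap L(\Lambda_1\ast\cdots\ast\Lambda_m)=L(K_j)$. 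Transporting through $\pi$, the subalgebra $\pi(L(\Lambda_j))=\pi(L(H_j))\vt\pi(L(K_j))\subset M$ satisfies exactly the hypotheses of Lemma~\ref{lem:thelem2} (here using that $\G=\G_1\ast\cdots\ast\G_n$, with each $\G_i$ exact and i.c.c., is bi-exact relative to $\{\G_1,\dots,\G_n\}$). Thus there is an index $i$ and a unitary $u\in M$ with $u\,\pi(L(\Lambda_j))\,u^*\subset L(\G_i)$.

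The main obstacle is to promote this inclusion to an equality. I would do so by a sandwich argument. Running the identical analysis with the roles of $\G$ and $\Lambda$ reversed — applying Lemma~\ref{lem:thelem2} to $\pi^{-1}(L(\G_i))=\pi^{-1}(L(H_i))\vt\pi^{-1}(L(K_i))$ inside $L(\Lambda_1\ast\cdots\ast\Lambda_m)$ — produces an index $j'$ and a unitary $w\in M$ with $L(\G_i)\subset w^*\pi(L(\Lambda_{j'}))w$. Chaining the two inclusions yields a unitary $z$ with $z^*L(\Lambda_j)z\subset L(\Lambda_{j'})$ inside the free product. Since $L(\Lambda_j)$ is diffuse, Theorem~\ref{thm:freenormalizer} forbids conjugating it into any complementary free factor, which forces $j'=j$; the ``moreover'' clause of the same theorem then gives $z\in L(\Lambda_j)$, whence $z^*L(\Lambda_j)z=L(\Lambda_j)$. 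Unwinding the conjugations shows that the inner algebra $u\,\pi(L(\Lambda_j))\,u^*$ and the outer algebra $w^*\pi(L(\Lambda_j))w$ of the sandwich coincide, so the middle term is squeezed and $u\,\pi(L(\Lambda_j))\,u^*=L(\G_i)$; in particular $L(\G_i)\cong L(\Lambda_j)$.

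Finally I would assemble the matching. The construction assigns to each $j\in\{1,\dots,m\}$ an index $\sigma(j)\in\{1,\dots,n\}$ and a unitary carrying $\pi(L(\Lambda_j))$ onto $L(\G_{\sigma(j)})$. If $\sigma(j)=\sigma(j')$ for $j\ne j'$, then $L(\Lambda_j)$ and $L(\Lambda_{j'})$ would be unitarily conjugate inside $L(\Lambda_1\ast\cdots\ast\Lambda_m)$, again contradicting Theorem~\ref{thm:freenormalizer}. Hence $\sigma$ is injective and $m\le n$; the symmetric argument gives $n\le m$, so $n=m$ and $\sigma$ is a bijection with $L(\G_{\sigma(j)})\cong L(\Lambda_j)$ for every $j$, which after permuting indices is the assertion. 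The only genuinely delicate points are the relative-commutant identities needed to feed Lemma~\ref{lem:thelem2} and the bookkeeping of unitaries in the sandwich step; everything else is formal.
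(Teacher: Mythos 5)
Your proposal is correct and takes essentially the same route as the paper's own proof: decompose each $\Lambda_j = H_j \times K_j$ with the noninjective tensor factor, use Theorem \ref{thm:freenormalizer} to get the relative-commutant hypothesis for Lemma \ref{lem:thelem2}, apply that lemma in both directions, and invoke the normalizer theorem again to force the index match and put the composed unitary inside $L(\Lambda_j)$, squeezing the sandwich to an equality. The only cosmetic difference is at the counting step, where you establish injectivity of the index map by a separate conjugacy contradiction while the paper reads $m \leq n$ directly off the left-inverse identity $i(j(i)) = i$.
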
 

\begin{proof} First write $\Lambda_i = H_i \times K_i$ where $K_i$ is nonamenable.  Then, by Theorem \ref{thm:freenormalizer}, $$L(H_i)' \cap L(\Lambda_1 \ast \cdots \ast \Lambda_m) = L(K_i)$$ is a noninjective II$_1$-factor. Thus, applying the previous lemma to each $L(\Lambda_i)$, we can find unitaries $u_i$ and indices $j(i)$ such that $u_i L(\Lambda_i) u_i^* \subset L(\G_{j(i)})$.  Similarly, we can find unitaries $v_j$ and indices $i(j)$ such that $v_j L(\G_j) v_j^* \subset L(\Lambda_{i(j)})$.  Hence, $$v_{j(i)} u_i L(\Lambda_i) u_i^* v_{j(i)}^* \subset L(\Lambda_{i(j(i))}).$$ 

Theroem \ref{thm:freenormalizer} now implies that $i(j(i)) = i$ and $v_{j(i)} u_i \in L(\Lambda_i) $.  Thus $v_{j(i)} u_i L(\Lambda_i) u_i^* v_{j(i)}^* = L(\Lambda_{i})$ and so $u_i L(\Lambda_i) u_i^* = L(\G_{j(i)})$. Finally, since $i(j(i)) = i$, it follows that $m \leq n$; by symmetry, we're done. 
\end{proof} 

This result can be improved  a bit (cf.\ \cite[Corollary 15.3.15]{Me-n-Taka}), but it's good enough to imply the following striking result: If $R$ denotes the injective II$_1$-factor, then $$\freeprod_1^n (R\vt L(\F_2)) \cong \freeprod_1^m (R\vt L(\F_2)) \Longleftrightarrow n = m.$$ Without the $R$'s, this is the isomorphism problem for free group factors (cf.\ \cite{radulescu}, \cite{dykema}).

\bibliographystyle{amsalpha}

\begin{thebibliography}{A}

\providecommand{\bysame}{\leavevmode\hbox
to3em{\hrulefill}\thinspace}


\bibitem{akemann-ostrand} C.A.~Akemann and P.A.~Ostrand, \emph{On a
tensor product C$^*$-algebra associated with the free group on two
generators}, J.\ Math.\ Soc.\ Japan \textbf{27} (1975), 589--599.


\bibitem{Me-n-Taka} Nathanial P.\ Brown and Narutaka Ozawa, \emph{$C\sp *$-algebras and finite-dimensional approximations}, Graduate Studies in Mathematics, \textbf{88}. American Mathematical Society, Providence, RI, 2008. xvi+509 pp.

\bibitem{choi:free} M.D. Choi, \emph{A simple $C\sp{*} $-algebra generated by 
two finite-order unitaries},  Canad.\ J.\ Math.\  \textbf{31}  (1979),  867--880.

\bibitem{connes:classification} A.~Connes, \emph{Classification of
injective factors: cases II$_1$, II$_\infty$, III$_{\lambda}$,
$\lambda \neq 1$}, Ann. Math. \textbf{104} (1976), 73--115.

\bibitem{dykema} K. Dykema, \emph{Interpolated free group factors}, 
Pacific J.\ Math.\ \textbf{163} (1994),  123--135. 

\bibitem{ge} L. Ge, \emph{Applications of free entropy to finite von Neumann algebras. II.}  Ann.\ of Math.\   \textbf{147}  (1998), 143--157.

\bibitem{gromov} M.~Gromov, \emph{Hyperbolic groups}, Essays in group
theory, 75--263, Math.\ Sci.\ Res.\ Inst.\ Publ.\ 8, Springer, New
York, 1987.

\bibitem{GHW} E.~Guentner, N.~Higson and S.~Weinberger, \emph{The
Novikov conjecture for linear groups},  Publ.\ Math.\ Inst.\ Hautes Etudes Sci.\  No.\ 101  (2005), 243--268.

\bibitem{haagerup} U. Haagerup, \emph{The standard form of von Neumann algebras},  Math.\ Scand.\  \textbf{37}  (1975), 271--283.

\bibitem{ipp}
A. Ioana, J. Peterson and S. Popa,
\emph{Amalgamated Free Products of $w$-Rigid Factors and Calculation 
of their Symmetry Groups.}  Acta Math.  \textbf{200}  (2008),   85--153. 

\bibitem{pedersen} G.K.~Pedersen, \emph{C$^*$-algebras and their
automorphism groups}, London Mathematical Society Monographs,
14. Academic Press, Inc., London-New York, 1979.

\bibitem{peterson}
J. Peterson, 
\emph{$L^2$-rigidity in von Neumann algebras.}
Preprint, 2006 (math.OA/ 0605033). 


\bibitem{popa:orthogonal} S. Popa, \emph{Orthogonal pairs of $*$-subalgebras in finite von Neumann algebras.} J. Operator Theory \textbf{9} (1983), 253--268. 

\bibitem{popa:solid} S. Popa, \emph{On Ozawa's property for free group factors},  Int.\ Math.\ Res.\ Not.\ 2007,  Art. ID rnm036, 10 pp.

\bibitem{popa:productgap}
S. Popa, 
\emph{On the Superrigidity of Malleable Actions with Spectral Gap.}
 J. Amer. Math. Soc.  \textbf{21}  (2008),  981--1000.

\bibitem{powers} R.T. Powers, \emph{Representations of uniformly hyperfinite algebras and their associated von Neumann rings},  Ann.\ Math.\ \textbf{86}  (1967), 138--171.

\bibitem{radulescu} F. Radulescu, \emph{Random matrices, amalgamated free products and subfactors of the von Neumann algebra of a free group, of noninteger index}, 
Invent.\ Math.\ \textbf{115} (1994),  347--389. 

\end{thebibliography}

\end{document}